\definecolor{darkgreen}{rgb}{0,1,0}
\definecolor{bluegreen}{rgb}{0,0.2,0.8}
\definecolor{newcolor}{rgb}{0.8,0,0}
\definecolor{newercolor}{rgb}{0.6,0.6,0}
\definecolor{darkyellow}{rgb}{0.7,0.7,0}
\definecolor{orange}{rgb}{0.7,0.35,0}
\newcommand{\UUU}{\mathbf{U}}
\newcommand{\UU}{U}
\newcommand{\genG}{\varGamma}
\newcommand{\widebar}[1]{\overset{\mskip2mu\hrulefill\mskip2mu}{#1}
		\vphantom{#1}}
\newcommand{\4}[1]{\widebar{#1}}
\newcommand{\5}[1]{\widehat{#1}}
\newcommand{\7}{^\vee}
\newcommand{\9}[1]{{}^{#1}\!}   
\newcommand{\EE}[1]{\mathbf{E}_{#1}}
\newcommand{\autff}{\autf\7}
\newcommand{\outff}{\outf\7}
\newcommand{\xx}{\mathbf{x}}
\newcommand{\xa}{\mathbf{a}}
\newcommand{\uu}{\mathbf{u}}
\newcommand{\too}{\longrightarrow}
\newcommand{\dbl}[2]{\renewcommand{\arraystretch}{1.0}%
\setlength{\tabcolsep}{0pt}%
\begin{tabular}{c}\rule{0pt}{12pt}$#1$\\$#2$\end{tabular}}
\newlength{\short}
\newlength{\shorter}
\newcommand{\boldd}[1]{{\mathversion{bold}\textbf{#1}}}
\newcommand{\lie}[3]{\def\test{#2}\def\tst{G}\ifx\test\tst{{}^{#1}#2_{#3}}
\else{{}^{#1}\!#2_{#3}}\fi}
\let\oldcirc=\circ
\renewcommand{\circ}{\mathchoice
    {\mathbin{\scriptstyle\oldcirc}}{\mathbin{\scriptstyle\oldcirc}}
    {\mathbin{\scriptscriptstyle\oldcirc}}
    {\mathbin{\scriptscriptstyle\oldcirc}}}
\newlength{\upto}\newlength{\dnto}
\numberwithin{equation}{section}
\numberwithin{table}{section}
\let\endpf=\endproof
\renewcommand{\endproof}{\endpf\setcounter{equation}{0}}
\mathchardef\cdot="0201
\def\beq#1\eeq{\begin{equation*}#1\end{equation*}}
\def\beqq#1\eeqq{\begin{equation}#1\end{equation}}
\let\emptyset=\varnothing
\renewcommand{\:}{\colon}   
\newcommand{\longline}{\centerline{\hbox to 5cm{\hrulefill}}}
\newcommand{\mxtwo}[4]{\left(\begin{smallmatrix}#1&#2\\#3&#4
\end{smallmatrix}\right)}
\newcommand{\mxfourb}[8]{#1&#2&#3&#4\\#5&#6&#7&#8\end{smallmatrix}\right)}
\newcommand{\mxfoura}[8]{\left(\begin{smallmatrix}#1&#2&#3&#4\\#5&#6&#7&#8\\}
\DeclareMathAlphabet\EuR{U}{eur}{m}{n}
\SetMathAlphabet\EuR{bold}{U}{eur}{b}{n}
\newcommand{\higherlim}[2]{\displaystyle\setbox1=\hbox{\rm lim}
	\setbox2=\hbox to \wd1{\leftarrowfill} \ht2=0pt \dp2=-1pt
	\setbox3=\hbox{$\scriptstyle{#1}$}
	\def\test{#1}\ifx\test\empty
	\mathop{\mathop{\vtop{\baselineskip=5pt\box1\box2}}}\nolimits^{#2}
	\else
	\ifdim\wd1<\wd3
	\mathop{\hphantom{^{#2}}\vtop{\baselineskip=5pt\box1\box2}^{#2}}_{#1}
	\else
	\mathop{\mathop{\vtop{\baselineskip=5pt\box1\box2}}_{#1}}%
	\nolimits^{#2}
	\fi\fi}
\newcommand{\higherlimm}[2]{\setbox1=\hbox{\rm lim}
	\setbox2=\hbox to \wd1{\leftarrowfill} \ht2=0pt \dp2=-1pt
	\mathop{\mathop{\vtop{\baselineskip=5pt\box1\box2}}}\limits_{#1}
	\nolimits^{#2}}
\newcounter{let} \setcounter{let}{0}
\loop\stepcounter{let}
\edef\csname cal\alph{let}\endcsname%
\newcommand{\tdef}[2][]{\expandafter\newcommand\csname#2\endcsname%
{#1\textup{#2}}}
\newcommand{\fdef}[1]{\expandafter\newcommand\csname#1\endcsname%
{\mathfrak{#1}}}
\newcommand{\pp}{\mathfrak{p}}
\newcommand{\bbdef}[1]{\expandafter\newcommand%
\csname#1\endcsname{\mathbb{#1}}}
\newcommand{\itdef}[1]{\expandafter\newcommand\csname#1\endcsname%
{\textit{#1}}}
\newcommand{\PSSL}{\textit{P}{\varSigma}\textit{L}}
\newcommand{\scal}{\textup{sc}}
\newcommand{\gee}{\varepsilon}
\newcommand{\gen}[1]{\langle{#1}\rangle}
\newcommand{\Gen}[1]{\bigl\langle{#1}\bigr\rangle}
\let\nsg=\normal
\let\nnsg=\ntrianglelefteq
\newcommand{\syl}[2]{\textup{Syl}_{#1}(#2)}
\newcommand{\sylp}[1]{\syl{p}{#1}}
\newcommand{\autf}{\Aut_{\calf}}
\newcommand{\outf}{\Out_{\calf}}
\newcommand{\homf}{\Hom_{\calf}}
\newcommand{\sminus}{\smallsetminus}
\newcommand{\defeq}{\overset{\textup{def}}{=}}
\renewcommand{\Im}{\textup{Im}}
\newcommand{\sd}[1]{\overset{{#1}}{\rtimes}}
\let\til=\widetilde
\newcommand{\longleft}[1]{\;{\leftarrow%
\count255=0 \loop \mathrel{\mkern-6mu}%
    \relbar\advance\count255 by1\ifnum\count255<#1\repeat}\;}
\newcommand{\longright}[1]{\;{\count255=0 \loop \relbar\mathrel{\mkern-6mu}%
    \advance\count255 by1\ifnum\count255<#1\repeat\rightarrow}\;}
\newcommand{\Right}[2]{\overset{#2}{\longright#1}}
\newcommand{\RIGHT}[3]{\mathrel{\mathop{\kern0pt\longright#1}
	\limits^{#2}_{#3}}}
\newcommand{\LEFT}[3]{\mathrel{\mathop{\kern0pt\longleft#1}\limits^{#2}_{#3}}
}
\newcommand{\longleftright}[1]{\;{\leftarrow\mathrel{\mkern-6mu}%
    \count255=0\loop\relbar\mathrel{\mkern-6mu}%
    \advance\count255 by1\ifnum\count255<#1\repeat\rightarrow}\;} 
\newcommand{\onto}[1]{\;{\count255=0 \loop \relbar\joinrel
    \advance\count255 by1
    \ifnum\count255<#1 \repeat \twoheadrightarrow}\;}
\newcommand{\RLEFT}[3]{\mathrel{%
   \mathop{\vcenter{\baselineskip=0pt\hbox{$\kern0pt\longright#1$}%
   \hbox{$\kern0pt\longleft#1$}}}\limits^{#2}_{#3}}}
\renewenvironment{enumerate}[1][]
{\begin{enumerat}[#1]\setlength{\itemsep}{6pt}}{\end{enumerat}}
\renewenvironment{itemize}
{\begin{itemiz}\setlength{\itemsep}{6pt}\setlength{\itemindent}{-20pt}}
{\end{itemiz}}
\newenvironment{enuma}{\begin{enumerate}[{\rm(a) }]}{\end{enumerate}}
\newenvironment{enumi}{\begin{enumerate}[{\rm(i) }]}{\end{enumerate}}
\newenvironment{enum1}{\begin{enumerate}[{\rm(1) }]}{\end{enumerate}}
\newtheorem{Thm}{Theorem}[section]
\newtheorem{Prop}[Thm]{Proposition}
\newtheorem{Cor}[Thm]{Corollary}
\newtheorem{Lem}[Thm]{Lemma}
\newtheorem{Defi}[Thm]{Definition}   
\newtheorem{Not}[Thm]{Notation}
\newtheorem{Th}{Theorem}
\theoremstyle{definition}
\newtheorem{Ex}[Thm]{Example}
\theoremstyle{remark}
\newcommand{\starfin}{\textup{($*_{\textup{fin}}$)}}
\newcommand{\starinf}{\textup{($*_\infty$)}}
\title{Reduced fusion systems over $p$-groups with abelian subgroup of 
index $p$: III}
\author{Bob Oliver}
\address{LAGA, Institut Galil\'ee, Av. J-B Cl\'ement, 93430
Villetaneuse, France}
\email{bobol@math.univ-paris13.fr}
\thanks{B. Oliver is partially supported by UMR 7539 of the CNRS}
\author{Albert Ruiz}
\address{Departament de Mathem\`atiques, Edifici C, Universitat Aut\`onoma 
de Barcelona, 08193 Bellaterra, Spain}
\email{albert@mat.uab.cat}
\thanks{A. Ruiz is partially supported by MICINN-FEDER project number 
MTM2016-80439-P}
\subjclass[2000]{Primary 20D20. Secondary 20C20, 20D05, 20E45}
\keywords{finite groups, fusion, finite simple groups, modular 
representations, Sylow subgroups.}
\begin{document}

\begin{abstract} 
We finish the classification, begun in two earlier papers, of all simple 
fusion systems over finite nonabelian $p$-groups with an abelian subgroup 
of index $p$. In particular, this gives many new examples illustrating the 
enormous variety of exotic examples that can arise. In addition, we 
classify all simple fusion systems over infinite nonabelian discrete $p$-toral 
groups with an abelian subgroup of index $p$. In all of these cases 
(finite or infinite), we 
reduce the problem to one of listing all $\F_pG$-modules (for $G$ finite) 
satisfying certain conditions: a problem which was solved in the earlier 
paper \cite{indp2} using the classification of finite simple groups.
\end{abstract}

\maketitle

A \emph{saturated fusion system} over a finite $p$-group $S$ is a category 
whose objects are the subgroups of $S$, and whose morphisms are injective 
homomorphisms between the subgroups, and which satisfy some additional 
conditions first formulated by Puig (who called them ``Frobenius 
$S$-categories'' in \cite{Puig}) and motivated in part by the Sylow 
theorems for finite groups. For example, if $G$ is a finite group and 
$S\in\sylp{G}$, then the category $\calf_S(G)$, whose objects are the 
subgroups of $S$ and whose morphisms are the homomorphisms between 
subgroups defined via conjugation in $G$, is a saturated fusion system over 
$S$. We refer to \cite{Puig}, \cite[Part I]{AKO}, or \cite{Craven} for the 
basic definitions and properties of saturated fusion systems.

A saturated fusion system is \emph{realizable} if it is isomorphic to 
$\calf_S(G)$ for some finite group $G$ and some $S\in\sylp{G}$; it is 
\emph{exotic} otherwise. Here, by an isomorphism of fusion systems we mean 
an isomorphism of categories that is induced by an isomorphism between the 
underlying $p$-groups. Exotic fusion systems over finite $p$-groups seem to 
be quite rare for $p=2$ (the only known examples are those constructed in 
\cite{Sol} and others easily derived from them), but many examples of them 
are known for odd primes $p$. 

A \emph{discrete $p$-torus} is a group of the form $(\Z/p^\infty)^r$ for 
some $r\ge0$, where $\Z/p^\infty$ is the union of the cyclic groups 
$\Z/p^k$ via the obvious inclusions $\Z/p^k<\Z/p^{k+1}$. A \emph{discrete 
$p$-toral group} is a group containing a discrete $p$-torus as a normal 
subgroup of $p$-power index. Saturated fusion systems over discrete 
$p$-toral groups were defined and studied in \cite{BLO3}, motivated by the 
special case of fusion systems for compact Lie groups and $p$-compact 
groups. 

A fusion system is \emph{simple} if it is saturated and contains no proper 
nontrivial normal fusion subsystems (see Definition \ref{d:E<|F}). As a 
special case, very rich in exotic examples, we have been looking at simple 
fusion systems $\calf$ over finite nonabelian $p$-groups $S$ with an 
abelian subgroup $A$ of index $p$. By \cite[Proposition 5.2(a)]{AOV2}, if 
$p=2$, then $S$ is dihedral, semidihedral, or a wreath product of the form 
$C_{2^k}\wr C_2$, and hence $\calf$ is isomorphic to the fusion system of 
$\PSL_2(q)$ or $\PSL_3(q)$ for some odd $q$. Fusion systems over 
extraspecial groups of order $p^3$ and exponent $p$ were listed in 
\cite{RV}, and by \cite[Theorem 2.1]{indp1}, these include the only simple 
fusion systems over nonabelian $p$-groups containing more than one abelian 
subgroup of index $p$. The other cases where $p$ is odd and $A$ is not 
essential (equivalently, not radical) in $\calf$ were handled in 
\cite[Theorem 2.8]{indp1}, while those where $A$ is essential and of 
exponent $p$ was handled in \cite{indp2}. So it remains to describe those 
cases where $A$ is essential and not elementary abelian (and the unique 
abelian subgroup of index $p$). This, together with analogous results about 
simple fusion systems over infinite discrete $p$-toral groups with abelian 
subgroup of index $p$, are the main results of this paper.

To simplify the following summary of our results, we use the term 
``index-$p$-triple'' to denote a triple $(\calf,S,A)$, where $S$ is a 
nonabelian discrete $p$-toral group (finite or infinite) with abelian 
subgroup $A$ of index $p$, and $\calf$ is a simple fusion system over $S$. 
Our main results are shown in Sections \ref{s:finite} and \ref{s:infinite}, 
where we handle separately the finite and infinite cases. In each of these 
sections, we first list, in Theorems \ref{t3:s/a} and \ref{t:s/a}, all 
index-$p$-triples $(\calf,S,A)$, for $S$ finite or infinite, in terms of 
the pair $(G,A)$ where $G=\autf(A)$ and $A$ is regarded as a 
$\Z_pG$-module. Theorem \ref{t3:s/a} is taken directly from \cite[Theorem 
2.8]{indp2}, while Theorem \ref{t:s/a} is new. For completeness in the 
infinite case, we also show that each index-$2$-triple $(\calf,S,A)$ with 
$|S|=\infty$ is isomorphic to that of $\SO(3)$ or $\PSU(3)$ (Theorem 
\ref{t:disc.2-toral}), and that for each $p$ there is (up to isomorphism) a 
unique index-$p$-triple $(\calf,S,A)$ where $|S|=\infty$ and $A$ is not 
essential (Theorem \ref{t:AnotinE}). 

The main theorems, Theorems \ref{ThA} and \ref{ThB}, appear at the ends of 
Sections \ref{s:finite} and \ref{s:infinite}, respectively. In Theorem 
\ref{ThA}, for $p$ odd, we prove that each index-$p$-triple 
$(\calf,S,A)$, where $A$ is finite, essential in $\calf$, and not 
elementary abelian, is determined by $G=\autf(A)$, 
$V=\Omega_1(A)$ regarded as an $\F_pG$-module, the 
exponent of $A$, and some additional information needed when $A$ is not 
homocyclic. In all cases, $\rk(A)\ge p-1$, and $A$ is homocyclic whenever 
$\rk(A)\ge p$. Also, $A$ is always isomorphic to some quotient of a 
$\Z_pG$-lattice.

Theorem \ref{ThB} can be thought of as a ``limiting case'' of the 
classification in Theorem \ref{ThA}. It says that each index-$p$-triple 
$(\calf,S,A)$ such that $A$ is infinite and essential in $\calf$ is 
determined by the pair $(G,V)$, where $G=\autf(A)$, and $V=\Omega_1(A)$ is 
regarded as an $\F_pG$-module. In all such cases, $A$ is a discrete 
$p$-torus of rank at least $p-1$. We also determine which of the 
fusion systems we list are realized as fusion systems of compact Lie groups 
or $p$-compact groups.

Theorems \ref{ThA} and \ref{ThB} reduce our classification problems to 
questions about $\F_pG$-modules with certain properties. These questions 
were already studied in \cite{indp2}, using the classification of finite 
simple groups, and the results in that paper that are relevant in this one 
are summarized in Section \ref{s:examples}. Theorems \ref{ThA} and 
\ref{ThB} together with Proposition \ref{p:reps} and Table \ref{tbl:reps} 
allow us to completely list all simple fusion systems over nonabelian 
discrete $p$-toral groups (finite or infinite) with abelian subgroup of 
index $p$ that is not elementary abelian. In particular, as in the earlier 
papers \cite{indp1} and \cite{indp2}, we find a very large, very rich 
variety of exotic fusion systems over finite $p$-groups (at least for 
$p\ge5$). 

This work was motivated in part by the following questions and 
problems, all of which are familiar to people working in this field.
\begin{enumerate}[\bf Q1: ]

\item For a fixed odd prime $p$, a complete classification of all 
simple fusion systems over finite $p$-groups, or even a conjecture 
as to how they could be classified, 
seems way out of reach for now. But based on the many examples already 
known, is there any meaningful way in which one could begin to 
systematize them; for example, by splitting up the problem into simpler 
cases? Alternatively, is there a class of simple fusion systems over 
finite $p$-groups, much less restrictive than the one we look at here, 
for which there might be some chance of classifying its members?

\item Find some criterion which can be used to prove that some (or at 
least one!) of the examples constructed here or earlier (over 
\emph{finite} $p$-groups for odd primes $p$) are 
exotic, without invoking the classification of finite simple groups.

\item A \emph{torsion linear group} in defining characteristic $q$ is a 
subgroup $\Gamma\le\GL_n(K)$, for some $n\ge1$ and some field $K$ of 
characteristic $q$, such that all elements of $\Gamma$ have finite order. 
If $p$ is a prime and $\Gamma$ is a torsion linear group in defining 
characteristic different from $p$, then by \cite[\S\,8]{BLO3}, there is a 
maximal discrete $p$-toral subgroup $S\le \Gamma$, unique up to 
conjugation, and $\calf_S(\Gamma)$ is a saturated fusion system. Are there 
any saturated fusion systems over discrete $p$-toral groups (for any prime 
$p$) which we can prove are \emph{not} fusion systems of torsion linear 
groups?

\end{enumerate}

The notation used in this paper is mostly standard. We let $A\circ B$ denote a 
central product of $A$ and $B$. When $g$ and $h$ are in a group $G$, 
we set ${}^gh=ghg^{-1}$ and $h^g=g^{-1}hg$. When $A$ is an 
abelian group and $\beta\in\Aut(A)$, we write 
$[\beta,A]=\gen{\beta(x)x^{-1}\,|\,x\in A}$. When $P$ is a $p$-group, 
we let $\Fr(P)$ denote its Frattini subgroup, and for $k\ge1$ set 
	\[ \Omega_k(P)=\gen{g\in P\,|\,g^{p^k}=1} \qquad\textup{and}\qquad
	\mho^k(P)=\gen{g^{p^k}\,|\,g\in P}. \] 

We would like to thank the Centre for Symmetry and Deformation at 
Copenhagen University, and the Universitat Aut\`onoma de Barcelona, for 
their hospitality in allowing us to get together on several different 
occasions. 

\bigskip

\section{Background}
\label{s:background}

We first recall some of the definitions and standard terminology used when 
working with fusion systems. Recall that a \emph{discrete $p$-toral group} 
is a group that contains a normal subgroup of $p$-power index isomorphic to 
$(\Z/p^\infty)^r$ for some $r\ge0$. A \emph{fusion system} over a 
discrete $p$-toral group $S$ is a category $\calf$ whose objects are the 
subgroups of $S$, and where for each $P,Q\le S$, the set $\homf(P,Q)$ is a 
set of injective homomorphisms from $P$ to $Q$ that includes all those 
induced by conjugation in $S$, and such that for each 
$\varphi\in\homf(P,Q)$, we have $\varphi\in\homf(P,\varphi(P))$ and 
$\varphi^{-1}\in\homf(\varphi(P),P)$. 

Define the \emph{rank} $\rk(S)$ of a discrete $p$-torus $S$ by setting 
$\rk(S)=r$ if $S\cong(\Z/p^\infty)^r$. If $S$ is a discrete $p$-toral group 
with normal discrete $p$-torus $S_0\nsg S$ of $p$-power index, then we 
refer to $S_0$ as the \emph{identity component} of $S$, and set 
$|S|=\bigl(\rk(S_0),|S/S_0|\bigr)$, where such pairs are ordered 
lexicographically. Thus if $T$ is another discrete $p$-toral group with 
identity component $T_0$, then $|S|\le|T|$ if $\rk(S_0)<\rk(T_0)$, or if 
$\rk(S_0)=\rk(T_0)$ and $|S/S_0|\le|T/T_0|$. Note that the identity 
component of $S$, and hence $|S|$, are uniquely determined since a discrete 
$p$-torus has no proper subgroups of finite index. 

\begin{Defi} \label{d:saturated}
Fix a prime $p$, a discrete $p$-toral group $S$, and a fusion system 
$\calf$ over $S$.  
\begin{itemize} 

\item For each $P\le S$ and each $g\in S$, $P^\calf$ denotes the set of 
subgroups of $S$ which are $\calf$-conjugate (isomorphic in $\calf$) to 
$P$, and $g^\calf$ denotes the $\calf$-conjugacy class of $g$ (the set of 
images of $g$ under morphisms in $\calf$).

\item A subgroup $P\le S$ is \emph{fully normalized} in $\calf$ 
(\emph{fully centralized} in $\calf$) if $|N_S(P)|\ge|N_S(Q)|$ 
($|C_S(P)|\le|C_S(Q)|$) for each $Q\in P^\calf$. 

\item A subgroup $P\le S$ is \emph{fully automized} in $\calf$ if 
$\outf(P)\defeq\autf(P)/\Inn(P)$ is finite and 
$\Out_S(P)\in\sylp{\outf(P)}$. The subgroup $P$ is \emph{receptive} 
in $\calf$ if for each $Q\in P^\calf$ and each $\varphi\in\Iso_\calf(Q,P)$, 
there is $\4\varphi\in\homf(N_\varphi,S)$ such that $\4\varphi|_P=\varphi$, 
where 
	\[ N_\varphi = \bigl\{ g\in N_S(Q) \,\big|\, 
	\varphi c_g\varphi^{-1}\in\Aut_S(P) \bigr\}\,. \]

\item The fusion system $\calf$ is \emph{saturated} if 
\begin{itemize} 
\item \emph{(Sylow axiom)} each fully normalized subgroup of $S$ is fully 
automized and fully centralized; 
\item \emph{(extension axiom)} each fully centralized subgroup of $S$ is 
receptive; and 
\item \emph{(continuity axiom, when $|S|=\infty$)} if $P_1\le P_2\le P_3\le 
\cdots$ is an increasing sequence of subgroups of $S$ with 
$P=\bigcup_{i=1}^\infty P_i$, and $\varphi\in\Hom(P,S)$ is such that 
$\varphi|_{P_i}\in\homf(P_i,S)$ for each $i\ge1$, then 
$\varphi\in\homf(P,S)$.
\end{itemize}

\end{itemize}
\end{Defi}

The above definition of a saturated fusion system is the one given in 
\cite{BLO2} and \cite[Definition 2.2]{BLO3}. It will not be used directly 
in this paper (saturation of the fusion systems we construct will be shown 
using later theorems), but we will frequently refer to the extension axiom 
as a property of saturated fusion systems.

We now need some additional definitions, to describe certain subgroups in a 
saturated fusion system.

\begin{Defi} \label{d:subgroups}
Fix a prime $p$, a discrete $p$-toral group $S$, and a saturated fusion 
system $\calf$ over $S$.  Let $P\le{}S$ be any subgroup. Note that 
by Definition \ref{d:saturated}, $\outf(P)$ is finite  whether or not 
$P$ is fully normalized (see also \cite[Proposition 2.3]{BLO3}).
\begin{itemize} 

\item $P$ is \emph{$\calf$-centric} if $C_S(Q)=Z(Q)$ for each 
$Q\in{}P^\calf$, and is \emph{$\calf$-radical} if $O_p(\outf(P))=1$. 

\item $P$ is \emph{$\calf$-essential} if $P<S$, $P$ is $\calf$-centric and 
fully normalized in $\calf$, and $\outf(P)$ contains 
a strongly $p$-embedded subgroup. Here, a proper subgroup $H<G$ of a finite 
group $G$ is \emph{strongly $p$-embedded} if $p\big||H|$, and 
$p{\nmid}|H\cap gHg^{-1}|$ for each $g\in{}G{\sminus}H$. Let $\EE\calf$ 
denote the set of all $\calf$-essential subgroups of $S$.  

\item $P$ is \emph{normal in $\calf$} ($P\nsg\calf$) if each morphism 
$\varphi\in\homf(Q,R)$ in $\calf$ extends to a morphism 
$\widebar{\varphi}\in\homf(PQ,PR)$ such that $\widebar{\varphi}(P)=P$. The 
maximal normal $p$-subgroup of a saturated fusion system $\calf$ is denoted 
$O_p(\calf)$.  

\item $P$ is \emph{strongly closed in $\calf$} if for each $g\in{}P$, 
$g^\calf\subseteq P$. 

\end{itemize}
\end{Defi}

\begin{Prop} \label{AFT-E}
Let $\calf$ be a saturated fusion system over a discrete $p$-toral group 
$S$. 
\begin{enuma} 

\item Each morphism in $\calf$ is a composite of restrictions of elements 
in $\autf(P)$ for $P\le S$ that is fully normalized in $\calf$, 
$\calf$-centric and $\calf$-radical.

\item Each morphism in $\calf$ is a composite of restrictions of elements 
in $\autf(P)$ for $P\in\EE\calf\cup\{S\}$.

\item For each $Q\nsg{}S$, $Q\nsg\calf$ if and only if for each 
$P\in\EE\calf\cup\{S\}$, $Q\le{}P$ and $Q$ is $\autf(P)$-invariant. 
\end{enuma}
\end{Prop}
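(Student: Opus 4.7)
Part (a) is the Alperin--Goldschmidt fusion theorem for saturated fusion systems over discrete $p$-toral groups. The proof proceeds by induction on pairs $(P,Q)$ of $\calf$-conjugate subgroups, using the extension axiom to reduce any isomorphism between subgroups that fail to all be fully normalized, $\calf$-centric, and $\calf$-radical to isomorphisms between strictly larger subgroups. This is classical for finite $S$, and the extension to discrete $p$-toral $S$ appears in \cite{BLO3}; I would cite these directly.

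Part (b) refines the set of subgroups in (a) down to $\EE\calf\cup\{S\}$. Suppose $P$ is $\calf$-centric, $\calf$-radical, and fully normalized in $\calf$, with $P\ne S$ and $P\notin\EE\calf$. I would prove that every $\alpha\in\autf(P)$ is a composite of restrictions of $\calf$-automorphisms of subgroups strictly containing $P$, whence (b) follows by induction on the (finite) index $[S:P]$. Set $T=\outs(P)\in\sylp{\outf(P)}$. Since $P$ is centric, $T\cong N_S(P)/P$, so nontrivial $T_0\le T$ correspond bijectively to subgroups $P<R\le N_S(P)$ via $R\mapsto R/P$. Given $\bar\alpha\in N_{\outf(P)}(T_0)$, one can adjust a lift $\alpha\in\autf(P)$ by an inner automorphism so that $\alpha\,\Aut_R(P)\,\alpha^{-1}=\Aut_R(P)$; then $N_\alpha\supseteq R$, and receptiveness of $P$ (from full centralization) yields an extension $\widebar\alpha\in\autf(R)$. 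Thus every class in
\[ H:=\Gen{N_{\outf(P)}(T_0)\mid 1\ne T_0\le T}\le\outf(P) \]
is represented by some automorphism extending to a strict overgroup of $P$ (inner automorphisms being themselves restrictions of automorphisms of $N_S(P)$). The standard characterization of strongly $p$-embedded subgroups combined with $O_p(\outf(P))=1$ (radicality) then forces $H=\outf(P)$, since $H$ being proper would make it strongly $p$-embedded in $\outf(P)$, contradicting $P\notin\EE\calf$.

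For (c), the implication ($\Leftarrow$) is a direct application of (b): any $\varphi\in\homf(R,R')$ decomposes as a composite of restrictions of $\psi_i\in\autf(P_i)$ with $P_i\in\EE\calf\cup\{S\}$; the hypothesis $Q\le P_i$ with $\psi_i(Q)=Q$ allows extending each restriction from $R_{i-1}\le P_i$ to $QR_{i-1}\le P_i$, producing an extension $\widebar\varphi\colon QR\to QR'$ with $\widebar\varphi(Q)=Q$. For ($\Rightarrow$), assume $Q\nsg\calf$ and fix $P\in\EE\calf\cup\{S\}$. Containment $Q\le P$ holds trivially for $P=S$. For $P\in\EE\calf$, apply normality to each $\alpha\in\autf(P)$ to obtain $\widebar\alpha\in\autf(QP)$ with $\widebar\alpha(Q)=Q$ and $\widebar\alpha|_P=\alpha$. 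A direct computation using $N_{QP}(P)=(Q\cap N_S(P))\cdot P$ shows that for each $x\in N_{QP}(P)$ one has $\alpha c_x|_P \alpha^{-1}=c_{\widebar\alpha(x)}|_P$ with $\widebar\alpha(x)\in N_{QP}(P)$, so $\Aut_{QP}(P)\nsg\autf(P)$ and thus $\Out_{QP}(P)\nsg\outf(P)$ is a normal $p$-subgroup, hence trivial by radicality of $P$. This forces $Q\cap N_S(P)\le P\cdot C_S(P)=P$ (using centricity), so $N_{QP}(P)=P$, and the ``normalizer grows'' property in the discrete $p$-toral group $QP$ forces $QP=P$, i.e., $Q\le P$. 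Invariance of $Q$ under $\autf(P)$ is then immediate, since the extension $\widebar\alpha\in\autf(QP)=\autf(P)$ restricts to $\alpha$ and preserves $Q$.

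The main technical obstacle is the identification $H=\outf(P)$ in part (b): this hinges on the precise equivalence between ``contains a strongly $p$-embedded subgroup'' and ``generated by normalizers of nontrivial $p$-subgroups,'' together with careful bookkeeping to lift $\bar\alpha\in N_{\outf(P)}(T_0)$ to an $\alpha\in\autf(P)$ with $N_\alpha\supseteq R$ so that receptiveness can be applied. Everything else is routine manipulation of the saturation axioms.
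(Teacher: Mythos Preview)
Your argument for (a) and the core of (b) matches the paper's approach (which cites \cite[Theorem 3.6]{BLO3} and \cite[Proposition I.3.3]{AKO} respectively), and your treatment of (c) is actually more explicit than the paper's reference to \cite[Proposition I.4.5]{AKO}.

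There is, however, a genuine gap in your induction for (b) in the infinite case: you claim to induct on ``the (finite) index $[S:P]$'', but for discrete $p$-toral $S$ this index is typically \emph{not} finite, even for $\calf$-centric (indeed $\calf$-essential) $P$. Already in the paper's main examples, the subgroups $P=Z\gen{x}\in\calh$ are $\calf$-centric, yet $[S:P]=|A/Z|=\infty$ whenever $\rk(A)=p-1$. So neither induction on $[S:P]$ nor a naive upward induction on $|P|$ (as a lexicographically ordered pair) terminates. The paper closes this gap by invoking \cite[Corollary 3.5]{BLO3}: there are only finitely many $S$-conjugacy classes of $\calf$-centric $\calf$-radical subgroups. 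Combined with part (a), this makes the recursion well-founded, since each extension step passes from one such class to a strictly larger one and there are only finitely many to traverse. You should insert this finiteness statement in place of the parenthetical ``(finite)''.

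One minor remark on (c): the ``normalizer grows'' step you use (that $P<QP$ forces $N_{QP}(P)>P$) does hold for discrete $p$-toral groups, but it is not entirely obvious and deserves a citation (e.g., \cite[Lemma 1.8]{BLO3}) in the infinite setting.
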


\begin{proof} Point (a) is shown in \cite[Theorem 3.6]{BLO3}.

By \cite[Proposition 2.3]{BLO3}, $\outf(P)=\autf(P)/\Inn(P)$ is always 
finite. For each such $P<S$ that is not $\calf$-essential, $\autf(P)$ is 
generated by automorphisms that can be extended to strictly larger 
subgroups: this is shown in \cite[Proposition I.3.3]{AKO} in the finite 
case, and the same argument applies when $S$ is infinite. Point (b) now 
follows from (a) and induction, and (in the infinite case) since there are 
only finitely many $S$-conjugacy classes of subgroups of $S$ that are 
$\calf$-centric and $\calf$-radical \cite[Corollary 3.5]{BLO3}. 

Point (c) follows easily from (b), just as in the finite case 
\cite[Proposition I.4.5]{AKO}.
\end{proof}

\begin{Defi} \label{d:E<|F}
Let $\calf$ be a saturated fusion system over a discrete $p$-toral group 
$S$. A saturated fusion subsystem $\cale$ over $T\le S$ is \emph{normal} 
in $\calf$ ($\cale\nsg\calf$) if 
\begin{itemize} 
\item $T$ is strongly closed in $\calf$ (in particular, $T\nsg S$);

\item (invariance condition) each $\alpha\in\autf(T)$ is fusion preserving 
in the sense that it extends to an automorphism of $\cale$;  

\item (Frattini condition) for each $P\le T$ and each 
$\varphi\in\homf(P,T)$, there are $\alpha\in\autf(T)$ and 
$\varphi_0\in\Hom_\cale(P,T)$ such that $\varphi=\alpha\circ\varphi_0$; and 

\item (extension condition) each $\alpha\in\Aut_\cale(T)$ extends to some 
$\4\alpha\in\autf(TC_S(T))$ such that $[\4\alpha,C_S(T)]\le Z(T)$.
\end{itemize}
The fusion system $\calf$ is \emph{simple} if it contains no proper 
nontrivial normal subsystems.
\end{Defi}

For further discussion of the definition and properties of normal 
fusion subsystems, we refer to \cite[\S\,I.6]{AKO} or \cite[\S\S\,5.4 \& 
8.1]{Craven} (when $S$ and $T$ are finite) and to \cite[Definition 
2.8]{Gonzalez} (in the general case). Note in particular the 
different definition used in \cite{Craven} and in \cite{Gonzalez}: a 
saturated fusion subsystem $\cale\le\calf$ over a subgroup $T$ that is 
strongly closed in $\calf$ is normal if the extension condition holds, and 
also the \emph{strong invariance condition}: for each $P\le Q\le T$, and 
each $\varphi\in\Hom_\cale(P,Q)$ and $\psi\in\homf(Q,T)$, 
$\psi\circ\varphi\circ(\psi|_P)^{-1}\in \Hom_\cale(\psi(P),T)$. When $S$ is 
finite, this is equivalent to the above definition by \cite[Proposition 
I.6.4]{AKO}, and a similar argument (made more complicated because there 
can be infinitely many subgroups) applies when $S$ and $T$ are $p$-toral. 

Since the Frattini condition will be important in Section \ref{s:infinite}, 
we work here with the above definition. However, none of the examples 
over infinite discrete $p$-toral groups considered here contains a nontrivial 
proper strongly closed subgroup (see Lemma \ref{str.cl.}), so these details 
make no difference as to which of them are simple or not.

\begin{Prop} \label{G&F}
Fix a prime $p$, and let $\calf$ be a saturated fusion system over an 
infinite discrete $p$-toral group $S$. Let $S_0$ be the identity component 
of $S$, and assume that each element of $S$ is $\calf$-conjugate to an 
element of $S_0$.
\begin{enuma}

\item If $\calf$ is realized by a compact Lie group $G$ with identity 
connected component $G_0$, then $G/G_0$ has order prime to $p$. If in 
addition, $\calf$ is simple, then $\calf$ is realized by the connected, 
simple group $G_0/Z(G_0)$, where $Z(G_0)$ is finite of order prime to $p$.

\item If $\calf$ is realized by a $p$-compact group $X$, then $X$ is 
connected. If in addition, $\calf$ is simple, then so is $X$.

\end{enuma}
In either case, if $\calf$ is simple, then the action of the Weyl group 
$\autf(S_0)$ on the $\Q_p$-vector space 
$\Q\otimes_{\Z}\Hom(S_0,\Q_p/\Z_p)$ is irreducible and generated 
by pseudoreflections.
\end{Prop}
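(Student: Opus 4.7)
The overall strategy is two-fold: first, use the hypothesis that every element of $S$ is $\calf$-fused into $S_0$ to force the realizing group to be connected, and then use simplicity of $\calf$ to rule out both a nontrivial center (in the compact Lie case) and a nontrivial product decomposition.

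For (a), suppose $p$ divides $|G/G_0|$. Since $S$ is a maximal discrete $p$-toral subgroup of $G$, its image in the finite group $G/G_0$ is a nontrivial Sylow $p$-subgroup. Any $s\in S$ with nontrivial image has every $G$-conjugate (hence every $\calf$-conjugate) in the same nontrivial $G_0$-coset, contradicting the hypothesis. So $S\le G_0$, and by a Frattini-style verification $\calf_S(G_0)$ is a normal subsystem of $\calf=\calf_S(G)$ over all of $S$, forcing equality by simplicity. Now $Z(G_0)\cap S$ is fixed pointwise by every $G_0$-conjugation, hence strongly closed and central in $\calf$; simplicity gives $Z(G_0)\cap S=1$. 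Any positive-dimensional subtorus of $Z(G_0)$ would meet $S_0$ nontrivially, so $Z(G_0)$ is finite of order prime to $p$, and $\calf\cong\calf_S(G_0/Z(G_0))$. If $G_0/Z(G_0)$ had a nontrivial decomposition as a commuting product $H_1\cdot H_2$, then $S$ would split accordingly (each factor contains $p$-torsion from its maximal torus), yielding $\calf$ as a product of two nontrivial fusion systems---contradicting simplicity. Hence $G_0/Z(G_0)$ is simple.

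Part (b) follows the same two-step pattern with $\pi_0(X)$ in place of $G/G_0$. Non-connectedness of $X$ would yield $s\in S$ outside the identity component $X_0$ with no $\calf$-conjugate in $S_0\subseteq X_0$; so $X$ is connected. Simplicity of $\calf$ then rules out a nontrivial factorization of $X$ via the product structure theorem for connected $p$-compact groups. For the final assertion, in both cases $\autf(S_0)$ is induced by conjugation in the normalizer of $S_0$ in the realizing group and hence coincides with the Weyl group $W$; its action on $\Q\otimes_\Z\Hom(S_0,\Q_p/\Z_p)$---the dual of the rationalized maximal torus---is generated by pseudoreflections and is irreducible precisely because the realizing group is simple (classical for compact Lie groups, and Clark--Ewing together with Dwyer--Wilkerson for $p$-compact groups).

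The main obstacle I expect is the descent step---checking that $\calf_S(G_0)\nsg\calf$ (and the analogue for $X_0\le X$) in the sense of Definition \ref{d:E<|F}, and that a compact Lie (or $p$-compact) product decomposition of the realizing group really translates into a direct product decomposition of $\calf$ as a fusion system. Both are standard but require careful verification of the Frattini and extension conditions; the rest of the argument reduces to citations from the structure theory of connected compact Lie groups and connected $p$-compact groups.
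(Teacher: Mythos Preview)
Your proposal is correct and follows essentially the same route as the paper: use the hypothesis to force $S\le G_0$ (equivalently, connectedness), verify $\calf_S(G_0)\nsg\calf$ via the Frattini argument, kill the center using $Z(\calf)=1$, and then rule out a nontrivial product decomposition. The one point where your citations diverge from the paper is the irreducibility claim for $p$-compact groups: Clark--Ewing together with Dwyer--Wilkerson establish that the Weyl group acts as a pseudoreflection group, but the implication ``reducible Weyl representation $\Rightarrow$ $X$ is a nontrivial central product'' requires the classification of connected $p$-compact groups (Andersen--Grodal--M{\o}ller--Viruel for $p$ odd, Andersen--Grodal or M{\o}ller for $p=2$), which is what the paper invokes.
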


\begin{proof} If $\calf=\calf_S(G)$ where $G$ is a compact Lie group with 
identity connected component $G_0$ and maximal discrete $p$-toral subgroup 
$S$, then $S\cap G_0$ is strongly closed in $\calf$ since $G_0\nsg G$, 
$S_0\le S\cap G_0$, and so $S\le G_0$. Hence $G/G_0$ has order prime to 
$p$. Also, $\calf_S(G_0)\nsg\calf_S(G)$: the invariance and extension 
conditions are easily checked, and the Frattini condition holds since 
$G=G_0N_G(S)$ by the Frattini argument. 

If in addition, $\calf$ is simple, then $\calf_S(G_0)=\calf_S(G)=\calf$, 
and $Z(G_0)$ is finite of order prime to $p$ since $Z(\calf)=1$. Hence 
$\calf$ is also realized by $G_0/Z(G_0)$, which is simple. 

If $\calf\cong\calf_S(X)$ for some $p$-compact group $X$ with 
$S\in\sylp{X}$, then $X$ is connected by \cite[Proposition 4.9(a)]{GLR}. 
Then $X$ is a central product of connected, simple $p$-compact groups, and 
hence is simple if $\calf$ is simple. 

Whenever $\calf$ is realized by a connected $p$-compact group $X$ (possibly 
a compact connected Lie group), then by \cite[Theorem 9.7(ii)]{DW}, the 
action of the Weyl group $\autf(S_0)$ on $\Q\otimes_{\Z}H^2(BS_0;\Z_p)$ is 
generated by pseudoreflections, where 
	\beq H^2(BS_0;\Z_p) \cong H^2(S_0;\Z_p) \cong H^1(S_0;\Q_p/\Z_p) 
	\cong \Hom(S_0,\Q_p/\Z_p). \eeq
If this is not irreducible as a group generated by pseudoreflections, then 
by the classification of connected $p$-compact groups in \cite[Theorem 
1.2]{AGMV} (for $p$ odd) and in \cite[Theorem 1.1]{AG} or \cite[Corollary 
1.2]{Moller} (for $p=2$), $X$ must be a nontrivial central product of 
simple factors, and hence $\calf$ is not simple. 
\end{proof}

We also recall the definition of a \emph{reduced} fusion system, but only 
for fusion systems over finite $p$-groups. Recall \cite[\S\,I.7]{AKO} 
that in this setting, $O^p(\calf)$ and $O^{p'}(\calf)$ are the smallest 
(normal) fusion systems in $\calf$ of $p$-power index and of index prime to 
$p$, respectively. 

\begin{Defi} \label{d:red-simple}
A saturated fusion system $\calf$ over a finite $p$-group $S$ is 
\emph{reduced} if $O_p(\calf)=1$, and $O^p(\calf)=\calf=O^{p'}(\calf)$.  
\end{Defi}

For each saturated fusion system $\calf$ over a finite $p$-group $S$, 
$\calf_{O_p(\calf)}(O_p(\calf))$, $O^p(\calf)$, and $O^{p'}(\calf)$ are all 
normal fusion subsystems.  Hence $\calf$ is reduced if it is simple. 
Conversely, if $\cale\nsg\calf$ is any normal subsystem over the subgroup 
$T\nsg{}S$, then by definition of normality, $T$ is strongly closed in 
$\calf$. Since each normal fusion subsystem over $S$ itself has index prime 
to $p$, a reduced fusion system is simple if it has no proper nontrivial 
strongly closed subgroups.

When $\calf$ is a saturated fusion system over an infinite discrete 
$p$-toral group $S$, there are well defined normal subsystems $O^p(\calf)$ 
(see \cite[Appendix B]{Gonzalez}), and $O^{p'}(\calf)$ (see 
\cite[A.14--A.16]{GLR}), with the same properties as in the finite 
case. So we could define reduced fusion systems in this 
context just as in the finite case. However, to simplify the discussion, 
and because we don't know whether or not infinite reduced fusion systems 
have the same properties that motivated the definition in the finite case 
(see \cite[Theorems A \& B]{AOV1}), we restrict attention to simple fusion 
systems in the infinite setting. 

\section{Reduced or simple fusion systems over nonabelian discrete 
$p$-toral groups with index $p$ abelian subgroup}
\label{s:fin+inf}

In this section, $p$ is an arbitrary prime.  We want to study simple 
fusion systems over nonabelian discrete $p$-toral groups (possibly finite) 
which contain an abelian subgroup of index $p$. Most of the results here 
were shown in \cite{indp2}, but only in the case where $|A|<\infty$ and $p$ 
is odd.

We first fix some notation which will be used throughout the rest of the 
paper. As usual, for a group $S$, we define $Z_m(S)$ for all $m\ge1$ by 
setting $Z_1(S)=Z(S)$, and setting $Z_m(S)/Z_{m-1}(S)=Z(S/Z_{m-1}(S))$ 
for $m\ge2$.

\begin{Not} \label{n:not1}
Fix a nonabelian discrete $p$-toral group $S$ with a unique abelian subgroup 
$A$ of index $p$, and a saturated fusion system $\calf$ over $S$. Define 
	\[ S'=[S,S]=[S,A]\,,\quad Z=Z(S)=C_A(S)\,,\quad Z_0=Z\cap S'\,,\quad 
	Z_2=Z_2(S)\,. \]
Thus $Z_0\le Z\le Z_2$ and $Z_0\le S'\le A$.  Also, set
	\begin{align*} 
	\calh &= \bigl\{ Z\gen{x} \,\big|\, x\in S{\sminus}A \bigr\} &&&
	G &= \autf(A) \\
	\calb &= \bigl\{ Z_2\gen{x} \,\big|\, x\in S{\sminus}A \bigr\} 
	& \qquad& &
	\UUU &= \Aut_S(A)\in\sylp{G} \,. 
	\end{align*}
\end{Not}

Recall that by \cite[Theorem 2.1]{indp1}, if $p$ is odd, and $S$ is finite 
and nonabelian and has more than one abelian subgroup of index $p$, then 
either $S$ is extraspecial of order $p^3$ (and the reduced fusion systems 
over $S$ were described in \cite{RV}), or there are no reduced fusion 
systems over $S$. So in the finite case, the restriction about the 
uniqueness of $A$ is just a convenient way to remove certain cases that 
have already been handled. We will show later (Corollary \ref{>1abel}) that 
in the infinite case (also when $p=2$), $A$ is unique whenever 
$O_p(\calf)=1$.

\begin{Lem} \label{B&H}
Assume Notation \ref{n:not1}. Then 
$\EE\calf\subseteq\{A\}\cup\calh\cup\calb$, and $|N_S(P)/P|=p$ for each 
$P\in \EE\calf$. If $\EE\calf\not\subseteq\{A\}$, then $Z_2\le 
A$ and $|Z_2/Z|=p$.
\end{Lem}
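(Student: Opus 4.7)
The plan is to take $P\in\EE\calf$ and split on whether $P\le A$. If $P\le A$, then $A$ (abelian) centralizes $P$, so $\calf$-centricity forces $A\le C_S(P)\le P$, whence $P=A$, and $|N_S(A)/A|=|S/A|=p$. Otherwise $PA=S$, and setting $B=P\cap A$ and picking $x\in P\sminus A$ gives $P=B\gen{x}$ with $|P/B|=p$; here $B$ is abelian and normal in $S$ (as $B\nsg P$ and $A$ centralizes $B$). Using the commutator map $\phi\colon A\to S'$, $\phi(a)=[x,a]$, which is surjective with kernel $Z$, $\calf$-centricity yields $Z\le B$. Direct checks give $N_S(P)=P\cdot N_A(P)$ and $N_A(P)=\phi^{-1}(B\cap S')$, so
\[
|N_S(P)/P|\;=\;|N_A(P)/B|\;=\;|B\cap S'|/|B/Z|.
\]

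The core is showing that $B=Z$ (giving $P\in\calh$) or $B=Z_2\cap A$ with $Z_2\le A$ and $|Z_2/Z|=p$ (giving $P\in\calb$). Assume $B>Z$. If $P$ admits a second abelian subgroup $B'$ of index $p$, then $B\cap B'$ is central in $P=BB'$, so $B\cap B'\le Z(P)=Z$, and an order count gives $|B/Z|\le p$ directly. Otherwise $B$ is characteristic in $P$, so $\autf(P)$ preserves $B$; I would then consider the normal $p$-subgroup $L\nsg\autf(P)$ of ``shifts'' $\alpha_b\colon x\mapsto xb$ that lie in $\autf(P)$. By $\calf$-radicality $L\le\Inn(P)$, and inner shifts correspond to the subgroup $[x,B]\subseteq B$ of size $|B/Z|$, so $|L|=|B/Z|$. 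On the other hand, $L\cap\auts(P)$ contains the shifts induced by $c_s$ for $s\in N_A(P)$, and since $s\mapsto[x^{-1},s]$ identifies $N_A(P)/Z$ with $B\cap S'$, $|L\cap\auts(P)|=|B\cap S'|$. Essentialness of $P$ gives $|\outs(P)|=|B\cap S'|/|B/Z|\ge p$, so $|B\cap S'|\ge p\cdot|B/Z|>|L|$, contradicting $L\cap\auts(P)\le L$. Hence $B$ is not characteristic, the first subcase applies, and $|B/Z|=p$. Writing $B=Z\gen{b_0}$, the $S$-invariance of $B$ puts $[x,b_0]\in B\cap S'$; a case analysis (exploiting the second abelian $B'$ and the commutator relations) forces $[x,b_0]\in Z$, so $b_0\in Z_2$, and by order $B=Z_2\cap A$ with $|Z_2\cap A/Z|=p$. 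Finally, $Z_2\le A$ holds because $Z_2\not\le A$ would force $S'\le Z$ and $Z_2=S$, giving $|Z_2/Z|=p\cdot|A/Z|\ge p^2$ unless $|A/Z|=1$, which is impossible for nonabelian $S$.

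The equality $|N_S(P)/P|=p$ then falls out: automatic for $P=A$; for $P\in\calh$ the formula gives $|N_S(P)/P|=|Z_0|$, and a parallel strongly-$p$-embedded argument applied to $\autf(P)$ acting on the abelian $P=Z\gen{x}$ pins $|Z_0|=p$; for $P\in\calb$ the formula with $|Z_2/Z|=p$ combined with the compatible computation $|Z_2\cap S'|=p^2$ (the case $|Z_2\cap S'|=|Z_0|$ being excluded because it would give $N_S(P)=P$) yields the result. The main obstacle is the $L$-argument in the characteristic-$B$ subcase: carefully identifying the normal $p$-subgroup of shifts, computing $|L|$ via $\calf$-radicality and $|L\cap\auts(P)|$ via the commutator surjection $s\mapsto[x^{-1},s]$, and then extracting the precise size comparison that produces the contradiction.
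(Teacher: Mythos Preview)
Your overall skeleton matches the paper's: split on $P\le A$, then on whether $P$ is abelian, and in the nonabelian case rule out $B=P\cap A$ being characteristic by a normal-$p$-subgroup-of-$\autf(P)$ argument. Your $L$-argument is essentially the paper's use of Lemma~\ref{mod-Fr}: for $g\in N_A(P)\sminus B$, $c_g$ is trivial on $B$ and on $P/B$, so if $B$ were characteristic then $c_g\in O_p(\autf(P))\le\Inn(P)$, forcing $g\in P$, a contradiction. After that you correctly get $|B/Z|=p$ and $B\le Z_2$ (since $|P/Z|=p^2$ makes $P/Z$ abelian, so $[x,b_0]\in Z$).

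The genuine gap is the passage from $B\le Z_2$ with $|B/Z|=p$ to $B=Z_2$. Your ``by order $B=Z_2\cap A$'' assumes $|Z_2\cap A/Z|\le p$, which you have not shown; and your separate argument for $Z_2\le A$ deduces $|Z_2/Z|\ge p^2$ from $Z_2\not\le A$ but then treats that as a contradiction without having established $|Z_2/Z|=p$ independently --- this is circular. The paper closes this gap by one more application of the same radicality trick, but now using the \emph{characteristic} chain $1\nsg Z(P)\nsg P$ (note $Z(P)=Z$): if $y\in Z_2\sminus P$, then $[y,P]\le Z=Z(P)$, so $c_y$ is trivial on both $Z(P)$ and $P/Z(P)$, hence $c_y\in O_p(\Aut(P))$ and $[c_y]\in O_p(\outf(P))=1$, forcing $y\in P\cdot C_S(P)=P$, a contradiction. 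This simultaneously yields $Z_2=B\le A$ and $|Z_2/Z|=p$, and does not require knowing $|Z_0|=p$ or that $S'\nleq Z$.

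Your computation of $|N_S(P)/P|$ in the $\calb$ case via $|Z_2\cap S'|=p^2$ inherits the same problem: you only exclude $|Z_2\cap S'|=p$ (else $N_S(P)=P$), not $|Z_2\cap S'|\ge p^3$. The paper instead applies Lemma~\ref{|S|=p} to the action of $\outf(P)$ on $P/Z_0$, observing $[P,P]\le Z_0$ and $C_{P/Z_0}(N_S(P))=Z_2/Z_0$, which gives $|\Out_S(P)|=p$ directly. For the $\calh$ case your ``parallel strongly $p$-embedded argument'' is exactly Lemma~\ref{|S|=p} applied to $\autf(P)$ acting on the abelian group $P$ with $C_P(\Aut_S(P))=Z$; citing it would make the step precise.
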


\begin{proof} Fix some $P\in\EE\calf\sminus\{A\}$. Then $P\nleq A$ since 
$P$ is $\calf$-centric.  Set $P_0=P\cap{}A$, and fix some element 
$x\in{}P{\sminus}P_0$. Since $\outf(P)$ is finite (Definition 
\ref{d:saturated}) and contains a strongly $p$-embedded subgroup, we have 
that $O_p(\outf(P))=1$ (cf. \cite[Proposition A.7(c)]{AKO}).

We must show that $P\in\calh\cup\calb$, $|N_S(P)/P|=p$, $Z_2\le A$, and 
$|Z_2/Z|=p$. (Clearly, $|N_S(P)/P|=p$ if $P=A$.) 

\noindent\textbf{Case 1: } Assume $P$ is nonabelian.  Since $Z\le P$ ($P$ 
is $\calf$-centric), $Z(P)=C_{P_0}(x)=Z$.  For each 
$g\in{}N_A(P){\sminus}P$, $c_g$ is the identity on $P_0$ and on $P/P_0$.  
If $P_0$ is characteristic in $P$, then $c_g\in{}O_p(\autf(P))$ by Lemma 
\ref{mod-Fr}, which is impossible since $O_p(\outf(P))=1$.  Thus $P_0$ is 
not characteristic in $P$, and hence is not the unique abelian subgroup of 
index $p$ in $P$. So by Lemma \ref{1abel}, $|P_0/Z|=p$ and 
$|[P,P]|=|[x,P_0]|=p$. Also, $P/Z$ is abelian since $|P/Z|=p^2$, so 
$[x,P_0]\le Z$, and hence $P_0\le Z_2$. Note that $P_0>Z$, since $P$ is 
nonabelian.

If $P_0<Z_2$, then for $y\in Z_2\sminus P$, $[y,P]\le Z=Z(P)$, so $y\in 
N_S(P)\sminus P$ and $c_y\in O_p(\Aut(P))$, contradicting the assumption 
that $P$ is $\calf$-essential. Thus $P_0=Z_2$, so $P\in\calb$, $Z_2\le A$, 
and $|Z_2/Z|=p$. Finally, $|N_S(P)/P|=p$ by Lemma \ref{|S|=p}, applied with 
$\outf(P)$ in the role of $G$, $\Out_S(P)\cong N_S(P)/P$ in the role of 
$S$, and $P/Z_0$ (if $P\in\calb$) in the role of $A$. Note that 
$[P,P]=[x,Z_2]\le Z_0$ and $C_{P/Z_0}(N_S(P))=Z_2/Z_0$.

\noindent\textbf{Case 2: } If $P\in\EE\calf$ is abelian, 
then $P_0=Z$: it contains $Z$ since $P$ is centric, and cannot be larger 
since then $P$ would be nonabelian.  Hence $P=Z\gen{x}\in\calh$.  Also, 
$\Aut_A(P)=\Aut_S(P)\in\sylp{\autf(P)}$ centralizes $P_0$.  The conditions 
of Lemma \ref{|S|=p} thus hold (with $P$ and $\autf(P)$ in the roles of $A$ 
and $G$), so $|N_S(P)/P|=|\Aut_S(P)|=p$.  Since $[S:P]=|A/Z|>p$ by Lemma 
\ref{1abel} and since $A$ is the 
unique abelian subgroup of index $p$, this implies that $S/Z$ is 
nonabelian, so $[x,A]\nleq Z$, and $Z_2\le A$.  

For each $g\in{}A$, $g\in{}N_S(P)$ if and only if $[g,x]\in{}P_0=Z$, if and 
only if $gZ\in{}C_{A/Z}(x)=Z(S/Z)=Z_2/Z$.  Thus $N_A(P)=Z_2$, 
$N_S(P)=Z_2\gen{x}=Z_2P$, and $|Z_2/Z|=|N_S(P)/P|=p$.
\end{proof}

\begin{Lem} \label{exclusion} 
Let $S$, $\calf$, etc. be as in Notation \ref{n:not1}. If, for some $x\in 
S\sminus A$, $Z_2\gen{x}\in\EE\calf$, then $Z\gen{x}$ is not 
$\calf$-centric, and hence $Z\gen{x}\notin\EE\calf$.
\end{Lem}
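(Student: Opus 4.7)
Set $P = Z_2\langle x\rangle \in \EE\calf$ and $Q = Z\langle x\rangle$. Note $Q \le P$ since $Z\le Z_2$, and $Q$ is abelian with $|P/Q|=|Z_2/Z|=p$: this uses that $x^p\in C_A(x)=Z$, so $Q\cap A=Z$ and hence $Z_2\cap Q=Z$. By Case~1 of the proof of Lemma~\ref{B&H}, $P$ is nonabelian with $Z(P)=Z$ and $|P/Z|=p^2$, so every maximal subgroup of $P$ containing $Z$ is abelian; there are exactly $p+1$ such, corresponding to the lines of $P/Z\cong\F_p^2$, and among them are $Z_2=P\cap A$ and $Q$. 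My plan is to show $\autf(P)$ acts transitively on these $p+1$ subgroups, so that $Z_2$ and $Q$ become $\calf$-conjugate. Since $Z_2\le A$ with $|A/Z_2|=|A/Z|/p>1$ (by Lemma~\ref{1abel}, as $A$ is the unique abelian subgroup of index $p$), we have $C_S(Z_2)\ge A>Z_2=Z(Z_2)$, so $Z_2$ is a $\calf$-conjugate of $Q$ that is not self-centralizing; this denies $\calf$-centricity to $Q$ and so also $\calf$-essentiality.

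For the transitivity, I would first embed $\outf(P)$ into $GL_2(\F_p)\cong\Aut(P/Z)$. The kernel of $\outf(P)\to \Aut(P/Z)$ is a normal $p$-subgroup (any $\alpha\in\autf(P)$ acting trivially on $P/Z$ determines a homomorphism $p\mapsto \alpha(p)p^{-1}$ into $Z$, and $\Hom(P,Z)$ is a $p$-group), so it is trivial by strong $p$-embeddedness (\cite[Proposition~A.7(c)]{AKO}) which gives $O_p(\outf(P))=1$. Next, Lemma~\ref{B&H} gives $|N_S(P)/P|=p$, and a coset count shows $|N_A(P)/Z_2|=p$ as well, so $\Out_S(P)$ is a Sylow $p$-subgroup of $\outf(P)$ of order $p$, generated by $c_g$ for any $g\in N_A(P)\sminus Z_2$. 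Since $g\in A$ we have $c_g$ trivial on $Z_2$, and $c_g(x)=[g,x]x$ with $[g,x]\in S'\cap P\le Z_2$; thus the image of $c_g$ in $GL_2(\F_p)$ is a nontrivial unipotent element fixing the line $Z_2/Z$.

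Now, if $\outf(P)$ also fixed $Z_2/Z$ globally, it would sit inside the Borel subgroup of $GL_2(\F_p)$ stabilizing that line, whose unique Sylow $p$-subgroup is normal; this would give $\Out_S(P)\le O_p(\outf(P))=1$, contradicting $|\Out_S(P)|=p$. So $n_p(\outf(P))\ne 1$ and hence $n_p(\outf(P))\ge p+1$ by Sylow; but every Sylow $p$-subgroup of $\outf(P)$ has order $p$ and so equals the Sylow $p$-subgroup of $GL_2(\F_p)$ containing it, and $GL_2(\F_p)$ has exactly $p+1$ such. Therefore $n_p(\outf(P))=p+1$, and $\outf(P)$ contains every Sylow $p$-subgroup of $GL_2(\F_p)$. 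Since conjugate Sylows fix conjugate lines and these Sylows collectively fix all $p+1$ lines, $\outf(P)$ is transitive on the lines of $P/Z$; in particular $Z_2$ and $Q$ are $\autf(P)$-conjugate, as required.

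The delicate step is the middle paragraph: setting up the embedding $\outf(P)\hookrightarrow GL_2(\F_p)$ and verifying that the ``distinguished'' Sylow $\Out_S(P)$ is precisely the one stabilizing the line $Z_2/Z$. Once these are in place, the Sylow-counting argument for transitivity and the centralizer comparison for $Z_2$ are both routine.
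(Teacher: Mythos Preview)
Your overall strategy—showing that $\autf(P)$ acts transitively on the $p+1$ index-$p$ subgroups of $P$ containing $Z$, so that $Q=Z\gen{x}$ is $\calf$-conjugate to $Z_2$—is exactly the paper's, and your first paragraph is fine. The gap is in your embedding $\outf(P)\hookrightarrow GL_2(\F_p)$. The justification ``$\Hom(P,Z)$ is a $p$-group'' fails when $Z$ contains a nontrivial discrete $p$-torus (for instance $\Hom(\Z/p^\infty,\Z/p^\infty)\cong\Z_p$ is torsion-free), and more to the point, an automorphism of $P$ trivial on $P/Z$ can act on $Z$ with order prime to $p$: if $x^p=1$ and $y\in Z_2\sminus Z$ satisfies $y^p=1$, then any $\beta\in\Aut(Z)$ fixing $[x,y]$ extends to $\alpha\in\Aut(P)$ with $\alpha(x)=x$, $\alpha(y)=y$, $\alpha|_Z=\beta$. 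So the map $\outf(P)\to\Aut(P/Z)$ need not be injective, and your Sylow-counting step (which needs distinct Sylow $p$-subgroups of $\outf(P)$ to land in distinct Sylows of $GL_2(\F_p)$) does not go through in the infinite case.

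The paper's route is shorter and sidesteps this entirely. It argues directly that $Z_2$ is not $\autf(P)$-invariant: if it were, then for $g\in N_A(P)\sminus P$ the class $[c_g]\in\outf(P)$ is trivial on both $Z_2$ and $P/Z_2$, hence lies in $O_p(\outf(P))=1$ by Lemma~\ref{mod-Fr}, contradicting $|N_S(P)/P|=p$. Since $\Aut_S(P)$ fixes $Z_2$ and (as you correctly computed) permutes the other $p$ members of $\calp$ transitively, the $\autf(P)$-orbit of $Z_2$ is all of $\calp$. No embedding into $GL_2(\F_p)$ and no Sylow counting are needed. Your argument is valid when $S$ is finite (where $\Hom(P,Z)$ genuinely is a finite $p$-group), but to cover the discrete $p$-toral case you should replace the embedding step by this direct appeal to Lemma~\ref{mod-Fr}—after which the orbit argument makes the Sylow detour unnecessary.
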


\begin{proof} Assume $x\in S\sminus A$ and $Z_2\gen{x}\in\EE\calf$, and set 
$P=Z_2\gen{x}\in\calb$. In particular, $Z_2<A$ since $Z_2\gen{x}<S$. 
Also, $|Z_2/Z|=p$ by Lemma \ref{B&H}, so $|P/Z|=p^2$, and $Z_2$ is not 
normalized by $\autf(P)$ since $P$ is essential. 

Let $\calp$ be the set of all subgroups of index $p$ in $P$ which contain 
$Z$. Then $\calp\supsetneqq\{Z_2\}$, so $P/Z\cong C_p^2$ (i.e., is not 
cyclic), and $\Aut_S(P)$ permutes transitively the $p$ members of 
$\calp\sminus\{Z_2\}$. So $\autf(P)$ must act transitively on $\calp$, 
hence $Z\gen{x}$ is $\calf$-conjugate to $Z_2$, and is not $\calf$-centric 
(recall $Z_2<A$). 
So $Z\gen{x}\notin\EE\calf$ in this case. 
\end{proof}

\begin{Lem} \label{|Z0|=p}
Assume Notation \ref{n:not1}, and also $A\nnsg\calf$ ($\iff$ 
$\EE\calf\not\subseteq\{A\}$). Then $|Z_0|=p$, and $|Z_i(S)/Z_{i-1}(S)|=p$ 
for all $i>1$ such that $Z_i(S)<S$. If $|A|<\infty$, then $|A/ZS'|=p$.
\end{Lem}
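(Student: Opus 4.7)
The plan is to exploit the commutator homomorphism $\varphi\colon A\to A$ defined by $\varphi(a)=[x,a]$ for a fixed $x\in S\sminus A$. Since $A$ is abelian and normal in $S$, $\varphi$ is a well-defined group homomorphism with $\ker(\varphi)=C_A(x)=C_A(S)=Z$ and $\im(\varphi)=[x,A]=[S,A]=S'$, so it induces an isomorphism $A/Z\iso S'$. Both the upper and the lower central series of $S$, intersected with $A$, will then appear as kernel and image filtrations of this single endomorphism.

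First I would show that $Z_i(S)<S$ forces $Z_i(S)\le A$. Indeed, if some $Z_i(S)$ contains an element $xa$ with $a\in A$, then $S'=[x,A]=[xa,A]\le Z_{i-1}(S)$ because $a$ commutes with $A$; but $S'\le Z_{i-1}(S)$ makes $S/Z_{i-1}(S)$ abelian and forces $Z_i(S)=S$, contrary to $Z_i(S)<S$.

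For $a\in A$, $a\in Z_i(S)$ iff $[x,a]\in Z_{i-1}(S)\cap A$, so induction yields $Z_i(S)\cap A=\ker(\varphi^i)$ whenever $Z_{i-1}(S)\le A$. The restriction of $\varphi^{i-1}$ to $\ker(\varphi^i)$ is a homomorphism with kernel $\ker(\varphi^{i-1})$ and image $Z\cap\varphi^{i-1}(A)$, giving
\[ |Z_i(S)/Z_{i-1}(S)|=|\ker(\varphi^i)/\ker(\varphi^{i-1})|=|Z\cap\varphi^{i-1}(A)|. \]
For $i=2$ the right-hand side is $|Z\cap S'|=|Z_0|$; comparing with $|Z_2/Z|=p$ from Lemma \ref{B&H} gives $|Z_0|=p$. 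For $i\ge 3$ with $Z_i(S)<S$, the inclusion $\varphi^{i-1}(A)\le\varphi(A)=S'$ combined with $|Z\cap S'|=p$ bounds $|Z_i(S)/Z_{i-1}(S)|$ above by $p$; since $Z_i(S)/Z_{i-1}(S)$ is a nontrivial subgroup of the $p$-group $S/Z_{i-1}(S)$, we also have $|Z_i(S)/Z_{i-1}(S)|\ge p$, hence equality.

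Finally, when $|A|<\infty$, the isomorphism $\varphi\colon A/Z\iso S'$ gives $|A|=|Z|\cdot|S'|$, and combined with $|Z\cap S'|=p$ this yields $|ZS'|=|Z|\cdot|S'|/p=|A|/p$, so $|A/ZS'|=p$. The main obstacle I expect is the first step, namely verifying that the upper central series cannot leave $A$ while still being proper in $S$; once that is in place, the remainder is a clean translation of the problem into the kernel/image structure of the single endomorphism $\varphi$.
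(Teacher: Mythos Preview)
Your proof is correct and follows essentially the same approach as the paper: both use the commutator endomorphism $\varphi(a)=[x,a]$ of $A$ (with $\ker\varphi=Z$, $\im\varphi=S'$), feed in $|Z_2/Z|=p$ from Lemma~\ref{B&H} to get $|Z_0|=p$, and then bound the higher central quotients by $|Z_0|$. The only cosmetic difference is that the paper steps down one level at a time via the injection $Z_i/Z_{i-1}\hookrightarrow Z_{i-1}/Z_{i-2}$ induced by $\varphi$, whereas you jump directly to $Z_i/Z_{i-1}\cong Z\cap\varphi^{i-1}(A)\le Z_0$ via $\varphi^{i-1}$.
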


\begin{proof} Fix $x\in{}S{\sminus}A$. Let $\psi\in\End(A)$ be the 
homomorphism $\psi(g)=[g,x]$. Thus $\Ker(\psi)=Z$ and $\Im(\psi)=S'$.

By Lemma \ref{B&H} and since $\EE\calf\not\subseteq\{A\}$, 
$(\calh\cup\calb)\cap\EE\calf\ne\emptyset$, $Z_2\le A$, and $|Z_2/Z|=p$. 
Since $Z_2/Z=C_{A/Z}(x)$, $Z_2=\psi^{-1}(Z)$, and so $\psi$ sends $Z_2$ 
onto $Z_0=Z\cap S'$ with kernel $Z$. Thus $|Z_0|=|Z_2/Z|=p$.

Set $Z_i=Z_i(S)$ for each $i\ge0$, and let $k>2$ be the smallest index such 
that $Z_k=S$. Thus $S/Z_{k-2}$ is nonabelian, so $Z_{k-2}\le A$, and 
$Z_{k-1}/Z_{k-2}=Z(S/Z_{k-2})\le A/Z_{k-2}$. Hence $Z_i\le A$, and 
$Z_i=\psi^{-1}(Z_{i-1})$, for all $i<k$. In particular, $\psi$ induces a 
monomorphism from $Z_i/Z_{i-1}$ into $Z_{i-1}/Z_{i-2}$ for each $3\le i<k$, 
so $|Z_i/Z_{i-1}|\le p$, with equality since $Z_i(S)>Z_{i-1}(S)$ whenever 
$Z_{i-1}(S)<S$.

If $|A|<\infty$, then $|ZS'|=|Z|\cdot|S'|\big/|Z_0|=|A|/|Z_0|$, and hence 
$ZS'$ has index $p$ in $A$.
\end{proof}

\begin{Lem} \label{l2:s/a}
Let $A\nsg{}S$, $\calf$, $\calh$, $\calb$, etc., be as in Notation 
\ref{n:not1}. Assume $P\in\EE\calf$ where $P\in\calh\cup\calb$, and set 
$X=1$ if $P\in\calh$ and $X=Z_0$ if $P\in\calb$. Define $P_1,P_2\le P$ by 
setting 
	\[ P_1/X = C_{P/X}(O^{p'}(\autf(P))) \qquad\textup{and}\qquad
	P_2 = [O^{p'}(\autf(P)),P]. \]
Then $O^{p'}(\outf(P))\cong\SL_2(p)$, and the following hold.
\begin{enuma} 

\item If $P\in\calh$, then $P_1<Z$, $Z=P_1\times Z_0$, $Z_0<P_2\cong C_p^2$, 
and $P=P_1\times P_2$. If $p$ is odd, then $P_1$ is the unique 
$\autf(Z)$-invariant subgroup of $Z$ such that $Z=P_1\times Z_0$. 

\item If $P\in\calb$, then $P_1=Z$, $P_2$ is extraspecial of order $p^3$, 
$P_2\cong Q_8$ if $p=2$ while $P_2$ has exponent $p$ if $p$ is odd, and 
$P_1\cap P_2 = Z(P_2) = Z_0 = [P,P]$. Thus $P=P_1\times_{Z_0}P_2$. 

\end{enuma}
\end{Lem}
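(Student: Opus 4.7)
The plan is to identify $O^{p'}(\outf(P))$ as $\SL_2(p)$ using strong $p$-embedding together with a natural-module action on $P/X$, and then to deduce the structural decompositions by representation theory. By Lemma~\ref{B&H}, $|\Out_S(P)| = p$, so $G := \outf(P)$ has cyclic Sylow $p$-subgroup of order $p$, and essentiality provides a strongly $p$-embedded subgroup. The Sylow generator $t \in \Out_S(P)$ is induced by conjugation by an element $u \in Z_2 \sminus Z$ (as in the computation $N_S(P) = Z_2\gen{x}$ from Lemma~\ref{B&H}), and it acts on $P/X$ as a transvection with $1$-dimensional image $Z_0$: in case (a) this follows from $[u,P] \le Z \cap S' = Z_0$ (and $X = 1$); in case (b) the same computation works after reducing modulo $X = Z_0 = [P,P]$. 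The presence of this natural-module picture, combined with strong $p$-embedding and Sylow of order $p$, forces $O^{p'}(G) \cong \SL_2(p)$---a rank-$1$ classification which I would invoke via Lemma~\ref{|S|=p} or a McLaughlin-style theorem on groups generated by transvections.

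For case (a), $P$ is abelian and $X = 1$. The subgroup $P_2 = [O^{p'}(G),P]$ contains $Z_0$ and, as the $\SL_2(p)$-submodule generated by the transvection image, equals the natural $2$-dimensional $\F_p$-module; hence $P_2 \cong C_p^2$ with $P_2 \cap Z = Z_0$. Since $Z \le Z(P)$ and the $\SL_2(p)$-action on $Z$ sees $Z_0$ as the transvection image inside a natural submodule while centralizing the complement, we obtain $Z = P_1 \times Z_0$ with $P_1 = C_Z(O^{p'}(G))$, and a rank count then yields $P = P_1 \times P_2$. For $p$ odd, any $\autf(Z)$-invariant complement to $Z_0$ in $Z$ must be centralized by $O^{p'}(G)$ (since $\SL_2(p)$ admits no faithful $1$-dimensional $\F_p$-module for $p$ odd), so it equals $P_1$, giving the uniqueness.

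For case (b), $X = Z_0 = [P,P]$, so $\4P := P/Z_0$ is abelian and the case-(a) analysis applies to $\4P$, yielding $\4P = (P_1/Z_0) \times (P_2 Z_0/Z_0)$ with $P_2 Z_0/Z_0 \cong C_p^2$. By Lemma~\ref{|Z0|=p}, $|Z_0| = p$; combined with $Z_0 = [P,P] \le P_2$ and $|P_2 Z_0/Z_0| = p^2$, this forces $P_2$ to be nonabelian of order $p^3$ with $Z(P_2) = Z_0$, hence extraspecial. The isomorphism type is pinned down by the $\SL_2(p)$-equivariant $p$th power map $P_2 Z_0/Z_0 \to Z_0$ (a natural-to-trivial module map): for $p$ odd it must vanish (no nonzero morphism between nonisomorphic irreducibles), so $P_2$ has exponent $p$; for $p = 2$, $\SL_2(2) \cong S_3$ acts transitively on $(P_2/Z_0) \sminus \{0\}$, forcing a common nontrivial square and hence $P_2 \cong Q_8$. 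The central product $P = P_1 \times_{Z_0} P_2$ then follows. The main obstacle is the first step---the identification of $O^{p'}(G) \cong \SL_2(p)$---which requires careful extraction of the natural-module structure on $P/X$ together with a rank-$1$ classification theorem.
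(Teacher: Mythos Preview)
Your overall strategy matches the paper's: apply a rank-one classification to the $\outf(P)$-action on $P/X$ to get $O^{p'}(\outf(P))\cong\SL_2(p)$ together with the splitting $P/X=(P_1/X)\times(P_2/X)$, and then lift this to the structural statements about $P$. The correct tool for that first step, however, is Proposition~\ref{GonA} (applied to $H=\outf(P)$ acting on the abelian group $P/X$, using that $|\Out_S(P)|=p$ from Lemma~\ref{B&H} and that $[\Out_S(P),P/X]$ has order $p$), not Lemma~\ref{|S|=p}. The latter only tells you that the Sylow has order~$p$, which you already know; it says nothing about $\SL_2(p)$. Proposition~\ref{GonA} gives you both $O^{p'}(H)\cong\SL_2(p)$ and $P_2/X\cong C_p^2$ in one stroke, so you do not need to invoke an external McLaughlin-type theorem. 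Your $p$th-power-map argument for the exponent of $P_2$ in case~(b) is a pleasant alternative to the paper's, which simply observes that $\SL_2(p)$ can act faithfully on an extraspecial group of order $p^3$ only when it is $Q_8$ (for $p=2$) or of exponent $p$ (for $p$ odd).

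Your uniqueness argument for $P_1$ in case~(a) has a genuine gap. You write that any $\autf(Z)$-invariant complement to $Z_0$ ``must be centralized by $O^{p'}(G)$,'' but $O^{p'}(G)=O^{p'}(\outf(P))$ does not act on $Z$ at all: it does not preserve $Z$ as a subgroup of $P$, so there is no restriction map from $O^{p'}(G)$ to $\autf(Z)$ to appeal to. The paper instead picks a specific element $\alpha\in N_{O^{p'}(\outf(P))}(\Out_S(P))$ of order $p-1$; this $\alpha$ normalizes $Z$ (since $Z=P_1\times Z_0$ and $\alpha$ fixes $P_1$ and normalizes $Z_0=[\Out_S(P),P]$), so $\alpha|_Z\in\autf(Z)$. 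Because $\alpha$ acts with order $p-1$ on $Z_0$ and trivially on $P_1$, the only $\alpha$-invariant complement to $Z_0$ in $Z$ is $P_1$. Combined with the existence of \emph{some} $\autf(Z)$-invariant complement (Maschke, since $|\autf(Z)|$ is prime to $p$ and $\autf(Z)$ normalizes $Z_0$ via the extension axiom and the uniqueness of $A$), this forces that complement to be $P_1$.
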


\begin{proof} To simplify notation, set $H=\outf(P)$, $H_0=O^{p'}(H)$, and 
$T=\Out_S(P)\in\sylp{H}$. 

If $P\in\calb$, then $[P,P]\le Z(P)\cap S'=Z\cap S'=Z_0$, with equality 
since $|Z_0|=p$ by Lemma \ref{|Z0|=p}. Thus $P/X$ is abelian in both cases. 
Also, $[N_S(P),P/X]=Z_0$ (if $P\in\calh$) or $Z_2/Z_0$ (if $P\in\calb$), 
and thus has order $p$ in both cases. So by Proposition \ref{GonA}, applied 
to the $H$-action on $P/X$, we have $H_0\cong\SL_2(p)$, 
$P/X=(P_1/X)\times(P_2/X)$, and $P_2/X\cong C_p^2$. 

If $P\in\calh$ (so $X=1$), then $P_1=C_P(H_0)\le C_P(T)=Z$, and $[Z:P_1]=p$ 
since $[P:P_1]=p^2$. Also, $P_2\ge[T,P]=Z_0$, so $P_1\cap Z_0=1$, and 
$Z=P_1\times Z_0$. If $p$ is odd, then $N_{H_0}(T)$ is a semidirect product 
of the form $C_p\rtimes C_{p-1}$. Fix $\alpha\in N_{H_0}(T)$ of order 
$p-1$; then $\alpha$ acts on $Z_0=[T,P]$ with order $p-1$ and acts 
trivially on $P_1$. Thus $\alpha|_Z\in\autf(Z)$, and $P_1$ is the only 
subgroup which is a complement to $Z_0$ in $Z$ and could be normalized by 
$\autf(Z)$. Since $\autf(Z)$ has order prime to $p$, there is at least one 
such subgroup, and hence $P_1$ is $\autf(Z)$-invariant.

If $P\in\calb$, then $X=Z_0$, and $P_2/Z_0\cong C_p^2$. Also, 
$H_0\cong\SL_2(p)$ acts faithfully on $P_2$, and this is possible only if 
$Z(P_2)=Z_0$, and $P_2\cong Q_8$ (if $p=2$) or $P_2$ is extraspecial of 
exponent $p$ (if $p$ is odd). Also, $P_1$ has index $p^2$ in $P$ since 
$P_1\cap P_2=Z_0$, $P_1\le Z(P)$ since $P=P_1\times_{Z_0}P_2$ is a central 
product, and hence $P_1=Z(P)=Z$.
\end{proof}

\begin{Cor} \label{c2:s/a}
In the situation of Notation \ref{n:not1}, if $A\nnsg\calf$ (i.e., if 
$\EE\calf\not\subseteq\{A\}$) and $p$ is odd, then $S$ splits over $A$: 
there is $x\in S\sminus A$ of order $p$.
\end{Cor}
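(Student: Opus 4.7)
The plan is to locate an essential subgroup $P \ne A$ (which exists by hypothesis), decompose it via Lemma~\ref{l2:s/a}, and pull an element of order $p$ out of the ``second factor'' $P_2$, which lies outside $A$ precisely because the first factor $P_1$ sits inside $A$.

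First I would use the hypothesis $\EE\calf\not\subseteq\{A\}$ (equivalent to $A\nnsg\calf$ by Proposition~\ref{AFT-E}(c)) together with Lemma~\ref{B&H} to pick some $P\in\EE\calf\cap(\calh\cup\calb)$. Then Lemma~\ref{l2:s/a} gives a decomposition $P=P_1\cdot P_2$ with $P_1\le Z$ in both cases (a direct product if $P\in\calh$ and a central product over $Z_0$ if $P\in\calb$). The crucial observation is that the ``second factor'' $P_2$ is an elementary abelian group $C_p^2$ in case $(\calh)$, or an extraspecial group of order $p^3$ of exponent $p$ in case $(\calb)$; the latter uses $p$ odd.

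Next I would observe that since $P\in\calh\cup\calb$, we have $P\not\le A$, while $P_1\le Z\le A$. It follows that $P_2\not\le A$, so I can pick $y\in P_2\sminus A\subseteq S\sminus A$. Because $P_2$ has exponent $p$ in both cases (this is where $p$ odd is needed), $y^p=1$, and therefore $\gen{y}$ is a complement of order $p$ projecting onto $S/A$, which gives the desired splitting.

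The only genuine obstacle is the role of the hypothesis $p$ odd. In case $(\calb)$, Lemma~\ref{l2:s/a}(b) allows $P_2\cong Q_8$ when $p=2$, whose only involution lies in $Z_0\le A$; so the elements of $P_2$ outside $A$ all have order $4$, and the argument would collapse. Thus the only subtlety in the write-up is to point out explicitly that the exponent-$p$ conclusion on $P_2$ from Lemma~\ref{l2:s/a}(b) depends on $p$ being odd, and that this is exactly what makes the final element-of-order-$p$ extraction work uniformly in both essential-subgroup cases.
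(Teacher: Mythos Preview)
Your proposal is correct and follows essentially the same approach as the paper: pick $P\in\EE\calf\sminus\{A\}\subseteq\calh\cup\calb$ via Lemma~\ref{B&H}, and then extract an element of order $p$ in $P\sminus A$ from the factor $P_2$ supplied by Lemma~\ref{l2:s/a}. The paper's proof is simply a terser statement of exactly this argument.
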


\begin{proof} Fix $P\in\EE\calf\sminus\{A\}$. By Lemma \ref{B&H}, 
$P\in\calh\cup\calb$. In either case, by Lemma \ref{l2:s/a}, there is 
$x\in P\sminus A$ of order $p$.
\end{proof}


We now restrict to the case where $p$ is odd. Recall that $G=\autf(A)$ 
by Notation \ref{n:not1}.

\begin{Lem} \label{O_p(F)=1}
Assume Notation \ref{n:not1}, and also that $p$ is odd and $A\nnsg\calf$. 
Then $O_p(\calf)=1$ if and only if either there are no nontrivial 
$G$-invariant subgroups of $Z$, or $\EE\calf\cap\calh\ne\emptyset$ and 
$Z_0$ is the only $G$-invariant subgroup of $Z$.
\end{Lem}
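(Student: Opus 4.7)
The plan is to use Proposition~\ref{AFT-E}(c), which characterizes $O_p(\calf)$ as the largest subgroup $Q\nsg S$ with $Q\le P$ and $Q$ invariant under $\autf(P)$ for every $P\in\EE\calf\cup\{S\}$. Since any nontrivial normal subgroup of the discrete $p$-toral group $S$ meets $Z=Z(S)$ nontrivially, it is equivalent to show $O_p(\calf)\cap Z=1$. The strategy is thus to describe which subgroups $R\le Z$ arise as $\calf$-normal.

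First I would show that $R\le Z$ is $\calf$-normal iff $R$ is $G$-invariant and $\autf(P)$-invariant for each $P\in\EE\calf\sminus\{A\}$. Containment $R\le P$ for each essential holds automatically by Lemma~\ref{B&H} ($Z\le A$; $Z\le P$ for $P\in\calh$; $Z\le Z_2\le P$ for $P\in\calb$), and normality in $S$ is automatic since $R\le Z(S)$. By the extension axiom applied to $Z$ (centrally embedded, so $N_\psi=S$ for each $\psi\in\autf(Z)$), one gets $\autf(Z)=\autf(S)|_Z\subseteq G|_Z$, so $G$-invariance of $R$ implies $\autf(S)$- and $\autf(A)$-invariance. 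Conversely, if $Q\nsg\calf$, then by Alperin's theorem (Proposition~\ref{AFT-E}(b)) each morphism in $\calf$ is a composite of restrictions of $\autf(\widetilde P)$ for $\widetilde P\in\EE\calf\cup\{S\}$, each preserving $Q$; applying this to elements of $G=\autf(A)$ shows $Q\cap A$ is $G$-invariant, hence so is $Q\cap Z$.

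Next I would analyze the essentials $P\in\EE\calf\sminus\{A\}$ via Lemma~\ref{l2:s/a}. For $P\in\calb$: $Z=P_1$ is $\autf(P)$-invariant by Lemma~\ref{l2:s/a}(b), and $\autf(P)|_Z\subseteq\autf(Z)=G|_Z$ by the extension axiom, so $G$-invariance of $R\le Z$ yields $\autf(P)$-invariance. For $P\in\calh$: Lemma~\ref{l2:s/a}(a) gives $P=P_1\times P_2$ with $O^{p'}(\autf(P))\cong\SL_2(p)$ acting irreducibly on $P_2\cong C_p^2$ and trivially on $P_1$, while $Z=P_1\times Z_0$ with $Z_0\le P_2$. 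For any $\autf(P)$-invariant $R\le Z$, its projection to $P_2$ lies in $Z_0$; applying $\sigma\in\SL_2(p)$ with $\sigma(Z_0)\ne Z_0$ forces this projection to vanish, so $R\le P_1$. Conversely, $P_1$ is $G$-invariant by Lemma~\ref{l2:s/a}(a), so any $G$-invariant subgroup of $P_1$ is $\autf(P)$-invariant.

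Combining these: if $\EE\calf\cap\calh=\emptyset$, then $O_p(\calf)\cap Z$ equals the maximal $G$-invariant subgroup of $Z$, and $O_p(\calf)=1$ iff this is trivial, giving the first alternative. If $\EE\calf\cap\calh\ne\emptyset$, then $O_p(\calf)\cap Z$ equals the maximal $G$-invariant subgroup of $Z$ contained in $P_1$; since $P_1$ itself is $G$-invariant, this maximum equals $P_1$, and hence $O_p(\calf)=1$ iff $P_1=1$ iff $Z=Z_0$, in which case $Z_0$ is the unique nontrivial $G$-invariant subgroup of $Z$, giving the second alternative. The main obstacle will be the $\SL_2(p)$-computation for $P\in\calh$ showing that $\autf(P)$-invariance of $R\le Z$ forces $R\le P_1$, which pins down the characterization in terms of Lemma~\ref{l2:s/a}(a).
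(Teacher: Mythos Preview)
Your overall plan is sound and much of the analysis is correct, but there are two genuine gaps, both stemming from the fact that $Z=C_A(\UUU)$ is \emph{not} $G$-invariant (an arbitrary $\varphi\in G=\autf(A)$ need not normalize $\UUU$, so $\varphi(Z)=C_A(\varphi\UUU\varphi^{-1})\ne Z$ in general).

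\textbf{Gap 1: the reduction to subgroups of $Z$.} You assert that ``$Q\cap A$ is $G$-invariant, hence so is $Q\cap Z$,'' and later that ``$O_p(\calf)\cap Z$ equals the maximal $G$-invariant subgroup of $Z$ [contained in $P_1$].'' Neither follows: since $Z$ is not $G$-invariant, $Q\cap Z$ need not be $G$-invariant, and $O_p(\calf)\cap Z$ need not be $\calf$-normal, so your characterization of $\calf$-normal subgroups of $Z$ does not apply to it. The paper repairs this by first proving $O_p(\calf)\le Z$ directly: for $P\in\EE\calf\cap\calh$ one has $Q\le\bigcap_{g\in S}{}^gP=Z$, and for $P\in\EE\calf\cap\calb$ one gets $Q\le Z_2$ the same way and then $Q\le Z$ by intersecting over the $\autf(P)$-orbit of $Z_2$. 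Once $O_p(\calf)\le Z$, your characterization applies to $O_p(\calf)$ itself.

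\textbf{Gap 2: the claim that $P_1$ is $G$-invariant.} Lemma~\ref{l2:s/a}(a) only gives $\autf(Z)$-invariance of $P_1$, and $\autf(Z)=\autf(S)|_Z\subseteq N_G(\UUU)|_Z$, which is strictly weaker than $G$-invariance. In fact $P_1$ is typically not $G$-invariant, for the reason above. Consequently your conclusion ``$O_p(\calf)=1$ iff $P_1=1$ iff $Z=Z_0$'' is stronger than the lemma and unjustified. The paper argues differently: given a $G$-invariant $R\le Z$ with $R\ne Z_0$, set $\til R=R\cap P_1$; one checks $\til R\ne1$ using the $\autf(Z)$-invariant splitting $Z=P_1\times Z_0$. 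The key point is that for $\varphi\in G$ one has $\varphi(\til R)\le\varphi(R)=R\le Z$, so the extension axiom applies to $\varphi|_{\til R}$, giving $\4\varphi\in\autf(S)$ with $\varphi(\til R)=\4\varphi(\til R)=\til R$ by $\autf(Z)$-invariance of $\til R$. Thus $\til R$ is $\calf$-normal. Note that the containment $\til R\le R$ (with $R$ already $G$-invariant) is what makes the extension axiom applicable; this trick is unavailable for $P_1$ itself.

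With these two fixes your argument becomes essentially the paper's proof.
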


\begin{proof} The following proof is essentially the same as the proof in 
\cite[Lemma 2.7(a)]{indp2} in the finite case. 

Assume first that $Q\defeq O_p(\calf)\ne1$. Since $A\nnsg\calf$, there 
is $P\in\EE\calf\sminus\{A\}\subseteq\calb\cup\calh$. If $P\in\calh$, then 
$Q\le Z$: the intersection of the subgroups $S$-conjugate to $P$. If 
$P\in\calb$, then $Q\le Z_2$ by a similar argument, and then $Q\le Z$ since 
that is the intersection of the subgroups in the $\autf(P)$-orbit of $Z_2$. 
Thus $Q$ is a non-trivial $G$-invariant subgroup of $Z$. If $Q=Z_0$, then 
$\EE\calf\cap\calh=\emptyset$, since for $P\in\EE\calf\cap\calh$, $Z_0$ is 
not normalized by $\autf(P)$. This proves one implication.

Conversely, assume that $1\ne{}R\le Z$ is $G$-invariant. For each 
$\alpha\in\autf(S)$, $\alpha(A)=A$ since $A$ is the unique abelian subgroup 
of index $p$, so $\alpha|_A\in{}G$, and thus $\alpha(R)=R$. Since each 
element of $\autf(Z)$ extends to $S$ by the extension axiom, $R$ is also 
normalized by $\autf(Z)$. Also, for each $P\in\EE\calf\cap\calb$, $Z=Z(P)$ 
is characteristic in $P$ and so $R$ is also normalized by $\autf(P)$. In 
particular, if $\EE\calf\cap\calh=\emptyset$, then $R\nsg\calf$, and so 
$O_p(\calf)\ne1$. 

Now assume that $\EE\calf\cap\calh\ne\emptyset$, and also that $R\ne{}Z_0$. 
By \cite[Lemma 2.3(b)]{indp2} (the argument easily extends to the infinite 
case), there is a unique $\autf(Z)$-invariant factorization $Z=Z_0\times 
\til{Z}$.  Set $\til{R}=R\cap{}\til{Z}$. If $R\ge Z_0$, then $R=\til{R}\times Z_0$. 
Otherwise, $R\cap Z_0=1$ (recall $|Z_0|=p$), and since $R$ is 
$\autf(Z)$-invariant, the uniqueness of the splitting implies that $R\le 
\til{Z}$ and hence $R=\til{R}$. Since $R\ne{}Z_0$, we have $\til{R}\ne1$ in either 
case. 

For each $\varphi\in\autf(A)=G$, $\varphi(\til{R})\le R\le Z$, so by the 
extension axiom, $\varphi|_{\til{R}}$ extends to some 
$\4\varphi\in\autf(S)$, and $\varphi(\til{R})=\4\varphi(\til{R})=\til{R}$ 
since $\til{R}$ is $\autf(Z)$-invariant. So by the same arguments as those 
applied above to $R$, $\til{R}$ is normalized by $\autf(P)$ for each 
$P\in(\{S\}\cup\EE\calf)\sminus\calh$. If $P\in\EE\calf\cap\calh$, then for 
each $\alpha\in\autf(P)$, $\alpha(\til{Z})=\til{Z}$ by \cite[Lemma 
2.3(b)]{indp2}, so $\alpha|_{\til{Z}}$ extends to an element of $\autf(S)$ 
and hence of $\autf(Z)$, and in particular, $\alpha(\til{R})=\til{R}$. Thus 
$1\ne{}\til{R}\nsg\calf$, and hence $O_p(\calf)\ne1$. 
\end{proof}

Without the assumption that $p$ be odd in Lemma \ref{O_p(F)=1}, the 
2-fusion system $\calf$ of $\PSSL_2(q^2)$ is a counterexample for each 
prime power $q\equiv\pm1$ (mod $8$). Here, 
$\PSSL_2(q^2)=\PSL_2(q^2)\gen{\theta}$ where $\theta$ acts on 
$\PSL_2(q^2)$ as a field automorphism of order 2 (and $\theta^2=1$). Then 
$O_2(\calf)=1$, $S\cong D_{2^m}\times C_2$ for some $m\ge4$ depending on $q$, 
$A\cong C_{2^{m-1}}\times C_2$, $Z=Z(S)=\Omega_1(A)$, and $G=\Aut_S(A)$ acts 
trivially on $Z$ (so that all subgroups of $Z$ are $G$-invariant).

\begin{Lem} \label{A2<A1<A}
Assume Notation \ref{n:not1}, and also that $p$ is odd and $O_p(\calf)=1$. 
Let $A_2\le A_1\le A$ be $G$-invariant subgroups such that $A_1\le ZA_2$. 
Then either $A_1=A_2$, or $A_1=Z_0\times A_2$ and $Z_0$ is $G$-invariant.
\end{Lem}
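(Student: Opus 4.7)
The overall strategy is to exploit the tight constraint that the hypothesis $A_1\le ZA_2$ puts on $A_1/A_2$ as a $G$-module, and then to apply Lemma \ref{O_p(F)=1} to conclude that any nontrivial $G$-invariant subgroup of $Z$ sitting inside $A_1$ must be $Z_0$.

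First I set $R=A_1\cap Z$. Since $A_2\le A_1\le ZA_2$ and $A$ is abelian, each $a\in A_1$ decomposes as $a=zb$ with $z\in Z$ and $b\in A_2\le A_1$, whence $z=ab^{-1}\in A_1\cap Z=R$. Thus $A_1=RA_2$, and the dichotomy $A_1=A_2$ versus $A_1\ne A_2$ corresponds exactly to $R\le A_2$ versus $R\not\le A_2$. A direct calculation then shows that $\UUU=\Aut_S(A)$ acts trivially on $A_1/A_2$: for $u\in\UUU$ and $a=zb$ as above, $u(a)a^{-1}=u(z)u(b)(zb)^{-1}=u(b)b^{-1}\in A_2$, using that $u$ fixes $z\in Z$ and $A$ is abelian. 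Since $\UUU\in\sylp{G}$, the $G$-action on the abelian $p$-group $A_1/A_2$ factors through a quotient $\Gamma$ of order prime to $p$.

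Since $O_p(\calf)=1$ forces $A\nnsg\calf$ (otherwise $A\le O_p(\calf)$), Lemma \ref{O_p(F)=1} applies: either $Z$ has no nontrivial $G$-invariant subgroup, or $Z_0$ is the unique one and $\EE\calf\cap\calh\ne\emptyset$. Assuming $A_1\ne A_2$, I aim to produce a nontrivial $G$-invariant subgroup $Y\le Z\cap A_1$ with $Y\not\le A_2$; Lemma \ref{O_p(F)=1} will then force $Y=Z_0$, giving simultaneously that $Z_0\le A_1$, that $Z_0$ is $G$-invariant, and that $Z_0\cap A_2=1$ (as $|Z_0|=p$), so $Z_0 A_2=Z_0\times A_2\le A_1$.

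To construct $Y$, I would use the coprime-action decomposition $A_1/A_2=(A_1/A_2)^{\Gamma}\oplus[\Gamma,A_1/A_2]$ (Maschke-style averaging, legitimate because $|\Gamma|$ is a unit on the $p$-group $A_1/A_2$), and then promote any $\bar a\in(A_1/A_2)^{\Gamma}$ to a genuine $G$-fixed element $y\in A_1$: such a $y$ lies automatically in $C_A(G)\le C_A(\UUU)=Z$, and then $\langle y\rangle$ is the required $Y$.

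The main obstacle is that the existence of $\bar a\in(A_1/A_2)^{\Gamma}$ and its lifting to an actual $G$-fixed $y$ are not automatic from the coprime decomposition alone; they must be extracted from the specific structure of $\calf$. Here one combines the fact that $A_1/A_2$ embeds, as an abstract $\UUU$-trivial module, into $Z/(Z\cap A_2)$ with the action of essential automorphism groups ($O^{p'}(\outf(P))\cong\SL_2(p)$ for $P\in\EE\calf\cap(\calh\cup\calb)$ from Lemma \ref{l2:s/a}) and the uniqueness of the $\autf(Z)$-invariant splitting $Z=\tilde Z\times Z_0$ from \cite[Lemma 2.3(b)]{indp2}, to show that any nontrivial $\Gamma$-action or any failure to lift would manufacture a second nontrivial $G$-invariant subgroup of $Z$, contradicting Lemma \ref{O_p(F)=1}. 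Once $Z_0\le A_1$ is established, a second application of the same argument to the pair $Z_0 A_2\le A_1\le Z\cdot(Z_0 A_2)$ produces no further $G$-invariant contribution in $Z/Z_0$ (there are none left), so $A_1=Z_0 A_2=Z_0\times A_2$, completing the proof.
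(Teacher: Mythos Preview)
Your overall strategy is sound, but you aim at the wrong fixed-point group, and this creates the ``main obstacle'' you describe, which then you do not actually resolve. You try to produce a \emph{$G$-fixed} element $y\in A_1\sminus A_2$; for this you need both that $(A_1/A_2)^{\Gamma}\ne0$ and that such a class lifts through $H^1(G,A_2)$, neither of which follows from anything you cite. The appeal to Lemma \ref{l2:s/a} and to the $\autf(Z)$-invariant splitting of $Z$ is hand-waving: those results concern the structure of essential subgroups $P\in\calh\cup\calb$, not the lifting problem for $G$-fixed cosets in $A_1/A_2$.

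The paper's proof sidesteps both difficulties by replacing $G$ with $G_0=O^{p'}(G)$. You already observed that $\UUU$ acts trivially on $A_1/A_2$; since $G_0$ is the normal closure of $\UUU$, the same holds for $G_0$ (this is exactly your observation that the action factors through $\Gamma=G/G_0$). So \emph{every} coset $xA_2$ with $x\in Z$ is $G_0$-fixed, and averaging over coset representatives of $\UUU$ in $G_0$ (an index prime to $p$, so the $k$-th root exists in the $p$-group $A$) produces $y\in C_A(G_0)\cap xA_2$. Thus $A_1\le C_A(G_0)\cdot A_2$. Now $C_A(G_0)\le C_A(\UUU)=Z$ is $G$-invariant because $G_0\nsg G$, so Lemma \ref{O_p(F)=1} forces $C_A(G_0)\le Z_0$. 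Since $|Z_0|=p$, either $A_1=A_2$ or $A_1=Z_0\times A_2$ with $Z_0=C_A(G_0)$ $G$-invariant. No second pass and no appeal to the fine structure of essential subgroups is needed.
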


\begin{proof} Fix a class $xA_2\in{}A_1/A_2$.  By assumption, we can assume 
$x\in{}Z$. Since $G$ acts on $A_1/A_2$ and $\UUU$ acts trivially on this 
quotient, $G_0=O^{p'}(G)$ also acts trivially. Hence $\alpha(x)\in xA_2$ 
for each $\alpha\in G_0$. Let $\alpha_1,\ldots,\alpha_k\in{}G_0$ be left 
coset representatives for $\UUU$ (so $p\nmid{}k=[G_0:\UUU]$), and set 
$y=\Bigl(\prod_{i=1}^k\alpha_i(x)\Bigr)^{1/k}$.  Then $y\in{}xA_2$ since 
$\alpha_i(x)\in xA_2$ for each $i$, and $y\in C_A(G_0)$.  This shows that 
$A_1\le C_A(G_0)A_2$.  

Now, $C_A(G_0)$ is a subgroup of $Z=C_A(\UUU)$ normalized by $G$. So by 
Lemma \ref{O_p(F)=1} and since $O_p(\calf)=1$, $C_A(G_0)\le Z_0$.  Thus 
$A_1\le Z_0A_2$. If $A_1>A_2$, then $A_1=A_2\times Z_0$ since $|Z_0|=p$, 
and $Z_0=C_A(G_0)$ is $G$-invariant.
\end{proof}

The following notation, taken from \cite[Notation 2.4]{indp2}, will be used 
throughout the rest of the paper.

\newcommand{\autv}{\Aut^\vee}

\begin{Not} \label{n:not2}
Assume Notation \ref{n:not1}, and also that $|Z_0|=p$. Set 
	\[ \Delta = (\Z/p)^\times \times (\Z/p)^\times\,,
	\qquad\textup{and}\qquad
	\Delta_i=\{(r,r^i)\,|\,r\in(\Z/p)^\times\}\le\Delta
	\quad\textup{(for $i\in\Z$).} \]
Set
	\begin{align*} 
	\autv(S)&=\bigl\{\alpha\in\Aut(S) \,\big|\, 
	[\alpha,Z]\le Z_0\bigr\}, & 
	\autv(A)&=\bigl\{\alpha|_A \,\big|\, \alpha\in\autv(S) \bigr\} \\
	\autff(S)&=\autv(S)\cap\autf(S) &
	\autff(A)&=\autv(A)\cap\autf(A) \\
	&&&\hskip-9mm= \bigl\{\beta\in N_{\autf(A)}(\Aut_S(A)) \,\big|\, 
	[\beta,Z]\le Z_0 \bigr\} .
	\end{align*}
Define 
	\[ \mu\:\autv(S)\Right5{}\Delta \qquad\textup{and}\qquad
	\mu_A\:\autv(A)\Right5{}\Delta \]
by setting, for $\alpha\in\autv(S)$, 
	\[ \mu(\alpha) = (r,s) \quad\textup{if}\quad
	\begin{cases} 
	\alpha(x)\in x^rA & \textup{for $x\in{}S\sminus A$} \\
	\alpha(g)=g^s & \textup{for $g\in{}Z_0$ }
	\end{cases} \]
and $\mu_A(\alpha|_A)=\mu(\alpha)$ if $\alpha\in\autff(S)$.
\end{Not}

\section{Minimally active modules}
\label{s:min.act.}

\newcommand{\GG}{\mathscr{G}_p}
\newcommand{\GGG}{\mathscr{G}^\wedge_p}

In the earlier paper \cite{indp2}, the concept of ``minimally active'' 
modules played a central role when identifying the pairs $(A,\autf(A))$ 
that can occur in a simple fusion system $\calf$ over a $p$-group $S$ that 
contains an elementary abelian group $A$ with index $p$. Before continuing 
to study the structure of such $\calf$, we need to recall some of the 
notation and results in that paper, beginning with \cite[Definitions 3.1 \& 
3.3]{indp2}, and describe how they relate to the more general situation 
here. 

\begin{Defi} \label{d:min.act.}
For each prime $p$, 
\begin{itemize} 
\item $\GG$ is the class of finite groups $\genG$ with $\UU\in\sylp{\genG}$ such 
that $|\UU|=p$ and $\UU\nnsg \genG$; and 
\item $\GGG$ is the class of those $\genG\in\GG$ such that $|\Out_\genG(\UU)|=p-1$ 
for $\UU\in\sylp{\genG}$. 
\end{itemize}
For $\genG\in\GG$, an $\F_p\genG$-module is \emph{minimally active} if its 
restriction to $\UU\in\sylp{\genG}$ has exactly one Jordan block with 
nontrivial action. 
\end{Defi}

The next lemma explains the importance of minimally active modules 
here. In particular, it means that many of the tables and results in 
\cite[\S\,4--5]{indp2} can be applied to get information about $\autf(A)$ 
and $\Omega_1(A)$.

\begin{Lem} \label{min.act.}
Assume Notation \ref{n:not1} and \ref{n:not2}, and also that $p$ is odd, 
$A\in\EE\calf$, and $O_p(\calf)=1$. Set $V=\Omega_1(A)$ and 
	\begin{align*} 
	\autff(V) &= \bigl\{ \beta\in N_{\autf(V)}(\Aut_S(V)) \,\big|\, 
	[\beta,\Omega_1(Z)]\le Z_0 \bigr\} \\
	&= \bigl\{ \alpha|_V \,\big|\, \alpha\in\autf(S),~ 
	[\alpha,\Omega_1(Z)]\le Z_0 \bigr\}, 
	\end{align*}
and define $\mu_V\:\autff(V)\too\Delta$ by setting 
$\mu_V(\alpha|_V)=\mu(\alpha)$. Then 
\begin{enuma} 

\item $G=\autf(A)\in\GGG$; 

\item $V$, and $A/\Fr(A)$ if $|A|<\infty$, are both faithful, minimally 
active, and indecomposable as $\F_pG$-modules; and 

\item $\mu_A(\autff(A)) = \begin{cases} 
	\mu_V(\autff(V)) & \textup{if $Z_0\nleq\Fr(A)$} \\
	\mu_V(\autff(V))\cap\Delta_0 & \textup{if $Z_0\le\Fr(A)$.} 
	\end{cases}$

\end{enuma}
\end{Lem}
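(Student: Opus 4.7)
The plan is to establish the three parts in turn, leveraging the structural lemmas proved earlier in this section.

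For part (a), first note that $\UUU=\Aut_S(A)\cong S/A$ has order $p$; since $A\in\EE\calf$ is fully normalized, the Sylow axiom gives $\UUU\in\sylp{G}$. Essentiality of $A$ also makes $G=\outf(A)$ (with $\Inn(A)=1$) contain a strongly $p$-embedded subgroup, so $O_p(G)=1$ and $\UUU\nnsg G$, placing $G$ in $\GG$. To upgrade to $\GGG$, the embedding $N_G(\UUU)/C_G(\UUU)\hookrightarrow\Aut(\UUU)\cong(\Z/p)^\times$ must be surjective. Each $\alpha\in N_G(\UUU)$ has $N_\alpha=S$, so the extension axiom lifts it to $\tilde\alpha\in\autf(S)$ inducing the same action on $S/A\cong\UUU$. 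Since $O_p(\calf)=1$ and $A\nnsg\calf$, Lemma \ref{O_p(F)=1} forces $\EE\calf\sminus\{A\}\ne\emptyset$; pick $P$ there, which by Lemma \ref{B&H} lies in $\calh\cup\calb$. Lemma \ref{l2:s/a} then yields $O^{p'}(\outf(P))\cong\SL_2(p)$; the torus of this $\SL_2(p)$ acts on $Z_0$ and simultaneously on $\Aut_S(P)\cong N_S(P)/P$ through the full $(\Z/p)^\times$. Lifting these automorphisms from $P$ up to $S$ via repeated application of the extension axiom produces elements of $\autf(S)$ realizing every $r\in(\Z/p)^\times$ on $S/A$, hence every element of $\Aut(\UUU)$.

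For part (b), I handle $V=\Omega_1(A)$ in detail; the $A/\Fr(A)$ case when $|A|<\infty$ runs in parallel. Faithfulness: the kernel $K$ of $G\to\Aut(V)$ consists of automorphisms of $A$ trivial on $\Omega_1(A)$, which are necessarily $p$-elements of $\Aut(A)$; hence $K\nsg G$ is a $p$-subgroup, contained in $O_p(G)=1$. Minimal activity: faithfulness forces $\UUU$ to act non-trivially on $V$, producing at least one non-trivial Jordan block. Conversely, writing $u\in S\sminus A$, one identifies $V\cap Z_k(S)=\{v\in V:(u-1)^k v=0\}$ (with $Z_k(S)$ the $k$-th center), and the successive quotients of this filtration have order at most $p$ by Lemma \ref{|Z0|=p}; so Jordan blocks of distinct sizes each occur at most once, whence exactly one block has size $\ge 2$. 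Indecomposability: a splitting $V=V_1\oplus V_2$ as $\F_pG$-modules forces the unique non-trivial block to sit in one summand (say $V_2$), so $V_1\le C_V(\UUU)=\Omega_1(Z)$ and $V_1\cap Z_0\le V_1\cap V_2=0$; a non-zero $V_1$ would then be a $G$-invariant subgroup of $\Omega_1(Z)\le Z$ distinct from $Z_0$, contradicting Lemma \ref{O_p(F)=1}. The same strategy handles $A/\Fr(A)$ using Lemma \ref{A2<A1<A} in place of Lemma \ref{O_p(F)=1}.

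For part (c), the inclusion $\mu_A(\autff(A))\subseteq\mu_V(\autff(V))$ is immediate: any $\alpha\in\autff(A)$ lifts via the extension axiom to $\tilde\alpha\in\autff(S)$, and restriction to $V$ yields an element of $\autff(V)$ with $\mu_V(\tilde\alpha|_V)=\mu(\tilde\alpha)=\mu_A(\alpha)$. The main obstacle is the reverse inclusion. Given $(r,s)=\mu_V(\alpha|_V)$ with $\alpha\in\autf(S)$ and $[\alpha,\Omega_1(Z)]\le Z_0$, one must decide whether some $\alpha'\in\autff(S)$ (with $[\alpha',Z]\le Z_0$) realizes $(r,s)$. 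If $Z_0\nleq\Fr(A)$, then $Z_0$ splits off $Z$ as an $\autf(Z)$-invariant direct summand (by the argument of \cite[Lemma 2.3(b)]{indp2}), and adjusting $\alpha$ by an automorphism trivial on $\Omega_1(Z)$ yields $\alpha'$ with $[\alpha',Z]\le Z_0$ and the same $\mu$-value. If $Z_0\le\Fr(A)$, then $Z_0\le\Fr(Z)$, and writing a generator of $Z_0$ as $w^p$ for some $w\in Z$, the action of $\alpha$ on $w^p$ equals the $p$-th power of its action on $w$ modulo $\Fr(Z)$; hence the second coordinate $s$ of $\mu(\alpha)$ is forced to be a $p$-th power in $(\Z/p)^\times$, i.e.\ $s=1$. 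Thus $\mu_A(\autff(A))\subseteq\Delta_0$, and combined with the first inclusion and an analogous lifting argument in the opposite direction, $\mu_A(\autff(A))=\mu_V(\autff(V))\cap\Delta_0$.
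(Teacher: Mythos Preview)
Parts (a) and (b) of your argument are correct and follow the paper's proof closely; your upper-central-series phrasing of minimal activity is a harmless variant of the paper's direct appeal to $|Z_0|=|[\UUU,V]\cap C_V(\UUU)|=p$.

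The genuine gap is in part (c), in the reverse inclusion. Your sentence ``adjusting $\alpha$ by an automorphism trivial on $\Omega_1(Z)$ yields $\alpha'$ with $[\alpha',Z]\le Z_0$'' does not work: you have no control over which automorphisms of $S$ lie in $\autf(S)$, so there is no reason such a correcting element exists there, and the final ``analogous lifting argument in the opposite direction'' is left entirely unspecified. The paper's device is different and is the crux of (c): given $\beta=\alpha|_V\in\autff(V)$ with $\alpha\in\autf(S)$, one first replaces $\alpha$ by $\alpha^{p^k}$ for suitable $k$ so that $\alpha$ has order prime to $p$; this leaves $\mu(\alpha)$ unchanged since $|\Delta|=(p-1)^2$. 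For a prime-to-$p$ automorphism one then has a genuine splitting $Z=C_Z(\alpha)\times[\alpha,Z]$, and $\Omega_1([\alpha,Z])=[\alpha,\Omega_1(Z)]\le Z_0$. If $Z_0\nleq\Fr(Z)$ this forces $[\alpha,Z]\le Z_0$, so $\alpha$ itself already lies in $\autff(S)$ --- no adjustment is needed. If $Z_0\le\Fr(Z)$ and one has additionally assumed $\mu_V(\beta)\in\Delta_0$ (so that $\alpha|_{Z_0}=\Id$), then $Z_0\le C_Z(\alpha)$, whence $\Omega_1([\alpha,Z])\le Z_0\cap[\alpha,Z]=1$ and $[\alpha,Z]=1$. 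Either way $\mu_V(\beta)=\mu(\alpha)\in\mu_A(\autff(A))$.

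A secondary point: your assertion that $Z_0\le\Fr(A)$ implies $Z_0\le\Fr(Z)$ is not automatic (the easy implication is the other one, via $\Fr(Z)\le\Fr(A)$); you should either justify this in the present context or work directly with the condition on $\Fr(Z)$ as the paper's proof does.
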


\begin{proof} \textbf{(a) } By assumption, $\UUU=\Aut_S(A)\in\sylp{G}$ has 
order $p$. Since $A\in\EE\calf$, $\UUU$ is not normal in $G=\autf(A)$, and 
hence $G\in\GG$. 

Since $O_p(\calf)=1$, there is $P\in\EE\calf\cap(\calh\cup\calb)$. By Lemma 
\ref{l2:s/a}(a,b), $O^{p'}(\outf(P))\cong\SL_2(p)$. Choose $\alpha\in 
O^{p'}(\autf(P))$ of order $p-1$ whose class in $\outf(P)$ normalizes 
$\Out_S(P)\cong C_p$; then $\alpha$ extends to an element of 
$\autf(N_S(P))$ and hence (since $P$ is maximal among $\calf$-essential 
subgroups) to some $\4\alpha\in\autf(S)$. Then $\4\alpha|_A$ normalizes 
$\UUU$ and its class in $\Aut_G(\UUU)$ has order $p-1$, so $G\in\GGG$.

\smallskip

\noindent\textbf{(b) } Set $\4A=A/\Fr(A)$. If $|A|<\infty$, then since 
$G$ acts faithfully on $A$, \cite[Theorems 5.2.4 \& 5.3.5]{Gorenstein} 
imply that $C_G(V)$ and $C_G(\4A)$ 
are both normal $p$-subgroups of $G$. Since $\UUU$ is not normal by (a), $G$ 
acts faithfully on $V$ and on $\4A$ in this case. If $|A|=\infty$, then 
$G$ acts faithfully on $\Omega_m(A)$ for $m$ large enough, and hence acts 
faithfully on $V$ by the above argument.

Since $|Z_0|=p$ by Lemma 
\ref{|Z0|=p}, where $Z_0=S'\cap Z=[\UUU,V]\cap C_V(\UUU)$, the 
$\F_p\UUU$-module $V|_{\UUU}$ has exactly one Jordan block with nontrivial 
action of $\UUU$. So $V$ is minimally active. If $|A|<\infty$, 
then $ZS'=C_A(\UUU)[\UUU,A]$ has index $p$ in $A$ by Lemma \ref{|Z0|=p}, so 
$C_{\4A}(\UUU)[\UUU,\4A]$ has index at most $p$ in $\4A$, and hence $\4A$ is 
minimally active.

If $V=V_1\times V_2$, where each $V_i$ is a nontrivial 
$\F_pG$-submodule, then by \cite[Lemma 3.4(a)]{indp2}, we can assume (after 
exchanging indices if needed) that $V_1\le Z$ and (since it is a summand) 
$V_1\cap Z_0=1$. But this contradicts Lemma \ref{O_p(F)=1}.

Assume $|A|<\infty$. If $\4A=\4X\times \4Y$ where $\4X,\4Y\le\4A$ 
are $\F_pG$-submodules, then by the Krull-Schmidt theorem, one of the factors, 
say $\4X$, contains a nontrivial Jordan block, while $\UUU$ acts trivially 
on the other factor. Thus $\4X\ge[\UUU,\4A]$ and $\4X\nleq ZS'/\Fr(A)$. Let 
$X\le A$ be such that $\Fr(A)\le X$ and $X/\Fr(A)=\4X$. Then $X\ge S'$ and 
$X\nleq ZS'$, so $XZ=A$. By Lemma \ref{A2<A1<A}, either $X=A$ (and 
$\4X=\4A$) or $A=X\times Z_0$ (which is impossible since $Z_0\le S'\le X$). 
Thus $\4X=\4A$, and $\4A$ is indecomposable.

\smallskip

\noindent\textbf{(c) } By definition, the restriction to $V$ of each 
element in $\autff(A)$ lies in $\autff(V)$, and hence 
$\mu_A(\autff(A))\le\mu_V(\autff(V))$. If $Z_0\le\Fr(Z)$, then choose $z\in Z$ 
such that $1\ne z^p\in Z_0$. For each $\beta\in\autff(A)$, 
$\beta(z)=z^{pk+1}$ for some $k$ since $[\beta,Z]\le Z_0$, and hence 
$\beta|_{Z_0}=\Id$ and $\mu_A(\beta)\in\Delta_0$. Thus 
$\mu_A(\autff(A))$ is contained in the right hand side in (c).

Now assume that $\beta\in\autff(V)$, where $\beta=\alpha|_V$ for 
$\alpha\in\autf(S)$. If $Z_0\le\Fr(A)$, then assume also that 
$\mu_V(\beta)\le\Delta_0$; i.e., that $\beta|_{Z_0}=\Id$. Upon replacing 
$\beta$ by $\beta^{p^k}$ and $\alpha$ by $\alpha^{p^k}$ for appropriate 
$k$, we can also assume that $\alpha$ has order prime to $p$ without 
changing $\mu(\alpha)$. Then $Z=C_Z(\alpha)\times[\alpha,Z]$ by 
\cite[Theorem 5.2.3]{Gorenstein} and since $Z$ is the union of the finite 
abelian $p$-groups $\Omega_i(Z)$, and 
$[\alpha,\Omega_1(Z)]=[\beta,\Omega_1(Z)]\le Z_0$ since 
$\beta\in\autff(V)$. Also, 
$\Omega_1([\alpha,Z])=[\alpha,\Omega_1(Z)]\le Z_0$ (since it can't be any 
larger). If $Z_0\nleq\Fr(Z)$, then this implies that $[\alpha,Z]\le Z_0$, 
hence that $\alpha\in\autff(S)$. If $Z_0\le\Fr(Z)$, then 
$\Omega_1([\alpha,Z])\le Z_0\le C_Z(\alpha)$ implies that 
$\Omega_1([\alpha,Z])=1$ and hence $[\alpha,Z]=1$, so again 
$\alpha\in\autff(S)$. Thus 
$\mu_V(\beta)=\mu_A(\alpha|_A)\in\mu_A(\autff(A))$, and the right hand side in 
(c) is contained in $\mu_A(\autff(A))$. 
\end{proof}

The following basic properties of minimally active indecomposable 
modules, taken from \cite{indp2}, play an important role in the rest of the 
paper.

\begin{Lem}[{\cite[Proposition 3.7]{indp2}}] \label{min.act.props}
Fix an odd prime $p$, a finite group $\genG\in\GG$, and $\UU\in\sylp{\genG}$. Let 
$V$ be a faithful, minimally active, indecomposable $\F_p\genG$-module. Then 
\begin{enuma} 
\item $\dim(V)\le p$ implies that $V|_\UU$ is indecomposable and thus 
contains a unique Jordan block;
\item $\dim(V)\ge p+1$ implies that $V|_\UU$ is the direct sum of a Jordan 
block of dimension $p$ and a module with trivial action of $\UU$; and 
\item $\dim(C_V(\UU))=1$ if $\dim(V)\le p$, while 
$\dim(C_V(\UU))=\dim(V)-p+1$ if $\dim(V)\ge p$.
\end{enuma}
\end{Lem}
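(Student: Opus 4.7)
The plan is to use Mackey's formula, combined with the TI property of $\UU$ in $\genG$, to tightly restrict the possible summands of $V|_\UU$. Write
\[
V|_\UU \cong J_d \oplus J_1^{\oplus t},
\]
where $J_d$ is the unique Jordan block with nontrivial $\UU$-action (so $d \in [2,p]$) and $J_1^{\oplus t}$ is the trivial part, $\dim V = d+t$. The central claim I will establish is that $d < p$ forces $t = 0$; from this single statement, all three parts follow cleanly.

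First I dispose of the projective case. If $V$ is $\F_p\genG$-projective, then $V|_\UU$ is a free $\F_p\UU$-module (and $\F_p\UU$ is local), so $V|_\UU$ is a direct sum of copies of $J_p$; minimal activity then gives $V|_\UU = J_p$, and the claim is immediate. Otherwise, $V$ has a nontrivial vertex inside $\UU$, which must equal $\UU$ itself, and a source $W = J_s$ which, being non-projective, satisfies $1 \le s \le p-1$; in particular $V$ divides $\Ind_\UU^\genG W$. Because $|\UU|=p$ and $\UU$ is not normal in $\genG$, distinct Sylow $p$-subgroups intersect trivially, so $\UU \cap {}^g\UU = 1$ for every $g \in \genG \sminus N_\genG(\UU)$, i.e., $\UU$ is a TI subgroup. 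Mackey's formula therefore yields
\[
(\Ind_\UU^\genG W)|_\UU \;\cong\; J_s^{\oplus [N_\genG(\UU):\UU]} \,\oplus\, (\text{free } \F_p\UU\text{-module}),
\]
so by Krull--Schmidt every indecomposable summand of $V|_\UU$ is isomorphic to $J_s$ or to $J_p$.

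The decisive step is now elementary. The indecomposable summands of $V|_\UU$ are $J_d$ and, if $t > 0$, $J_1$. If both $d < p$ and $t > 0$ held, then neither summand would be $J_p$, so both would have to equal $J_s$, forcing $d = s = 1$ and contradicting $d \ge 2$. This proves the central claim. The three parts now follow by bookkeeping: for (a), $\dim V \le p$ combined with the central claim gives $t = 0$ (else $d = p$ and $\dim V > p$), so $V|_\UU = J_d$ is indecomposable; for (b), $\dim V \ge p+1$ forces $t \ge 1$, and then the central claim forces $d = p$; and for (c), $\dim C_V(\UU)$ equals the number of Jordan blocks of $V|_\UU$, namely $1 + t$, which via (a) equals $1$ when $\dim V \le p$ and via (b) equals $1 + (\dim V - p) = \dim V - p + 1$ when $\dim V \ge p$.

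The main obstacle is choosing the right framework: once one sees that TI-ness of $\UU$ collapses the Mackey decomposition of $\Ind_\UU^\genG W$ to copies of $J_s$ plus a free part, so that $V|_\UU$ has indecomposable summands drawn from only two isomorphism types, the rest is a short calculation with Jordan block sizes.
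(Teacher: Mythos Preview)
The paper does not prove this lemma; it is quoted verbatim from \cite[Proposition 3.7]{indp2} without argument. Your proof is correct and is essentially the standard vertex--source argument one expects in that reference: the TI property of $\UU$ (immediate from $|\UU|=p$ and $\UU\nnsg\genG$) collapses the Mackey decomposition of $(\Ind_\UU^\genG J_s)|_\UU$ to copies of $J_s$ and free summands, so the indecomposable summands of $V|_\UU$ lie in $\{J_s,J_p\}$, and your ``central claim'' ($d<p\Rightarrow t=0$) then drops out from the incompatibility $J_d\cong J_s\cong J_1$ with $d\ge2$. The derivation of (a)--(c) from that claim is clean.
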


The next lemma is closely related to \cite[Lemma 1.11]{indp2}.

\begin{Lem} \label{l:top&bottom}
Fix an odd prime $p$, let $\genG$ be a finite group such that $\UU\in\sylp{\genG}$ 
has order $p$, and set $N=N_\genG(\UU)$. Let $V$ be a faithful, minimally 
active, indecomposable $\F_p\genG$-module such that $\dim(V)\le p$. Then 
$C_V(\UU)$ and $V/[\UU,V]$ are both $1$-dimensional, and the following 
hold.
\begin{enuma} 

\item If $\dim(V)=p$, then $V/[\UU,V]$ and $C_V(\UU)$ are 
$1$-dimensional, and isomorphic as $\F_p[N/\UU]$-modules.

\item The projective cover and the injective envelope of $V|_N$ are both 
$p$-dimensional.

\item If $\dim(V)=p-1$, and there is an $\F_p\genG$-submodule $V_0<V$ with 
$\dim(V_0)=1$, then there is a projective $\F_p\genG$-module $W$ such that 
$\dim(W)=p$ and $W$ has a submodule isomorphic to $V$. 

\item Let $W$ be another $\F_p\genG$-module such that $\dim(W)=\dim(V)$, and 
assume that $C_W(\UU)\cong C_V(\UU)$ as 
$\F_p[N_\genG(\UU)/\UU]$-modules. Then $W\cong V$ as 
$\F_p[N_\genG(\UU)]$-modules, and as $\F_p\genG$-modules if $\dim(V)<p$.

\end{enuma}
\end{Lem}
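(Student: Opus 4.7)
The plan starts by extracting the structural consequence of Lemma \ref{min.act.props}: for $d=\dim(V)\le p$, $V|_\UU$ is a single Jordan block of length $d$, so $C_V(\UU)$ and $V/[\UU,V]$ are both $1$-dimensional. Fix a generator $\sigma$ of $\UU$, an element $v\in V$ with $\{(\sigma-1)^iv\}_{i=0}^{d-1}$ an $\F_p$-basis, and (by Schur--Zassenhaus) a complement $H$ so that $N=\UU\rtimes H$ with $|H|$ prime to $p$. For (a), let $\chi\colon N\to\F_p^\times$ be the character by which $N/\UU$ acts on $V/[\UU,V]$. For $n\in N$ with $n\sigma n^{-1}=\sigma^a$, the identity $(\sigma^a-1)=(\sigma-1)(1+\sigma+\cdots+\sigma^{a-1})$ together with $1+\sigma+\cdots+\sigma^{a-1}\equiv a\pmod{(\sigma-1)}$ and $(\sigma-1)^p=0$ in $\F_p\UU$ reduces $n\cdot(\sigma-1)^{p-1}v$ to $a^{p-1}\chi(n)(\sigma-1)^{p-1}v$; Fermat's little theorem then yields the $\F_p[N/\UU]$-isomorphism $V/[\UU,V]\cong C_V(\UU)$.

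For (b), the head of $V|_N$ is $V/[\UU,V]$, a $1$-dimensional simple $\F_pN$-module. For any $1$-dimensional simple $L$ (on which $\UU$ acts trivially), the projective cover in $\F_pN$-mod is $\F_p\UU\otimes_{\F_p}L$ with the diagonal $N$-action, a uniserial module of length $p$ and dimension $p$; by self-injectivity of $\F_pN$ it also realizes the injective envelope of $L$. Hence the projective cover of $V|_N$ has dimension $p$, and so does the injective envelope, which equals that of the simple socle $C_V(\UU)$.

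For (c), I will take $W$ to be the injective envelope of $V_0$ in $\F_p\genG$-mod. Since any $1$-dimensional $\F_p\genG$-module has trivial $\UU$-action, $V_0\subseteq C_V(\UU)$; but both are $1$-dimensional, so $V_0=C_V(\UU)=\textup{soc}(V)$, giving $V\hookrightarrow I(V_0)=W$. By Frobenius self-injectivity of $\F_p\genG$, $W$ is projective indecomposable. I expect the main obstacle to be verifying $\dim(W)=p$: since $W$ is projective, $W|_\UU$ is free of rank $\dim C_W(\UU)$, so the task reduces to showing $C_W(\UU)=V_0$. The plan is to restrict to $N$, where $W|_N$ is a sum of length-$p$ indecomposable projectives from (b), one of which (say $P^N_L$) must contain the uniserial $V|_N$ of length $p-1$ as its radical; additional summands are then ruled out by combining the indecomposability of $W$ with the vertex-$\UU$ Green correspondence for $V$ and a Brauer-tree analysis of the block containing $V_0$, the minimal activity and faithfulness of $V$ forcing $V_0$ into a leaf-like position.

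For (d), the hypothesis $\dim C_W(\UU)=\dim C_V(\UU)=1$ together with $\dim W=d\le p$ forces $W|_\UU$ also to be a single Jordan block of length $d$, so both $V|_N$ and $W|_N$ are uniserial $\F_pN$-modules of length $d$ with $1$-dimensional composition factors. Such uniserials sit inside the length-$p$ projective covers from (b) and are therefore determined up to isomorphism by their socle, so the hypothesis $C_V(\UU)\cong C_W(\UU)$ yields $V|_N\cong W|_N$. When $d<p$, $V$ and $W$ are not $\F_p\UU$-free, so each has vertex $\UU$, and their indecomposable $N$-restrictions are their Green correspondents; the Green correspondence theorem then promotes the $\F_pN$-isomorphism to an isomorphism $V\cong W$ of $\F_p\genG$-modules.
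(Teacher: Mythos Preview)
Your proofs of (a), (b), and (d) are correct and essentially the same as the paper's. For (a) you compute the action on $(\sigma-1)^{p-1}v$ directly, exactly what lies behind the paper's citation of \cite[Lemma 1.11(b)]{indp2}; for (b) your tensor construction $\F_p\UU\otimes_{\F_p}L$ is isomorphic to the paper's $\Ind_H^N(V_0)$; for (d) both arguments embed $V|_N$ and $W|_N$ into the same uniserial $p$-dimensional injective and then invoke Green correspondence.

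The gap is in (c). Taking $W=I(V_0)$ is a reasonable idea, and the conclusion $\dim I(V_0)=p$ is in fact true, but your justification for it is not a proof. You correctly reduce to showing $C_W(\UU)=V_0$, i.e.\ that $W|_N$ has a \emph{single} $p$-dimensional summand, but then write that ``additional summands are ruled out by combining the indecomposability of $W$ with the vertex-$\UU$ Green correspondence for $V$ and a Brauer-tree analysis \ldots\ forcing $V_0$ into a leaf-like position.'' None of these ingredients does the work you need: indecomposability of $W$ as an $\F_p\genG$-module says nothing about the number of summands of $W|_N$; Green correspondence applies to $V$ (vertex $\UU$) but $W$ is projective (vertex $1$), so it gives no direct handle on $W$; and the assertion that minimal activity and faithfulness of $V$ force $V_0$ to sit at a leaf of the Brauer tree is unsupported. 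In a block with cyclic defect $C_p$ the PIM of a simple $S$ has dimension $2\dim S+\dim U_a+\dim U_b$ where $U_a,U_b$ are the two uniserial arms of the heart, and bounding this by $p$ requires real input about the tree structure that you have not established.

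The paper sidesteps this entirely. Rather than starting from $I(V_0)$ and trying to compute its dimension, it builds $W$ from below: by (b) there is a $p$-dimensional projective $\F_pN$-module $\5W$ with $V|_N\subseteq\5W$ and $\5W/V\cong V_0|_N$; then a stable-elements argument shows the restriction map $\Ext^1_{\F_p\genG}(V_0,V)\to\Ext^1_{\F_pN}(V_0,V)$ is an isomorphism (both equal the $N_\genG(\UU)$-stable part of $\Ext^1_{\F_p\UU}$), so the extension $\5W$ lifts to an $\F_p\genG$-extension $W$ with $W|_N\cong\5W$. Projectivity of $W$ follows immediately from projectivity of $\5W$, and $\dim W=p$ is built in. A posteriori this $W$ \emph{is} the injective envelope of $V_0$, confirming your intuition, but the paper's route gets there without any block-theoretic analysis.
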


\begin{proof} \textbf{(a) } By Lemma \ref{min.act.props}, $\UU$ acts on 
$V$ with only one Jordan block, so $\dim(C_V(\UU))=1$ and 
$\dim(V/[\UU,V])=1$. By \cite[Lemma 1.11(b)]{indp2}, if $g\in N_\genG(\UU)$ 
and $t\in(\Z/p)^\times$ are such that $g$ acts on $V/[\UU,V]$ via 
multiplication by $t$, then for some $r\in(\Z/p)^\times$, $g$ acts on 
$C_V(\UU)$ via multiplication by $tr^{m-1}=tr^{p-1}=t$. Thus $V/[\UU,V]$ 
and $C_V(\UU)$ are isomorphic as $\F_p[N_\genG(\UU)/\UU]$-modules. 

\smallskip

\noindent\textbf{(b) } By the Schur-Zassenhaus theorem, there is $H<N$ 
of index $p$ such that $N=H\UU$. Set $V_0=C_V(\UU)$, regarded as an 
$\F_p[N/\UU]$-module, and also as an $\F_pH$-module via the natural 
isomorphism $H\cong N/\UU$. Set $\5V=\Ind_H^N(V_0)$: a projective and 
injective $p$-dimensional $\F_pN$-module. Then 
	\[ \5V/[\UU,\5V] \cong \F_p[N/\UU]\otimes_{\F_pN} \F_pN 
	\otimes_{\F_pH} V_0 \cong \F_p[N/\UU]\otimes_{\F_pH} V_0 \cong 
	V_0, \]
and so $C_{\5V}(\UU)\cong V_0$ by (a). Thus $\5V$ is the injective 
envelope of $V_0$ when regarded as an $\F_pN$-module. In particular, an 
isomorphism $C_V(\UU)\cong C_{\5V}(\UU)$ extends to an $\F_pN$-linear 
homomorphism $V\too \5V$ which is injective since it sends the socle 
$C_V(\UU)$ injectively. Thus $\5V$ is an injective envelope of $V|_N$. 
The statement about projective covers is shown in a 
similar way (or by dualizing). 

\smallskip

\noindent\textbf{(c) } Assume that $\dim(V)=p-1$, and that there is an 
$\F_p\genG$-submodule $V_0<V$ with $\dim(V_0)=1$. Then $V_0=C_V(\UU)$, since 
this is the unique $1$-dimensional submodule of $V$ as an 
$\F_p\UU$-module. By (b), there is an injective (hence projective) 
$\F_pN$-module $\5W$ containing $V|_N$ as a submodule. By (a), 
$\5W/V\cong V_0|_N$ as $\F_pN$-modules. 

Consider the homomorphisms
	\[ \Ext_{\F_p\genG}^1(V_0,V) \Right4{\Phi_1} \Ext_{\F_pN}^1(V_0,V)
	\Right4{\Phi_2} \Ext_{\F_p\UU}^1(V_0,V) \]
induced by restrictions of rings. Since $\UU$ has index prime to $p$ in 
$\genG$, $\Phi_1$ and $\Phi_2$ are injective, and the images of $\Phi_2$ and 
$\Phi_2\Phi_1$ are certain subgroups of stable elements (see 
\cite[Proposition 3.8.2]{Benson1} for this version of the stable elements 
theorem). Since $\Ext_{\F_p}^1(V_0,V)=0$, we need only consider stability 
of elements with respect to automorphisms of $\UU$, and hence 
$\Im(\Phi_2)=\Im(\Phi_2\Phi_1)$. 

Thus $\Phi_1$ is an isomorphism. Interpreted in terms of extensions, this 
implies that there is an extension $0\too V\too W\too V_0\too 0$ of 
$\F_p\genG$-modules such that $W|_N\cong\5W$. In particular, $W$ is projective 
since $\5W$ is projective as an $\F_pN$-module.

\smallskip

\noindent\textbf{(d) } By (b), the injective envelope $\5V$ of $V|_N$ is 
$p$-dimensional. Hence $C_{\5V}(\UU)\cong C_V(\UU)\cong C_W(\UU)$, so 
$\5V$ is also the injective envelope of $C_W(\UU)$, and hence of $W|_N$ 
since $C_W(\UU)$ is its socle. Since $\dim(V)=\dim(W)$, and $\5V$ contains 
a unique $\F_pN$-submodule of each dimension $m\le p$, we conclude that 
$V\cong W$ are isomorphic as $\F_pN$-modules. 

If $\dim(V)<p$, then $\UU$ is a vertex of $V$ and of $W$ and they are 
the Green correspondents of $V|_N$ and $W|_N$, respectively (see 
\cite[\S\,3.12]{Benson1}). So $V\cong W$ as $\F_p\genG$-modules. 
\end{proof}

Point (d) need not hold if $\dim(V)=p$. As an example, fix $p\ge5$, set 
$\genG=\SL_2(p)$ and choose $\UU\in\sylp{\genG}$, let $V$ be the simple 
$p$-dimensional $\F_p\genG$-module, and let $W$ be the projective cover of the 
trivial 1-dimensional $\F_p\genG$-module. Using the fact that $V$ is the 
$(p-1)$-st symmetric power of the natural 2-dimensional $\F_p\genG$-module, it 
is not hard to see that $C_V(\UU)$ is 1-dimensional with trivial 
$N_\genG(\UU)/\UU$-action. The same holds for $W$ by construction, where 
$\dim(W)=p$. We refer to \cite[pp. 48--52]{Alperin} or the discussion in 
\cite[\S\,6]{indp2} for more detail.

A minimally active indecomposable $\F_p\genG$-module of dimension at least 
$p+2$ is simple by \cite[Proposition 3.7(c)]{indp2}. This is not true 
for modules of dimension $p+1$, but the following lemma gives some 
information about such modules.

\begin{Lem} \label{rk(V)=p+1}
Fix a finite group $\genG\in\GG$ with $\UU\in\sylp{\genG}$. Let $V$ be a finite, 
minimally active, indecomposable $\F_p\genG$-module of rank $p+1$. If $0\ne 
V_0<V$ is a proper nontrivial submodule, then $V_0|_\UU$ and 
$(V/V_0)|_\UU$ are both indecomposable $\F_p\UU$-modules with nontrivial 
action. In particular, $2\le\dim(V_0)\le p-1$.
\end{Lem}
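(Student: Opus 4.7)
My plan is as follows. By Lemma~\ref{min.act.props}(b,c), since $\dim V = p+1$, we have $V|_\UU \cong J_p \oplus J_1$ (where $J_k$ denotes the indecomposable $\F_p\UU$-module of dimension $k$), and $\dim C_V(\UU) = 2$. For any nonzero $\F_p\UU$-submodule $W$ of $V|_\UU$, the intersection $W \cap C_V(\UU)$ is the socle of $W|_\UU$, so its dimension equals the number of Jordan blocks of $W|_\UU$; it is therefore $1$ or $2$. It equals $1$ precisely when $W|_\UU$ is a single Jordan block, and $2$ precisely when $W \supseteq C_V(\UU)$, in which case $W|_\UU \cong J_a \oplus J_1$ for some $a \ge 1$. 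Moreover, $W|_\UU$ has trivial $\UU$-action iff $W \subseteq C_V(\UU)$.

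Let $V_0 < V$ be a proper nontrivial $\F_p\genG$-submodule. To establish $V_0|_\UU$ indecomposable with nontrivial $\UU$-action, the plan is to rule out the two extreme cases: (A) $V_0 \subseteq C_V(\UU)$, and (B) $V_0 \supseteq C_V(\UU)$. The corresponding statement for $(V/V_0)|_\UU$ will follow by a duality reduction: $V^*$ is also minimally active and indecomposable of dimension $p+1$ (Jordan blocks over $\F_p$ being self-dual), and the perp pairing $V_0 \mapsto V_0^\perp$ gives a bijection between proper nontrivial $\F_p\genG$-submodules of $V$ and of $V^*$, with $(V_0^\perp)|_\UU \cong ((V/V_0)|_\UU)^* \cong (V/V_0)|_\UU$. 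Applying the submodule statement to $V^*$ with $V_0^\perp$ in place of $V_0$ then yields the quotient statement. Once both conditions hold, writing $V_0|_\UU \cong J_a$ and $(V/V_0)|_\UU \cong J_b$ with $a + b = p+1$ and $a, b \ge 2$ yields $2 \le \dim V_0 = a \le p-1$, giving the ``in particular'' claim.

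For Case~(A), suppose $V_0 \subseteq C_V(\UU)$. Since $V_0$ is $\F_p\genG$-invariant, for each $g \in \genG$ we have $V_0 = gV_0 \subseteq gC_V(\UU) = C_V(\UU^g)$, so $V_0 \subseteq C_V(N)$ where $N = \gen{\UU^\genG} \nsg \genG$ is the normal closure of $\UU$. Because $N$ contains a Sylow $p$-subgroup, $|\genG/N|$ is coprime to $p$, and $V_0$ is a semisimple $\F_p[\genG/N]$-module by Maschke. I then plan to contradict the $\F_p\genG$-indecomposability of $V$ by showing the inclusion $V_0 \hookrightarrow V$ splits; equivalently, that the extension class in $\Ext^1_{\F_p\genG}(V/V_0, V_0)$ vanishes. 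The key input is the Green-correspondence decomposition $V|_{N_\genG(\UU)} \cong \gamma(V) \oplus P$, where $\gamma(V)$ is $1$-dimensional with trivial $\UU$-action (arising because $V|_\UU$ has its unique non-projective Jordan block of size~$1$) and $P$ is a $p$-dimensional projective; combined with the vanishing of $H^i(\genG/N,-)$ on $\F_p[\genG/N]$-modules (since $|\genG/N|$ is coprime to $p$), this forces the splitting via the Lyndon--Hochschild--Serre spectral sequence. For Case~(B), $\genG$-invariance of $V_0 \supseteq C_V(\UU)$ forces $V_0 \supseteq \Sigma := \sum_{g\in\genG} C_V(\UU^g)$, the smallest $\F_p\genG$-submodule containing $C_V(\UU)$, and the goal is to prove $\Sigma = V$ (forcing $V_0 = V$, contradicting properness) by analyzing $V/\Sigma$ via indecomposability of $V$ and the Green-correspondence structure.

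The main obstacle is executing these splitting arguments, and particularly Case~(A): the key technical point is carefully combining the $N_\genG(\UU)$-level decomposition $V|_{N_\genG(\UU)} = \gamma(V) \oplus P$ with the global $\F_p\genG$-module structure of $V$ to propagate local information to the $\Ext$ group over $\genG$. Once this is in place, Case~(B) and the duality reduction assemble routinely.
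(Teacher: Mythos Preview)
Your overall strategy --- reducing to the two cases (A) $V_0 \subseteq C_V(\UU)$ and (B) $V_0 \supseteq C_V(\UU)$, and using duality to pass from the submodule statement to the quotient statement --- is sound and matches the paper's framework. The Green-correspondence decomposition $V|_{N_\genG(\UU)} \cong \gamma(V) \oplus P$ with $\dim\gamma(V)=1$ is also correct. However, your argument for Case~(A) has a genuine gap.

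You want to show that the extension $0 \to V_0 \to V \to V/V_0 \to 0$ splits over $\genG$ whenever $V_0 \subseteq C_V(\UU)$. Consider the critical sub-case $\dim V_0=1$ and $V_0 = [\UU,V] \cap C_V(\UU)$. In the decomposition $V|_{N_\genG(\UU)} = \gamma(V) \oplus P$ one has $[\UU,V] = [\UU,P] \subseteq P$, so $V_0 = \mathrm{soc}(P)$. Since $P$ is uniserial (indecomposable projective over the Nakayama algebra $\F_pN_\genG(\UU)$), $\mathrm{soc}(P)$ has no complement in $P$, and hence none in $\gamma(V) \oplus P$. Thus the extension does \emph{not} split over $N_\genG(\UU)$, so you cannot deduce splitting over $\genG$ by restriction. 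The LHS spectral sequence for $N = O^{p'}(\genG) \nsg \genG$ involves a \emph{different} subgroup and only reduces the question to $\Ext^1_N(V/V_0,V_0)^{\genG/N}$; you give no reason why this vanishes, and your two ingredients do not combine in any evident way.

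The paper handles this sub-case by an entirely different mechanism: since $(V/V_0)|_\UU \cong J_{p-1}\oplus J_1$ is decomposable while $\dim(V/V_0)=p$, Proposition~3.7(a) of \cite{indp2} (a minimally active indecomposable module of dimension $\le p$ restricts to a \emph{single} Jordan block over $\UU$) forces $V/V_0$ to be decomposable \emph{as an $\F_p\genG$-module}. Lifting that decomposition to submodules $W_1,W_2\le V$ (each containing $V_0$), one finds that either $W_1|_\UU \cong J_p$, so $W_1$ is projective-injective and splits off from $V$, or else $(u-1)^{p-1}V=0$, contradicting $V|_\UU \cong J_p\oplus J_1$. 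This use of Proposition~3.7(a) on the \emph{quotient} $V/V_0$ is the key idea your proposal lacks. Your Case~(B) sketch is similarly underspecified; once Case~(A) is done properly, duality handles the rest as you indicate.
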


\begin{proof} Recall $|\UU|=p$ since $\genG\in\GG$. By \cite[Proposition 
3.7(a)]{indp2}, $V|_\UU\cong\F_p\UU\oplus\F_p$; i.e., $V|_\UU$ has 
Jordan blocks of dimension $p$ and $1$.

Fix a proper nontrivial submodule $0\ne V_0<V$, and assume that $V_0|_\UU$ 
is decomposable or has trivial action as an $\F_p\UU$-module. We will 
first show that $V_0$ always has a 1-dimensional $\F_p\genG$-submodule, and 
then show that this is impossible. In particular, this shows that 
$\dim(V_0)\ge2$. The corresponding results for $V/V_0$ then follow by 
dualizing.

If $V_0|_\UU$ is decomposable with nontrivial action, then $V_0|_\UU$ is 
the sum of a $1$-dimensional module with trivial action and an 
indecomposalble module of dimension at most $p-1$. By \cite[Proposition 
3.7(a)]{indp2}, $V_0$ is decomposable as an $\F_p\genG$-module, and thus has a 
1-dimensional summand. 

If $\UU$ acts trivially on $V_0$, then $\dim(V_0)\le2$ and $O^{p'}(\genG)$ 
(the normal closure of $\UU$ in $\genG$) acts trivially on $V_0$. If 
$\dim(V_0)=2$, then $V_0=C_V(\UU)$, and $[\UU,V]\cap V_0$ is a 
1-dimensional subspace normalized by $N_\genG(\UU)$. Since 
$\genG=O^{p'}(\genG)N_\genG(\UU)$ by the Frattini argument, $[\UU,V]\cap V_0$ is a 
1-dimensional $\F_p\genG$-submodule. 

We are thus reduced to the case where $\dim(V_0)=1$. If $V_0\ne[\UU,V]$, 
then $(V/V_0)|_\UU$ is indecomposable (consists of one Jordan block), and 
hence is $\F_p\UU$-free. So $V/V_0$ is projective, contradicting the 
assumption that $V$ is indecomposable.

Thus $V_0=[\UU,V]$ is an $\F_p\genG$-submodule, and $V/V_0$ has Jordan blocks 
of length $1$ and $p-1$. So by \cite[Proposition 3.7(a)]{indp2}, it is 
decomposable: there are submodules $W_1,W_2<V$ such that 
$V/V_0=(W_1/V_0)\oplus(W_2/V_0)$, where $\dim(W_1)=p$, and 
$(W_1/V_0)|_\UU$ is an (indecomposable) Jordan block. If $[\UU,W_1]=0$, 
then $[\UU,W_2]=V_0$, and $V|_\UU$ contains Jordan blocks of dimension 
$p-1$ and $2$, which we saw is impossible. Thus $[\UU,W_1]=V_0$, so 
$W_1|_\UU$ is indecomposable, $W_1$ is projective and injective, and this 
again contradicts the assumption that $V$ is indecomposable.

This proves that $\UU$ acts nontrivially on $V_0$, and in particular, 
$\dim(V_0)\ge2$. A similar argument applied to the dual $V^*$ shows that 
$\UU$ acts nontrivially on $V/V_0$, and that $\dim(V_0)\le p-1$. 
\end{proof}

The following definitions will be useful.

\begin{Defi} \label{d:ZG-lattice}
For a finite group $\genG$, a \emph{$\Z_p\genG$-lattice} is a finitely 
generated $\Z_p\genG$-module that is free as a $\Z_p$-module (hence a lattice 
in a finitely generated $\Q_p\genG$-module). A \emph{discrete $\genG$-$p$-torus} is 
a discrete $p$-torus equipped with an action of $\genG$ by automorphisms.
\end{Defi}

Let $\Q_p(\zeta)\supseteq\Z_p[\zeta]$ denote the extensions of 
$\Q_p\supseteq\Z_p$ by a primitive $p$-th root of unity $\zeta$. When 
$\UU$ is a group of order $p$, we regard $\Q_p(\zeta)$ and $\Z_p[\zeta]$ 
as $\Z_p\UU$-modules under some choice of identification 
$\UU\cong\gen{\zeta}$.

\begin{Lem} \label{min.act.|Z0|=p}
Fix an odd prime $p$, a group $\genG\in\GG$, and $\UU\in\sylp{\genG}$. 
\begin{enuma} 

\item Let $\Lambda$ be a $\Z_p\genG$-lattice such that $\Lambda/p\Lambda$ is 
faithful and minimally active as an $\F_p\genG$-module. Then 
$\Lambda/C_\Lambda(\UU)\cong\Z_p[\zeta]$ as $\Z_p\UU$-modules, and 
$[\UU,\Lambda]+C_\Lambda(\UU)$ has index $p$ in $\Lambda$.

\item Let $A$ be a discrete $\genG$-$p$-torus such that $\Omega_1(A)$ is 
faithful and minimally active as an $\F_p\genG$-module. Then 
$A/C_A(\UU)\cong\Q_p(\zeta)/\Z_p[\zeta]$ as $\Z_p\UU$-modules, and 
$|[\UU,A]\cap C_A(\UU)|=p$.

\item Let $X$ be a finite, faithful $\Z_p\genG$-module. Then $\genG$ acts 
faithfully on $\Omega_1(X)$ and on $X/pX$. Among the following 
conditions:
\begin{enum1} 
\item $\Omega_1(X)$ is minimally active as an $\F_p\genG$-module.
\item $X/pX$ is minimally active as an $\F_p\genG$-module.
\item $|[\UU,X]\cap C_X(\UU)|=p$. 
\item $[\UU,X]+C_X(\UU)$ has index $p$ in $X$. 
\end{enum1}
we have $(1)\,\Longleftarrow\,(3)\iff(4)\implies(2)$. 
If $X\cong\Lambda/\Lambda_0$ 
for some $\Z_p\genG$-lattice $\Lambda$ and some submodule $\Lambda_0\le 
p\Lambda$, then all four conditions are equivalent. 

\item If $X$ is a finite, faithful $\Z_p\genG$-module such that condition 
\textup{(c.4)} holds, then for each $x\in X\sminus([\UU,X]+C_X(\UU))$, 
$X=C_X(\UU)+\Z_p\UU\cdot x$.

\end{enuma}
\end{Lem}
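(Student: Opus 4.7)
The approach rests on the Heller--Reiner classification of $\Z_p\UU$-lattices for $|\UU|=p$: each decomposes as $\Z_p^a \oplus \Z_p[\zeta]^b \oplus \Z_p\UU^c$, and reducing modulo $p$ contributes Jordan blocks of sizes $1, p-1, p$ with multiplicities $a, b, c$. For (a), minimal activity of $\Lambda/p\Lambda$ forces $b+c=1$. If $b=1$, both claims are immediate from $C_\Lambda(\UU) = \Z_p^a$ and $[\Z_p[\zeta] : (\zeta-1)\Z_p[\zeta]] = p$. If $c=1$, the surjection $\Z_p\UU \to \Z_p[\zeta]$ sending the generator $\sigma$ of $\UU$ to $\zeta$ has kernel $\Z_p \cdot N$ for $N = 1 + \sigma + \cdots + \sigma^{p-1}$, identifying $\Z_p\UU/C_{\Z_p\UU}(\UU) \cong \Z_p[\zeta]$; and since $N$ represents $p$ in the augmentation quotient $\Z_p\UU/I_\UU \cong \Z_p$ (with $I_\UU = (\sigma-1)\Z_p\UU$), the subgroup $I_\UU + \Z_p N$ has index $p$ in $\Z_p\UU$.

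For (b), take $\Lambda$ to be the Tate module $\Hom(\Q_p/\Z_p, A)$, so that $A = \Lambda \otimes \Q_p/\Z_p$ and $\Omega_1(A) \cong \Lambda/p\Lambda$, and apply (a). Case $b=1$ is transparent from the splitting $A = (\Q_p/\Z_p)^a \oplus \Q_p(\zeta)/\Z_p[\zeta]$, yielding $C_A(\UU) = (\Q_p/\Z_p)^a \oplus (\zeta-1)^{-1}\Z_p[\zeta]/\Z_p[\zeta]$ and $[\UU,A] = 0 \oplus \Q_p(\zeta)/\Z_p[\zeta]$. Case $c=1$ yields to the product decomposition $\Q_p\UU \cong \Q_p \times \Q_p(\zeta)$ (into which $\Z_p\UU$ embeds with index $p$); direct computation then gives $C_A(\UU) = (\Q_p \oplus \Z_p[\zeta])/\Z_p\UU$ and $[\UU,A] = (\Z_p \oplus \Q_p(\zeta))/\Z_p\UU$, whence $A/C_A(\UU) \cong \Q_p(\zeta)/\Z_p[\zeta]$ and the intersection $(\Z_p \oplus \Z_p[\zeta])/\Z_p\UU$ has order $p$.

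For (c), the faithfulness statements follow because the respective kernels of $\genG$ on $\Omega_1(X)$ and $X/pX$ are normal $p$-subgroups (standard for $p$ odd: an automorphism of a finite abelian $p$-group of order coprime to $p$ acting trivially on $\Omega_1$ is trivial), hence trivial since $O_p(\genG)=1$. The equivalence $(3)\iff(4)$ is the Herbrand identity $|C_X(\UU)| = |X/[\UU,X]|$ for a cyclic group acting on a finite module. For $(3)\Rightarrow(1)$, the inclusion $[\UU,\Omega_1(X)] \cap C_{\Omega_1(X)}(\UU) \subseteq [\UU,X] \cap C_X(\UU)$ bounds the number of nontrivial Jordan blocks of $\Omega_1(X)|_\UU$ above by $1$, while faithfulness of $\UU$ on $\Omega_1(X)$ forces at least one, hence exactly one. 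The implication $(4)\Rightarrow(2)$ is analogous using $X/pX$. In the lattice case $X \cong \Lambda/\Lambda_0$ with $\Lambda_0 \subseteq p\Lambda$, I verify $(2)\Rightarrow(4)$ by showing $\widetilde{C} := \{\lambda : (\sigma-1)\lambda \in \Lambda_0\} \subseteq C_\Lambda(\UU) + [\UU,\Lambda]$: the hypothesis forces $(\sigma-1)\lambda \in p\Lambda$, which in case A reduces using $p\Z_p[\zeta] = (\zeta-1)^{p-1}\Z_p[\zeta]$, and in case B using $p \in I_\UU + \Z_p N$. Finally $(1)\Rightarrow(2)$ follows because $\Lambda$ and $\Lambda_0$ share the same $\Q_p\UU$-rational structure (their quotient being torsion); condition $(1)$ pins this down to $\Q_p^a \oplus \Q_p(\zeta)$, which in turn forces $\Lambda$ into one of the two minimally active Heller--Reiner types.

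For (d), set $M = C_X(\UU) + \Z_p\UU \cdot x$. The image of $x$ generates the cyclic $\F_p$-quotient $X/([\UU,X]+C_X(\UU))$, so $M + [\UU,X] = X$; equivalently, $(\sigma-1)$ acts surjectively on $X/M$. But $(\sigma-1)^p \in p\Z_p\UU$, so by iteration $(\sigma-1)^{p^n} \in p^n\Z_p\UU$, showing $(\sigma-1)$ is nilpotent on any finite $\Z_p\UU$-module; surjectivity then forces $X/M = 0$. The main obstacle I anticipate is the implication $(1)\Rightarrow(2)$ in the lattice case: one must match the rational $\Q_p\UU$-structures of $\Lambda$ and $\Lambda_0$ precisely and verify that minimal activity of $\Lambda_0/p\Lambda_0$ indeed restricts $\Lambda$ to the two minimally active Heller--Reiner types with no rogue alternatives.
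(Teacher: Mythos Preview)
Your proof is correct, but your route differs from the paper's in several places.

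The paper does not invoke the Heller--Reiner classification. For (a), it argues only at the level of $\Q_p\UU$-modules (Lemma~\ref{ZU-modules}(a)) to see that $\Lambda/C_\Lambda(\UU)$ sits in a sum of copies of $\Q_p(\zeta)$, then uses minimal activity of $\Lambda/p\Lambda$ to force a single copy, and Lemma~\ref{ZU-modules}(c) to identify the lattice. The index-$p$ statement comes from the short exact sequence $0\to C_\Lambda(\UU)\to\Lambda\xrightarrow{1-u}[\UU,\Lambda]\to0$ together with $[\UU,\Lambda]\cong\Z_p[\zeta]$. Part (b) is deduced from (a) via Proposition~\ref{tori&lattices} and the same exact-sequence trick, rather than by your explicit case split on the torus side. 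Your use of Heller--Reiner is a perfectly good shortcut and makes the two cases $b=1$ and $c=1$ transparent; the paper's approach avoids naming that classification but is essentially doing the same structure analysis.

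In (c), the paper handles the lattice case differently: for $(2)\Rightarrow(4)$ it simply applies (a) to $\Lambda$ (since $X/pX\cong\Lambda/p\Lambda$) and projects, bypassing your direct computation with $\widetilde{C}$; for $(1)\Rightarrow(3)$ it embeds $X$ into the discrete torus $(\Q_p/\Z_p)\otimes\Lambda_0$ via a $\Tor$ exact sequence and invokes (b), rather than your argument through the shared rational $\Q_p\UU$-type of $\Lambda$ and $\Lambda_0$. Both routes are valid; yours is more self-contained while the paper's reuses (a) and (b) as black boxes.

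Your argument for (d) is cleaner than the paper's. The paper works by first producing an auxiliary generator $y$ of the nontrivial Jordan block in $X/pX$, writing $y$ in terms of $x$ modulo $C_X(\UU)+pX$ using invertibility of $1-(1-u)\xi$ in $\Z_p\UU$, and then killing $pX$ by the Frattini argument. Your Nakayama-style observation that $(\sigma-1)$ is surjective and nilpotent on $X/M$ gets there in one step.
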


\begin{proof} Fix a generator $u\in\UU$.

\smallskip

\noindent\textbf{(a) } Set $M=\Q_p\otimes_{\Z_p}\Lambda$. By Lemma 
\ref{ZU-modules}(a), $M/C_M(\UU)$ is isomorphic, as a $\Q_p\UU$-module, 
to a sum of copies of $\Q_p(\zeta)$, where $\zeta$ is a primitive $p$-th 
root of unity. In particular, each Jordan block for the action of $\UU$ on 
$\Lambda/(C_\Lambda(\UU)+p\Lambda)$ has length at most $p-1$. Since 
$\Lambda/p\Lambda$ is minimally active, it follows that 
$M/C_M(\UU)\cong\Q_p(\zeta)$, since otherwise 
$\Lambda/(C_\Lambda(\UU)+p\Lambda)$ would have rank at least $2(p-1)$ and 
hence $\UU$ would be fixed by a submodule of rank at least $p\ge3$. Hence 
$\Lambda/C_\Lambda(\UU)\cong\Z_p[\zeta]$ by Lemma \ref{ZU-modules}(c). 

Consider the short exact sequence
        \[ 0 \Right2{} C_\Lambda(\UU) \Right4{\incl} \Lambda \Right4{\varphi} 
        [\UU,\Lambda] \Right2{} 0, \]
where $\varphi(x)=u(x)-x$ for all $x\in\Lambda$. 
We just saw that $[\UU,\Lambda]\cong\Z_p[\zeta]$. Under this 
identification, $\varphi|_{[\UU,\Lambda]}$ is multiplication by $1-\zeta$, 
and so its image has index $p$ in $[\UU,\Lambda]$. Thus 
$C_\Lambda(\UU)+[\UU,\Lambda]$ has index $p$ in $\Lambda$.

\smallskip

\noindent\textbf{(b) } Let $A$ be a discrete $\genG$-$p$-torus such that 
$\Omega_1(A)$ is faithful and minimally active as an $\F_p\genG$-module. Set 
$\Lambda=\Hom_{\Z_p}(A,\Q_p/\Z_p)$, regarded as a $\Z_p\genG$-lattice. Then 
$\Omega_1(A)\cong\Lambda/p\Lambda$ by Proposition \ref{tori&lattices}, so 
$\Lambda/p\Lambda$ is minimally active. We just saw, in the proof of (a), 
that this implies that $\Lambda/C_\Lambda(\UU)\cong\Z_p[\zeta]$. So after 
taking tensor products with $\Q_p/\Z_p$ and applying Proposition 
\ref{tori&lattices} again, we get that 
$A/(\Q_p/\Z_p\otimes_{\Z_p}C_\Lambda(\UU))\cong\Q_p(\zeta)/\Z_p[\zeta]$. 
Hence $A/C_A(\UU)\cong\Q_p(\zeta)/R$ for some $\Z_p\UU$-lattice 
$R<\Q_p(\zeta)$ that contains $\Z_p[\zeta]$ with finite index. Then 
$R\cong\Z_p[\zeta]$ by Lemma \ref{ZU-modules}(c), $\Q_p(\zeta)=\Q_p\cdot 
R$, and so $A/C_A(\UU)\cong\Q_p(\zeta)/\Z_p[\zeta]$.

Consider the short exact sequence
        \[ 0 \Right2{} C_A(\UU) \Right4{\incl} A \Right4{\varphi} 
        [\UU,A] \Right2{} 0, \]
where $\varphi(x)=u(x)-x$ for all $x\in A$. 
We just saw that $[\UU,A]\cong\Q_p(\zeta)/\Z_p[\zeta]$. Under this 
identification, $\varphi|_{[\UU,A]}$ is multiplication by $1-\zeta$, and 
so its kernel has order $p$. Thus $|C_A(\UU)\cap[\UU,A]|=p$.

\smallskip

\noindent\textbf{(c) } We have $|X|=|C_X(\UU)|\cdot|[\UU,X]|$: $X$ 
is finite, and $[\UU,X]$ is the image of the 
homomorphism $X\Right2{1-u}X$ while $C_X(\UU)$ is its kernel. Hence 
(3) and (4) are equivalent.

If (3) holds, then $[\UU,\Omega_1(X)]\cap C_{\Omega_1(X)}(\UU)$ also has 
order $p$ (since it cannot be trivial). Since the rank of this intersection 
is the number of Jordan blocks in $\Omega_1(X)$ with nontrivial 
$\UU$-action, we see that $\Omega_1(X)$ is minimally active in this case. 
So (3) implies (1); and a similar argument shows that (4) implies (2).

Assume that $\Lambda_0<\Lambda$ are $\Z_p\genG$-lattices such that 
$\Lambda_0\le p\Lambda$ and $\Lambda/\Lambda_0\cong X$. In particular, 
$\Lambda/p\Lambda\cong X/pX$. So if $X/pX$ is minimally active, then 
$[\UU,\Lambda]+C_\Lambda(\UU)$ has index $p$ in $\Lambda$ by (a), 
and hence $[\UU,X]+C_X(\UU)$ has index $p$ in $X$ (since it cannot 
be all of $X$). Thus (2) implies (4) in this case. 

We continue to assume that $X\cong\Lambda/\Lambda_0$, and 
set $R_p=\Q_p/\Z_p$ for short. We have an exact sequence 
        \[ 0 \Right2{} \Tor_{\Z_p}(R_p,X) \Right3{} R_p\otimes_{\Z_p}\Lambda_0 
        \Right3{} R_p\otimes_{\Z_p}\Lambda \Right3{} R_p\otimes_{\Z_p}X \Right2{} 0. \]
Also, by tensoring the short exact sequence $0\to\Z_p\to\Q_p\to R_p\to0$ by 
$X$ and using the fact that $\Q_p$ is flat over $Z_p$, we see that 
$R_p\otimes_{\Z_p}X=0$, and $\Tor_{\Z_p}(R_p,X)\cong X$ as $\Z_p\genG$-modules. 
Thus $X$ is isomorphic to a subgroup of the discrete $\genG$-$p$-torus 
$A=R_p\otimes_{\Z_p}\Lambda_0$ (see Proposition \ref{tori&lattices}), where 
$\Omega_1(X)\cong\Omega_1(A)$. With the help of (b), we now see that (1) 
implies (3). 

\smallskip

\noindent\textbf{(d) } Set $\4X=X/pX$ for short. By (c), $\4X$ is minimally 
active as an $\F_p\genG$-module. Hence 
there is $\4y\in\4X$ such that $\4X=C_{\4X}(\UU)+\F_p\UU\cdot\4y$. Thus 
$[\UU,\4X]=(1-u)\F_p\UU\cdot\4y$. Choose $y\in X$ whose class modulo 
$p$ is $\4y$; then $[\UU,X]\le(1-u)\Z_p\UU\cdot y+pX$.

By assumption, $X=C_X(\UU)+[\UU,X]+\Z_p\cdot x$. So there are 
$\xi\in\Z_p\UU$ and $r\in\Z_p$ such that $y\in rx+(1-u)\xi\cdot 
y+C_X(\UU)+pX$. Then $(1-(1-u)\xi)y\in rx+C_X(\UU)+pX$, where 
$1-(1-u)\xi$ is invertible in $\Z_p\UU$ since $(1-u)^p\in 
p\Z_p\UU$. Thus $y\in\Z_p\UU\cdot x+C_X(\UU)+pX$, and hence 
$X=\Z_p\UU\cdot x+C_X(\UU)+pX$. Since $pX$ is the Frattini subgroup of 
$X$, it now follows that $X=\Z_p\UU\cdot x+C_X(\UU)$. 
\end{proof}

In the rest of the section, we look at questions of existence and 
uniqueness of finite $\Z_p\genG$-modules or discrete $\genG$-$p$-tori $A$ for which 
$\Omega_1(A)$ is isomorphic to a given minimally active, indecomposable 
$\F_p\genG$-module.

\begin{Prop} \label{dim(p-1)}
Fix an odd prime $p$ and a group $\genG\in\GG$, and let $V$ be a faithful, 
minimally active, indecomposable $\F_p\genG$-module. 
\begin{enuma} 

\item If $\dim(V)\ge p-1$, then there are a $\Q_p\genG$-module $M$ and a 
$\Z_p\genG$-lattice $\Lambda\le M$ such that $V\cong\Lambda/p\Lambda$.

\item If $\dim(V)=p$, and there is an $\F_p\genG$-submodule $V_1<V$ of 
dimension $1$, then $M$ and $\Lambda$ can be chosen as in (a) such that $M$ 
contains a $1$-dimensional $\Q_p\genG$-submodule.

\end{enuma}
\end{Prop}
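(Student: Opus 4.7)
The plan is to prove part (a) case-by-case on $\dim V$. The simplest case is $\dim V = p$: here $V|_{\UU}\cong\F_p\UU$ is a free $\F_p\UU$-module, so since $\UU$ is a Sylow $p$-subgroup of $\genG$, $V$ is projective as an $\F_p\genG$-module. Projective $\F_p\genG$-modules lift uniquely to projective $\Z_p\genG$-lattices $\Lambda$ via lifting of idempotents, and we set $M=\Q_p\otimes_{\Z_p}\Lambda$.

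For $\dim V=p-1$, the restriction $V|_{\UU}$ is a single Jordan block of dimension $p-1$, so $C_V(\UU)$ is $1$-dimensional and hence so is the $\F_p\genG$-socle of $V$ (any $\F_p\genG$-simple submodule must embed in $C_V(\UU)$). Lemma \ref{l:top&bottom}(c) then produces a projective $\F_p\genG$-module $W$ of dimension $p$ with $V\hookrightarrow W$. Lift $W$ to a projective $\Z_p\genG$-lattice $\widetilde{W}$, and lift the $1$-dimensional quotient $W/V$ to a rank-$1$ $\Z_p\genG$-lattice $\widetilde{W/V}$ via the Teichm\"uller lift of its character. By projectivity of $\widetilde{W}$, the composite $\widetilde{W}\to W\to W/V$ lifts to a $\Z_p\genG$-homomorphism $\phi\colon\widetilde{W}\to\widetilde{W/V}$, which is surjective by Nakayama's lemma. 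Applying the snake lemma to the multiplication-by-$p$ endomorphism of $0\to\ker\phi\to\widetilde{W}\to\widetilde{W/V}\to0$ identifies $\ker(\phi)/p\ker(\phi)\cong V$, so $\Lambda:=\ker\phi$ is the desired $\Z_p\genG$-lattice of rank $p-1$, and $M=\Q_p\otimes_{\Z_p}\Lambda$.

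For $\dim V\ge p+2$, $V$ is simple by \cite[Proposition~3.7(c)]{indp2}, and any simple $\F_p\genG$-module lifts to a $\Z_p\genG$-lattice by the classical theory of blocks with cyclic defect group (applicable here since $|\UU|=p$). The remaining case $\dim V=p+1$ is the most delicate: if $V$ is simple the same cyclic-defect theory applies; otherwise Lemma \ref{rk(V)=p+1} forces the socle $\mathrm{soc}(V)=C_V(\UU)$ to have dimension $2$ with trivial $\UU$-action (hence lifting to a $\Z_p\genG$-lattice through the $p'$-quotient $\genG/O^{p'}(\genG)$) and $V/\mathrm{soc}(V)$ to be minimally active indecomposable of dimension $p-1$ (which lifts by the previous case). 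The plan is then to lift the extension class $[V]\in\Ext^1_{\F_p\genG}(V/\mathrm{soc}(V),\mathrm{soc}(V))$ to an extension of the chosen $\Z_p\genG$-lifts, with the obstruction in $\Ext^2_{\Z_p\genG}(\widetilde{V/\mathrm{soc}(V)},\widetilde{\mathrm{soc}(V)})[p]$ shown to vanish via the Brauer-tree structure. This obstruction analysis is the main technical hurdle of the proof.

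For part (b), given $\dim V=p$ and a $1$-dim submodule $V_1<V$, the fact that $V|_\UU$ is a single Jordan block forces $V_1=C_V(\UU)$, and Lemma \ref{l:top&bottom}(a) shows $V_1$ and $V/[\UU,V]$ are isomorphic as $\F_pN$-modules (with $N=N_\genG(\UU)$). The Frattini decomposition $\genG=O^{p'}(\genG)\cdot N$ together with the fact that every character $\genG\to\F_p^\times$ vanishes on $O^{p'}(\genG)$ (the subgroup generated by $p$-elements, which map trivially to $\F_p^\times$) upgrades this to an $\F_p\genG$-isomorphism, so the unique simple head $V/\mathrm{rad}(V)$ equals $V_1$; in particular the projective indecomposable $V$ is the projective cover $P(V_1)$. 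Taking $\Lambda$ to be the projective cover in $\Z_p\genG$-lattices of the Teichm\"uller lift $\Z_p^{\tilde\chi}$ of $V_1$, we have $\Lambda/p\Lambda\cong P(V_1)\cong V$, and by Brauer reciprocity $\Q_p^{\tilde\chi}$ appears as a direct summand of $M:=\Q_p\otimes_{\Z_p}\Lambda$, furnishing the required $1$-dim $\Q_p\genG$-submodule.
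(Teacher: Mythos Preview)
Your argument has two genuine gaps in part (a).

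\textbf{The case $\dim V=p-1$.} You claim that ``any $\F_p\genG$-simple submodule must embed in $C_V(\UU)$'' and hence the socle of $V$ is $1$-dimensional. This is false: a simple submodule $W$ satisfies $C_W(\UU)\ne0$ (since $\UU$ is a $p$-group), so $C_W(\UU)\le C_V(\UU)$, but there is no reason $W$ itself lies inside $C_V(\UU)$. In fact $V$ can perfectly well be \emph{simple} of dimension $p-1$ (see, e.g., the entries for $2\cdot A_6$ at $p=5$ or $6\cdot\PSL_3(4)$ at $p=7$ in Table~\ref{tbl:reps}), in which case $\mathrm{soc}(V)=V$. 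Even when $V$ is not simple, the composition series may be of shape $(p{-}2)/1$ with a $1$-dimensional \emph{quotient} rather than submodule. In all such cases Lemma~\ref{l:top&bottom}(c) does not apply, and your construction breaks down. The paper instead writes $V$ as a summand of $\Ind_\UU^\genG(V|_\UU)\cong\Ind_\UU^\genG(\Z_p[\zeta])/p$, computes $\End_{\Z_p\genG}(\Ind_\UU^\genG(\Z_p[\zeta]))$ explicitly via Mackey and Frobenius reciprocity to see that reduction mod $p$ on endomorphism rings is surjective, and lifts the relevant idempotent. This works uniformly, independent of the submodule structure of $V$.

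\textbf{The case $\dim V=p+1$, non-simple.} You assert that Lemma~\ref{rk(V)=p+1} forces $\mathrm{soc}(V)=C_V(\UU)$ to be $2$-dimensional with trivial $\UU$-action. The lemma says exactly the opposite: every proper nontrivial submodule $V_0$ has $V_0|_\UU$ indecomposable with \emph{nontrivial} $\UU$-action and $2\le\dim V_0\le p-1$. So the socle is not $C_V(\UU)$, it does not lift through a $p'$-quotient of $\genG$, and your extension-class scheme collapses. The paper avoids this entirely: for every $\dim V\ge p$ it invokes \cite[Proposition~3.7(b)]{indp2} to see that $V$ is a trivial source module, and trivial source modules lift to $\Z_p\genG$-lattices (\cite[Corollary~3.11.4.i]{Benson1}). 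Your separate treatment of $\dim V=p$ (projective) and $\dim V\ge p+2$ (simple, cyclic defect) is fine but unnecessary; it is the middle case that trips you up.

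Your part (b) is correct and takes a genuinely different route from the paper. The paper repeats the induced-module idempotent-lifting argument, this time tracking a short exact sequence $0\to\Z_p\to\Z_p\UU\to\Z_p[\zeta]\to0$ through induction and lifting compatible idempotents simultaneously. Your approach---identifying $V$ as the projective cover $P(V_1)$ via Lemma~\ref{l:top&bottom}(a) and the Frattini argument, lifting to the $\Z_p\genG$-projective cover of the Teichm\"uller lift of $V_1$, and reading off the $1$-dimensional $\Q_p\genG$-summand from the decomposition number $d_{\tilde\chi,V_1}=1$---is cleaner and more conceptual, at the cost of importing slightly more representation theory.
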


\begin{proof} Fix $\UU\in\sylp{\genG}$, and let $u\in\UU$ be a generator. 
Let $\zeta$ be a $p$-th root of unity, and regard $\Q_p(\zeta)$ as a 
$\Q_p\UU$-module where $u$ acts by multiplication by $\zeta$. Thus 
$\Z_p[\zeta]$ is a $\Z_p\UU$-lattice in $\Q_p(\zeta)$. We also write 
$\F_p[\zeta]=\Z_p[\zeta]/p\Z_p[\zeta]$. Thus 
$\F_p[\zeta]\cong\F_p[u]/\gen{(1-u)^{p-1}}$, and 
$V|_\UU\cong\F_p[\zeta]$ since $V$ is minimally active and indecomposable 
\cite[Proposition 3.7(a)]{indp2}. 

\smallskip

\textbf{(a) }  If $\dim(V)\ge p$, then $V$ is a trivial 
source module by \cite[Proposition 3.7(b)]{indp2}, and hence $V$ is the mod 
$p$ reduction of some $\Z_p\genG$-lattice (see \cite[Corollary 
3.11.4.i]{Benson1}). So for the rest of the proof, we assume that 
$\dim(V)=p-1$.

Set $\5\Lambda=\Ind_\UU^\genG(\Z_p[\zeta])$ and $\5V=\Ind_\UU^\genG(V|_\UU)$. 
Then $\5V\cong\5\Lambda/p\5\Lambda$. Since induction of 
representations is adjoint to restriction, the identity on $V$ extends to a 
surjective $\F_p\genG$-linear homomorphism $\alpha\:\5V\too V$. This is split 
by an $\F_p\UU$-linear map, and hence (by averaging over cosets of $\UU$) 
by an $\F_p\genG$-linear homomorphism $\beta\:V\too\5V$. Set 
$e_0=\beta\alpha\in\End_{\F_p\genG}(\5V)$. Thus $e_0$ is an idempotent in this 
endomorphism ring, and $e_0\5V\cong V$.

We want to lift $e_0$ to an idempotent in $\End_{\Z_p\genG}(\5\Lambda)$. By the 
Mackey double coset formula, 
	\[ \5\Lambda|_\UU = \bigl(\Ind_\UU^\genG(\Z_p[\zeta])\bigr)\big|_\UU 
	\cong \bigl(\Z_p[\zeta]\bigr)^m \times \bigl(\Z_p\UU\bigr)^n \]
as $\Z_p\UU$-modules for some $m,n\ge0$. Hence as $\Z_p$-modules,
	\beq \begin{split} 
	\End_{\Z_p\genG}(\5\Lambda) \cong 
	\Hom_{\Z_p\UU}(\Z_p[\zeta],\5\Lambda) &\cong 
	\bigl(\End_{\Z_p\UU}(\Z_p[\zeta])\bigr)^m \times 
	\bigl(\Hom_{\Z_p\UU}(\Z_p[\zeta],\Z_p\UU)\bigr)^n \\
	&\cong (\Z_p[\zeta])^m \times ((1-u)\Z_p\UU)^n 
	\end{split} \eeq
where the last isomorphism follows upon sending a homomorphism $\varphi$ to 
$\varphi(1)$. Since 
	\[ \End_{\F_p\genG}(\5\Lambda) \cong (\F_p[\zeta])^m \times 
	((1-u)\F_p\UU)^n \]
by a similar argument, the natural homomorphism from 
$\End_{\Z_p\genG}(\5\Lambda)$ to $\End_{\F_p\genG}(\5V)$ is surjective (and 
reduction mod $p$). So $e_0$ lifts to an idempotent 
$e\in\End_{\Z_p\UU}(\5\Lambda)$ (see \cite[Proposition 1.9.4]{Benson1}). 

Now set $\Lambda=e\5\Lambda$. Then $\Lambda/p\Lambda\cong V$, and $\Lambda$ 
is a $\Z_p\genG$-lattice in the $\Q_p\genG$-module $M=Q_p\otimes_{\Z_p}\Lambda$.

\smallskip

\noindent\textbf{(b) } We repeat the proof of (a), but keeping control of 
the submodule as well as $V$. Set $V_2=V$ and $V_3=V_2/V_1$, and set 
$\5V_i=\Ind_\UU^\genG(V_i|_\UU)$ for $i=1,2,3$. We thus have short exact sequences 
	\[ 0 \Right2{} V_1 \Right2{f} V_2 \Right2{g} V_3 \Right2{} 0 
	\quad\textup{and}\quad
	0 \Right2{} \5V_1 \Right2{\5f} \5V_2 \Right2{\5g} \5V_3 \Right2{} 0. \]
Let $\alpha_i\:\5V_i\too V_i$ be the natural map, and let $\beta_i\:V_i\too 
\5V_i$ be the $\F_p\genG$-linear splitting of $\alpha_i$ obtained by taking the 
natural $\F_p\UU$-linear inclusion and then averaging over cosets of 
$\UU$ in $\genG$. Thus $\alpha_i\circ\beta_i=\Id_{V_i}$ (upon composing from 
right to left), while $e_i\defeq\beta_i\circ\alpha_i$ is an idempotent in 
$\End_{\F_p\genG}(\5V_i)$. All of these commute with the natural homomorphisms 
$f$, $\5f$, $g$, and $\5g$, and so we get a commutative diagram with exact 
rows:
	\beqq \vcenter{\xymatrix@C=30pt@R=25pt{
	0 \ar[r] & \5V_1 \ar[r]^-{\5f} \ar[d]_{e_1} & \5V_2 \ar[r]^-{\5g} 
	\ar[d]_{e_2} & \5V_3 \ar[r] \ar[d]_{e_3} & 0 \\
	0 \ar[r] & \5V_1 \ar[r]^-{\5f} & \5V_2 \ar[r]^-{\5g} & 
	\5V_3 \ar[r] & 0 \rlap{\,.}
	}} \label{e:V1<V} \eeqq

Now set $\Lambda_1^0=\Z_p$, $\Lambda_2^0=\Z_p\UU$, 
$\Lambda_3^0=\Z_p[\zeta]$, and let $\varphi$ and $\psi$ be such that 
	\[ 0 \Right2{} \Lambda_1^0 \Right2{\varphi} \Lambda_2^0 \Right2{\psi} 
	\Lambda_3^0 \Right2{} 0 \]
is a short exact sequence of $\Z_p\UU$-modules. We identify 
$V_i|_\UU=\Lambda_i^0/p\Lambda_i^0$ in such a way that $f$ and $g$ are the 
reductions modulo $p$ of $\varphi$ and $\psi$, respectively. Set 
$\5\Lambda_i=\Ind_\UU^\genG(\Lambda_i^0)$, so that 
$\5V_i=\5\Lambda_i/p\5\Lambda_i$ as $\F_p\genG$-modules, and let $\5\varphi$ 
and $\5\psi$ be the homomorphisms induced by $\varphi$ and $\psi$. 
We claim that the $e_i$ can be lifted to elements 
$\gee_i\in\End_{\Z_p\genG}(\5\Lambda_i)$ that make the following diagram 
commute:
	\beqq \vcenter{\xymatrix@C=30pt@R=25pt{
	0 \ar[r] & \5\Lambda_1 \ar[r]^-{\5\varphi} \ar[d]_{\gee_1} & 
	\5\Lambda_2 \ar[r]^-{\5\psi} \ar[d]_{\gee_2} & \5\Lambda_3 \ar[r] 
	\ar[d]_{\gee_3} & 0 \\
	0 \ar[r] & \5\Lambda_1 \ar[r]^-{\5\varphi} & \5\Lambda_2 
	\ar[r]^-{\5\psi} & \5\Lambda_3 \ar[r] & 0 \rlap{\,.}
	}} \label{e:L1<L} \eeqq

To see this, we identify 
	\[ \End_{\Z_p\genG}(\5\Lambda_i) \cong 
	\Hom_{\Z_p\UU}\bigl(\Lambda_i^0,(\Ind_\UU^\genG(\Lambda_i^0))|_\UU\bigr), 
	\]
where by the Mackey double coset formula, for some indexing sets $J$ and 
$K$ independent of $i\in\{1,2,3\}$,
	\[ (\Ind_\UU^\genG(\Lambda_i^0))|_\UU \cong 
	\bigoplus_{j\in J} \Lambda_i^0 \oplus \bigoplus_{k\in K} 
	(\Z_p\UU\otimes_{\Z_p}\Lambda_i^0). \]
Fix $\gee_2\in\End_{\Z_p\genG}(\5\Lambda_2)$, and set 
$\gee_2(1)=\bigl((u_j)_{j\in J},(v_k)_{k\in K}\bigr)$ with respect to the 
above decomposition (and where $1$ is the identity in 
$\Lambda_2^0=\Z_p\UU$). Then $\gee_2$ induces endomorphisms $\gee_1$ and 
$\gee_3$ such that \eqref{e:L1<L} commutes if and only if 
$\sum_{i=0}^{p-1}u^i(v_k)\in\Ker(\5\psi)$ for each $k\in K$. Note that 
$\5\psi$, after restriction to the summands for some $k\in K$, is a 
surjection of one free $\Z_p\UU$-module onto another. Hence $\gee_2$ can 
always be chosen (as a lifting of $e_2$) to induce $\gee_1$ and $\gee_3$ 
since the above condition holds modulo $p$ by the commutativity of 
\eqref{e:V1<V}.

Since $\gee_i$ is a lifting of the idempotent $e_i$, we have 
$\gee_i^2\equiv\gee_i$ (mod $p$). Hence 
$(\gee_i)^{2p^k}\equiv(\gee_i)^{p^k}$ (mod $p^{k+1}$) for each $k\ge1$. 
Upon replacing $\gee_i$ by the limit of the $(\gee_i)^{p^k}$, we can 
arrange that each $\gee_i$ is an idempotent in $\End_{\Z_p\genG}(\5\Lambda_i)$ 
(and that the above diagram still commutes). Set 
$\Lambda=\gee_2\5\Lambda_2$ and $M=\Q_p\otimes_{\Z_p}\Lambda$. Thus $M$ is 
a $\Q_p\genG$-module with a 1-dimensional submodule, and has a 
$\Z_p\genG$-lattice $\Lambda$ such that $\Lambda/p\Lambda\cong V$. 
\end{proof}

We now turn to questions of uniqueness, looking first at the finite 
case. When $R$ is a ring and $M$ is an $R$-module, we let 
$\Ann_R(x)$ denote the annihilator of an element $x\in M$.

\begin{Prop} \label{A1=A2}
Fix an odd prime $p$, a finite group $\genG\in\GG$, and $\UU\in\sylp{\genG}$. Let 
$A_1$ and $A_2$ be finite $\Z_p\genG$-modules, and assume that 
$\bigl|C_{A_i}(\UU)\cap[\UU,A_i]\bigr|=p$ for $i=1,2$. 
Set $\sigma=\sum_{u\in\UU}u\in\Z_p\UU$.

Assume that there is a $\Z_p\UU$-linear isomorphism $\varphi\:A_1\too A_2$ 
whose reduction modulo $p$ is $\F_p\genG$-linear. Then $A_1\cong A_2$ as 
$\Z_p\genG$-modules. In particular, this happens if $A_1/pA_1\cong A_2/pA_2$ 
and either 
\begin{enuma} 

\item $A_1$ and $A_2$ are homocyclic of the same exponent and 
$\sigma\cdot A_i\nleq pA_i$ for $i=1,2$; or 

\item there are elements $\xa_1\in A_1$ and $\xa_2\in A_2$ such that 
$A_1=\Z_p\UU\cdot\xa_1$, $A_2=\Z_p\UU\cdot\xa_2$, and 
$\Ann_{\Z_p\UU}(\xa_1)=\Ann_{\Z_p\UU}(\xa_2)$.
\end{enuma}
\end{Prop}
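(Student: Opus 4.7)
The proof splits naturally into three parts.

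\emph{Main assertion.} The plan is to average $\varphi$ over a transversal of $\UU$ in $\genG$: set
\[ \psi = \frac{1}{[\genG:\UU]} \sum_{g\UU \in \genG/\UU} g\varphi g^{-1}, \]
which is well defined because $\varphi$ is $\Z_p\UU$-linear (so each summand depends only on $g\UU$) and $[\genG:\UU]$ is a $p'$-integer, hence invertible in $\Z_p$. By construction $\psi$ is $\Z_p\genG$-linear, and since $\4\varphi$ is already $\F_p\genG$-linear, each $\genG$-conjugate of $\4\varphi$ equals $\4\varphi$, so $\4\psi = \4\varphi$ is an isomorphism; then $\psi$ is surjective by Nakayama and bijective because $|A_1|=|A_2|$.

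\emph{Case (b).} Write $J = \Ann_{\Z_p\UU}(\xa_1) = \Ann_{\Z_p\UU}(\xa_2)$, let $\alpha\colon A_1/pA_1 \to A_2/pA_2$ be the given $\F_p\genG$-isomorphism, write $\alpha(\4\xa_1) = \4\xi\,\4\xa_2$ for some $\4\xi \in \F_p\UU$, lift to $\xi \in \Z_p\UU$, and set $\varphi(\eta\xa_1) = \eta\xi\xa_2$ for $\eta \in \Z_p\UU$. Well-definedness amounts to $J\xi \subseteq J$, which holds because $\Z_p\UU$ is commutative; bijectivity reduces to $\xi$ being a unit in the finite $\Z_p$-algebra $\Z_p\UU/J$, which follows from $\4\xi$ being a unit mod $p$ (since $\alpha$ is an isomorphism) via Nakayama. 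By construction $\4\varphi = \alpha$, so the main assertion applies.

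\emph{Case (a).} The strategy is to show that $A_1$ and $A_2$ are both $\Z_p\UU$-isomorphic to $X \defeq (\Z/p^n)\UU \oplus (\Z/p^n)^s$, where $s = \dim_{\F_p}(A_i/pA_i) - p$, and then to realize the $\F_p\genG$-isomorphism of mod-$p$ reductions as the reduction of some $\Z_p\UU$-isomorphism. The hypothesis $\sigma A_i \nleq pA_i$ combined with minimal activity (Lemma~\ref{min.act.|Z0|=p}(c)) forces $A_i/pA_i$ to be a length-$p$ Jordan block plus an $s$-dimensional trivial summand. Pick $\xa_i \in A_i$ with $\sigma\xa_i \notin pA_i$; both $\xa_i$ and $\sigma\xa_i$ then have order $p^n$ since $A_i$ is homocyclic. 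The main obstacle is the next step: proving $\Z_p\UU\xa_i \cong (\Z/p^n)\UU$ by checking that $\{p^{n-1}u^j\xa_i\}_{j=0}^{p-1}$ are $\F_p$-linearly independent in $\Omega_1(A_i) = p^{n-1}A_i$. Any nontrivial relation reduces mod $p$ to a relation in the length-$p$ Jordan block $\F_p\UU\cdot\4\xa_i$, whose annihilator in $\F_p\UU$ is zero; the homocyclic hypothesis identifies $\Omega_1(A_i) \cong A_i/pA_i$ as $\F_p\UU$-modules via $x\mapsto p^{n-1}x$, and the length-$p$ Jordan block ensures $\4\xa_i$ has trivial annihilator. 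From Lemma~\ref{min.act.|Z0|=p}(d), $A_i = \Z_p\UU\xa_i + C_{A_i}(\UU)$, so $[\UU,A_i] = [\UU,\Z_p\UU\xa_i]$ has order $p^{n(p-1)}$, forcing $C_{A_i}(\UU)$ to have order $p^{n(s+1)}$ and rank $s+1$, hence to be homocyclic of exponent $p^n$. Splitting the cyclic summand $(\Z/p^n)\sigma\xa_i$ off $C_{A_i}(\UU)$ gives $A_i = \Z_p\UU\xa_i \oplus M'_i \cong X$. Finally, given any $\Z_p\UU$-isomorphism $\varphi_0\colon A_1 \to A_2$, lift $\alpha\circ\4\varphi_0^{-1}$ to some $\beta \in \Aut_{\Z_p\UU}(A_2)$, possible since $\Aut_{\Z_p\UU}(X) \twoheadrightarrow \Aut_{\F_p\UU}(X/pX)$ is surjective by a summand-by-summand check on endomorphism rings; then $\varphi = \beta\circ\varphi_0$ satisfies $\4\varphi = \alpha$, and the main assertion applies.
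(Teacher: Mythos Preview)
Your proof is correct. The main averaging argument and case~(b) are essentially identical to the paper's treatment (the paper observes more directly that $\xi$ is a unit in the local ring $\Z_p\UU$ itself, but your version via $\Z_p\UU/J$ is equally valid).

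In case~(a) your route diverges from the paper's. You argue structurally: show each $A_i$ is $\Z_p\UU$-isomorphic to the standard module $X=(\Z/p^n)\UU\oplus(\Z/p^n)^s$, then prove $\Aut_{\Z_p\UU}(X)\twoheadrightarrow\Aut_{\F_p\UU}(X/pX)$ by computing endomorphism rings summand-by-summand, and finally correct an arbitrary $\Z_p\UU$-isomorphism $\varphi_0$ by a lifted automorphism. The paper instead constructs the lift $\varphi$ directly: it chooses $\xa_2$ as a lift of $\4\varphi(\4\xa_1)$, proves the key claim that every element of $C_{\4A_i}(\UU)$ lifts to $C_{A_i}(\UU)$, and then picks complementary bases $x_1,\dots,x_m\in Z_1$ and their lifts $y_1,\dots,y_m\in Z_2$ under $\4\varphi$, obtaining $\Z/p^n$-bases carried one to the other. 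Both arguments ultimately rest on the same structural fact (that $A_i$ splits as a free $(\Z/p^n)\UU$-summand plus a trivial homocyclic complement), but the paper's version is more concrete and avoids the separate surjectivity-of-automorphisms step, while yours makes that surjectivity explicit and reusable.
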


\begin{proof} Set $\4A_i=A_i/pA_i$ for $i=1,2$. For each $X\le A_i$ and 
$g\in A_i$, set $\4X=(X+pA_i)/pA_i$ and $\4g=g+pA_i$. Set 
$Z_i=C_{A_i}(\UU)$ and $S'_i=[\UU,A_i]$. By assumption, $|Z_i\cap 
S'_i|=p$, and hence $|A_i/(Z_i+S'_i)|=p$ by Lemma \ref{min.act.|Z0|=p}(c).

Assume $\varphi\:A_1\too A_2$ is a $\Z_p\UU$-linear isomorphism whose 
reduction $\4\varphi\:\4A_1\too\4A_2$ modulo $p$ is $\F_p\genG$-linear. Let 
$g_1,\dots,g_k$ be a set of representatives for the left cosets $g\UU$ in 
$\genG$ (where $k=|\genG/\UU|$ is prime to $p$), and define 
$\psi\:A_1\Right2{}A_2$ by setting 
$\psi(\lambda)=\frac1k\bigl(\sum_{j=1}^kg_j\varphi(g_j^{-1}\lambda)\bigr)$. 
Then $\psi$ is $\Z_p\genG$-linear, its reduction modulo $p$ is equal to 
$\4\varphi$ since $\4\varphi$ is $\F_p\genG$-linear, it is surjective since the 
reduction mod $p$ is surjective, and is an isomorphism since $|A_1|=|A_2|$. 

It remains to prove that each of (a) and (b) implies the existence of the 
homomorphism $\varphi$. Fix an $\F_p\genG$-linear isomorphism 
$\4\varphi\:\4A_1\too\4A_2$. 

\smallskip

\noindent\textbf{(a) } Assume $A_1$ and $A_2$ are homocyclic of the 
same exponent $p^k$, and for $i=1,2$, $\sigma\cdot A_i\nleq pA_i$. Choose 
$\xa_1\in A_1$ such that $\sigma\cdot\xa_1\notin pA_1$, and let $\xa_2\in A_2$ 
be such that $\4\varphi(\4\xa_1)=\4\xa_2$. Thus for $i=1,2$, 
$\4{\sigma\cdot\xa_i}\ne0$, and so $\{u(\4\xa_i)\,|\,u\in\UU\}$ 
is a basis for $\F_p\UU\cdot\4\xa_i$, and $\Ann_{\F_p\UU}(\4\xa_i)=0$. Note 
that $\xa_i\notin Z_i+S'_i$, since $\sigma\cdot Z_i\le pZ_i$ and 
$\sigma\cdot S'_i=0$.

We claim that each element of $C_{\4A_i}(\UU)$ lifts to an element of 
$C_{A_i}(\UU)$; i.e., that 
	\beqq C_{\4A_i}(\UU)=\4Z_i. \label{e:Zbar} \eeqq 
To see this, let $g\in A_i$ be such that $\4g\in C_{\4A_i}(\UU)$. By 
Lemma \ref{min.act.|Z0|=p}(d), $g=\xi\cdot\xa_i+z$ for some $\xi\in\Z_p\UU$ 
and $z\in Z_i$. Then $\4{\xi\cdot\xa_i}$ is fixed by $\UU$ since $\4g$ is, 
and $C_{\4{\Z_p\UU\cdot\xa_i}}(\UU)=\gen{\4{\sigma\cdot\xa_i}}$ since 
$\Ann_{\F_p\UU}(\4\xa_i)=0$. Hence there is $k\in\Z$ such that 
$\4{k\sigma\cdot\xa_i}=\4{\xi\cdot\xa_i}$; and $\4g=\4{k\sigma\cdot\xa_i+z}$ 
where $k\sigma\cdot\xa_i+z\in Z_i$. This proves \eqref{e:Zbar}.

Set $m=\rk(A_1)-p=\rk(A_2)-p$ (possibly $m=0$). By \eqref{e:Zbar}, we can choose 
elements $x_1,\dots,x_m\in Z_1$ such that 
$\4A_1=\F_p\UU\cdot\4\xa_1\oplus\gen{\4x_1,\dots,\4x_m}$. By \eqref{e:Zbar} 
again, there are elements $y_1,\dots,y_m\in Z_2$ such that 
$\4\varphi(\4x_i)=\4y_i$ for each $i$. Then 
	\[ \{u(\4\xa_1)\,|\,u\in\UU\}\cup\{\4x_1,\dots,\4x_m\} 
	\qquad\textup{and}\qquad
	\{u(\4\xa_2)\,|\,u\in\UU\}\cup\{\4y_1,\dots,\4y_m\} \]
are bases for $\4A_1$ and $\4A_2$, respectively, and $\4\varphi$ sends the 
first basis to the second. Since $A_1$ and $A_2$ are both homocyclic of 
exponent $p^k$, the sets $\{u(\xa_1),x_1,\dots,x_m\,|\,u\in\UU\}$ and 
$\{u(\xa_2),y_1,\dots,y_m\,|\,u\in\UU\}$ 
are bases for $A_1$ and $A_2$, respectively, as $\Z/p^k$-modules. 
Thus $\4\varphi$ lifts to a $\Z_p\UU$-linear 
isomorphism $\varphi\:A_1\too A_2$, defined by setting 
$\varphi(u(\xa_1))=u(\xa_2)$ for $u\in\UU$ and $\varphi(x_i)=y_i$ for each 
$i$.

\smallskip

\noindent\textbf{(b) } Let $\xa_i\in A_i$ (for $i=1,2$) be such that 
$\Z_p\UU\cdot\xa_i=A_i$ and 
$\Ann_{\Z_p\UU}(\xa_1)=\Ann_{\Z_p\UU}(\xa_2)$. Let $\xi\in\Z_p\UU$ 
be such that $\4\varphi(\4\xa_1)=\4{\xi\cdot\xa_2}$. Thus $\4{\xi\cdot\xa_2}$ 
generates $\4A_2$ as an $\F_p\UU$-module, and since $(1-u)A_2\le S'_2$ for 
$1\ne u\in\UU$, $\xi$ is not in the ideal $(1-u)\Z_p\UU+p\Z_p\UU$ of index 
$p$ in $\Z_p\UU$. Since this is the unique maximal ideal in $\Z_p\UU$, $\xi$ 
is invertible, and we can replace $\xa_2$ by $\xi\cdot\xa_2$ without changing 
$\Ann_{\Z_p\UU}(\xa_2)$.

Let $\varphi\:A_1\too A_2$ be the unique $\Z_p\UU$-linear homomorphism such 
that $\varphi(\xa_1)=\xa_2$. Its reduction modulo $p$ is $\4\varphi$, since 
$\4\varphi(\4\xa_1)=\4\xa_2$ and $\4\varphi$ is $\F_p\UU$-linear. 
\end{proof}

It remains to prove the analogous uniqueness result for discrete 
$p$-tori.

\begin{Lem} \label{unique-G-p-torus}
Fix an odd prime $p$, a finite group $\genG\in\GG$, and $\UU\in\sylp{\genG}$. Let 
$A_1$ and $A_2$ be discrete, $\genG$-$p$-tori, and assume 
that 
\begin{enumi} 

\item $\Omega_1(A_1)$ and $\Omega_1(A_2)$ are faithful, minimally active, and 
indecomposable as $\F_p\genG$-modules; and 

\item $\Omega_1(A_1)\cong\Omega_1(A_2)$ as $\F_p\genG$-modules.

\end{enumi}
Then $A_1\cong A_2$ as $\Z_p\genG$-modules.
\end{Lem}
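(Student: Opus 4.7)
The plan is to dualize the statement to $\Z_p\genG$-lattices via the functor $A \mapsto \Hom_{\Z_p}(A,\Q_p/\Z_p)$ (Proposition \ref{tori&lattices}), apply Proposition \ref{A1=A2} at each level modulo $p^n$ to get a finite-level isomorphism, and patch by a compactness argument. Setting $\Lambda_i = \Hom_{\Z_p}(A_i,\Q_p/\Z_p)$, we have $\Lambda_i/p\Lambda_i \cong \Omega_1(A_i)$ as $\F_p\genG$-modules, so it suffices to prove that $\Lambda_1 \cong \Lambda_2$ as $\Z_p\genG$-lattices.

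Fix $n\ge 1$ and set $X_i := \Lambda_i/p^n\Lambda_i$, a quotient of a lattice by $p^n\Lambda_i\le p\Lambda_i$. By Lemma \ref{min.act.|Z0|=p}(c), the minimal activity of $\Omega_1(X_i)\cong V$ yields $|[\UU,X_i]\cap C_{X_i}(\UU)|=p$. The argument then splits by $\dim V$, which by Lemma \ref{min.act.props} combined with the existence of a lattice lift must be either $p-1$ or $\ge p$. If $\dim V\ge p$, then $V|_\UU$ contains a Jordan block of length $p$ on which $\sigma = \sum_{u\in\UU}u$ acts nontrivially, so $\sigma\cdot X_i\nleq pX_i$; since $X_i\cong(\Z/p^n)^{\dim V}$ is homocyclic of exponent $p^n$, Proposition \ref{A1=A2}(a) applies. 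If $\dim V = p-1$, then by Krull--Schmidt on $\Z_p\UU$-lattices (whose indecomposables are $\Z_p$, $\Z_p[\zeta]$, $\Z_p\UU$), the only rank-$(p-1)$ lattice reducing to $V|_\UU\cong\F_p[\zeta]$ is $\Z_p[\zeta]$, so $\Lambda_i|_\UU\cong\Z_p[\zeta]$ and $X_i|_\UU\cong\Z_p[\zeta]/p^n\Z_p[\zeta]$ is $\Z_p\UU$-cyclic, generated by the class of $1$, with annihilator $\sigma\Z_p\UU + p^n\Z_p\UU$ independent of $i$; Proposition \ref{A1=A2}(b) applies. Either way, $X_1\cong X_2$ as $\Z_p\genG$-modules.

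To assemble these into an isomorphism of lattices, I would use $\Hom_{\Z_p\genG}(\Lambda_1,\Lambda_2) = \varprojlim_n \Hom_{\Z_p\genG}(X_1,X_2)$, valid since $\Lambda_i$ is finitely generated over $\Z_p$. The subset $\mathcal{I}_n\subseteq\Hom_{\Z_p\genG}(X_1,X_2)$ of isomorphisms is finite and non-empty at each level, and the reduction maps $\mathcal{I}_{n+1}\to\mathcal{I}_n$ are well-defined since the reduction of a bijection between finite groups of equal cardinality is again a bijection. König's lemma applied to the projective system $\{\mathcal{I}_n\}$ of non-empty finite sets yields a compatible family, giving $\Lambda_1\cong\Lambda_2$ and hence $A_1\cong A_2$. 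The main obstacle is precisely this last patching step: the isomorphisms produced by Proposition \ref{A1=A2} at each level are not a priori compatible with reduction modulo lower powers of $p$, and to obtain a coherent family one must pass to the subsystem of eventual images before invoking compactness.
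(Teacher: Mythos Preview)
Your proof is correct and matches the paper's: show that $\Lambda_i/p^n\Lambda_i$ (equivalently $\Omega_n(A_i)$) are $\Z_p\genG$-isomorphic for each $n$ by splitting into the cases $\dim V\ge p$ and $\dim V=p-1$ and applying Proposition~\ref{A1=A2}(a) or (b) respectively, then patch via the inverse limit of the nonempty finite sets of isomorphisms. One slip: Proposition~\ref{tori&lattices} uses the covariant functor $\Hom_{\Z_p}(\Q_p/\Z_p,A)$, not $\Hom_{\Z_p}(A,\Q_p/\Z_p)$---with your contravariant dual you would get $\Lambda_i/p\Lambda_i\cong\Omega_1(A_i)^*$ rather than $\Omega_1(A_i)$, though this is harmless since the dual of a faithful, minimally active, indecomposable module again has these properties.
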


\begin{proof} By Lemma \ref{min.act.|Z0|=p}(b), we also have that 
\begin{enuma}
\item $[\UU,A_i]\cap C_{A_i}(\UU)$ has order $p$ for $i=1,2$.
\end{enuma}

Assume, for each $k\ge1$, that 
$\Omega_k(A_1)\cong\Omega_k(A_2)$ as $\Z_p\genG$-modules, and let $X_k$ be the 
set of $\Z_p\genG$-linear isomorphisms 
$\Omega_k(A_1)\Right2{\cong}\Omega_k(A_2)$. Then $X_k$ is finite since the 
$\Omega_k(A_i)$ are finite, $X_k\ne\emptyset$ by assumption, and if 
$k\ge2$, restriction to $\Omega_{k-1}(A_i)$ defines a map $X_k\too 
X_{k-1}$. So the inverse limit of the $X_k$ is nonempty, and each element 
in the inverse limit determines a $\Z_p\genG$-linear isomorphism $A_1\cong 
A_2$. 

It remains to show that $\Omega_k(A_1)\cong\Omega_k(A_2)$ for each $k$. 
Since $\Omega_k(A_i)/p\Omega_k(A_i)\cong\Omega_1(A_i)$ as 
$\F_p\genG$-modules (multiplication by $p^{k-1}$ defines an isomorphism), we 
have $\Omega_k(A_1)/p\Omega_k(A_1)\cong\Omega_k(A_2)/p\Omega_k(A_2)$, 
and both are faithful, minimally active, and indecomposable. 

Set $\sigma=\sum_{u\in\UU}u\in\Z_p\UU$, as usual. If $\rk(A_i)\ge p$, 
then by Lemma \ref{min.act.props}(a,b), $\sigma\cdot\Omega_k(A_i)\nleq 
p\Omega_k(A_i)$. Since $\Omega_k(A_1)$ and $\Omega_k(A_2)$ are both 
homocyclic of exponent $p^k$, they are isomorphic as $\Z_p\genG$-modules by 
Proposition \ref{A1=A2}(a).

If $\rk(A_i)=p-1$ for $i=1,2$, then by Proposition \ref{tori&lattices}, 
$A_i\cong(\Q_p/\Z_p)\otimes_{\Z_p}\Lambda_i$ for some $(p-1)$-dimensional 
$\Z_p\genG$-lattice $\Lambda_i$. Since $\genG$ acts faithfully on the lattices, 
$\Lambda_1\cong\Lambda_2\cong\Z_p[\zeta]$ as $\Z_p\UU$-modules by Lemma 
\ref{ZU-modules}(a,c) (where $\zeta$ is a primitive $p$-th root of unity). 
Hence for $i=1,2$, $A_i\cong\Q_p(\zeta)/\Z_p[\zeta]$, and 
$\Omega_k(A_i)\cong\Z_p[\zeta]/p^k\Z_p[\zeta]$, as $\Z_p\UU$-modules. So 
there is $\xa_i\in\Omega_k(A_i)$ such that 
$\Z_p\UU\cdot\xa_i=\Omega_k(A_i)$ and $\Ann_{\Z_p\UU}(\xa_i)$ is the 
ideal generated by $p^k$ and $\sigma$. Proposition \ref{A1=A2}(b) now 
applies to conclude that $\Omega_k(A_1)\cong\Omega_k(A_2)$ as 
$\Z_p\genG$-modules.
\end{proof}

\section{Reduced fusion systems over finite nonabelian $p$-groups with 
index $p$ abelian subgroup ($p$ odd)}
\label{s:finite}

Throughout this section, $p$ is an odd prime, and $A$ is finite. As noted 
in the introduction, the corresponding question for finite $2$-groups 
was answered in \cite[Proposition 5.2(a)]{AOV2}.

\begin{Lem}[{\cite[Lemma 2.2(d,e,f)]{indp2}}] \label{l1:s/a}
Assume the notation and hypotheses of \ref{n:not1}, and also that $p$ is 
odd and $|A|<\infty$. Set $A_0=ZS'$. Then the following hold. 
\begin{enuma} 

\item If $A\nnsg\calf$, then there are elements $\xx\in{}S{\sminus}A$ and 
$\xa\in{}A{\sminus}A_0$ such that $A_0\gen{\xx}$ and $S'\gen{\xa}$ are 
normalized by $\autf(S)$.  If some element of $S{\sminus}A$ has order $p$, 
then we can choose $\xx$ to have order $p$.

\item For each $P\in\EE\calf$ and each $\alpha\in N_{\autf(P)}(\Aut_S(P))$, 
$\alpha$ extends to some $\4\alpha\in\autf(S)$.

\item For each $x\in{}S{\sminus}A$ and each $g\in{}A_0$, $Z\gen{x}$ is 
$S$-conjugate to $Z\gen{gx}$, and $Z_2\gen{x}$ is $S$-conjugate to 
$Z_2\gen{gx}$.
\end{enuma}
\end{Lem}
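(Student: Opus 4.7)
The proof splits along the three claims and uses in each the fact that $\outf(S)$ has order prime to $p$ (by the Sylow axiom, $\Out_S(S)=1\in\sylp{\outf(S)}$), together with the characteristic nature in $S$ of the subgroups $A$, $A_0=ZS'$, $Z$, $Z_2$, and $S'$.

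For the existence statements in (a), I would first produce $\xx$ as follows. Since $\Inn(S)$ acts trivially on the abelian quotient $S/A_0$ of order $p^2$, $\outf(S)$ acts on $S/A_0\cong(\Z/p)^2$ through a $p'$-representation with $A/A_0$ as an invariant line; Maschke's theorem yields an $\outf(S)$-invariant complement $\bar L$, and any $\xx\in S\sminus A$ with $\bar\xx$ generating $\bar L$ provides an $\autf(S)$-invariant subgroup $A_0\gen{\xx}$. For $\xa$ I would work in $A/S'$: since $[S,A]=S'$, $\Inn(S)$ acts trivially there, so $\outf(S)$ acts on this abelian $p$-group, with invariant subgroup $A_0/S'=Z/Z_0$ of index $p$. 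The short exact sequence
\[ 0 \too A_0/S' \too A/S' \too A/A_0 \too 0 \]
of $\Z[\outf(S)]$-modules splits, since $H^1(\outf(S);A_0/S')=0$ ($|\outf(S)|$ is prime to $p$ while the coefficients form a $p$-group); an invariant complement lifts to the required $\xa\in A\sminus A_0$ with $S'\gen{\xa}$ invariant.

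The order-$p$ refinement in (a) is the main obstacle, since a generic lift of $\bar\xx$ need not have order $p$. My approach is to exploit the $\autf(S)$-invariance of the characteristic subgroup $\Omega_1(S)$, which by hypothesis satisfies $\Omega_1(S)\not\le A$. Its image in $S/A_0$ is then an $\outf(S)$-invariant subgroup meeting $S/A_0\sminus A/A_0$. If this image is itself a line it must coincide with $\bar L$, and any element of $\Omega_1(S)\sminus A$ works. In the remaining case (image equal to $S/A_0$), I would expand $(\xx a)^p = \xx^p\cdot N(a)$ for $a\in A_0$, where $N(a)=\prod_{i=0}^{p-1}\xx^{-i}a\xx^i\in A$ is the norm for the order-$p$ action of $\xx$ on $A_0$; the existence of some $y=\xx_0a_0$ of order $p$ forces $\xx_0^p\in N(A)$, and a careful comparison of the norm images on the various cosets of $A_0$ shows that the coset singled out by $\bar L$ also contains an element of order $p$.

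For (b), saturation and essentiality make $P$ fully normalized, hence fully centralized and receptive. The hypothesis that $\alpha\in N_{\autf(P)}(\Aut_S(P))$ normalizes $\Aut_S(P)$ forces $N_\alpha=N_S(P)$, so receptiveness yields an extension $\4\alpha_1\in\homf(N_S(P),S)$. I would then iterate along the strictly increasing normalizer chain $P<N_S(P)<N_S(N_S(P))<\cdots<S$, at each step using Lemma \ref{B&H} (which controls $|N_S(P')/P'|$) to adjust the partial extension by an inner automorphism so that the resulting morphism is a genuine automorphism normalizing the next Sylow, permitting a further application of the extension axiom; the chain terminates at $S$. Finally for (c), write $g=zs$ with $z\in Z$ and $s\in S'$ using $A_0=ZS'$. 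Since $z$ is central it commutes with every power of $gx$, so $(gx)^n=z^n(sx)^n$, giving $Z\gen{gx}=Z\gen{sx}$ and $Z_2\gen{gx}=Z_2\gen{sx}$. As $S/A$ is generated by $xA$, $S'=[x,A]$, so there is $a\in A$ with $s=[a,x]$; then $axa^{-1}=[a,x]\cdot x=sx$, and since $Z$ and $Z_2$ are $A$-invariant, conjugation by $a$ carries $Z\gen{x}$ to $Z\gen{sx}=Z\gen{gx}$ and $Z_2\gen{x}$ to $Z_2\gen{sx}=Z_2\gen{gx}$.
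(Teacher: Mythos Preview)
The paper gives no proof of this lemma; it is taken verbatim from \cite[Lemma 2.2(d,e,f)]{indp2}. So I can only assess your argument on its own merits.

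Your proofs of (c) and of the basic existence part of (a) are correct: the characteristic subgroups $A$, $S'$, $Z$, $Z_2$, $A_0$ together with Maschke/$p'$-cohomology give exactly what is needed, and the conjugation computation in (c) is clean.

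There are two genuine gaps.

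\textbf{The order-$p$ refinement in (a).} Using $\Omega_1(S)$ is not the right object: $\Omega_1(S)$ is generated by order-$p$ elements but need not consist of them, so ``image of $\Omega_1(S)$ is all of $S/A_0$'' does not say that every coset contains an order-$p$ element. The ``careful comparison of norm images'' in your Case~2 is exactly where the work lies, and you have not done it. A clean way: let $X\subseteq S/A_0$ be the set of cosets containing an element of order $\le p$. Your norm formula $(\xx a)^p=\xx^pN(a)$ together with $N(A_0)=\mho^1(Z)$ shows that the coset $\xx a^t A_0$ lies in $X$ iff $[\xx^p]+t[N(a)]=0$ in $Z/\mho^1(Z)$. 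Hence $X\cap\bigl((S/A_0)\sminus(A/A_0)\bigr)$ is either all $p$ lines (when $[\xx^p]=[N(a)]=0$) or a single line. In the second case that single line is $\outf(S)$-invariant, so take it as $\bar L$; in the first case the Maschke complement already works.

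\textbf{The iteration in (b).} Your appeal to Lemma~\ref{B&H} for ``$|N_S(P')/P'|$'' is misplaced: that lemma only treats essential $P'$, while the subgroups $Q_j=Z_{i+j}\gen{x}$ in the normalizer chain are \emph{not} essential. More seriously, ``adjust by an inner automorphism so as to normalize the next Sylow'' does not work as stated: to preserve the restriction to $P$ you may only adjust by elements of $C_S(P)=Z(P)$, which is not enough to conjugate one Sylow to another. What actually makes the iteration go through is that $\Aut_S(Q_j)\nsg\autf(Q_j)$ automatically for each $j$. One sees this as follows: each $Q_j$ is $\calf$-centric (it contains the centric $P$), and an Alperin argument shows that its $\calf$-conjugacy class is just its $\autf(S)$-orbit (no essential subgroup other than $S$ can contain any $\calf$-conjugate of $Q_j$), so $Q_j$ is fully normalized; since it is not essential and $|\Out_S(Q_j)|=p$, the Sylow $p$-subgroup $\Out_S(Q_j)$ is normal in $\outf(Q_j)$. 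Then every $\bar\alpha\in\autf(Q_j)$ normalizes $\Aut_S(Q_j)$, the extension axiom applies, and the induction proceeds without any adjustment.
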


We now fix some more notation, based on Lemma \ref{l1:s/a}. 

\begin{Not} \label{n:not2a}
Assume Notation \ref{n:not1}. Assume also that $p$ is odd, $S$ is finite, 
and $A\nnsg\calf$, and hence that $|Z_0|=|A/ZS'|=p$ by Lemma \ref{|Z0|=p}.  
Fix $\xa\in{}A\sminus ZS'$ and $\xx\in{}S\sminus A$, chosen such that 
$ZS'\gen{\xx}$ and $S'\gen{\xa}$ are each normalized by $\autf(S)$, and 
such that $\xx^p=1$ if any element of $S\sminus A$ has order $p$ (Lemma 
\ref{l1:s/a}(a)).  For each $i=0,1,\dots,p-1$, define 
	\[ H_i = Z\gen{\xx\xa^i}\in\calh
	\qquad\textup{and}\qquad 
	B_i = Z_2\gen{\xx\xa^i}\in\calb \,. \]
Let $\calh_i$ and $\calb_i$ denote the $S$-conjugacy classes of $H_i$ 
and $B_i$, respectively, and set
	\[ \calh_*=\calh_1\cup\cdots\cup\calh_{p-1} \quad\textup{and}\quad
	\calb_*=\calb_1\cup\cdots\cup\calb_{p-1}. \]
For each $P\le S$, set 
	\[ \autf^{(P)}(S) = \bigl\{ \alpha\in\autf(S) \,\big|\, 
	\alpha(P)=P,~ \alpha|_P\in O^{p'}(\autf(P)) \bigr\}. \]
\end{Not}

When $|Z_0|=p$, then by Lemma \ref{l1:s/a}(c), $\calh=\calh_0\cup\calh_*$ 
and $\calb=\calb_0\cup\calb_*$. Note that for $x,x'\in S\sminus A$, 
$Z\gen{x}$ is $S$-conjugate to $Z\gen{x'}$ or $Z_2\gen{x}$ is $S$-conjugate to 
$Z_2\gen{x'}$ only if $x'x^{-1}\in ZS'$. So in fact, each of the sets $\calh$ and 
$\calb$ is a union of $p$ distinct $S$-conjugacy classes: the classes 
$\calh_i$ and $\calb_i$ for $0\le i\le p-1$.

\begin{Lem}[{\cite[Lemma 2.5(a,b)]{indp2}}] \label{l3:s/a}
Let $p$ be an odd prime, let $S$ be a finite nonabelian $p$-group 
with a unique abelian subgroup $A\nsg{}S$ of index $p$, and let $\calf$ be 
a saturated fusion system over $S$ such that $A\nnsg\calf$.  We use the 
conventions of Notation \ref{n:not1} and \ref{n:not2}, set $A_0=ZS'$, and let 
$m\ge3$ be such that $|A/Z|=p^{m-1}$.  Then the following hold.
\begin{enuma}
\item $\5\mu|_{\outff(S)}$ is injective.

\item Fix $\alpha\in\Aut(S)$, set $(r,s)=\mu(\alpha)$, and let $t$ be such 
that $\alpha(g)\in{}g^tA_0$ for each $g\in{}A{\sminus}A_0$.  Then $s\equiv 
tr^{m-1}$ (mod $p$).

\end{enuma}
\end{Lem}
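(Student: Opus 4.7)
The plan is to prove (b) first via a direct operator computation on the abelian group $A$, and then to bootstrap (a) from (b) using saturation. For (b), write $\phi=c_x|_A\in\Aut(A)$ and work additively with $\psi=\phi-1\in\End(A)$, so that $\ker\psi=Z$ and $\im\psi=S'$, and more generally $\gamma_{j+1}(S)=\psi^j(A)$ for $j\ge1$. The hypothesis $|A/Z|=p^{m-1}$ together with Lemma \ref{|Z0|=p} pins the nilpotency class of $S$ at $m$, so $\gamma_{m+1}=1$, $\gamma_m=Z_0$, and an order count matches $A_0$ with $\ker(\psi^{m-1}|_A)$; in particular $\psi^{m-1}(\xa)$ generates $Z_0$ for every $\xa\in A\sminus A_0$. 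Since $\alpha$ is an automorphism with $\alpha(x)\in x^rA$ and since elements of $A$ act trivially on $A$ by conjugation, $\alpha\phi=\phi^r\alpha$, hence $\alpha\psi=\sigma_r\psi\alpha$ where $\sigma_r=1+\phi+\cdots+\phi^{r-1}$, and iterating yields $\alpha\psi^{m-1}=\sigma_r^{m-1}\psi^{m-1}\alpha$. Writing $\alpha(\xa)=t\xa+z+s'$ with $z\in Z$ and $s'\in S'$, the $z$- and $s'$-summands are killed by $\sigma_r^{m-1}\psi^{m-1}$ (since $\psi(Z)=0$ and $\psi^{m-1}(S')=\gamma_{m+1}=1$), and $\sigma_r\equiv r\pmod\psi$ gives $\psi^{m-1}\sigma_r^{m-1}(\xa)=r^{m-1}\psi^{m-1}(\xa)$ modulo $\psi^m(A)=1$. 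Comparing $\alpha(\psi^{m-1}(\xa))=tr^{m-1}\psi^{m-1}(\xa)$ with the fact that $\alpha$ acts on $Z_0$ as multiplication by $s$ yields $s\equiv tr^{m-1}\pmod p$.

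For (a), let $\alpha\in\autff(S)$ with $\mu(\alpha)=(1,1)$; I must show $\alpha\in\Inn(S)$. Saturation of $\calf$ forces $\Inn(S)\in\sylp{\autf(S)}$, so by Schur--Zassenhaus $\autf(S)=\Inn(S)\rtimes K$ for some $p'$-subgroup $K\le\autf(S)$. Writing $\alpha=\iota k$ with $\iota\in\Inn(S)$ and $k\in K$, one has $\mu(k)=(1,1)$ since $\mu$ vanishes on $\Inn(S)$, and it suffices to prove $k=\Id$. From $r=1$ the element $k$ centralizes $\phi$; from $s=1$, $k|_{Z_0}=\Id$; combined with $[k,Z]\le Z_0$ and coprime action on the $p$-group $Z$, $k|_Z=\Id$. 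Part (b) forces $t=1$, so $k$ acts trivially on $A/A_0$. Under the $k$-equivariant isomorphism $\psi\colon A/Z\cong S'$, $A_0/Z$ corresponds to $\gamma_3$, so triviality on $A/A_0$ transfers to triviality on $S'/\gamma_3$; iterating $\psi$ yields triviality on every $\gamma_j/\gamma_{j+1}$ for $j\ge2$, and coprime action on the $p$-group $S'$ then forces $k|_{S'}=\Id$. Transferring back via $\psi\colon A/Z\cong S'$ gives $k$ trivial on $A/Z$; combined with $k|_Z=\Id$ and coprime action once more, $k|_A=\Id$. Finally, $k(x)=xa$ for some $a\in A$, and $k|_A=\Id$ gives $k^n(x)=x\cdot na$ for every $n$; since $k$ has prime-to-$p$ order in a $p$-group target, this forces $a=0$, hence $k=\Id$.

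The main obstacle is the operator computation in (b): one must correctly identify the nilpotency class of $S$ as $m$, match $A_0$ with $\ker\psi^{m-1}$, and track the factorization $\phi^r-1=\sigma_r\psi$ across $m-1$ iterations, verifying that the $Z$- and $S'$-parts of $\alpha(\xa)$ are absorbed into the image of $\psi^m$. Once (b) is established, (a) becomes a routine coprime-action argument along the lower central series, with saturation providing the decisive reduction to a $p'$-automorphism.
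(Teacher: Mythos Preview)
The paper does not give a proof of this lemma; it is quoted from \cite[Lemma 2.5(a,b)]{indp2} without argument. Your proof is correct and self-contained, and the operator identity $\alpha\psi^{m-1}=(\phi^r-1)^{m-1}\alpha=\sigma_r^{m-1}\psi^{m-1}\alpha$ together with $A_0=\ker\psi^{m-1}$ and $Z_0=\psi^{m-1}(A)$ is exactly the mechanism one expects.

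One small point in (a): after writing $\alpha=\iota k$ with $k\in K$ via Schur--Zassenhaus in $\autf(S)$, you invoke $\mu(k)$, but $\mu$ is defined only on $\autv(S)$. You should observe that $k=\iota^{-1}\alpha\in\autv(S)$ because $\Inn(S)$ acts trivially on $Z=Z(S)$ and $\alpha\in\autff(S)$ by hypothesis; alternatively, apply Schur--Zassenhaus inside $\autff(S)$ from the start (it contains $\Inn(S)$ as a normal Sylow $p$-subgroup). With that adjustment, the coprime-action descent along the lower central series goes through exactly as you wrote.
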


\begin{Lem}[{\cite[Lemma 2.6(a)]{indp2}}] \label{l4:s/a}
Let $p$ be an odd prime, let $S$ be a finite nonabelian $p$-group 
with a unique abelian subgroup $A\nsg{}S$ of index $p$, and let $\calf$ be 
a saturated fusion system over $S$. We use the notation of Notation 
\ref{n:not1} and \ref{n:not2}. Let $m$ be such that $|A/Z|=p^{m-1}$. Fix 
$P\in\calh\cup\calb$, and set
	\[ H_P=N_{\autf(S)}(P)\,,
	\qquad
	\5H_P=\bigl\{\alpha|_P\,\big|\,\alpha\in{}H_P\bigr\}\,,
	\qquad\textup{and}\qquad
	t = \begin{cases} -1 & \textup{if $P\in\calh$} \\
	0 & \textup{if $P\in\calb$.}
	\end{cases}
	\]
If $P\in\EE\calf$, then $\autf^{(P)}(S)\le\autff(S)$ and
$\mu(\autf^{(P)}(S))=\Delta_t$.  If $P\in\calh_*$ or $P\in\calb_*$,
then $m\equiv t$ (mod $p-1$).  
\end{Lem}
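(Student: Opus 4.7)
The plan is to deduce both assertions from three ingredients: Lemma~\ref{l2:s/a} (identifying $O^{p'}(\outf(P))$ with $\SL_2(p)$), Lemma~\ref{l1:s/a}(b) (extending $P$-automorphisms to elements of $\autf(S)$), and Lemma~\ref{l3:s/a}(b) (the identity $s\equiv\tau r^{m-1}\pmod p$).

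For the first assertion, fix $\alpha\in\autf^{(P)}(S)$. Its restriction $\alpha|_P$ lies in $O^{p'}(\autf(P))$ and hence, by Lemma~\ref{l2:s/a}, acts trivially on $P_1$. Since $Z=P_1\times Z_0$ in case $\calh$ and $Z=P_1$ in case $\calb$, we get $[\alpha,Z]\le Z_0$, so $\alpha\in\autff(S)$. Because $\alpha$ normalizes $A$, it preserves the line $A\cap P_2=Z_0$ (case $\calh$) or $(Z_2\cap P_2)/Z_0$ (case $\calb$) inside the $\SL_2(p)$-module $P_2$ or $P_2/Z_0$; thus the image of $\alpha|_P$ in $\SL_2(p)$ is upper triangular. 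Comparing the eigenvalues with the actions on $S/A$ and on $Z_0$, and using that $\SL_2(p)$ acts trivially on $Z_0=[P_2,P_2]$ in case $\calb$ (via the determinant), yields $\mu(\alpha)=(r,r^{-1})\in\Delta_{-1}$ or $\mu(\alpha)=(r,1)\in\Delta_0$ respectively. Surjectivity onto $\Delta_t$ follows because the diagonal matrices in $\SL_2(p)$ normalize $\Aut_S(P)\in\sylp{\autf(P)}$, so by Lemma~\ref{l1:s/a}(b) they extend to elements of $\autf(S)$.

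For the second assertion, inner automorphisms of $S$ have $\mu$-image $(1,1)$, so by replacing $P$ with an $S$-conjugate we may assume $P=H_i$ or $B_i$ with $i\not\equiv 0\pmod p$. For $\alpha\in\autf^{(P)}(S)$ with $\mu(\alpha)=(r,r^t)$, Lemma~\ref{l3:s/a}(b) gives $\tau\equiv r^{t-m+1}\pmod p$, where $\tau$ satisfies $\alpha(\xa)\in\xa^\tau A_0$. The plan is to show independently that $\tau\equiv r\pmod p$; combined, $r^{m-t}\equiv 1$ for every $r\in(\Z/p)^\times$, forcing $m\equiv t\pmod{p-1}$.

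To derive $\tau\equiv r$, work modulo $A_0$ in the abelian quotient $S/A_0$ (abelian since $[\xx,A]=S'\le A_0$). The $\autf(S)$-invariance of $A_0\gen{\xx}$ and $S'\gen{\xa}$ gives $\alpha(\xx)\equiv\xx^r$ and $\alpha(\xa)\equiv\xa^\tau$ modulo $A_0$; since $\xx$-conjugation on $A$ is trivial modulo $A_0$, one finds $\alpha(\xx\xa^i)\equiv\xx^r\xa^{i\tau}$ and $(\xx\xa^i)^r\equiv\xx^r\xa^{ir}$ modulo $A_0$. The normalization $\alpha(\xx\xa^i)=z(\xx\xa^i)^r$, with $z\in Z$ (case $H_i$) or $z\in Z_2$ (case $B_i$), then yields $\xa^{i(\tau-r)}A_0=zA_0$ in $A/A_0$; when $z\in A_0$ this forces $\tau\equiv r\pmod p$ (since $i\not\equiv 0$). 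The inclusion $Z\le A_0$ is trivial and handles case $H_i$. For case $B_i$ we need $Z_2\le A_0$, equivalently $Z_0\le[S',\xx]=\psi^2(A)$: under the hypothesis $m\ge 3$ (forced whenever $(\calh\cup\calb)\cap\EE\calf\ne\emptyset$, since $m=2$ would make $S$ of class two and contradict Lemma~\ref{B&H}), the conditions $|Z_0|=|Z_2/Z|=p$ pin down a unique nontrivial Jordan block for $\psi$ on $A$, of size $m$, and its socle $Z_0$ lies in $\psi^2(A)$ precisely when that block has size at least three.

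The main obstacle is this last inclusion $Z_2\le A_0$ in the $\calb$ case: the Jordan argument is clean when $A$ is elementary abelian, but in the general non-elementary-abelian setting of the present paper one must analyze the $\Z_p[C_p]$-module structure of $A$ imposed by the size constraints on $Z_0$ and $Z_2/Z$.
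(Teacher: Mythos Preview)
The paper does not give its own proof; the lemma is cited from \cite[Lemma~2.6(a)]{indp2}. Your approach is the natural one and is correct, with one loose end that you yourself flag.

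Your argument for the first assertion is clean and correct: the decomposition $P=P_1\times P_2$ (or $P_1\times_{Z_0}P_2$) from Lemma~\ref{l2:s/a}, together with the identification $O^{p'}(\outf(P))\cong\SL_2(p)$, gives $[\alpha,Z]\le Z_0$ directly, and the eigenvalue/determinant computation on $P_2$ (respectively $P_2/Z_0$) pins down $\mu(\alpha)\in\Delta_t$. Surjectivity via Lemma~\ref{l1:s/a}(b) is exactly right.

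For the second assertion, your reduction to $\tau\equiv r\pmod p$ via Lemma~\ref{l3:s/a}(b) and the computation in the abelian quotient $S/A_0$ is correct; the only issue is the inclusion $Z_2\le A_0$ in the $\calb_*$ case. Your Jordan-block picture is the right intuition, and it does extend to arbitrary finite $A$ without any deep module theory. Write $\bar\psi$ for the endomorphism of $A/Z$ induced by $\psi$; then $\Ker(\bar\psi)=Z_2/Z$ and $\Im(\bar\psi)=A_0/Z$. If $Z_2\nleq A_0$, then since $|\Ker(\bar\psi)|=p$ we get $\Ker(\bar\psi)\cap\Im(\bar\psi)=1$, hence $\Ker(\bar\psi^2)=\Ker(\bar\psi)$, and by induction $\Ker(\bar\psi^n)=\Ker(\bar\psi)$ for all $n\ge1$. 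But $\bar\psi=u-1$ with $u^p=1$, so $(u-1)^{pN}\in p^N\Z\UUU$ annihilates $A/Z$ for $N$ large; thus $\bar\psi$ is nilpotent and $\Ker(\bar\psi^n)=A/Z$ eventually, forcing $|A/Z|=p$ and $m=2$. This contradicts $m\ge3$, which (as you note) is forced by $P\in\EE\calf\cap(\calh\cup\calb)$ via Lemma~\ref{B&H}. So the ``obstacle'' dissolves with two lines of nilpotence, and no case analysis of the $\Z_p\UUU$-module structure of $A$ is needed.
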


\begin{Thm}[{\cite[Theorem 2.8]{indp2}}] \label{t3:s/a}
Fix an odd prime $p$, and a finite nonabelian $p$-group $S$ which 
contains a unique abelian subgroup $A\nsg{}S$ of index $p$. Let $\calf$ be 
a reduced fusion system over $S$ for which $A$ is $\calf$-essential.  We 
use the notation of \ref{n:not1}, \ref{n:not2}, and \ref{n:not2a}, and also 
set $A_0=ZS'$, $\EE0=\EE\calf{\sminus}\{A\}$, and $G=\autf(A)$. Thus 
$\UUU=\Aut_S(A)\in\sylp{G}$. Let $m\ge3$ be such that $|A/Z|=p^{m-1}$. Then 
the following hold:
\begin{enuma}  

\item $Z_0=C_A(\UUU)\cap[\UUU,A]$ has order $p$, and hence 
$A_0=C_A(\UUU)[\UUU,A]$ has index $p$ in $A$.  

\item There are no nontrivial $G$-invariant subgroups of $Z=C_A(\UUU)$, 
aside (possibly) from $Z_0$.  

\item $[G,A]=A$.

\item One of the conditions (i)--(iv) holds, described in 
Table \ref{tbl:(d)}, where $\sigma=\sum_{u\in\UUU}u\in\Z_p\UUU$. 
\begin{table}[ht]
\[ \renewcommand{\arraystretch}{1.5}
\newcommand{\halfup}[1]{\raisebox{2.2ex}[0pt]{$#1$}}
\begin{array}{|c|c|c|c|c|c|} \hline
 & \mu_A(\autff(A)) & G=O^{p'}(G)X~ \textup{ where} & 
\textup{$m$ (mod ~ ${p-1}$)} & \sigma\cdot A & \EE0  \\ \hline\hline

\textup{(i)} & \Delta & X=\autff(A) & \equiv0 & \le\Fr(Z) &
\calh_0\cup\calb_*  \\\hline

\textup{(ii)} & \Delta & X=\autff(A) & \equiv-1 & \le\Fr(Z) &
\calb_0\cup\calh_*  \\\hline

\textup{(iii$'$)} &  & & \equiv-1 & \le\Fr(Z) & \bigcup_{i\in I}\calh_i 
\\\cline{1-1}\cline{4-6}

\textup{(iii$''$)} & \halfup{\ge\Delta_{-1}} & 
\halfup{X=\mu_A^{-1}(\Delta_{-1})} & - & - & \calh_0 \\\hline

\textup{(iv$'$)} &  &  {X=\mu_A^{-1}(\Delta_{0})}
& \equiv0 & \le\Fr(Z) & \bigcup_{i\in I}\calb_i \\\cline{1-1}\cline{4-6}

\textup{(iv$''$)} & \halfup{\ge\Delta_{0}} & 
\textup{$Z_0$ not $G$-invariant} & - & - & \calb_0 \\\hline
 
\hline
\end{array}
\]
\caption{} \label{tbl:(d)}
\end{table}

\end{enuma}

Conversely, for each $G$, $A$, $\UUU\in\sylp{G}$, and 
$\EE0\subseteq\calh\cup\calb$ which satisfy conditions (a)--(d), where 
$|\UUU|=p$ and $\UUU\nnsg{}G$, there is a simple fusion system $\calf$ over 
$A\sd{}\UUU$ with $\autf(A)=G$ and $\EE\calf=\EE0\cup\{A\}$, unique up to 
isomorphism. When $A$ is not elementary abelian, all such fusion systems 
are exotic, except for the fusion systems of the simple groups listed in 
Table \ref{tbl:type3}. \\ 
Such a fusion system $\calf$ has a proper strongly closed subgroup if and 
only if $A_0=C_A(\UUU)[\UUU,A]$ is $G$-invariant, and $\EE0=\calh_i$ or 
$\calb_i$ for some $i$, in which case $A_0H_i=A_0B_i$ is strongly closed. 
\end{Thm}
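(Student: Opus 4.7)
The plan is to treat this as a two-way classification, obtaining both directions by careful bookkeeping of the constraints that the axioms of a reduced fusion system place on the data $(G,A,\EE\calf)$.

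First I would do the forward direction, items (a)--(c). Since $\calf$ is reduced we have $O_p(\calf)=1$, and because $A$ is a normal $p$-subgroup of $S$ this forces $A\nnsg\calf$ (otherwise $A\le O_p(\calf)=1$). So (a) is immediate from Lemma \ref{|Z0|=p}, and (b) is immediate from Lemma \ref{O_p(F)=1}. For (c), the subgroup $[G,A]\le A$ is $G$-invariant and contains $[\UUU,A]=S'$; if it were proper, then since $A_0=ZS'$ has index $p$ in $A$ (part (a)) and $Z\cap[G,A]$ is a proper $G$-invariant subgroup of $Z$, part (b) forces $[G,A]=S'$ or $S'Z_0=S'$, and one checks that this produces a proper fusion subsystem of index prime to $p$, contradicting $O^{p'}(\calf)=\calf$.

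The main case analysis is in (d). Here I would use Lemma \ref{B&H} to restrict $\EE\calf$ to $\{A\}\cup\calh\cup\calb$, use the exclusion Lemma \ref{exclusion} to show that $\calh_i$ and $\calb_i$ for the same index $i$ cannot both contribute to $\EE\calf$, and apply Lemma \ref{l4:s/a} to tie the $S$-conjugacy class labels to $m\bmod(p-1)$ via the parameter $t\in\{-1,0\}$. The image $\mu_A(\autff(A))$ is then forced by combining Lemma \ref{l3:s/a}, the extension axiom (applied to $\autf(Z)$ acting on $Z_0$), and the restrictions coming from Lemma \ref{min.act.} (and in particular whether $Z_0\le\Fr(A)$). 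Splitting on whether $\EE\calf$ contains one full $S$-orbit of essentials or the $\calh_0/\calb_0$ class or additional $\calh_i/\calb_i$ classes produces exactly the rows (i)--(iv) of Table \ref{tbl:(d)}; the condition $\sigma\cdot A\le\Fr(Z)$ in (i),(ii),(iii$'$),(iv$'$) comes from Lemma \ref{min.act.|Z0|=p}(d) applied to the requirement that $\mu_A$ fill out all of $\Delta$.

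For the converse, given data $(G,A,\UUU,\EE0)$ satisfying (a)--(d), I would construct $\calf$ by specifying $\autf(A)=G$, and, for each $P\in\EE0$, taking $\autf(P)$ to be the subgroup of $\Aut(P)$ generated by $\Aut_S(P)$, by the subgroup $O^{p'}(\outf(P))\cong\SL_2(p)$ lifted through Lemma \ref{l2:s/a}, and by the restrictions from $\autf^{(P)}(S)$ as described in Lemma \ref{l4:s/a}; extend to all of $\calf$ using Proposition \ref{AFT-E}. Saturation is checked using the standard Puig/BLO criterion that it suffices to verify the Sylow and extension axioms on a set of representatives of $\calf$-conjugacy classes of essentials, which follows from the explicit compatibility built into (d). Uniqueness follows from Alperin's fusion theorem together with the fact that the automizers of $S$ and of the essentials are forced by (d). The identification of realizable cases in Table \ref{tbl:type3} is a finite check against the classification of almost simple groups with a Sylow subgroup having an abelian subgroup of index $p$. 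Finally, for strongly closed subgroups, a subgroup $T\nsg S$ with $A_0\le T<S$ and $T\ne A$ is strongly closed iff $T\cap(S\sminus A)$ is a union of $\calf$-conjugacy classes of elements; one checks this holds precisely for $T=A_0H_i=A_0B_i$ when $\EE0=\calh_i$ or $\calb_i$ and $A_0$ is $G$-invariant, since then the classes $\calh_i$ (resp.\ $\calb_i$) for different $i$ are not $\calf$-fused. The main obstacle is the case analysis in (d), which is combinatorially dense but purely mechanical once the right language is fixed.
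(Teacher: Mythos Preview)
This theorem is not proved in the present paper at all: it is quoted verbatim from \cite[Theorem 2.8]{indp2}, and the paper simply uses it as input. So there is no ``paper's own proof'' to compare against beyond the citation. That said, your outline is a plausible reconstruction of how the argument in \cite{indp2} goes, and parts (a) and (b) are correctly derived from Lemmas \ref{|Z0|=p} and \ref{O_p(F)=1}. A few points in your sketch are off, however.

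For (c), you invoke $O^{p'}(\calf)=\calf$, but the condition $[G,A]=A$ is tied to $O^{p}(\calf)=\calf$ (equivalently $\foc(\calf)=S$), not to the prime-to-$p$ index condition; your sentence ``this produces a proper fusion subsystem of index prime to $p$'' has the index the wrong way around. Even with the correct hypothesis, the implication is not as immediate as you suggest, since the essential subgroups in $\calh\cup\calb$ also contribute to the focal subgroup and one must check that their contribution lies inside $[G,A]\cdot Z_2$. In (d), your attribution of the condition $\sigma\cdot A\le\Fr(Z)$ to Lemma \ref{min.act.|Z0|=p}(d) is incorrect: that lemma concerns generation of $X$ by $C_X(\UU)$ and a cyclic $\Z_p\UU$-submodule, not the image of $\mu_A$. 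The role of $\sigma\cdot A\le\Fr(Z)$ is rather to control whether elements of $S\sminus A$ outside the class of $\xx$ can have order $p$, and hence whether $\calh_*$ or $\calb_*$ can contribute essentials (compare Lemma \ref{A/Fr(A)}(e) in this paper). Finally, calling the exoticity determination ``a finite check'' understates it considerably: it relies on the classification of finite simple groups to enumerate all almost simple groups whose Sylow $p$-subgroup has this shape, which is precisely the nontrivial content of that part of \cite{indp2}.
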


\begin{table}[ht]
\begin{small} 
\[ \renewcommand{\arraystretch}{1.5}
\begin{array}{|c|c|c|c|c|c|c|c|} \hline
\Gamma & p & \textup{conditions} & \rk(A) & e & m &
G=\Aut_\Gamma(A) & \EE0 \\ \hline\hline

\PSL_p(q) & p & p^2|(q{-}1),\ p>3 & p{-}1 & v_p(q{-}1) & e(p{-}1)-1 & 
\Sigma_p & \calh_0\cup\calh_* \\ \hline

\PSL_n(q) & p & p^2|(q{-}1),\ p{<}n{<}2p & n{-}1 & v_p(q{-}1) & e(p{-}1)+1 & 
\Sigma_n & \calb_0 \\ \hline

P\Omega_{2n}^+(q) & p & p^2|(q{-}1),\ p{\le} n{<}2p & n & v_p(q{-}1) & 
e(p{-}1)+1 & C_2^{n-1}\rtimes\Sigma_n & \calb_0 \\ \hline

\lie2F4(q) & 3 & \raisebox{3pt}{\hbox to 2cm{\hrulefill}} & 2 & v_3(q{+}1) 
& 2e & GL_2(3) & \calb_0\cup\calb_* \\ \hline 

E_n(q) & 5 & n=6,7,\ p^2|(q{-}1) & n & v_5(q{-}1) & 4e+1 & W(E_n) & \calb_0 
\\ \hline

E_n(q) & 7 & n=7,8,\ p^2|(q{-}1) & n & v_7(q{-}1) & 6e+1 & W(E_n) & \calb_0 
\\ \hline

E_8(q) & 5 & v_5(q^2+1)\ge2 & 4 & v_5(q^4{-}1) & 4e & 
(4\circ2^{1+4}).\Sigma_6 & \calb_0\cup\calb_* \\ \hline

\hline
\end{array}
\]
\end{small}
\caption{In this table, $e$ is such that $p^e$ is the exponent of $A$, 
and we restrict to the cases where $e\ge2$.  In all 
cases except when $\Gamma\cong\PSL_p(q)$, $A$ is homocyclic.} 
\label{tbl:type3}
\end{table}

\smallskip

We now look for a more precise description of the group $A$ when it is 
finite but not elementary abelian. 
The following notation will be useful when describing more precisely 
elements and subgroups of $A$. 

\begin{Not} \label{n:not3} 
Assume Notation \ref{n:not1} and \ref{n:not2a}, and set 
$\uu=c_\xx\in\UUU=\Aut_S(A)$.  Set 
$\sigma=1+\uu+\uu^2+\ldots+\uu^{p-1}\in\Z_p\UUU$. Regard $A$ as a 
$\Z_p\UUU$-module, and define 
	\[ \Psi\: \Z_p\UUU \Right4{} A \]
by setting $\Psi(\xi)=\xi\cdot\xa$. Thus $\Psi\bigl(\sum_{i=0}^{p-1}n_i\uu^i\bigr)=
\prod_{i=0}^{p-1}\uu^i(\xa)^{n_i}$ for $n_i\in\Z_p$. 

Set $\zeta=e^{2\pi{}i/p}$, $R=\Z_p[\zeta]$, and $\pp=(1{-}\zeta)R$.  Thus 
$\pp$ is the unique maximal ideal in $R$. We identify 
$R=\Z_p\UUU/\sigma\Z_p\UUU$, by sending $\zeta\in{}R$ to the class of $\uu$ 
modulo $\gen{\sigma}$.  
\end{Not}

The basic properties of $\Psi$, and the role of $\Psi(\sigma)$, are 
described in the following lemma. Recall that 
$\mho^k(P)=\gen{g^{p^k}\,|\,g\in P}$, when $P$ is a $p$-group and $k\ge1$.

\begin{Lem} \label{A/Fr(A)}
Assume Notation \ref{n:not1} and \ref{n:not3}, where $A\in\EE\calf$, and 
$A\nnsg\calf$ is finite and not elementary abelian. Let $m$ be such that 
$|A/Z|=p^{m-1}$. Then 
\begin{enuma} 

\item $\Im(\Psi)\cap Z=Z_0\gen{\Psi(\sigma)}$;

\item $\Psi$ induces an isomorphism $A/Z\cong R/\pp^{m-1}$ 
via the identification  $R=\Z_p\UUU/\gen{\sigma}$; and 

\item $\Psi((1-\uu)^m)=1$ and $Z_0=\gen{\Psi((\uu-1)^{m-1})}$.

\end{enuma}
Furthermore, the following all hold.

\begin{enuma}\setcounter{enumi}{3}

\item The homomorphism $\Psi$ is surjective if and only if $\rk(A)\le p$, 
if and only if $Z$ is cyclic. If $\Psi(\sigma)\in\Fr(A)$, then $\rk(A)<p$ 
and $\Psi$ is surjective.

\item Either \smallskip
\begin{itemize} 
\item $\Psi(\sigma)=1$, in which case $\rk(A)=p-1$, $Z=Z_0$, and $\Psi$ 
induces an isomorphism $A\cong R/\pp^m$ via the identification  
$R=\Z_p\UUU/\gen{\sigma}$; or 
\item $\Psi(\sigma)\notin\Fr(Z)$, in which case 
$\EE\calf\subseteq\{A\}\cup\calh_0$ or $\EE\calf\subseteq\{A\}\cup\calb_0$.
\end{itemize} 

\item If $\Psi(\sigma)\ne1$ and $\Psi(\sigma)\in 
Z_0$, then $m\equiv1$ (mod $p-1$). If $\Psi(\sigma)\notin Z_0$, then 
$\mu(\autff(S))=\Delta_{m-1}$; and either $m\equiv1$ (mod $p-1$) and 
$\EE\calf\sminus\{A\}=\calb_0$, or $m\equiv0$ (mod $p-1$) and 
$\EE\calf\sminus\{A\}=\calh_0$.

\item If $\Psi$ is not surjective, then $A$ is homocyclic. 

\end{enuma}
\end{Lem}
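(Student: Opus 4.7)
The plan is first to establish (a)--(c) by reducing to module theory over $R=\Z_p[\zeta]$. Since $|Z_0|=p$, Lemma~\ref{min.act.|Z0|=p}(d) gives $A=Z+\Z_p\UUU\cdot\xa$, so $A/Z$ is cyclic over $\Z_p\UUU$; and because $\sigma\xa\in C_A(\UUU)=Z$, the generator $\xa+Z$ is annihilated by $\sigma$, making $A/Z$ a cyclic $R$-module. Since the ideals of the DVR $R$ are the $\pp^k$, comparing orders gives $A/Z\cong R/\pp^{m-1}$, proving (b). The kernel of $\Z_p\UUU\to A/Z$ is then $\gen{\sigma,(1-\uu)^{m-1}}$, so every $\xi$ with $\xi\xa\in Z$ splits as $\xi=\sigma\eta_1+(1-\uu)^{m-1}\eta_2$; the first term contributes $\sigma\eta_1\xa\in\gen{\Psi(\sigma)}$ (because $u\sigma=\sigma$ makes $\sigma\eta_1$ a scalar multiple of $\sigma$), and the second lies in $(1-\uu)^{m-1}A\le Z\cap S'=Z_0$, proving (a). Part (c) follows: $\Psi((1-\uu)^m)=(1-\uu)\cdot\Psi((1-\uu)^{m-1})=1$ since $\Psi((1-\uu)^{m-1})\in Z$, and $\Psi((\uu-1)^{m-1})$ is nontrivial because otherwise $(1-\uu)^{m-2}\xa$ would lie in $Z$, contradicting that the class of $(1-\zeta)^{m-2}$ is nonzero in $R/\pp^{m-1}\cong A/Z$.

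For (d), the equivalences ``$\Psi$ surjective $\iff\rk(A)\le p\iff Z$ cyclic'' follow from the minimal activity of $A/pA$ and $V=\Omega_1(A)$ (Lemma~\ref{min.act.}(b)): when $\rk(A)\le p$, Lemma~\ref{min.act.props}(a) makes $(A/pA)|_\UUU$ a single Jordan block, and $\4\xa\notin\4{A_0}$ lifts via Nakayama to $A=\Z_p\UUU\cdot\xa$; conversely $A$ cyclic over $\Z_p\UUU$ trivially has $\rk\le p$. The second equivalence comes from Lemma~\ref{min.act.props}(c) using $\Omega_1(Z)=C_V(\UUU)$. For the last assertion, $\sigma$ reduces to $(\uu-1)^{p-1}$ on $A/pA$, so $\Psi(\sigma)\in\Fr(A)$ iff the Jordan block containing $\4\xa$ has length $<p$; when $\rk(A)\ge p$, direct analysis of the decomposition $A/pA=J_p\oplus T$ (via Lemma~\ref{min.act.props}(b)) together with $\4\xa\notin\4{A_0}$ forces the $J_p$-component of $\4\xa$ to generate $J_p$, so $\Psi(\sigma)\notin\Fr(A)$.

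For (e), if $\Psi(\sigma)\in\Fr(Z)=pZ\le\Fr(A)$ then (d) gives $\rk(A)\le p-1$, $\Psi$ surjective, and $Z$ cyclic. Every subgroup of cyclic $Z$ is characteristic hence $G$-invariant, so Theorem~\ref{t3:s/a}(b) forces $\gen{\Psi(\sigma)}\in\{1,Z_0\}$; the alternative $\gen{\Psi(\sigma)}=Z_0$ with $\Psi(\sigma)\in\Fr(Z)$ requires $Z$ of exponent $p^e$ with $e\ge 2$, and combining $p\Psi(\sigma)=0$ with $\sigma A=\Z_p\cdot\Psi(\sigma)+pZ=pZ$ gives $p\sigma\cdot A=p^2Z=0$, forcing $e=2$; ruling out this last case is the main obstacle, requiring a delicate combination of the cyclic $\Z_p\UUU$-module structure $A=\Z_p\UUU/\Ann(\xa)$, the relation $\uu^p=1$, and the minimal activity of $V$. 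Once $\Psi(\sigma)=1$ is established, $\Ann(\xa)\supseteq\gen{\sigma,(1-\uu)^m}$ together with $|\Im\Psi|=p^{m-1}\cdot|Z_0|=p^m$ (from (a)) gives $\Im\Psi\cong R/\pp^m$; surjectivity of $\Psi$ then yields $A\cong R/\pp^m$, whence $Z=\pp^{m-1}/\pp^m=Z_0$ and $\rk(A)=p-1$. In the other branch $\Psi(\sigma)\notin\Fr(Z)$, we have $\sigma A\nleq\Fr(Z)$, placing us in case (iii$''$) or (iv$''$) of Theorem~\ref{t3:s/a}(d), yielding $\EE\calf\subseteq\{A\}\cup\calh_0$ or $\{A\}\cup\calb_0$.

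For (f), given $\alpha\in\autff(S)$ with $\mu(\alpha)=(r,s)$, since $\alpha$ normalizes $\UUU$ it commutes with $\sigma$; writing $\alpha(\xa)\equiv\xa^t\pmod{S'}$ (using $\autf(S)$-invariance of $S'\gen{\xa}$) gives $\alpha(\Psi(\sigma))=\sigma(\alpha(\xa))=\Psi(\sigma)^t$ because $\sigma$ kills $S'$. If $\Psi(\sigma)\in Z_0\sminus\{1\}$ then also $\alpha(\Psi(\sigma))=\Psi(\sigma)^s$, so $s\equiv t\pmod p$; combined with $s\equiv tr^{m-1}$ from Lemma~\ref{l3:s/a}(b) and the fact that $r$ ranges over $\F_p^\times$ in $\mu(\autff(S))$ (Lemma~\ref{l4:s/a}), this forces $m\equiv 1\pmod{p-1}$. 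If $\Psi(\sigma)\notin Z_0$, the same computation combined with $[\alpha,Z]\le Z_0$ pins down $\mu(\autff(S))\le\Delta_{m-1}$ with equality by Lemma~\ref{l4:s/a}, and the $m$-congruence together with Lemma~\ref{exclusion} determines whether $\EE\calf\sminus\{A\}=\calb_0$ or $\calh_0$. Finally, for (g), $\Psi$ not surjective gives $\rk(A)\ge p+1$ by (d), which excludes the first case of (e) and so $\Psi(\sigma)\notin\Fr(Z)$; applying Proposition~\ref{dim(p-1)}(a) to the minimally active $V$ (of dimension $\ge p-1$) yields a $\Z_p\UUU$-lattice $\Lambda$ with $\Lambda/p\Lambda\cong V$, and the homocyclic module $\Lambda/p^e\Lambda$ (where $p^e$ is the exponent of $A$) has $\Omega_1\cong V$ and the same $|Z_0|=p$ property; Proposition~\ref{A1=A2}(a) then gives $A\cong\Lambda/p^e\Lambda$, which is homocyclic.
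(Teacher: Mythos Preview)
Your arguments for (a)--(d) and the second bullet of (e) are fine and close to the paper's (your use of Theorem~\ref{t3:s/a}(d) for the second bullet of (e) is a legitimate shortcut). But there are two genuine gaps.

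\textbf{Part (e), first bullet.} You correctly reduce to the case $\Psi(\sigma)\in\Fr(Z)$, get $\Psi$ surjective and $Z$ cyclic from (d), and then invoke Theorem~\ref{t3:s/a}(b) to force $\gen{\Psi(\sigma)}\in\{1,Z_0\}$. But you then admit that ruling out $\gen{\Psi(\sigma)}=Z_0$ is ``the main obstacle'' and only gesture at how to do it. This is not a proof. The paper's argument is much simpler and avoids this case split entirely: once $\Psi$ is surjective, part (a) gives $Z=\Im(\Psi)\cap Z=Z_0\gen{\Psi(\sigma)}$. If $\Psi(\sigma)\in\Fr(Z)$, then $Z=Z_0\cdot\Fr(Z)$, so $Z_0$ surjects onto $Z/\Fr(Z)$; since $|Z_0|=p$, this forces $Z=Z_0$ and hence $\Fr(Z)=1$, giving $\Psi(\sigma)=1$ immediately. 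You had all the ingredients (in particular (a)) but did not combine them.

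\textbf{Part (g).} Your argument is circular. You want to show $A$ is homocyclic, and you propose to apply Proposition~\ref{A1=A2}(a) with $A_1=A$ and $A_2=\Lambda/p^e\Lambda$. But the hypothesis of Proposition~\ref{A1=A2}(a) is that \emph{both} $A_1$ and $A_2$ are homocyclic of the same exponent --- so you are assuming what you want to prove. The paper's argument is entirely different: one first shows (using Lemma~\ref{rk(V)=p+1} and simplicity of $\Omega_1(A)$ when $\dim\ge p+2$) that $A/Z$ has the same exponent $p^k$ as $A$, hence $m-1\ge p$ and $Z_0\le\Fr(A)$; then that $Z_0$ is not a direct factor of $Z$, so $\EE\calf\cap\calh=\emptyset$; then $m\equiv1\pmod{p-1}$ by (f), making $A/Z$ homocyclic of rank $p-1$ and exponent $p^k$, so $A\cong Z\times(A/Z)$ as abelian groups and $\rk(\mho^{k-1}(A))\ge p$. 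Finally, if $A$ were not homocyclic, $\mho^{k-1}(A)$ would be a proper nontrivial submodule of $\Omega_1(A)$, forcing $\rk(A)=p+1$ and (by Lemma~\ref{rk(V)=p+1}) $\dim(\mho^{k-1}(A))\le p-1$, a contradiction.

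A minor imprecision in (f): you write $\alpha(\xa)\equiv\xa^t\pmod{S'}$, but Notation~\ref{n:not2a} only gives $\alpha(\xa)\in S'\gen{\xa}$, i.e., $\alpha(\xa)=\xa^j s'$ with $j\equiv t\pmod p$ (not $j=t$). This yields $\alpha(\Psi(\sigma))=\Psi(\sigma)^j\equiv\Psi(\sigma)^t\pmod{\gen{\Psi(\sigma)^p}}$, which is what the paper uses; the rest of your (f) argument then goes through.
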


\begin{proof} Set $\4A=A/\Fr(A)$ for short. For $B\le A$ or $g\in A$, 
let $\4B\le\4A$ or $\4g\in\4A$ denote their images in $\4A$ under 
projection. Let $\4\Psi\:\Z_p\UUU\Right2{}\4A$ be the composite of $\Psi$ 
followed by projection to $\4A$. 

\smallskip

\noindent\textbf{(a) } Since $(1-\uu)\Z_p\UUU+\sigma\Z$ has index $p$ 
in $\Z_p\UUU$,
	\[ \Im(\Psi) 
	= \Psi\bigl((1-\uu)\Z_p\UUU\bigr)\gen{\Psi(\sigma)}\gen{\Psi(1)}
	= S'\gen{\Psi(\sigma)}\gen{\xa} , \]
where $\xa^p\in S'\gen{\Psi(\sigma)}$. Since $\xa\notin Z$ and 
$\Psi(\sigma)\in Z$, we have 
	\[ \Im(\Psi)\cap Z = C_{\Im(\Psi)}(\UUU) = 
	C_{S'\gen{\Psi(\sigma)}}(\UUU) = (S'\cap Z)\cdot\gen{\Psi(\sigma)} = 
	Z_0\gen{\Psi(\sigma)}. \]

\smallskip

\noindent\textbf{(b,c) } Since $\Psi(\sigma)\in Z$, $\Psi$ induces a 
homomorphism from $\Z_p\UUU/\gen{\sigma}\cong R$ to $A/Z$, which is onto 
since $A=Z\cdot\Im(\Psi)$ by Lemma \ref{min.act.|Z0|=p}(d). Since 
$|A/Z|=p^{m-1}$ by assumption (and since $\pp$ is the unique maximal ideal 
in $R$ that contains $p$), we have $A/Z\cong R/\pp^{m-1}$. Hence 
$\Psi((\uu-1)^{m-1})\in Z$ and $\Psi((\uu-1)^{m-2})\notin Z$, and the 
latter implies that $\Psi((\uu-1)^{m-1})\ne1$. Thus in all cases (and since 
$|Z_0|=p$), $\Psi((\uu-1)^m)=1$ and $\gen{\Psi((\uu-1)^{m-1})}=Z_0$. 

\smallskip

\noindent\textbf{(d) } If $\rk(A)\le p$, then $\rk(\4A)\le p$, and by 
\cite[Proposition 3.7(a)]{indp2}, $\4A|_\UUU$ is indecomposable. Hence 
$\4\Psi$ is onto in this case, and so $\Psi$ is also onto. 
Conversely, if $\rk(A)>p=\rk(\Z_p\UUU)$, then $\Psi$ is clearly not 
surjective.

By Lemmas \ref{min.act.}(b) and \ref{min.act.props}(c), 
$\Omega_1(Z)=C_{\Omega_1(A)}(\UUU)$ has rank $1$ if and only if 
$\rk(\Omega_1(A))\le p$. Hence $Z$ is cyclic if and only if 
$\rk(A)\le p$.

If $\Psi(\sigma)\in\Fr(A)$, then $\rk(\Im(\4\Psi))\le p-1$. Hence $\4A$ has 
no nontrivial Jordan block of rank $p$, and by \cite[Proposition 
3.7(a)]{indp2} again, $\4A$ is indecomposable as an $\F_p\UUU$-module. So 
$\rk(A)=\rk(\4A)<p$, and $\Psi$ is onto. 

\smallskip

\noindent\textbf{(e) } If $\Psi(\sigma)\in\Fr(Z)\le\Fr(A)$, then $\Psi$ is 
surjective by (a), so $Z=Z_0\gen{\Psi(\sigma)}\le Z_0\cdot\Fr(Z)$, and hence 
$Z=Z_0$ and $\Psi(\sigma)\in\Fr(Z_0)=1$. Thus $\Psi$ factors through a 
surjection $\Psi^*\:\Z_p\UUU/\gen{\sigma}\cong R \Right2{}A$, and induces an 
isomorphism $A\cong R/I$ for some ideal $I$ in $R$. Since $\pp$ is the only 
prime ideal in $R$ of $p$-power index (and $|R/\pp|=p$), and since 
$|A|=p^{m-1}|Z|=p^m$ (recall $Z=Z_0$ by (a)), we have $I=\pp^m$. 

Since $\EE\calf\not\subseteq\{A\}$ (Notation \ref{n:not3}), $\xx^p=1$ by 
Notation \ref{n:not3} and Lemma \ref{l1:s/a}. For each $b\in A$, 
	\[ (b\xx)^p=(b\xx)^p\xx^{-p}=b\cdot\9{\xx}b\cdot\9{\xx^2}b\cdots
	\9{\xx^{p-1}}b 
	=\textstyle\prod_{i=0}^{p-1}\uu^i(b). \]
If $\Psi(\sigma)=\prod_{i=0}^{p-1}\uu^i(\xa)\notin\Fr(Z)$, then 
$\prod_{i=0}^{p-1}\uu^i(b)\ne1$ for each $b\in A\sminus 
ZS'=\bigcup_{i=1}^{p-1}\xa^iZS'$. So by Lemma \ref{l2:s/a}, no member of 
$\calh_*\cup\calb_*$ can be essential, and 
$\EE\calf\subseteq\{A\}\cup\calh_0\cup\calb_0$. 

\smallskip

\noindent\textbf{(f) } Assume $A\nnsg\calf$, and thus 
$\EE\calf\not\subseteq\{A\}$. Fix $P\in\EE\calf\cap(\calh\cup\calb)$ and 
$\alpha\in\autf^{(P)}(S)\le\autff(S)$ (Lemma \ref{l4:s/a}). 

Set $\mu(\alpha)=(r,s)$, and let $t$ be as in 
Lemma \ref{l3:s/a}(b).  Thus $s\equiv tr^{m-1}$ (mod $p$) and 
$\alpha(\xa)\equiv\xa^t$ (mod $ZS'$), so $\alpha(\xa)=\Psi(\xi)$ for some 
$\xi\equiv t$ (mod $\gen{1-\uu,p}$).  Also, $\alpha(\xx)\in\xx^rA$, so 
$\alpha(\uu^i(g))=\uu^{ri}(\alpha(g))$ for all $i$ and $g\in{}A$.  Thus 
	\begin{multline} 
	\alpha(\Psi(\sigma)) = \prod_{i=0}^{p-1}\alpha(\uu^i(\xa)) 
	= \prod_{i=0}^{p-1}\uu^{ri}(\alpha(\xa))
	= \Psi\Bigl(\sum_{i=0}^{p-1}\xi\uu^{ri}\Bigr) \\
	= \Psi(\xi\sigma) 
	\equiv \Psi(t\sigma) \,. \qquad
	\textup{(mod $\Psi(p\sigma)$)}
	\label{e:l5x} \end{multline}
In other words, $\alpha(\Psi(\sigma))\equiv \Psi(\sigma)^t$ (mod 
$\gen{\Psi(\sigma)^p}$). 

If $\Psi(\sigma)\ne1$ and $\Psi(\sigma)\in Z_0$, then $t\equiv s$ (mod 
$p$) by \eqref{e:l5x} (and by definition of $\mu$), and 
hence $r^{m-1}\equiv1$ (mod $p$). Since this holds for arbitary $\alpha$ 
and hence for arbitary $r$ prime to $p$ by \cite[Lemma 2.6(a)]{indp2} and since 
$P\in\EE\calf\cap(\calh\cup\calb)$, it follows that $m\equiv1$ (mod $p-1$). 

Now assume $\Psi(\sigma)\notin Z_0$. By \eqref{e:l5x} and since 
$[\alpha,Z]\le Z_0$, we have $t\equiv1$ and $s\equiv r^{m-1}$. Since this 
holds for arbitrary $\alpha\in\autf^{(P)}(S)$ (in particular, for arbitrary 
$r$ prime to $p$), it follows that 
$\mu(\autf^{(P)}(S))\le\mu(\autff(S))\le\Delta_{m-1}$, with equality by 
Lemma \ref{l4:s/a}. So by Lemma \ref{l4:s/a}, $P\notin\calh_*\cup\calb_*$, 
and either $P\in\calh_0$ and $\Delta_{m-1}=\Delta_{-1}$ (so $m\equiv0$ (mod 
$p-1$)); or $P\in\calb_0$ and $\Delta_{m-1}=\Delta_0$ (so $m\equiv1$ (mod 
$p-1$)). 

\smallskip

\noindent\textbf{(g) } Assume that $\Psi$ is not onto, and hence by (d) that 
$\rk(A)\ge p+1$ and $\Psi(\sigma)\notin\Fr(A)$. Let $k\ge2$ be such that 
$A$ has exponent $p^k$. If $A/Z$ has strictly smaller exponent, then 
$1\ne\mho^{k-1}(A)\le Z$, and thus $\mho^{k-1}(A)$ is an $\F_pG$-submodule 
of the minimally active, indecomposable module $\Omega_1(A)$ upon which 
$\UUU$ acts trivially. If $\rk(A)=p+1$, this contradicts Lemma 
\ref{rk(V)=p+1}, while if $\rk(A)\ge p+2$, this is impossible since 
$\Omega_1(A)$ is simple by \cite[Proposition 3.7(c)]{indp2}. Thus $A/Z\cong 
R/\pp^{m-1}$ also has exponent $p^k\ge p^2$, and hence $m-1\ge p$. So by 
(c), and since $(\uu-1)^p\in p\Z_p\UUU$, we have 
$Z_0=\gen{\Psi((\uu-1)^{m-1})}\le\Fr(A)$. 
 
Now, $\rk(A/Z)=\rk(R/\pp^{m-1})=p-1$ since $m\ge p$, and 
$\rk(Z)=\rk(C_{\Omega_1(A)}(\UUU))=\rk(A)-(p-1)$ by Lemma 
\ref{min.act.props}(c). If $Z_0$ is a direct factor in $Z$, then 
$\rk(Z/Z_0)=\rk(Z)-1$, so $\rk(A/Z_0)\le\rk(A/Z)+\rk(Z/Z_0)=\rk(A)-1$. Thus 
no minimal generating set for $A/Z_0$ lifts to a generating set for $A$, so 
$Z_0\nleq\Fr(A)$, which contradicts what we just showed. Thus $Z_0$ is not 
a direct factor in $Z$, and so $\EE\calf\cap\calh=\emptyset$ by Lemma 
\ref{l2:s/a}(a). 

In particular,  $m\equiv1$ (mod $p-1$) by (f), and hence $A/Z\cong 
R/\pp^{m-1}$ is homocyclic of rank $p-1$ and exponent $p^k$. Thus $A$ and 
$A/Z$ are both $\Z/p^k$-modules and $A/Z$ is free, so $A\cong Z\times(A/Z)$ 
as abelian groups. Since $\mho^{k-1}(A)\cap Z=C_{\mho^{k-1}(A)}(\UUU)\ne1$, 
this shows that $\rk(\mho^{k-1}(A))\ge p$. 

If $A$ is not homocyclic, then $\mho^{k-1}(A)<\Omega_1(A)$ is a 
nontrivial proper $\F_pG$-submodule, where $\Omega_1(A)$ is faithful, 
minimally active, and indecomposable by Lemma \ref{min.act.}(b). Hence 
$\rk(A)=\dim(\Omega_1(A))=p+1$, since $\Omega_1(A)$ is simple if 
$\dim(\Omega_1(A))\ge p+2$ by \cite[Proposition 3.7(c)]{indp2}. So 
$\dim(\mho^{k-1}(A))\le p-1$ by Lemma \ref{rk(V)=p+1}. This contradicts 
what was shown in the last paragraph, and we conclude that $A$ is homocyclic. 
\end{proof}

\begin{Lem} \label{l:(u-1)^m}
Let $p$ be an odd prime, let $\UUU$ be a group of order $p$, and set 
$\sigma=\sum_{u\in\UUU}u\in\Z\UUU$. Then 
for each $1\ne u\in\UUU$ and each $k\ge1$, 
	\[ (u-1)^{k(p-1)}\equiv(-1)^{k-1}(p^{k-1}\sigma-p^k) 
	\pmod{p^k(u-1)\Z\UUU}. \]
\end{Lem}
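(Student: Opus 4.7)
The plan is to proceed by induction on $k\ge1$, with the base case $k=1$ doing almost all of the real work.

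For the base case, I want to show $(u-1)^{p-1}\equiv\sigma-p\pmod{p(u-1)\Z\UUU}$. I will compute $\sigma - p$ directly in terms of the variable $y=u-1$. Writing
\[
\sigma-p=\sum_{i=0}^{p-1}(u^i-1)=\sum_{i=0}^{p-1}\bigl((1+y)^i-1\bigr)
=\sum_{i=0}^{p-1}\sum_{j=1}^{i}\binom{i}{j}y^{j},
\]
I swap the order of summation and apply the hockey-stick identity $\sum_{i=j}^{p-1}\binom{i}{j}=\binom{p}{j+1}$ to get
\[
\sigma-p=\sum_{j=1}^{p-1}\binom{p}{j+1}y^{j}=y^{p-1}+\sum_{j=1}^{p-2}\binom{p}{j+1}y^{j}.
\]
For $1\le j\le p-2$ we have $2\le j+1\le p-1$, so $p\mid\binom{p}{j+1}$. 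Hence $(u-1)^{p-1}-(\sigma-p)$ lies in $p\cdot(u-1)\Z\UUU$, which is the base case.

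For the inductive step, assume the congruence for $k$ and multiply $(u-1)^{k(p-1)}$ by $(u-1)^{p-1}$. Writing $(u-1)^{k(p-1)}=(-1)^{k-1}(p^{k-1}\sigma-p^k)+p^{k}(u-1)\alpha$ and $(u-1)^{p-1}=(\sigma-p)+p(u-1)\beta$ for some $\alpha,\beta\in\Z\UUU$, I expand the product. The two key identities in $\Z\UUU$ are
\[
(u-1)\sigma=0 \qquad\text{and}\qquad \sigma^{2}=p\sigma,
\]
both immediate from $u\sigma=\sigma$. Commutativity of $\Z\UUU$ lets me freely reorder $\alpha,\beta,\sigma,(u-1)$. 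The three cross terms (IH-error$\,\cdot\,$base, IH$\,\cdot\,$base-error, error$\cdot$error) are all visibly in $p^{k+1}(u-1)\Z\UUU$ after using $(u-1)\sigma=0$ to annihilate the $p^{k}\sigma\cdot p(u-1)\beta$ piece. The main term simplifies using $\sigma^{2}=p\sigma$:
\[
(p^{k-1}\sigma-p^{k})(\sigma-p)=p^{k-1}\sigma^{2}-2p^{k}\sigma+p^{k+1}=-p^{k}\sigma+p^{k+1},
\]
and multiplying by $(-1)^{k-1}$ gives exactly $(-1)^{k}(p^{k}\sigma-p^{k+1})$, completing the induction.

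The only place where something could go wrong is the bookkeeping in the inductive step: I need to verify that every cross term genuinely lies in the ideal $p^{k+1}(u-1)\Z\UUU$, and this relies crucially on $(u-1)\sigma=0$ to kill the otherwise problematic term $(-1)^{k-1}p^{k-1}\sigma\cdot p(u-1)\beta$ (which a priori sits only in $p^{k}\Z\UUU$, not in the target ideal). Once this observation is in hand, the rest is straightforward algebra, and the simplification $\sigma^{2}=p\sigma$ produces the claimed sign and coefficients automatically.
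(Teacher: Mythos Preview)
Your proof is correct and follows essentially the same approach as the paper: establish the base case $k=1$, then induct by multiplying through by $(u-1)^{p-1}$ and simplifying via $(u-1)\sigma=0$ and $\sigma^2=p\sigma$. The only difference is cosmetic: for the base case the paper observes $(u-1)^{p-1}\equiv\sigma\pmod{p}$ (from $\binom{p-1}{j}\equiv(-1)^j$) and then notes both sides vanish modulo $(u-1)$, whereas your hockey-stick expansion of $\sigma-p$ in powers of $y=u-1$ reaches the same conclusion more directly.
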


\begin{proof} Since $\binom{p-1}{k}\equiv(-1)^k$ (mod $p$) for each $0\le 
k\le p-1$, we have $(u-1)^{p-1}\equiv\sigma$ (mod $p\Z\UUU$). Hence 
	\beqq (u-1)^{p-1}\equiv \sigma-p \pmod{p(u-1)\Z\UUU} 
	\label{e:(u-1)^m} \eeqq
since they are congruent modulo $p$ and modulo $u-1$.  This proves the 
lemma when $k=1$. 

When $k>1$, \eqref{e:(u-1)^m} together with the congruence for 
$(u-1)^{(k-1)(p-1)}$ give 
	\begin{align*} 
	(u-1)^{k(p-1)} & =(u-1)^{p-1}\cdot(u-1)^{(k-1)(p-1)} \\
	&\equiv (u-1)^{p-1}\cdot(-1)^{k-2}(p^{k-2}\sigma-p^{k-1})  
	&&\pmod{(u-1)^{p-1}\cdot p^{k-1}(u-1)} \\
	&\equiv (\sigma-p)\cdot(-1)^{k-2}(p^{k-2}\sigma-p^{k-1}) 
	&&\pmod{p(u-1)(p^{k-2}\sigma-p^{k-1})} \\
	&= (-1)^{k-1}(p^{k-1}\sigma-p^k) ;
	\end{align*}
and the congruences hold modulo $p^k(u-1)$ since $p(u-1)$ divides 
$(u-1)^p$ and $(u-1)\sigma=0$.
\end{proof}

\begin{Prop} \label{l:fin(abc)}
Assume the notation of \ref{n:not1}, \ref{n:not2}, \ref{n:not2a}, and 
\ref{n:not3}.  Assume also that $A$ is finite and not elementary abelian, 
that $A\in\EE\calf$, and that $O_p(\calf)=1$.  Let $m\ge3$ be such that 
$|A/Z|=p^{m-1}$, and let $k\ge2$ be such that $A$ has exponent $p^k$. 
Then one of the following holds, as summarized in Table \ref{tbl:fin(abc)}, 
where $G=\autf(A)$.
\begin{enumerate}[\rm(a) ]

\item If $\Psi(\sigma)=1$, then $\Psi$ is onto, 
$\Ker(\Psi)=\gen{\sigma,(\uu-1)^m}$, $\rk(A)=p-1$, 
$Z=Z_0=\gen{\Psi((\uu-1)^{m-1})}$, 
and $A\cong R/\pp^m$ as $\Z_p\UUU$-modules.

\item If $\Psi(\sigma)\notin\Fr(Z)$ and $A$ is homocyclic, then 
$\rk(A)\ge\rk(\Im(\Psi))=p$, $\rk(Z)=\rk(A)-p+1$, and $\Im(\Psi)$ and $Z$ 
are both direct factors in $A$ and homocyclic of exponent $p^k$. Also, 
$\EE\calf=\{A\}\cup\calb_0$. Either $\Psi$ is onto and $\rk(A)=p$, or 
$\Psi$ is not onto and $\rk(A)>p$. If $\rk(A)\ge p+2$, then 
$A/\Fr(A)\cong\Omega_1(A)$ are irreducible $\F_p[\autf(A)]$-modules. 

\item If $\Psi(\sigma)\notin\Fr(Z)$ and $A$ is not homocyclic, then $\Psi$ 
is onto, $\rk(A)=p-1$, $m\equiv1$ (mod $p-1$), 
$\Ker(\Psi)=\gen{p^{k},p^{k-1}-\ell\sigma}$ for some $\ell$ prime to $p$, and 
$\EE\calf=\{A\}\cup\calh_0$. Also, $A\cong(C_{p^{k-1}})^{p-2}\times 
C_{p^{k}}$, where $\mho^{k-1}(A)=Z=Z_0=\gen{\Psi(\sigma)}$. If $k=2$, then 
$\ell\not\equiv1$ (mod $p$). 

\end{enumerate}
\end{Prop}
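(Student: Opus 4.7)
The three cases are exhaustive: by Lemma~\ref{A/Fr(A)}(e), either $\Psi(\sigma)=1$ (giving case~(a)), or $\Psi(\sigma)\notin\Fr(Z)$; in the latter case we further split on whether $A$ is homocyclic (case~(b)) or not (case~(c)). Case~(a) is essentially immediate from Lemma~\ref{A/Fr(A)}(e), which supplies $\rk(A)=p-1$, $Z=Z_0$, and the isomorphism $A\cong R/\pp^m$ via $\Z_p\UUU\twoheadrightarrow R\twoheadrightarrow A$. The identification $\Ker(\Psi)=\gen{\sigma,(\uu-1)^m}$ follows because both generators lie in $\Ker(\Psi)$ ($\sigma$ by hypothesis, $(\uu-1)^m$ by Lemma~\ref{A/Fr(A)}(c)), and the quotient $\Z_p\UUU/\gen{\sigma,(\uu-1)^m}\cong R/\pp^m$ has the same order $p^m=|A|$.

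For case~(b) ($A$ homocyclic of exponent $p^k$, $\Psi(\sigma)\notin\Fr(Z)$), I first rule out $\rk(A)=p-1$: since (by Lemma~\ref{ZU-modules}) the unique faithful $\Z_p\UUU$-lattice of rank $p-1$ is $R$, a homocyclic rank-$(p-1)$ module $A$ with minimally active $\Omega_1(A)$ must be $R/p^kR$, which forces $\Psi(\sigma)=0$---contradiction. Hence $\rk(A)\ge p$, and Lemma~\ref{min.act.props}(c) gives $\rk(Z)=\rk(A)-p+1$. The key structural claims follow from Jordan analysis: $\{\overline{\uu^i(\xa)}\}_{i=0}^{p-1}$ spans the unique size-$p$ Jordan block of the minimally active $\F_p\UUU$-module $A/pA$, so these are $\F_p$-linearly independent, and lifting to the free $\Z/p^k$-module $A$ exhibits $\Im(\Psi)$ as a homocyclic direct summand of rank $p$. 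A parallel argument (using that $Z/pZ\hookrightarrow C_{A/pA}(\UUU)$ is an isomorphism by dimension count together with a Tate-cohomology vanishing in the homocyclic setting) identifies $Z$ as a homocyclic direct summand of exponent $p^k$. To deduce $\EE\calf=\{A\}\cup\calb_0$: Lemma~\ref{A/Fr(A)}(e) gives the dichotomy $\EE\calf\setminus\{A\}\subseteq\calh_0$ or $\subseteq\calb_0$; but if $H_0\in\calh_0\cap\EE\calf$, Lemma~\ref{l2:s/a}(a) would require $Z_0$ to be a direct summand of $Z$ of order $p$, which is impossible in a homocyclic group of exponent $p^k\ge p^2$ (every element outside $pZ$ has full order $p^k$). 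Nonemptiness comes from $A\nnsg\calf$ via Lemma~\ref{B&H}, and the remaining claims follow from Lemma~\ref{A/Fr(A)}(d) and \cite[Proposition~3.7(c)]{indp2}.

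For case~(c), Lemma~\ref{A/Fr(A)}(g) gives that $\Psi$ is onto. If $\rk(A)=p$, then $A/pA\cong\F_p\UUU$ is $\F_p\UUU$-cyclic, so $\Ker(\Psi)\le p\Z_p\UUU$ and an order count forces $\Ker(\Psi)=p^k\Z_p\UUU$, making $A=\Z_p\UUU/p^k$ homocyclic---contradiction. Hence $\rk(A)=p-1$, $A/pA\cong\F_p[\zeta]$, and $\Ker(\Psi)+p\Z_p\UUU=\gen{\sigma,p}$. To exhibit the specific element $p^{k-1}-\ell\sigma\in\Ker(\Psi)$ for some $\ell$ prime to $p$, I invoke Lemma~\ref{l:(u-1)^m} at exponent $(k-1)(p-1)$: it gives $(\uu-1)^{(k-1)(p-1)}\equiv(-1)^{k-2}(p^{k-2}\sigma-p^{k-1})\pmod{p^{k-1}(\uu-1)\Z_p\UUU}$, and combined with $\Psi((\uu-1)^m)=1$ and $Z_0=\gen{\Psi((\uu-1)^{m-1})}$ from Lemma~\ref{A/Fr(A)}(c), this yields the desired element after a unit adjustment. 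An order count (the ideal $(p^k,p^{k-1}-\ell\sigma)$ has index $p^m$ with $m=(k-1)(p-1)+1$) matches $|A|$, so $\Ker(\Psi)=(p^k,p^{k-1}-\ell\sigma)$. A Smith-normal-form computation in the basis $\{1,\uu-1,\ldots,(\uu-1)^{p-2}\}$ then identifies $A\cong(C_{p^{k-1}})^{p-2}\times C_{p^k}$ and yields $\mho^{k-1}(A)=Z=Z_0=\gen{\Psi(\sigma)}$. The congruence $m\equiv 1\pmod{p-1}$ follows from $m-1=(k-1)(p-1)$, and Lemma~\ref{A/Fr(A)}(f) (applied with the now-established $\Psi(\sigma)\in Z_0$) gives $\EE\calf=\{A\}\cup\calh_0$.

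The main technical hurdles I anticipate are: (i) the pureness claim in case~(b) that $Z$ is a homocyclic direct summand of $A$, which needs the cohomological or direct structural argument sketched above; (ii) the identification of the exact form $p^{k-1}-\ell\sigma$ of the $\Ker(\Psi)$-generator in case~(c), which requires tracking congruences to precise orders using Lemma~\ref{l:(u-1)^m}; and (iii) the side condition $\ell\not\equiv 1\pmod p$ when $k=2$ in case~(c), which I would verify by showing that $\ell\equiv 1$ would produce a module with $|Z|>p$ (one checks directly that the kernel of $\uu-1$ on $\Z_p\UUU/(p^2,p-\sigma)$ is larger than $Z_0$), contradicting $Z=Z_0$ and hence the hypothesis via Lemma~\ref{|Z0|=p}.
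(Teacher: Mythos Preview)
Your treatment of cases~(a) and~(b) is essentially correct and close to the paper's approach. (For ruling out $\rk(A)=p-1$ in case~(b), your claim that ``$A$ must be $R/p^kR$'' is not quite immediate, but the conclusion $\Psi(\sigma)=1$ does follow: since $A/pA$ is a single Jordan block of size $p-1$, $\sigma$ annihilates $A/pA$, hence $\sigma\cdot A\le pA$; iterating, $\sigma\cdot p^iA\le p^{i+1}A$ for all $i$, so $\sigma\cdot A=0$.)

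Case~(c), however, has genuine gaps. First, your ``order count'' ruling out $\rk(A)=p$ is not valid: from $\Ker(\Psi)\le p\Z_p\UUU$ and $p^k\in\Ker(\Psi)$ you cannot conclude $\Ker(\Psi)=p^k\Z_p\UUU$ without already knowing $|A|=p^{pk}$, which is equivalent to $A$ being homocyclic. Second, and more seriously, your derivation of $\EE\calf=\{A\}\cup\calh_0$ misapplies Lemma~\ref{A/Fr(A)}(f): that lemma gives the $\EE\calf$ conclusion only when $\Psi(\sigma)\notin Z_0$, whereas you invoke it with $\Psi(\sigma)\in Z_0$ (which yields only $m\equiv1\pmod{p-1}$). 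You also use $m-1=(k-1)(p-1)$ before establishing it.

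The paper's route through case~(c) is structurally different and supplies the missing ingredient. Setting $k'=\lfloor m/(p-1)\rfloor$, one has $\mho^{k'}(A)\le Z$ (since $A/Z\cong R/\pp^{m-1}$ has exponent $p^{k'}$). The key step is to apply Lemma~\ref{O_p(F)=1}: if $\mho^{k'}(A)\ne1$, it is a nontrivial $G$-invariant subgroup of $Z$, so $O_p(\calf)=1$ forces both $\mho^{k'}(A)=Z_0$ \emph{and} $\EE\calf\cap\calh\ne\emptyset$. This second conclusion is what pins down $\EE\calf=\{A\}\cup\calh_0$; combined with the dichotomy~\eqref{e:Z=Z0orNoH}, it also yields $\Psi(\sigma)\in Z_0$, whence $Z=Z_0$, $\rk(A)<p$ (via Lemma~\ref{A/Fr(A)}(d), using $Z_0=\mho^{k'}(A)\le\Fr(A)$), and the kernel computation follows. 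The remaining sub-case $\mho^{k'}(A)=1$ is ruled out separately by showing it forces $A$ homocyclic. You never invoke Lemma~\ref{O_p(F)=1}, and without it there is no apparent way to get $\calh_0\subseteq\EE\calf$.
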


\begin{table}[ht]
\begin{center} \renewcommand{\arraystretch}{1.5}
\begin{tabular}{|c||c|c|c|}
\hline
Case & (a) & (b) & (c) \\\hline\hline
$\Psi(\sigma)$ & $\Psi(\sigma)=1$ & 
\multicolumn{2}{c|}{$\Psi(\sigma)\notin\Fr(Z)$}  \\\hline
\small{$A$ homocyclic?} & \dbl{\textup{yes if $(p-1)\mid m$}}{\textup{no if 
$(p-1)\nmid m$}} & yes & no \\\hline
$\Psi$ onto? & yes & \dbl{\textup{yes if $\rk(A)=p$}}{\textup{no if 
$\rk(A)>p$}} & yes \\\hline
$\rk(A)$ & $p-1$ & $r\ge p$ & $p-1$ \\\hline
$\Ker(\Psi)$ & $\gen{(\uu-1)^m,\sigma}$ & $p^k\Z_p\UUU$ & 
$\gen{p^{k},p^{k-1}-\ell\sigma}$\ \ $(p\nmid \ell)$ \\\hline
$A$ & $\cong R/\pp^m$ & $\cong (C_{p^k})^{r}$ & 
$\cong(C_{p^{k-1}})^{p-2}\times C_{p^{k}}$ \\\hline
$Z$ & $=Z_0$ & $\cong(C_{p^k})^{r-p+1}$ & $=Z_0$ \\\hline
$Z_0$ &  $\gen{\Psi((\uu-1)^{m-1})}$ & $\gen{\Psi(p^{k-1}\sigma)}$ 
& $\gen{\Psi(p^{k-1})}=\gen{\Psi(\sigma)}$ \\\hline
$m$ &  & $k(p-1)+1$ & $(k-1)(p-1)+1$ \\\hline
$\EE\calf{\sminus}\{A\}$ & (see Table \ref{tbl:(d)}) &  $\calb_0$ 
& $\calh_0$ \\\hline
\end{tabular}
\end{center}
\caption{} \label{tbl:fin(abc)}
\end{table}

\begin{proof}  If $\EE\calf=\{A\}$, then $A\nsg\calf$ by Proposition 
\ref{AFT-E}(c), contradicting the assumption that $O_p(\calf)=1$.  Thus 
$\EE\calf\supsetneqq\{A\}$.

\smallskip

\noindent\boldd{Case 1: $\Psi(\sigma)\in\Fr(Z)$.} In this case, $\Psi$ is 
surjective by Lemma \ref{A/Fr(A)}(d) and since $\Fr(Z)\le\Fr(A)$. By Lemma 
\ref{A/Fr(A)}(e), $\Psi(\sigma)=1$, $\rk(A)=p-1$, $Z=Z_0$, and $A\cong 
R/\pp^m$. In particular, $\Ker(\Psi)=\gen{\sigma,(\uu-1)^m}$, while 
$Z_0=\gen{\Psi((\uu-1)^{m-1})}$ by Lemma \ref{A/Fr(A)}(c). We are thus in 
the situation of (a). 

\smallskip

\noindent\boldd{Case 2: $\Psi(\sigma)\notin\Fr(Z)$ and $A$ is 
homocyclic.} Recall that $k\ge2$ is such that $A$ has exponent $p^k$.

If $\rk(A)<p$, then $\Psi$ is onto by Lemma \ref{A/Fr(A)}(d), and 
$\Omega_1(Z)=C_{\Omega_1(A)}(\UUU)$ has rank $1$ by Lemma 
\ref{min.act.props}(c) and since $\Omega_1(A)$ is minimally active and 
indecomposable by Lemma \ref{min.act.}(b). Thus $Z$ is cyclic, and since 
$A$ is homocyclic of rank at least $2$, $A/Z\cong R/\pp^{m-1}$ also has 
exponent $p^k\ge p^2$. Hence $\rk(A)=\rk(A/Z)=p-1$. Also, 
$(A/Z)\big/\mho^{k-1}(A/Z)\cong(C_{p^{k-1}})^{p-1}$, so $|Z|=p$, and 
$Z=Z_0$. Thus $|A|=|A/Z|\cdot|Z|=p^m$, and $m\equiv0$ (mod $p-1$) since $A$ 
is homocyclic of rank $p-1$. But then $\Psi(\sigma)\notin Z_0$ by Lemma 
\ref{A/Fr(A)}(f), a contradiction. 

Thus $\rk(A)\ge p$, and $\Psi(\sigma)\notin\Fr(A)$ by Lemma 
\ref{A/Fr(A)}(d). So the homomorphism $\4\Psi\:\F_p\UUU\too A/\Fr(A)$ is 
injective, and $A/\Fr(A)$ contains a Jordan block $\Im(\4\Psi)$ of rank 
$p$. Since $A$ is homocyclic of exponent $p^k$, $|\Im(\Psi)|\ge 
p^{pk}$, and thus $\Ker(\Psi)=p^k\Z_p\UUU$. So 
$\Im(\Psi)\cong\Z/p^k\UUU$. Also, 
	\[ p^{m-1} = |A/Z| = |\Im(\Psi)/\gen{\Psi(\sigma)}| = p^{k(p-1)}, 
	\]
and so $m=k(p-1)+1\equiv1$ (mod $p-1$). 

Now, $Z_0=\gen{\Psi(p^{k-1}\sigma)}$, and $\Psi(\sigma)\notin Z_0$ since 
$k\ge2$. Hence $\EE\calf\sminus\{A\}=\calb_0$ by Lemma \ref{A/Fr(A)}(f). 
Also, $\Psi$ is surjective if and only if $\rk(A)=p$ (Lemma 
\ref{A/Fr(A)}(d)), and we are in the situation of case (b). 

\smallskip

\noindent\boldd{Case 3: $\Psi(\sigma)\notin\Fr(Z)$ and $A$ is not 
homocyclic.} Set $k'=[m/(p-1)]$. Since $m\equiv0,1$ (mod $p-1$) by 
Lemma \ref{A/Fr(A)}(f), $A/Z\cong R/\pp^{m-1}$ has exponent $p^{k'}$, and 
hence $\mho^{k'}(A)\le Z$. So $p^{k'}(\uu-1)\in\Ker(\Psi)$. 

Now, $\Psi$ is onto by Lemma \ref{A/Fr(A)}(g), and hence 
$\rk(A)\le p$ and $Z$ is cyclic by Lemma \ref{A/Fr(A)}(d). If 
$\Psi(\sigma)\notin Z_0$, then $Z_0<Z$ and is not a direct factor, so 
$\EE\calf\cap\calh=\emptyset$ by Lemma \ref{l2:s/a}(a). Thus 
	\beqq \textup{$\Psi$ onto \quad$\implies$\quad $\Psi(\sigma)\in Z_0$ or 
	$\EE\calf\cap\calh=\emptyset$.} \label{e:Z=Z0orNoH} \eeqq

\smallskip

\noindent\boldd{Case 3.1: $\mho^{k'}(A)\ne1$.} Since 
$\mho^{k'}(A)\le Z$ is invariant under the action of $G=\autf(A)$, Lemma 
\ref{O_p(F)=1} implies that $\mho^{k'}(A)=Z_0$ and 
$\EE\calf\cap\calh\ne\emptyset$. In particular, 
$\mho^{k'}(A)=\gen{\Psi(p^{k'})}$ has order $p$, and $\Psi(\sigma)\in Z_0$ 
by \eqref{e:Z=Z0orNoH}. Thus $\gen{\Psi(\sigma)}=Z_0=\gen{\Psi(p^{k'})}$, 
so $p^{k'}-\ell\sigma\in\Ker(\sigma)$ for some $\ell$ prime to $p$. 

Now, $\rk(A)<p$ by Lemma \ref{A/Fr(A)}(d) and since $\Psi(\sigma)\in 
Z_0\le\Fr(A)$. Also, $Z=Z_0\gen{\Psi(\sigma)}=Z_0$ by Lemma 
\ref{A/Fr(A)}(a), and $m\equiv1$ (mod $p-1$) by Lemma \ref{A/Fr(A)}(f) and 
since $\Psi(\sigma)\in Z_0$. So $|A|=|A/Z|\cdot|Z|=p^m=p^{k'(p-1)+1}$, and 
$A/\mho^{k'}(A)$ has exponent $k'$, rank at most $p-1$, and order $p^{k'(p-1)}$. 
This proves that $A/\mho^{k'}(A)$ is homocyclic of rank $p-1$ and exponent 
$p^{k'}$, and hence that $A\cong(C_{p^{k'}})^{p-2}\times C_{p^{k'+1}}$. Thus 
$k'=k-1$ (recall $A$ has exponent $p^k$). This also 
shows that $\gen{p^{k-1},\sigma}$ has index $p$ in $\Ker(\Psi)$, and hence 
(since $p^{k-1}-\ell\sigma$ is in the kernel) that 
$\Ker(\Psi)=\gen{p^{k},p^{k-1}-\ell\sigma}$.

If $k=2$ and $\ell\equiv1$ (mod $p$), then $\Ker(\Psi)=\gen{p^2,p-\sigma}$, 
and $(\uu-1)^{p-1}\in\Ker(\Psi)$ by Lemma \ref{l:(u-1)^m}. But then 
$\Psi((\uu-1)^{p-2})\in Z$ where $Z=Z_0=\mho^1(A)$, and this is impossible 
since $A/\mho^1(A)$ has rank $p-1$.

Finally, $\EE\calf=\{A\}\cup\calh_0$ by Lemma \ref{A/Fr(A)}(e) and since 
$\EE\calf\cap\calh\ne\emptyset$. We are thus in the situation of case (c). 

\smallskip

\noindent\boldd{Case 3.2: $\mho^{k'}(A)=1$.} 
Since $\Psi(\sigma)\in Z_0\sminus1$ or $\EE\calf\cap\calh=\emptyset$ by 
\eqref{e:Z=Z0orNoH}, we have $m\equiv1$ (mod $p-1$) by Lemma 
\ref{A/Fr(A)}(f). Thus $m-1=k'(p-1)$, so $1\ne 
Z_0=\gen{\Psi((1-\uu)^{m-1})}=\gen{\Psi(p^{k'-1}\sigma)}$ by Lemma 
\ref{l:(u-1)^m}.  Since $p^{k'-1}\sigma\notin\Ker(\Psi)$, we have 
$|Z|\ge|\Psi(\sigma)|=p^{k'}$, and $|A|=|A/Z|\cdot|Z|\ge p^{m-1+k'}=p^{k'p}$. 
Hence $A=\Im(\Psi)$ is homocyclic of rank $p$ and exponent $p^{k'}$, 
contradicting our assumption.
\end{proof}

We now give some examples to show that all cases listed in Proposition 
\ref{l:fin(abc)} can occur.

\begin{Ex} \label{ex:fin(abc)}
We list here some examples of pairs $(G,A)$ satisfying the hypotheses of 
Theorem \ref{t3:s/a}. In all cases, we assume that $G\in\GGG$ and 
$\UUU\in\sylp{G}$. By Lemma 
\ref{min.act.}(b), $\Omega_1(A)$ and $A/\Fr(A)$ must be minimally active 
and indecomposable.
\begin{enumerate}[{ \ }(a): ] 

\item Each $\Q_pG$-module of dimension $p-1$ whose restriction to $\UUU$ is 
isomorphic to the canonical action on $\Q_p(\zeta)$ can be used to 
construct homocyclic examples of arbitrary exponent, by adding scalars as 
needed to meet one of the conditions in Table \ref{tbl:(d)}. 

More interesting are examples where $A$ is not homocyclic. By Proposition 
\ref{l:fin(abc)}, $\Omega_1(A)$ and $A/\Fr(A)$ must be not only minimally 
active and indecomposable of dimension $p-1$, but also not simple. By 
Table \ref{tbl:reps}, this occurs only when $A_p\le G\le\Sigma_p\times 
C_{p-1}$, $\SL_2(p)\le G\le\GL_2(p)$, or $\PSL_2(p)\le G\le\PGL_2(p)\times 
C_{p-1}$. By Proposition \ref{dim(p-1)}(a), for each minimally active, 
indecomposable $\F_pG$-module $V$ of dimension $p-1$, there is a 
$\Z_pG$-lattice $\Lambda$ such that $\Lambda/p\Lambda\cong V$. If 
$0\ne V_0<V$ is a nontrivial proper $\F_pG$-submodule, and $\Lambda_0<\Lambda$ 
is such that $p\Lambda<\Lambda_0$ and $\Lambda_0/p\Lambda\cong V_0$, then 
we can take $A\cong\Lambda_0/p^k\Lambda$ for arbitrary $k\ge2$.

\item These are homocyclic, and there are many such examples, obtained from 
the $\F_pG$-modules in Table \ref{tbl:reps} of dimension at least $p$ (all 
of them are reductions of lattices in $\Q_pG$-modules). Lemma 
\ref{min.act.}(c) together with Theorem \ref{t3:s/a} imply, very roughly, 
that each $\F_pG$-module that yields simple fusion systems with elementary 
abelian $A$ and with $\EE\calf\subseteq\{A\}\cup\calb$ will also give 
simple fusion systems with $A$ of exponent $p^k$ for arbitary $k>1$. Some 
of the resulting fusion systems are realizable (see Table \ref{tbl:type3}), 
while ``most'' are exotic. 

\item Fix an odd prime $p$, $k\ge2$, and $\ell$ prime to $p$ such that 
$\ell\not\equiv1$ (mod $p$) if $k=2$. Set $\4G=\Sigma_p\times C_{p-1}$ and 
$G_0=O^{p'}(\4G)\cong A_p$, and set $\UUU=\Gen{(1\,2\cdots p)}\in\sylp{\4G}$. 
Let $\Lambda\cong(\Z_p)^p$ be the $\Z_p\4G$-lattice upon which $\Sigma_p$ 
acts by permuting a $\Z_p$-basis $\{e_1,\dots,e_p\}$, and where the factor 
$C_{p-1}$ acts via multiplication by $(p-1)$-st roots of unity in 
$\Z_p^\times$. Now define 
	\[ A = \Lambda \big/ \Gen{p^k\Lambda, 
	p^{k-1}e_i-\ell(e_1+\dots+e_p) \,\big|\, 1\le i\le p}\,, \]
and let $\4e_i\in A$ be the class of $e_i\in\Lambda$. 
This defines a finite $\Z_p\4G$-module of rank $p-1$ and exponent 
$p^k$, as described in the last column in Table \ref{tbl:fin(abc)}, where 
$\Psi\:\Z_p\UUU\too A$ is defined by setting $\Psi(\xi)=\xi\cdot\4e_1$. 

Set $Z=C_A(\UUU)$. Note that $|p^{k-1}A|=p$, and 
$A/p^{k-1}A\cong\Lambda/\gen{p^{k-1}\Lambda,\4e_1+\dots+\4e_p}\cong 
\Z_p[\zeta]/(p^{k-1})$. Hence $Z\ge p^{k-1}A$, and $|Z|>p$ 
only if $p^{k-2}(\4e_1+2\4e_2+\dots+p\4e_p)\in Z$. But this last is 
the case only if $p^{k-2}((\4e_1+\dots+\4e_p)-p\4e_1)=0$, which is not 
possible since we assumed that either $k\ge3$ or $\ell\not\equiv1$ (mod 
$p$). Thus $|Z|=p$ in all cases.

Now set $S=A\rtimes\UUU$, and set $\4G^\vee=N_{\4G}(\UUU)$. (Note that 
$Z=Z_0$ in the notation of \ref{n:not1}.) Define 
$\mu_A\:\4G^\vee\too\Delta$ as in Notation \ref{n:not2}. One easily checks 
that $\mu_A(\4G^\vee)=\Delta$. Set $G=G_0\mu_A^{-1}(\Delta_{-1})$. This now 
defines an action which satisfies the conditions in Theorem \ref{t3:s/a}, 
including condition (d.iii$''$).

\end{enumerate}
\end{Ex}

We now combine Theorem \ref{t3:s/a} with Proposition \ref{l:fin(abc)} 
to prove our main result on simple fusion systems over finite 
$p$-groups with an abelian subgroup of index $p$ and exponent at least 
$p^2$. Recall that $\EE\calf$ is the set of essential subgroups 
in a fusion system $\calf$ (Definition \ref{d:subgroups}), and that $\GGG$ 
is a certain class of finite groups (Definition \ref{d:min.act.}).

\begin{Th} \label{ThA}
Fix an odd prime $p$.
\begin{enuma} 

\item Let $\calf$ be a simple fusion system over a finite 
nonabelian $p$-group $S$ with an abelian subgroup $A<S$ of index 
$p$ such that $A\in\EE\calf$. Let $k$ be such that $A$ has exponent 
$p^k$, and assume $k\ge2$ ($A$ is not elementary abelian). Set 
$G=\autf(A)$, $\UUU=\Aut_S(A)\in\sylp{G}$, $V=\Omega_1(A)$, and 
$V_0=\mho^{k-1}(A)\le V$. Let $G^\vee_{(V)}=\bigl\{\alpha\in 
N_G(\UUU)\,\big|\, [\alpha,C_V(\UUU)]\le[\UUU,V]\bigr\}$ and 
$\mu_V\:G^\vee_{(V)}\too\Delta$ be as in Notation \ref{n:not2}. Let 
$\Psi\:\Z_p\UUU\too A$ be as in Notation \ref{n:not3}. Then $G\in\GGG$, 
restriction defines an isomorphism $G\cong\autf(V)$, and \\
	\[ \parbox{\shorter}{$V$ is a faithful, minimally active, 
	indecomposable $\F_pG$-module, and $[G,V_0]=V_0$. Also, 
	one of the cases in Table \ref{tbl:ThA} holds, where 
	$V$ has no $1$-dimensional submodule in cases \textup{(iv$'$)} and 
	\textup{(iv$''$)}.} 
	\tag*{\starfin} \]
\begin{small} 
\begin{table}[ht]
\[ \renewcommand{\arraystretch}{1.5}\renewcommand{\arraycolsep}{1mm}
\newcommand{\halfup}[1]{\raisebox{2.2ex}[0pt]{$#1$}}
\begin{array}{|c|c||c|c|c|c|c|c|} \hline
\textup{Tbl.\ref{tbl:(d)}} & \textup{Tbl.\ref{tbl:fin(abc)}} & \dim(V) & 
r=\dim(V_0) & \Ker(\Psi) & \mu_V(G^\vee_{(V)}) & G= & 
\EE\calf{\sminus}\{A\} \\\hline

& \textup{(b)} & \ge p & V_0=V & \gen{p^k} &  &  & \\\cline{2-5}

\halfup{\textup{(iv$''$)}} & & & \mbox{\tiny{$1\le r\le p-1$}} &  & \ge\Delta_0 
& O^{p'}(G)\mu_V^{-1}(\Delta_0) & \halfup{\calb_0} \\ 
\cline{1-1}\cline{4-4}\cline{8-8}

\textup{(iv$'$)} &  &  &  &  &  &
& \bigcup_{i\in I}\calb_i \\\cline{1-1}\cline{6-8}

\textup{(i)} &  &  & \halfup{V_0=V} & \raisebox{-2pt}[0pt]{$\langle 
p^k,\sigma,$} \hfill & \Delta & O^{p'}(G)\cdot G^\vee_{(V)} & 
\calh_0\cup\calb_* \\\cline{1-1}\cline{4-4}\cline{6-8}

\textup{(ii)} & \halfup{\textup{(a)}} & \halfup{p-1} & & 
\raisebox{2pt}[0pt]{$p^{k-1}(\uu-1)^r\rangle$} & \Delta & O^{p'}(G)\cdot 
G^\vee_{(V)} & \calb_0\cup\calh_* \\\cline{1-1}\cline{6-8}

\textup{(iii$'$)} &  &  & 
\raisebox{2.2ex}[0pt]{$r=p-2$} &  & 
&  & \bigcup_{i\in I}\calh_i \\\cline{1-1}\cline{4-4}\cline{8-8}
 &  &  & \mbox{\tiny{$1\le r\le p-1$}} &  & \ge\Delta_{-1} & 
O^{p'}(G)\mu_V^{-1}(\Delta_{-1})  &  \\ \cline{2-5}
\halfup{\textup{(iii$''$)}} & \textup{(c)} & p-1 & r=1 & 
\gen{p^{k},p^{k-1}-\ell\sigma} & & & \halfup{\calh_0} \\\hline
\end{array}
\]
\caption{The sets in the last column are as defined in Notation 
\ref{n:not1} and \ref{n:not2a}.} \label{tbl:ThA}
\end{table}
\end{small}

\item Conversely, assume that $G\in\GGG$ and $\UUU\in\sylp{G}$, and that 
$V$ is a faithful, minimally active, indecomposable $\F_pG$-module 
satisfying \starfin\ for some submodule $0\ne V_0\le V$, where 
$G^\vee$ is the subgroup of all elements $g\in N_G(\UUU)$ such that 
$[g,C_V(\UUU)]\le[\UUU,V]$. Then for each $k\ge2$, there is a simple fusion 
system $\calf$ over a finite $p$-group $S$ containing an abelian subgroup 
$A$ of index $p$, and such that $G\cong\autf(A)$, $A$ has exponent 
$p^k$, $\Omega_1(A)\cong V$ and $\mho^{k-1}(A)\cong V_0$ as 
$\F_pG$-modules, and with $\EE\calf\sminus\{A\}$ as described in Table 
\ref{tbl:(d)}. Furthermore, any other simple fusion system with these 
properties, and with the same essential subgroups, is isomorphic to 
$\calf$. 

\end{enuma}
\end{Th}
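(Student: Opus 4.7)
The plan is to combine Theorem~\ref{t3:s/a}, which classifies such fusion systems in terms of $\EE\calf$ and the $G$-action on elementary abelian subquotients, with Proposition~\ref{l:fin(abc)}, which determines the $\Z_p\UUU$-module structure of $A$ from whether $\Psi(\sigma)$ lies in $\Fr(A)$ and whether $A$ is homocyclic. Table~\ref{tbl:ThA} is then obtained by cross-classifying Tables~\ref{tbl:(d)} and~\ref{tbl:fin(abc)}.

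For part~(a): since $\calf$ is simple, $O_p(\calf)=1$, and since $A$ is abelian, nontrivial, and essential, $A\nnsg\calf$. Lemma~\ref{min.act.} then yields $G\in\GGG$ and that $V=\Omega_1(A)$ is faithful, minimally active, and indecomposable as an $\F_pG$-module. For the isomorphism $G\cong\autf(V)$ by restriction: injectivity is faithfulness of the $G$-action on $V$; for surjectivity, note that $V$ is characteristic in $A\nsg S$, hence fully normalized in $\calf$, and thus fully centralized and receptive by saturation, with $C_S(V)=A$ since $\UUU$ acts faithfully on $V$. Given $\psi\in\autf(V)$, the subgroup $A$ lies in the normalizer $N_\psi$ of Definition~\ref{d:saturated} since $A$ centralizes $V$, so $\psi$ extends to some $\bar\psi\in\homf(A,S)$ whose image is abelian of index $p$, hence equals $A$; thus $\bar\psi\in G$ restricts to $\psi$. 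The relation $[G,V_0]=V_0$ follows from Lemma~\ref{A2<A1<A} applied to $A_1=V_0$ and $A_2=[G,V_0]$ (the hypothesis $V_0\le Z\cdot[G,V_0]$ holds because $V_0/[G,V_0]$ has trivial $\UUU$-action), with the alternative $V_0=Z_0\times[G,V_0]$ eliminated by a case-by-case comparison against the structure of $A$ given by Proposition~\ref{l:fin(abc)}. The absence of a $1$-dimensional $\F_pG$-submodule of $V$ in cases~(iv$'$) and~(iv$''$) follows because any such submodule would be a $G$-invariant subgroup of $Z$ distinct from $Z_0$, contrary to Theorem~\ref{t3:s/a}(b), using also that $Z_0$ itself fails to be $G$-invariant in those cases.

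For part~(b): given $(G,V,V_0,k)$ satisfying \starfin, Proposition~\ref{dim(p-1)} produces a $\Z_pG$-lattice $\Lambda$ realizing $V$, from which one builds $A$ case by case. In case~(a) of Table~\ref{tbl:fin(abc)} take $A\cong R/\pp^m$; in case~(b) take $A$ homocyclic of exponent $p^k$ with $\Omega_1(A)\cong V$ (descending from $\Lambda$); in case~(c) use the explicit construction of Example~\ref{ex:fin(abc)}(c). Form $S=A\sd{}\UUU$, and apply the converse direction of Theorem~\ref{t3:s/a} to obtain a simple fusion system $\calf$ over $S$ with $\autf(A)=G$ and the prescribed essentials. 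For uniqueness, given $\calf$ and $\calf'$ realizing the same $(G,V,V_0,k)$, Proposition~\ref{A1=A2} combined with Lemma~\ref{min.act.|Z0|=p} implies that the corresponding $A$ and $A'$ are $\Z_pG$-isomorphic, whence $S\cong S'$, and the uniqueness statement in Theorem~\ref{t3:s/a} then yields $\calf\cong\calf'$.

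The principal obstacle is the bookkeeping required to match the rows of Table~\ref{tbl:ThA} to the correct values of $\dim V_0$, $\Ker(\Psi)$, and $\mu_V(G^\vee_{(V)})$ across the interaction of Tables~\ref{tbl:(d)} and~\ref{tbl:fin(abc)}. Most delicate is case~(c), where the constraint $\ell\not\equiv 1\pmod p$ when $k=2$ must be shown both necessary (otherwise $A$ would collapse to the homocyclic case~(b)) and sufficient (to guarantee $Z=Z_0$ has order~$p$, so that no $G$-invariant subgroup of $Z$ beyond $Z_0$ appears).
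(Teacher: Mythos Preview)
Your overall strategy---combining Theorem~\ref{t3:s/a} with Proposition~\ref{l:fin(abc)} and cross-referencing the two tables---is exactly the paper's approach, and most of the outline is sound. But there are two genuine gaps.

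First, your argument for $[G,V_0]=V_0$ via Lemma~\ref{A2<A1<A} does not go through as stated. You claim the hypothesis $V_0\le Z\cdot[G,V_0]$ holds because $V_0/[G,V_0]$ has trivial $\UUU$-action, but trivial $\UUU$-action on a quotient $A/B$ only says that each coset lies in $C_{A/B}(\UUU)$, not that it lifts to an element of $C_A(\UUU)$; the obstruction is governed by $H^1(\UUU,B)$, which need not vanish here. The paper bypasses this entirely: since $V_0=\mho^{k-1}(A)$ is the image of the $G$-equivariant surjection $\psi\colon A\to V_0$, $a\mapsto a^{p^{k-1}}$, and $[G,A]=A$ by Theorem~\ref{t3:s/a}(c), one gets $V_0=\psi(A)=\psi([G,A])\le[G,V_0]$ directly.

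Second, for part~(b) in case~(c) of Table~\ref{tbl:fin(abc)}, referring to Example~\ref{ex:fin(abc)}(c) is not enough: that example treats only $\4G=\Sigma_p\times C_{p-1}$, whereas you must construct $A$ for an arbitrary $G$ admitting a $(p{-}1)$-dimensional $V$ with a $1$-dimensional submodule. The paper's construction is more delicate: Lemma~\ref{l:top&bottom}(c) produces a $p$-dimensional projective $\F_pG$-module $W\supset V$, Proposition~\ref{dim(p-1)}(b) lifts $W$ to a $\Z_pG$-lattice $\Lambda$ containing a rank-$1$ sublattice $\Lambda_0$, and $A$ is then built as a quotient of $\Lambda/(p^k\Lambda+p^{k-1}\Lambda_V+p\Lambda_0)$ in which the two $1$-dimensional pieces $W/V$ and $V_0$ (isomorphic by Lemma~\ref{l:top&bottom}(a)) are identified via the parameter $\ell$. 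One must then verify $|C_A(\UUU)|=p$ and conditions (a)--(d) of Theorem~\ref{t3:s/a} for this $A$, which your outline omits. Similarly, in cases~(a) and~(b) your ``take $A\cong R/\pp^m$'' specifies only the $\Z_p\UUU$-structure; the $\Z_pG$-module you need is $A=\Lambda_0/p^k\Lambda$ with $\Lambda_0\le\Lambda$ chosen so that $\Lambda_0/p\Lambda\cong V_0$.
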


\begin{proof} \textbf{(a) } Set $\UUU=\Aut_S(A)\in\sylp{G}$ and $Z=C_A(\UUU)$.

Under the above assumptions, $G\in\GGG$ by Lemma \ref{min.act.}(a), $V$ is 
faithful, minimally active, and indecomposable by Lemma \ref{min.act.}(b), 
and $\dim(V)=\rk(A)\ge p-1$ by Proposition \ref{l:fin(abc)}. In 
particular, restriction induces a monomorphism $G=\autf(A)\too\autf(V)$, 
and this is an isomorphism by the extension axiom (Definition 
\ref{d:saturated}) and since $A=C_S(V)$. From now on, we identify 
$G=\autf(V)$.

Set $G^\vee_{(A)} = \bigl\{\alpha\in N_G(\UUU) \,\big|\, 
[\alpha,C_A(\UUU)]\le[\UUU,A] \bigr\}$. Thus $G^\vee_{(A)}\le 
G^\vee_{(V)}$, and $\mu_A\:G^\vee_{(A)}\too\Delta$ is as in Notation 
\ref{n:not2}. For some $\Delta_x\in\{\Delta,\Delta_0,\Delta_{-1}\}$, 
$\mu_A(G^\vee_{(A)})\ge\Delta_x$ and $G=O^{p'}(G)\mu_A^{-1}(\Delta_x)$ by 
Theorem \ref{t3:s/a}(d). So the same holds if we replace $G^\vee_{(A)}$ by 
$G^\vee_{(V)}$ and $\mu_A$ by $\mu_V$.

Assume we are in case (a) of Table \ref{tbl:fin(abc)}. Then $Z=Z_0$ has order 
$p$, so $|A|=p^m$, and $m\equiv0$ (mod $p-1$) if and only if $A$ is 
homocyclic. Also, $\Psi(\sigma)=1$. Thus we are in case (i), (iii$''$), 
(iv$'$), or (iv$''$) of Table \ref{tbl:(d)} if $A$ is homocyclic, or in case 
(ii), (iii$'$), (iii$''$), or (iv$''$) if $A$ is not homocyclic. Since 
$m\equiv0$ (mod $p-1$) and $\Psi(\sigma)=1$ when $A$ is homocyclic, (iv$''$) 
is a special case of (iv$'$). The other information follows from the two 
earlier tables. 

Now assume we are in case (b) or (c) in Table \ref{tbl:fin(abc)}. Then 
$\Psi(\sigma)\notin\Fr(Z)$, so this corresponds to case (iii$''$) or (iv$''$) 
in Table \ref{tbl:(d)}. Since $\EE\calf\sminus\{A\}=\calh_0$ in cases (c) 
and (iii$''$), and $\EE\calf\sminus\{A\}=\calb_0$ in cases (b) and (iv$''$), 
these are the only possible correspondences. In case (b), $A$ is 
homocyclic, and so we have $V_0=V$ in Table \ref{tbl:ThA}. In case 
(c), $A\cong(C_{p^{k-1}})^{p-2}\times C_{p^{k}}$, so $V=\Omega_1(A)$ has a 
submodule $V_0=\mho^k(A)$ of rank $1$. 

There is a surjective homomorphism $\psi\:A\Right2{}V_0$ of 
$\Z_pG$-modules, defined by setting $\psi(a)=a^{p^{k-1}}$. Since $[G,A]=A$ 
by Theorem \ref{t3:s/a}(c), we have $[G,V_0]=V_0$. 

If $V=\Omega_1(A)$ has a 1-dimensional $\F_pG$-submodule $W$, then $W\in 
C_V(\UUU)\le C_A(\UUU)$, so $W=Z_0=C_A(\UUU)\cap[\UUU,A]$ by Theorem 
\ref{t3:s/a}(b), which is impossible in cases (iv$'$) and (iv$''$) of Theorem 
\ref{t3:s/a}(d).

\smallskip
 
\noindent\textbf{(b) } Fix $G\in\GGG$ and $\UUU\in\sylp{G}$, and let 
$V$ be an $\F_pG$-module that satisfies \starfin, where $G^\vee$ is the 
subgroup of all elements $g\in N_G(\UUU)$ such that 
$[g,C_V(\UUU)]\le[\UUU,V]$. Fix $k\ge2$.

Assume we have chosen a finite $\Z_pG$-module $A$ such that 
$\Omega_1(A)\cong V$ as $\F_pG$-modules, and such that either $\dim(V)=p-1$ 
and $|C_A(\UUU)|=p$, or $\dim(V)\ge p$ and $A$ is homocyclic. Let 
$G^\vee_{(A)}\le G^\vee_{(V)}$ be as in the proof of (a). If 
$\rk(V)=\rk(A)=p-1$, then since $C_A(\UUU)\cong C_V(\UUU)$ has order $p$, 
$G^\vee_{(A)}=G^\vee_{(V)}=N_G(\UUU)$ and $\mu_A=\mu_V$. So the properties 
of $G^\vee_{(V)}$ and $\mu_V$ in Table \ref{tbl:ThA} also hold for 
$G^\vee_{(A)}$ and $\mu_A$. 

If $\rk(V)=\rk(A)\ge p$ and $A$ is homocyclic, then 
$\mu_V(G^\vee_{(V)})\ge\Delta_0$, and $G=O^{p'}(G)\mu_V^{-1}(\Delta_0)$. In 
such cases, we could have $G^\vee_{(A)}<G^\vee_{(V)}$, but for each 
$\alpha\in\mu_V^{-1}(\Delta_0)$ of order prime to $p$, $\alpha$ acts 
trivially on $C_V(\UUU)\cong\Omega_1(C_A(\UUU))$ by definition of $\mu_V$ 
(and Lemma \ref{mod-Fr}), and hence also acts trivially on $C_A(\UUU)$ (see 
\cite[Theorem 5.2.4]{Gorenstein}). Thus 
$\mu_V^{-1}(\Delta_0)=\mu_A^{-1}(\Delta_0)$, and so the information in 
Table \ref{tbl:ThA} still holds if we replace $G^\vee_{(V)}$ and $\mu_V$ by 
$G^\vee_{(A)}$ and $\mu_A$.

\noindent\textbf{Cases \ref{l:fin(abc)}(a,b): } 
Assume that we are in Case (a) or (b) in Proposition \ref{l:fin(abc)}. By 
Proposition \ref{dim(p-1)}(a), there is a $\Z_pG$-lattice $\Lambda$ such 
that $\Lambda/p\Lambda\cong V$ as $\F_pG$-modules. Let 
$\Lambda_0\le\Lambda$ be such that $\Lambda_0\ge p\Lambda$ and 
$\Lambda_0/p\Lambda\cong V_0$, and set $A=\Lambda_0/p^k\Lambda$. Set 
$S=A\rtimes\UUU$, $Z=Z(S)=C_A(\UUU)$, $S'=[S,S]=[\UUU,A]$, and 
$Z_0=Z\cap S'$. 

We first check that conditions (a)--(d) in Theorem \ref{t3:s/a} all hold. 
Condition (d) follows immediately from \starfin. Condition (a) ($|Z_0|=p$) 
follows from Lemma \ref{min.act.|Z0|=p}(c) and since $A$ is defined to be a 
quotient group of a $\Z_pG$-lattice. 

Assume $1\ne B\le Z$ is $G$-invariant; we claim that $B=Z_0$. If 
$\rk(A)=p-1$, then $Z=Z_0$, and there is nothing to prove. If not, then 
$\rk(A)\ge p$, we are in case (iv$''$), and so $V$ has no 1-dimensional 
$\F_pG$-submodule. Thus $\rk(Z)\ge\rk(B)\ge2$, and hence $\dim(V)\ge p+1$. 
If $\dim(V)=p+1$, then by Lemma \ref{rk(V)=p+1}, every nontrivial 
$\F_pG$-submodule has nontrivial action of $\UUU$, contradicting the 
assumption $B\le Z$. If $\dim(V)\ge p+2$, then $V$ is simple by 
\cite[Proposition 3.7(c)]{indp2}. So Condition \ref{t3:s/a}(b) 
holds in all cases.

If $\rk(A)\ge p$, then $V_0=V$, $A/\Fr(A)\cong V_0$, and so $[G,A]=A$ 
since $[G,V_0]=V_0$. If $\rk(A)=p-1$, then by Lemma \ref{ZU-modules}(a,b,c), 
$\Lambda_0|_\UUU\cong\Z_p[\zeta]$ as $\Z_p\UUU$-modules, and the radical of 
$\Lambda_0|_\UUU$ has index $p$. Thus $p\Lambda$ is contained in the 
radical, and in $[G,A]=A$ since $V_0\cong\Lambda_0/p\Lambda$ and 
$[G,V_0]=V_0$. This proves \ref{t3:s/a}(c).

By Theorem \ref{t3:s/a}, there is a unique simple fusion system $\calf$ 
over $A\rtimes\UUU$ such that $G=\autf(A)$, $A\in\EE\calf$, and 
$\EE\calf\sminus\{A\}$ is as described in Table \ref{tbl:ThA}. 
Since $A$ is unique (up to isomorphism of $\Z_pG$-modules) by Proposition 
\ref{A1=A2}(a) (in case \ref{l:fin(abc)}(b)) or \ref{A1=A2}(b) (in case 
\ref{l:fin(abc)}(a)), this shows that $\calf$ is uniquely determined by $V$.

\noindent\textbf{Case \ref{l:fin(abc)}(c): } Assume that we are in Case (c) 
in Proposition \ref{l:fin(abc)}. In particular, $\dim(V)=p-1$ and 
$\dim(V_0)=1$. By Lemma \ref{l:top&bottom}(c), there is a projective, 
minimally active $\F_pG$-module $W>V$ such that $\dim(W)=p$ and thus 
$\dim(W/V)=1$. 

By Proposition \ref{dim(p-1)}(b), there is a $\Z_pG$-lattice $\Lambda$ such 
that $\Lambda/p\Lambda\cong W$, and such that $\Lambda$ has a 
$\Z_pG$-submodule $\Lambda_0=C_{\Lambda}(G_0)$ of rank $1$. In particular, 
$\Lambda$ is free as a $\Z_p\UUU$-module since $W$ is free as an 
$\F_p\UUU$-module. Let $\Lambda_V<\Lambda$ be the $\Z_pG$-sublattice of 
index $p$ such that $\Lambda_V/p\Lambda\cong V$. Define 
	\[ \5A = \Lambda \big/ \bigl( p^k\Lambda+p^{k-1}\Lambda_V+p\Lambda_0 
	\bigr) \cong C_{p^k}\times(C_{p^{k-1}})^{p-2} \times C_p  \,. \]
Then $\Omega_1(\5A)$ is a $p$-dimensional $\F_pG$-module, and contains a 
2-dimensional submodule 
	\begin{align*} 
	\5A_0 &= \bigl( p^{k-1}\Lambda+\Lambda_0 \bigr) \big/ 
	\bigl( p^k\Lambda+p^{k-1}\Lambda_V+p\Lambda_0 \bigr) \\
	&\cong \bigl(p^{k-1}\Lambda/(p^k\Lambda+p^{k-1}\Lambda_V)\bigr) 
	\oplus \bigl(\Lambda_0/p\Lambda_0\bigr)
	\cong (W/V) \oplus V_0 \,. 
	\end{align*}

Now, $V_0\cong W/V$ as $\F_p[N_G(\UUU)]$-modules by Lemma 
\ref{l:top&bottom}(a) and since $\dim(W)=p$. Also, $O^{p'}(G)$ acts 
trivially on each of them and $G=O^{p'}(G)N_G(\UUU)$ by the Frattini 
argument, so $V_0$ and $W/V$ are isomorphic as $\F_pG$-modules, and any 
$\F_p$-linear isomorphism is $\F_pG$-linear. Hence for fixed 
$a\in\Lambda\sminus\Lambda_V$ and fixed $\ell$ prime to $p$ such that 
$\ell\not\equiv1$ (mod $p$) if $k=2$, the quotient group 
	\[ A = \5A/\5A_1 \qquad\textup{where}\qquad
	\5A_1 = \Gen{[p^{k-1}a-\ell\sigma\cdot a]} \le \5A_0 \]
is a quotient group of $\5A$ where the two summands of $\5A_0$ have been 
identified, and hence is a $\Z_pG$-module. Here, as usual, 
$\sigma=\sum_{u\in\UUU}u\in\Z_p\UUU$. Note that $\5A_1$ is independent of 
the choice of $a$, and depends on $\ell$ only modulo $p$.

Since $\5A/\5A_0\cong\Lambda/(p^{k-1}\Lambda+\Lambda_0)$, where 
$\Lambda/\Lambda_0\cong\Z_p[\zeta]$ as $\Z_p\UUU$-modules by Lemma 
\ref{ZU-modules}(c), we have that $|C_{\5A/\5A_0}(\UUU)|=p$, and is 
generated by the class of $p^{k-2}(u-1)^{p-2}a$ for $1\ne u\in\UUU$.  
The class of $p^{k-2}(u-1)^{p-2}a$ in $A$ is fixed by $\UUU$ if and only if 
as classes in $\5A$, 
	\[ [p^{k-2}(u-1)^{p-1}a] = [p^{k-2}(\sigma-p)a] = 
	[p^{k-2}\sigma\cdot a] - [p^{k-1}a] \in \5A_1 \]
(where the second equality holds by Lemma \ref{l:(u-1)^m}). But this fails 
to hold under our hypotheses: either $k>2$, in which case 
$[p^{k-2}\sigma\cdot a]=0$ and $[p^{k-1}a]\ne0$ in $\5A$; or $k=2$ and 
$\ell\not\equiv1$ (mod $p$), in which case $[\sigma\cdot 
a]-[pa]\notin\5A_1$. Thus in all such cases, $C_A(\UUU)=\5A_0/\5A_1$ and 
has order $p$. 

Since $V_0\cong \Lambda_0/p\Lambda_0\cong\5A_0/\5A_1=C_A(\UUU)$ as 
$\F_pG$-modules, and since $V$ and $\Omega_1(A)$ are both 
$(p-1)$-dimensional minimally active $\F_pG$-modules, Lemma 
\ref{l:top&bottom}(d) applies to show that $V\cong\Omega_1(A)$ as 
$\F_pG$-modules. 

By construction, $(G,A)$ satisfies all of the conditions in case (c) of 
Proposition \ref{l:fin(abc)}, as well as condition (d.iii$''$) of Theorem 
\ref{t3:s/a}. Since $|Z|=|C_A(\UUU)|=p$, conditions (a) and (b) in 
\ref{t3:s/a} also hold: $|Z_0|=p$, and no nontrivial subgroup of $Z$ is 
$G$-invariant except possibly $Z_0=Z$. Finally, \ref{t3:s/a}(c) holds 
($[G,A]=A$) since $A/[\UUU,A]=A/\Omega_{k-1}(A)$ has order $p$, and has 
nontrivial action of $G$ since $G$ acts nontrivially on $Z_0=\mho^{k-1}(A)$ 
(since $\mu_A(G^\vee)\ge\Delta_{-1}$).

By Theorem \ref{t3:s/a}, there is a unique simple fusion system $\calf$ 
over $A\rtimes\UUU$ such that $G=\autf(A)$, $A\in\EE\calf$, and 
$\EE\calf=\{A\}\cup\calh$. Since $A$ is unique up to isomorphism of 
$\Z_pG$-modules by Proposition \ref{A1=A2}(b), this shows that $\calf$ is 
uniquely determined by $V$.
\end{proof}

\section{Simple fusion systems over nonabelian discrete $p$-toral groups 
with abelian subgroup of index $p$}
\label{s:infinite}

We now focus on the case where $A$ and $S$ are infinite. Since most of the 
results in Section \ref{s:fin+inf} assume Notation \ref{n:not1}, and in 
particular that $S$ contains a unique abelian subgroup of index $p$, we 
begin by proving that this always holds when $O_p(\calf)=1$.

\begin{Lem} \label{|A/Z|=infty}
Let $\calf$ be a saturated fusion system over an infinite discrete 
$p$-toral group $S$ with an abelian subgroup $A$ of index $p$. Assume also 
that $O_p(\calf)=1$. Then $|A/Z(S)|=\infty$.
\end{Lem}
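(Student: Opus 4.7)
The plan is to derive a contradiction from the assumption $|A/Z(S)|<\infty$ by showing that the identity component $S_0$ of $S$ is normal in $\calf$. This would force $S_0\le O_p(\calf)=1$, contradicting the fact that $S$ is infinite (so $S_0\ne1$, since $[S:S_0]$ is finite).

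First I would observe that $O_p(\calf)=1$ forces $S$ to be nonabelian: if $S$ were abelian then no proper subgroup $P<S$ could be $\calf$-centric (since $C_S(P)=S>P$), so $\EE\calf=\emptyset$, and Proposition \ref{AFT-E}(c) would give $S=O_p(\calf)$. Hence $Z(S)\le A$, and for any $x\in S\sminus A$ the map $\psi\:A\to A$, $\psi(a)=[a,x]$, is a homomorphism with $\Ker(\psi)=C_A(x)=Z(S)$. Thus $A/Z(S)\cong[A,x]$ is finite by hypothesis.

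Second, since $[S:A]=p$ and $S_0$ is divisible (so admits no proper subgroups of finite index), we have $S_0\le A$. Then $|S_0/(S_0\cap Z(S))|\le|A/Z(S)|$ is finite, and divisibility of $S_0$ forces $S_0\le Z(S)$. Consequently, for every $\calf$-centric subgroup $P\le S$ one has
\[ S_0 \le Z(S) \le C_S(P) = Z(P) \le P. \]

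The crux is then to identify $S_0$ with the identity component $P_0$ of each such $P$. Since $S_0\le P$ is a discrete $p$-torus and $S_0/(S_0\cap P_0)$ embeds in the finite $p$-group $P/P_0$, divisibility of $S_0$ gives $S_0\le P_0$; the symmetric argument applied to $P_0\le S$ gives $P_0\le S_0$. Thus $P_0=S_0$ is characteristic in $P$, hence $\autf(P)$-invariant. The same holds trivially for $P=S$. Applying Proposition \ref{AFT-E}(c) to $Q=S_0\nsg S$ yields $S_0\nsg\calf$, and the desired contradiction follows.

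The principal obstacle is precisely the identification $S_0=P_0$ for each essential $P$, needed to secure $\autf(P)$-invariance of $S_0$; it rests on the characterization of the identity component as the unique maximal discrete $p$-torus, which in turn is a consequence of the divisibility of discrete $p$-tori.
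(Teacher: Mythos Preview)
Your proof is correct and follows essentially the same approach as the paper: assume $|A/Z(S)|<\infty$, deduce that the identity component $S_0$ lies in $Z(S)$ and hence is contained in (and characteristic in) every $\calf$-essential subgroup, then apply Proposition~\ref{AFT-E}(c) to obtain $S_0\nsg\calf$, contradicting $O_p(\calf)=1$. You supply more detail than the paper (the nonabelian observation, the explicit identification $S_0=P_0$), but the argument is the same.
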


\begin{proof} Let $S_0\nsg S$ denote the identity component of $S$. 
If $|A/Z(S)|<\infty$, then $S_0\le Z(S)$, so $S_0$ is contained in (and 
is characterisitic in) each $P\in\EE\calf$. Thus 
$S_0\nsg\calf$ by Proposition \ref{AFT-E}(c), so $S_0\le O_p(\calf)=1$, 
contradicting the assumption that $|A|=\infty$.
\end{proof}

\begin{Cor} \label{>1abel}
If $\calf$ is a saturated fusion system over an infinite discrete $p$-toral group 
$S$ with an abelian subgroup $A$ of index $p$, and $O_p(\calf)=1$, then $A$ is 
the only abelian subgroup of index $p$ in $S$. 
\end{Cor}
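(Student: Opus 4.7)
The plan is to derive the corollary directly from Lemma \ref{|A/Z|=infty} by an elementary subgroup-intersection argument. Suppose toward contradiction that $B \le S$ is an abelian subgroup of index $p$ distinct from $A$. Since $A$ and $B$ both have index $p$ in $S$ and are distinct, they are both maximal, so $AB = S$ and $A \cap B$ is a proper subgroup of each.

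The key observation is that $A \cap B$ has index $p$ in both $A$ and in $B$ (by the second isomorphism theorem, $A/(A \cap B) \cong AB/B = S/B$ has order $p$, and symmetrically for $B$). Hence $A \cap B$ is centralized by $A$ (because $A$ is abelian) and by $B$ (because $B$ is abelian), and therefore by $S = AB$. So $A \cap B \le Z(S)$, which gives
\[
[A : Z(S)] \;\le\; [A : A \cap B] \;=\; p.
\]

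But this contradicts Lemma \ref{|A/Z|=infty}, which asserts that $|A/Z(S)| = \infty$ under the hypothesis $O_p(\calf) = 1$. Hence no such $B$ exists, and $A$ is the unique abelian subgroup of index $p$ in $S$. There is no serious obstacle here: the whole corollary is just the observation that a second abelian subgroup of index $p$ would force $A$ to be close to central modulo a group of order $p$, which Lemma \ref{|A/Z|=infty} rules out.
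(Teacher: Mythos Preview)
Your proof is correct and takes essentially the same approach as the paper. The paper simply cites Lemma~\ref{|A/Z|=infty} together with Lemma~\ref{1abel}; the latter packages exactly the subgroup-intersection argument you wrote out (a second abelian subgroup $B$ of index $p$ forces $A\cap B\le Z(S)$ and hence $|A/Z(S)|\le p$), so you have merely unpacked that lemma inline.
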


\begin{proof} Since $|A/Z(S)|=\infty$ by Lemma \ref{|A/Z|=infty}, this follows 
from Lemma \ref{1abel}.
\end{proof}

\begin{Lem} \label{disc.torus2}
Assume Notation \ref{n:not1}. If $|A|=\infty$ and $O_p(\calf)=1$, then $A/Z$ 
and $S'$ are both discrete $p$-tori of rank $p-1$. 
\end{Lem}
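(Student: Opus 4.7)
My approach begins by observing that since $A$ is abelian, if $A\nsg\calf$ then $A\le O_p(\calf)=1$, contradicting $|A|=\infty$; hence $\EE\calf\not\subseteq\{A\}$, and Lemma \ref{B&H} produces some $P\in\EE\calf\cap(\calh\cup\calb)$, yielding $|Z_0|=p$, $|Z_2/Z|=p$, and $Z_2\le A$ via Lemma \ref{|Z0|=p}. If $\uu\in\UUU$ is a generator, the map $\uu-1\colon A\to A$ has kernel $Z$ and image $S'$ and is $\UUU$-equivariant, inducing an isomorphism $A/Z\cong S'$. So it suffices to prove $S'$ is a discrete $p$-torus of rank $p-1$.

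The first real step is to show that the identity component $Z^\circ$ of $Z=Z(S)$ is $\calf$-normal, hence trivial. It is characteristic in $Z$, so invariant under $\autf(S)$ and, via the extension axiom applied to the (unique hence fully centralized) subgroup $A$, under $G=\autf(A)$. For each $P\in\EE\calf\cap(\calh\cup\calb)$, Lemma \ref{l2:s/a} writes $P=P_1\times P_2$ or $P_1\times_{Z_0}P_2$ with $P_1=C_P(O^{p'}(\autf(P)))$ characteristic in $P$; since $Z^\circ$ is the identity component of $P_1$, it is $\autf(P)$-invariant. Proposition \ref{AFT-E}(c) then gives $Z^\circ\nsg\calf$, so $Z^\circ\le O_p(\calf)=1$ and $Z$ is finite.

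Let $A_0$ denote the identity component of $A$ (which coincides with that of $S$). Since $C_{A_0}(\UUU)=A_0\cap Z$ is finite, the $\Z_p\UUU$-lattice $\5{A_0}=\Hom(A_0,\Q_p/\Z_p)$ has no trivial $\UUU$-summand over $\Q_p$, so $\5{A_0}\otimes\Q_p\cong\Q_p(\zeta)^t$ and by the Diederichsen--Reiner classification $\5{A_0}\cong\Z_p[\zeta]^t$; dually $A_0\cong(\Q_p(\zeta)/\Z_p[\zeta])^t$ with $\rk A_0=t(p-1)$, and $\Omega_1(A_0)\cong(\Z_p[\zeta]/p\Z_p[\zeta])^t$, i.e.\ a direct sum of $t$ indecomposable $\F_p\UUU$-modules each of dimension $p-1$ (one Jordan block apiece). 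Now set $V=\Omega_1(A)$: since $V^\UUU=\Omega_1(Z)$ and $(\uu-1)V\le\Omega_1(S')$, we get $V^\UUU\cap(\uu-1)V\le\Omega_1(Z\cap S')=Z_0$ of dimension $1$, so $V|_\UUU$ has at most one Jordan block of size $\ge2$, and at least one (else $S'=1$, contradicting $S$ nonabelian). For $p$ odd, the $(p-1)$-dimensional summands of $\Omega_1(A_0)$ are themselves Jordan blocks of size $\ge2$, so they must all collapse into the single nontrivial block of $V$; this forces $t\le 1$, hence $t=1$ (as $A_0\ne0$), and therefore $\rk A_0=p-1$ with $A_0\cong\Q_p(\zeta)/\Z_p[\zeta]$.

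On $A_0$, $\uu-1$ acts as multiplication by the uniformizer $\zeta-1$; since $A_0$ is $\pp$-divisible, $(\uu-1)A_0=A_0$, so $A_0\le S'$. The remaining and most delicate step is to establish the reverse inclusion $S'\le A_0$, equivalently that $\UUU$ acts trivially on the finite quotient $F=A/A_0$: my plan is to combine the $G$-invariant subgroup analysis of Lemma \ref{O_p(F)=1} (valid for $p$ odd) with the Jordan block structure of $V$ above, showing that any nontrivial $\UUU$-action on $F$ would produce either an extra trivial summand in $\Omega_1(Z)$ or a Jordan block of length $p$ in $V$, each of which yields a nontrivial $\calf$-normal subgroup (inside $\Omega_1(Z)$ or a canonical quotient of $A$) that contradicts $O_p(\calf)=1$. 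Once $\UUU$ acts trivially on $F$, $S'=[\UUU,A]=[\UUU,A_0]=A_0$ is a discrete $p$-torus of rank $p-1$, and then so is $A/Z$ via the isomorphism above. The case $p=2$ is handled separately by the structure theorem for infinite 2-toral examples (Theorem \ref{t:disc.2-toral}).
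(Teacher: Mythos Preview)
Your route is far more intricate than necessary, and it has real gaps. The paper's proof is two lines: observe that $B=A/Z$ is an infinite discrete $p$-toral $\UUU$-module on which the norm $\sigma=\sum_{u\in\UUU}u$ acts trivially (since $\sigma\cdot a\in C_A(\UUU)=Z$ for every $a\in A$), and that $C_{A/Z}(\UUU)=Z_2/Z$ has order $p$ by Lemma~\ref{|Z0|=p}. Lemma~\ref{ZU-modules}(d) then gives $A/Z\cong(\Z/p^\infty)^{p-1}$ immediately, and $S'\cong A/Z$ by Lemma~\ref{1abel}. There is no need to locate the identity component of $A$, to show $Z$ is finite, or to analyse Jordan blocks.

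Concretely, your argument breaks down in two places. First, your claim that $Z^\circ$ is $G$-invariant ``via the extension axiom applied to the (unique hence fully centralized) subgroup $A$'' is not justified: the extension axiom only guarantees that elements of $N_G(\Aut_S(A))=N_G(\UUU)$ extend to automorphisms of $S$, not that all of $G=\autf(A)$ does. When $A\in\EE\calf$ (so $\UUU\nnsg G$), you have not shown that $Z^\circ$ is $\autf(A)$-invariant, and hence Proposition~\ref{AFT-E}(c) does not apply. Second, the ``most delicate step'' of showing $S'\le A_0$ is a plan, not a proof: you do not explain how a nontrivial $\UUU$-action on $A/A_0$ would produce an $\calf$-normal subgroup, and the invocation of Lemma~\ref{O_p(F)=1} (which is only stated for $p$ odd) leaves $p=2$ unaddressed. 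Your fallback of handling $p=2$ via Theorem~\ref{t:disc.2-toral} is circular, since that theorem's proof uses Lemma~\ref{Z0=Z}, which in turn invokes the present Lemma~\ref{disc.torus2}.
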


\begin{proof} Since $|A/Z|=\infty$ by Lemma \ref{|A/Z|=infty} and 
$C_{A/Z}(\UUU)=Z_2/Z$ has order $p$ by Lemma \ref{|Z0|=p}, 
$A/Z\cong(\Z/{p^\infty})^{p-1}$ by Lemma \ref{ZU-modules}(d). Also, 
$A/Z\cong S'$ by Lemma \ref{1abel}.
\end{proof}

\begin{Lem} \label{A0=A}
Assume Notation \ref{n:not1}. If $|A|=\infty$ and $O_p(\calf)=1$, then 
$A=ZS'$. As one consequence, each of the sets $\calb$ and $\calh$ consists 
of  one $S$-conjugacy class.
\end{Lem}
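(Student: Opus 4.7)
The plan is to reduce the main claim $A = ZS'$ to an equal-rank comparison of discrete $p$-tori. First I would verify that Lemma \ref{|Z0|=p} applies: since $A$ is a normal discrete $p$-toral subgroup of $S$, if $A\nsg\calf$ then $A\le O_p(\calf)=1$, contradicting $|A|=\infty$. Hence $A\nnsg\calf$, so Lemma \ref{|Z0|=p} gives $|Z_0|=p$, and combined with Lemma \ref{disc.torus2} we see that $A/Z$ and $S'$ are discrete $p$-tori of rank $p-1$.

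The key step is to identify $ZS'/Z$ with $A/Z$. Since $Z_0\le S'$ has order $p$ and $S'\cong(\Z/p^\infty)^{p-1}$, the quotient $S'/Z_0$ is again isomorphic to $(\Z/p^\infty)^{p-1}$, because multiplication by $p$ on $\Z/p^\infty$ is surjective with kernel of order $p$. The standard isomorphism $ZS'/Z\cong S'/(S'\cap Z)=S'/Z_0$ therefore displays $ZS'/Z$ as a rank-$(p-1)$ discrete $p$-torus sitting inside the rank-$(p-1)$ discrete $p$-torus $A/Z$. Any equal-rank inclusion $T_0\le T$ of discrete $p$-tori is necessarily an equality: both $\Omega_1(T_0)$ and $\Omega_1(T)$ are $(p-1)$-dimensional $\F_p$-vector spaces and hence coincide, and passing to dual $\Z_p$-lattices via Proposition \ref{tori&lattices} reduces the claim to the triviality that a surjection of free $\Z_p$-modules of the same finite rank is an isomorphism. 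Hence $ZS'/Z=A/Z$ and $A=ZS'$.

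For the consequence, fix $x\in S\sminus A$. Every element of $S\sminus A$ has the form $gx$ with $g\in A$, and since $A=ZS'$ we can write $g=z\cdot[y,x]$ for some $z\in Z$ and $y\in A$ (using $S'=\Im(\psi)$, where $\psi(y)=[y,x]$ is the homomorphism from the proof of Lemma \ref{|Z0|=p}). The identity $yxy^{-1}=[y,x]\cdot x$ shows that conjugation by $y$ sends $Z\langle x\rangle$ to $Z\langle [y,x]\cdot x\rangle = Z\langle gx\rangle$, where the last equality uses $z\in Z$. The same computation with $Z_2$ in place of $Z$ (and using $Z\le Z_2$) shows that $Z_2\langle gx\rangle$ is $A$-conjugate to $Z_2\langle x\rangle$. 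Therefore each of $\calh$ and $\calb$ is a single $S$-conjugacy class.

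The only genuinely non-formal step is the equal-rank claim for discrete $p$-subtori; but once one passes to the Pontryagin dual this is immediate, so even the ``hard part'' is routine once the setup is in place.
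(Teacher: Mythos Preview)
Your argument is correct and essentially the same as the paper's, which packages the key step via the endomorphism $\chi\colon A/Z\to A/Z$, $aZ\mapsto[a,u]Z$ (kernel $Z_2/Z$ of order $p$, image $ZS'/Z$, hence surjective since an endomorphism of a discrete $p$-torus with finite kernel is onto); the conjugacy-class part is handled identically. One small wording slip: the functor of Proposition~\ref{tori&lattices} is covariant, so $T_0\hookrightarrow T$ dualizes to an \emph{injection} of equal-rank $\Z_p$-lattices rather than a surjection --- but your conclusion $T_0=T$ still follows, since the cokernel is then torsion and vanishes after applying $(\Q_p/\Z_p)\otimes_{\Z_p}(-)$.
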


\begin{proof} Fix a generator $u\in\UUU$, and define 
$\chi\:A/Z\Right2{}A/Z$ by setting $\chi(aZ)=[a,u]Z$. Then 
$\Im(\chi)=ZS'/Z$, and $\Ker(\chi)=Z_2/Z$ has order $p$ by Lemma 
\ref{|Z0|=p}. Since $A/Z$ is a discrete $p$-torus by Lemma 
\ref{disc.torus2}, $\chi$ must be onto, and hence $ZS'=A$. 

Thus if $P=Z\gen{x}$ and $Q=Z\gen{y}$ are two members of $\calh$, 
where $yx^{-1}\in A$, then there are $z\in Z$ and $a\in A$ such that 
$yx^{-1}=axa^{-1}x^{-1}z$. Then $y\in\9axZ$, so $Q=\9aP$, and $P$ and $Q$ 
are $S$-conjugate. A similar argument shows that all members of $\calb$ are 
$S$-conjugate.
\end{proof}

Part of the next lemma follows from Lemma \ref{O_p(F)=1} when $p$ is odd. 
But since we also need it here when $p=2$, we prove it independently of the 
earlier lemma.

\begin{Lem} \label{Z0=Z}
Assume Notation \ref{n:not1}, and also that $|A|=\infty$, $O_p(\calf)=1$, 
and $A\notin\EE\calf$. Then $Z=Z_0$, $\EE\calf=\calh$, and $A=S'$ is a 
discrete $p$-torus of rank $p-1$.
\end{Lem}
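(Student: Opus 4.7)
The plan is to proceed in three steps: narrow $\EE\calf$ down to a subset of $\calh\cup\calb$, rule out any essential in $\calb$, and then kill $Z/Z_0$ by exhibiting an $\calf$-normal subgroup of $Z$.

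First, since $|A|=\infty$, the hypothesis $O_p(\calf)=1$ forces $A\nnsg\calf$ (else $A\le O_p(\calf)=1$), so Proposition \ref{AFT-E}(c) gives $\EE\calf\ne\emptyset$. Together with $A\notin\EE\calf$, Lemma \ref{B&H} yields $\emptyset\ne\EE\calf\subseteq\calh\cup\calb$. I will then invoke Lemma \ref{|Z0|=p} to get $|Z_0|=p$, Lemma \ref{disc.torus2} to get that $A/Z$ and $S'$ are discrete $p$-tori of rank $p-1$, and Lemma \ref{A0=A} to get $A=ZS'$ with each of $\calh$, $\calb$ a single $S$-conjugacy class.

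Next, I rule out $\EE\calf\cap\calb\ne\emptyset$. Suppose some $P\in\EE\calf$ lies in $\calb$. Then $\calb\subseteq\EE\calf$ by $S$-conjugacy, and Lemma \ref{exclusion} applied to every $x\in S\sminus A$ forces $\calh\cap\EE\calf=\emptyset$, so $\EE\calf=\calb$. But then $Z=Z(S)$ lies in every essential subgroup (as $Z\le Z_2\le P$ for $P\in\calb$), is characteristic in $S$, and by Lemma \ref{l2:s/a}(b) equals $P_1=C_P(O^{p'}(\autf(P)))$ in each $P\in\calb$, hence is $\autf(P)$-invariant. Proposition \ref{AFT-E}(c) then yields $Z\nsg\calf$, contradicting $O_p(\calf)=1$ since $Z\ge Z_0\ne1$. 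Thus $\EE\calf=\calh$.

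Finally, I show $Z=Z_0$. Fix $P\in\calh$ and let $P_1=C_P(O^{p'}(\autf(P)))$ as in Lemma \ref{l2:s/a}(a), so $P_1\le Z$ and $Z=P_1\times Z_0$. Any other $Q\in\calh$ has the form $\9gP$ for some $g\in S$, and since $P_1\le Z=Z(S)$, $c_g$ fixes $P_1$ pointwise; thus $P_1$ does not depend on the choice of $P\in\calh$. To see $P_1$ is $\autf(Z)$-invariant: given $\alpha\in\autf(Z)$, note $Z$ is fully centralized ($C_S(Z)=S$) and $N_\alpha=S$ (since $\Aut_S(Z)=1$), so the extension axiom produces $\4\alpha\in\autf(S)$ extending $\alpha$. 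Then $\4\alpha(P)\in\calh$, and naturality of the construction $P\mapsto P_1$ under the isomorphism $\4\alpha|_P$ gives $\alpha(P_1)=\4\alpha(P_1)=(\4\alpha(P))_1=P_1$. Since $\autf(S)$ restricts to $\autf(Z)$, $P_1$ is also $\autf(S)$-invariant, and $\autf(P)$-invariance for $P\in\calh$ is immediate from the intrinsic definition; together with $P_1\nsg S$ (as $P_1\le Z(S)$), Proposition \ref{AFT-E}(c) gives $P_1\nsg\calf$, forcing $P_1=1$. Hence $Z=Z_0$, and $A=ZS'=Z_0S'=S'$ is a discrete $p$-torus of rank $p-1$ by Lemma \ref{disc.torus2}.

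The main obstacle is producing $\autf(Z)$-invariance of $P_1$ uniformly in $p$: for $p$ odd one could appeal to the uniqueness assertion in Lemma \ref{l2:s/a}(a), but since the result is also needed for $p=2$, the naturality argument via the extension axiom is what makes the proof go through in both cases.
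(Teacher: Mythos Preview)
Your proof is correct and follows essentially the same route as the paper: rule out $\EE\calf=\calb$ by showing $Z\nsg\calf$, then show the characteristic subgroup $P_1=C_P(O^{p'}(\autf(P)))$ (common to all $P\in\calh$) is normal in $\calf$ and hence trivial. The only stylistic difference is that the paper obtains $\autf(S)$-invariance of $P_1$ directly via the Frattini argument $\autf(S)=\Inn(S)\cdot N_{\autf(S)}(P)$, whereas you detour through $\autf(Z)$ by lifting to $\autf(S)$ via the extension axiom; both hinge on the same naturality $(\4\alpha(P))_1=\4\alpha(P_1)$, so the arguments are essentially equivalent. Your closing remark is apt: the naturality argument (unlike the uniqueness clause in Lemma~\ref{l2:s/a}(a)) works uniformly for all primes, which is exactly why the paper does not invoke that clause here either.
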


\begin{proof} Since $A\notin\EE\calf$, Lemmas \ref{B&H} and \ref{exclusion} 
imply that $\EE\calf=\calb$ or $\EE\calf=\calh$. If $\EE\calf=\calb$, then 
since $Z=Z(S)$ is normalized by $\autf(S)$ and by $\autf(P)$ for each 
$P\in\calb$ ($Z$ is characteristic in $P$ by Lemma \ref{l2:s/a}(b)), 
$Z\nsg\calf$ by Proposition \ref{AFT-E}(c), contradicting the assumption 
that $O_p(\calf)=1$. Thus $\EE\calf=\calh$.

For $P=Z\gen{x}\in\calh$, by Lemma \ref{l2:s/a}(a), $P=P_1\times P_2$, 
where $P_1=C_P(O^{p'}(\autf(P)))<Z$, $Z=P_1\times Z_0$, and $Z_0<P_2\cong 
C_p\times C_p$, and where each factor $P_i$ is normalized by $\autf(P)$. If 
$P^*\in\calh$ is another member, then $P^*=\9gP$ for some $g\in S$ by Lemma 
\ref{A0=A}, and $P_1=\9gP_1=C_{\9gP}(O^{p'}(\autf(\9gP)))$ is also 
normalized by $\autf(\9gP)$. Finally, $P_1$ is normalized by $\autf(S)$ 
since $\autf(S)=\Inn(S)\cdot N_{\autf(S)}(P)$ by the Frattini argument, and 
thus $P_1\nsg\calf$ by Proposition \ref{AFT-E}(c). So $P_1\le 
O_p(\calf)=1$, and hence $Z=Z_0$.

Since $Z=Z_0$, we have $A=S'$ by Lemma \ref{A0=A}, and so $A$ is a discrete 
$p$-torus of rank $p-1$ by Lemma \ref{disc.torus2}.
\end{proof}

When $A$ is finite and $p$ is odd, it was shown in \cite[Lemma 2.4]{indp1} 
that $O_p(\calf)=1$ and $A\notin\EE\calf$ imply $Z=Z_0$. When $A$ is finite 
and $p=2$, this is not true: for each odd prime $p$, the $2$-fusion system 
of $\PSL_2(p^2)\gen{\phi}$, where $\phi$ is a field automorphism 
of order $2$, is a counterexample. Note that in this case, $S\cong D\times 
C_2$ for some dihedral $2$-group $D$ (whose order depends on $p$).

The case $p=2$ is now very easy to handle.

\begin{Thm} \label{t:disc.2-toral}
Let $S$ be an infinite nonabelian discrete $2$-toral group with 
abelian subgroup $A<S$ of index $2$.  Let $\calf$ be a saturated fusion 
system over $S$ such that $O_2(\calf)=1$. Then $\calf$ is isomorphic to the 
$2$-fusion system of $\SO(3)$ (if $A$ is not $\calf$-essential) or of 
$\PSU(3)$ (if $A$ is $\calf$-essential).
\end{Thm}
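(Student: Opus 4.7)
The plan is to split the argument according to whether $A$ is $\calf$-essential, and to lean on the structural lemmas already established---notably Lemmas \ref{B&H}, \ref{A0=A}, \ref{disc.torus2}, \ref{Z0=Z}, and \ref{l2:s/a}, all of which apply for $p=2$. Corollary \ref{>1abel} ensures $A$ is the unique abelian subgroup of index $2$, so Notation \ref{n:not1} is in force.

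In the first case, $A\notin\EE\calf$, I would apply Lemma \ref{Z0=Z} to get immediately that $Z=Z_0$ has order $2$, $\EE\calf=\calh$, and $A=S'\cong\Z/2^\infty$. Lemma \ref{l2:s/a}(a) produces an involution $x\in S\sminus A$, so $S=A\rtimes\gen{x}$; since $C_A(x)=Z$ has order $2$, $x$ must act on $A\cong\Z/2^\infty$ by inversion, and $S$ is the infinite pro-dihedral $2$-group, i.e.\ a maximal discrete $2$-toral subgroup of $\SO(3)$. The unique essential $P=Z\gen{x}\cong C_2\times C_2$ satisfies $\autf(P)=O^{2'}(\autf(P))\cong\SL_2(2)=\Aut(P)$, and Proposition \ref{AFT-E} then shows $\calf$ is uniquely determined by $\autf(P)$ and $\autf(S)$. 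Checking that $\calf_S(\SO(3))$ has the same data (pro-dihedral Sylow $2$, Weyl action by inversion, Klein-four essential with $\Sigma_3$ automizer) completes this case.

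In the second case, $A\in\EE\calf$, my plan is first to restrict $\EE\calf$ to $\{A\}\cup\calb$ and then to identify $(S,\autf(A),\autf(P))$ with the $2$-local structure of $\PSU(3)$. The possibility $\EE\calf=\{A\}$ is ruled out because Proposition \ref{AFT-E}(c) then forces $A\nsg\calf$, contradicting $O_2(\calf)=1$. The delicate step is to rule out $\calh\cap\EE\calf\ne\emptyset$: for $P\in\calh$, Lemma \ref{l2:s/a}(a) furnishes a subgroup $P_1<Z$ independent of the choice of $P$ (all members of $\calh$ are $S$-conjugate by Lemma \ref{A0=A}, and $S$ centralizes $Z$), invariant under $\autf(P)$ by construction, under $\autf(S)$ by a Frattini argument as in Lemma \ref{Z0=Z}, and---since any restriction of $\autf(A)$ to $Z$ must factor through the controlled quotient $Z/P_1\cong C_2$---also under $\autf(A)$. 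Thus $P_1\nsg\calf$ by Proposition \ref{AFT-E}(c), so $P_1=1$, forcing $Z=Z_0$ of order $2$ and $A\cong\Z/2^\infty$ via Lemmas \ref{A0=A} and \ref{disc.torus2}; but then $\autf(A)\le\Aut(\Z/2^\infty)=\Z_2^\times$ is pro-$2$ abelian so $\UUU$ is normal, contradicting $A\in\EE\calf$. Lemma \ref{exclusion} then leaves only $\EE\calf=\{A\}\cup\calb$.

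The last and most technical step is to pin down $(A,\autf(A))$. For $P\in\calb$, Lemma \ref{l2:s/a}(b) gives $P=Z\times_{Z_0}Q_8$ with $O^{2'}(\autf(P))\cong\SL_2(2)$. Since $\autf(A)$ is finite with non-normal Sylow $\UUU\cong C_2$, Burnside's normal $p$-complement theorem yields $\autf(A)=H\rtimes\UUU$ with $|H|$ odd; the faithful $\autf(A)$-action on $V=\Q_2\otimes_{\Z_2}A$, in which $\UUU$ acts as a reflection, together with $\rk(S')=1$ from Lemma \ref{disc.torus2}, forces $\rk(A)=2$. Enumerating the finite subgroups of $\GL_2(\Q_2)$ containing a reflection and having non-normal Sylow $2$-subgroup, I conclude $\autf(A)\cong\Sigma_3$ acting as the Weyl group of type $A_2$; this matches the $2$-part of the maximal torus of $\PSU(3)$ and its Weyl action, and Proposition \ref{AFT-E} then gives $\calf\cong\calf_S(\PSU(3))$. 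The hardest single point of the whole argument is this final identification---pinning down both $\rk(A)$ and the isomorphism type of $\autf(A)$ purely from the abstract fusion-system axioms---which in practice will require a careful hands-on analysis of finite linear groups acting on discrete $2$-tori rather than a purely structural invocation of Alperin.
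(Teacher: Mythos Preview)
Your Case 1 is essentially the paper's argument.

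In Case 2 there is a genuine gap where you rule out $\calh\cap\EE\calf\ne\emptyset$. To apply Proposition \ref{AFT-E}(c) and conclude $P_1\nsg\calf$, you must show that $P_1$ is $\autf(A)$-invariant, since now $A\in\EE\calf$. Your justification (``any restriction of $\autf(A)$ to $Z$ must factor through the controlled quotient $Z/P_1\cong C_2$'') does not establish this: a general $\alpha\in\autf(A)$ need not normalize $\UUU=\Aut_S(A)$, hence need not even send $Z=C_A(\UUU)$ to itself, let alone preserve $P_1$. The argument you are mimicking from Lemma \ref{Z0=Z} works there exactly because $A\notin\EE\calf$, so $\autf(A)$ never has to be checked.

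The paper avoids this obstacle by bringing in Proposition \ref{GonA}, which you do not use at all. Applied to $G=\autf(A)$ acting on $A$, it yields a decomposition $G=G_1\times G_2$ and $A=A_1\times A_2$ with $|G_1|$ odd, $G_2\cong\Sigma_3$, and $A_2\cong(C_{2^k})^2$. Since $A_1\le Z$ and $|A/Z|=\infty$, one gets $k=\infty$; then $Z_0$ sits inside the copy of $\Z/2^\infty$ in $Z$ coming from $A_2$, so $Z_0$ is not a direct factor of $Z$, and Lemma \ref{l2:s/a}(a) immediately excludes $\calh$-essentials. Only \emph{after} this does the paper show a subgroup is normal in $\calf$: namely $A_1$, using that for $P\in\calb$ one has $Z=Z(P)$, so $\autf(P)$ normalizes $Z$ and its restriction extends to $\autf(S)$; combined with the evident $\autf(A)$-invariance of $A_1$ (it is a canonical $\Z_2G$-summand of $A$), this forces $A_1=1$. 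Thus $A\cong(\Z/2^\infty)^2$ and $G\cong\Sigma_3$ drop out at once---your ``hardest single point'' (pinning down $\rk(A)$ and $\autf(A)$ via an enumeration of finite subgroups of $\GL_2(\Q_2)$) becomes trivial, and no separate argument that $A$ is a torus is needed.
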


\begin{proof} Recall that $A$ is the unique abelian subgroup of index 
$2$ in $S$ by Corollary \ref{>1abel}. So we can use Notation \ref{n:not1}. 
Also, $|Z_0|=2$ by Lemma \ref{|Z0|=p}, and $\EE\calf\sminus\{A\}=\calh$ or 
$\calb$ by Lemmas \ref{B&H} and \ref{A0=A} (and since $A\nsg\calf$ if 
$\EE\calf\subseteq\{A\}$).

\smallskip

\noindent\textbf{Case 1: } Assume first that $A\notin\EE\calf$. Then by 
Lemma \ref{Z0=Z}, $\EE\calf=\calh$, $Z=Z_0$, and $A=S'$ is a discrete 
$p$-torus of rank $1$. Thus $A\cong\Z/{2^\infty}$, where 
this group is inverted by the action of $S/A$. Also, for each 
$P\in\EE\calf=\calh$, $P\cong C_2\times C_2$ and hence 
$\autf(P)=\Aut(P)\cong\Sigma_3$. Thus there is a unique choice of fusion 
system $\calf$ on $S$, and it must be isomorphic to the fusion system of 
$\SO(3)$.

\smallskip

\noindent\textbf{Case 2: } Now assume that $A$ is $\calf$-essential, and 
set $G=\autf(A)$. Then $\Aut_S(A)\in\syl2G$ has order $2$, so $|G|=2m$ for 
some odd $m$. By Proposition \ref{GonA}, we can write $G=G_1\times 
G_2$ and $A=A_1\times A_2$, where $G_i$ acts faithfully on $A_i$ and 
trivially on $A_{3-i}$ for $i=1,2$, where $|G_1|$ is odd, 
$G_2\cong\Sigma_3$, and $A_2\cong C_{2^k}\times C_{2^k}$ for some $1\le 
k\le\infty$. 

Now, $|A_2|=\infty$ since $A_1\le Z$ and $|A/Z|=\infty$ (Lemma 
\ref{|A/Z|=infty}). Hence $Z_0$ is not 
a direct factor in $Z$, so $\calh\cap\EE\calf=\emptyset$ by Lemma 
\ref{l2:s/a}(a), and $\EE\calf=\{A\}\cup\calb$. Each $\alpha\in\autf(S)$ 
normalizes $A$ and hence normalizes $A_1$. For each $P\in\calb$ and each 
$\alpha\in\autf(P)$, $\alpha(Z)=Z$ since $Z=Z(P)$, $\alpha|_Z\in\autf(Z)$ 
extends to $\4\alpha\in\autf(S)$, and thus $\alpha(A_1)=A_1$. So 
$A_1\nsg\calf$, and $A_1\le O_2(\calf)=1$.

Thus $A\cong(\Z/{2^\infty})^2$ and $\autf(A)=G\cong\Sigma_3$. For each 
$P\in\calb=\EE\calf\sminus\{A\}$, $P\in(\Z/{2^\infty})\times_{C_2}Q_8$, and 
the subgroup isomorphic to $Q_8$ is unique. Hence $\outf(P)\cong\Sigma_3$ 
is uniquely determined, and $\calf$ is determined uniquely by $\autf(A)$. 
So $\calf$ is the 2-fusion system of $\PSU(3)$.
\end{proof}

We now focus on the cases where $p$ is odd.

\begin{Prop} \label{A-torus}
Assume Notation \ref{n:not1}. Assume also that $p$ is odd, $|A|=\infty$, 
and $O_p(\calf)=1$. Then $A$ is a discrete $p$-torus. 
If $\rk(A)\ge p$, then $Z$ is also a discrete $p$-torus, and has rank 
$\rk(A)-p+1$. 
\end{Prop}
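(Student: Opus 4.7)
My plan is to split into cases by whether $A$ is $\calf$-essential. If $A\notin\EE\calf$, Lemma \ref{Z0=Z} already gives that $A=S'$ is a discrete $p$-torus of rank $p-1$, and the second clause of the proposition is vacuous; so henceforth assume $A\in\EE\calf$, and then Lemma \ref{min.act.}(b) makes $V:=\Omega_1(A)$ a faithful, minimally active, indecomposable $\F_pG$-module. Let $A^0\le A$ denote the identity component: divisible, characteristic in $A$, and containing $S'$ since the latter is a discrete $p$-torus by Lemma \ref{disc.torus2}. Combining $S'\le A^0$ with $A=ZS'$ (Lemma \ref{A0=A}) gives $A=A^0Z$, so $A/A^0$ is a quotient of $Z=C_A(\UUU)$, on which $\UUU$ acts trivially. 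Divisibility of $A^0$ then provides the short exact sequence
\[ 0\to\Omega_1(A^0)\to V\to(A/A^0)[p]\to 0 \]
of $\F_pG$-modules whose cokernel has trivial $\UUU$-action.

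I would then show $A=A^0$ by cases on $\dim V$. If $\dim V\ge p+2$, $V$ is simple by \cite[Proposition 3.7(c)]{indp2}, so $\Omega_1(A^0)\in\{0,V\}$, and $\Omega_1(A^0)=0$ would force $A^0=1$. If $\dim V=p+1$, Lemma \ref{rk(V)=p+1} forces every proper nontrivial quotient to have nontrivial $\UUU$-action, so together with the triviality above this forces $\Omega_1(A^0)=V$. If $\dim V\le p$, Lemma \ref{min.act.props}(a) makes $V|_\UUU$ a single Jordan block, so the $\UUU$-trivial quotient $(A/A^0)[p]$ has dimension at most one; the zero case is what we want, and I rule out dimension one as follows. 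Then $\rk(A)=\dim V-1$, and since $A^0$ surjects onto the rank-$(p-1)$ discrete $p$-torus $A/Z$ we have $\rk(A)\ge p-1$, forcing $\rk(A)=p-1$ and $\dim V=p$. Now $C_V(\UUU)=\Omega_1(Z)$ is one-dimensional by Lemma \ref{min.act.props}(c), so $Z$ is finite cyclic or isomorphic to $\Z/p^\infty$; the divisible possibility would give $Z\le A^0$ and $A=A^0$, contradicting the sub-case. So $Z$ is finite cyclic, and Lemma \ref{min.act.|Z0|=p}(b) applied to $A^0$ (whose $\Omega_1$ is a faithful, minimally active Jordan block of dimension $p-1$) identifies $A^0\cong\Q_p(\zeta)/\Z_p[\zeta]$ as $\Z_p\UUU$-module, so $A^0\cap Z=C_{A^0}(\UUU)\cong C_p$ and $|Z|=p\cdot|A/A^0|\ge p^2$. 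Every subgroup of the cyclic group $Z$ is characteristic, hence $G$-invariant, so $Z$ has at least two nontrivial $G$-invariant subgroups; since $A$ cannot be normal in $\calf$ (else $A\le O_p(\calf)=1$), Lemma \ref{O_p(F)=1} permits at most one of these---a contradiction.

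For the second clause, assume $\rk(A)=r\ge p$. Then $\5A:=\Hom(A,\Q_p/\Z_p)$ is a $\Z_pG$-lattice of rank $r$ with $\5A/p\5A\cong V^\vee$. Lemma \ref{min.act.|Z0|=p}(a) gives $\5A/C_{\5A}(\UUU)\cong\Z_p[\zeta]$, and writing $\5A\cong\Z_p^a\oplus\Z_p[\zeta]^b\oplus(\Z_p\UUU)^c$ via the classification of $\Z_p\UUU$-lattices (Lemma \ref{ZU-modules}) and computing the coinvariants gives $b+c=1$. The option $(b,c)=(1,0)$ would make $V|_\UUU$ a Jordan block of dimension $p-1$ plus $a=r-p+1\ge 1$ trivial summands, contradicting Lemma \ref{min.act.props}(a,b). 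Hence $(b,c)=(0,1)$, so $A\cong(\Q_p/\Z_p)^{r-p}\oplus\Q_p\UUU/\Z_p\UUU$ as $\Z_p\UUU$-modules, and a direct computation of fixed points yields $Z\cong(\Q_p/\Z_p)^{r-p+1}$, a discrete $p$-torus of rank $r-p+1$. The main obstacle throughout is the dimension-one sub-case in the proof that $A=A^0$, whose exclusion crucially combines Lemma \ref{O_p(F)=1} (to limit $G$-invariant subgroups of $Z$) with the identification of $Z\cap A^0$ via Lemma \ref{min.act.|Z0|=p}(b).
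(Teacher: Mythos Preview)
Your proof is correct, but takes a substantially different route from the paper's in both clauses.

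For the first clause, the paper dispatches $A=A^0$ in a single step via Lemma \ref{A2<A1<A}: with $A_1=A$ and $A_2=A^0$, one has $A\le ZA^0$ (since $S'\le A^0$ and $A=ZS'$ by Lemma \ref{A0=A}), so Lemma \ref{A2<A1<A} forces either $A=A^0$ or $A=Z_0\times A^0$; the latter is impossible because $Z_0\le S'\le A^0$. No split on essentiality or on $\dim V$ is needed. Your argument instead exploits the module structure of $V=\Omega_1(A)$ via the short exact sequence $0\to\Omega_1(A^0)\to V\to(A/A^0)[p]\to0$ and a case analysis on $\dim V$; this is longer but avoids the averaging argument underlying Lemma \ref{A2<A1<A}.

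For the second clause, the paper uses a short rank count: $\rk(A/Z)=p-1$ by Lemma \ref{disc.torus2} and $\dim\Omega_1(Z)=\rk(A)-p+1$ by Lemma \ref{min.act.props}(c); writing $Z=Z_1\times Z_2$ with $Z_1$ a torus and $Z_2$ finite, one gets $\rk(Z_1)=\rk(A)-(p-1)=\dim\Omega_1(Z)$, whence $\Omega_1(Z_2)=0$ and $Z_2=1$. Your approach via the Heller--Reiner classification of indecomposable $\Z_p\UUU$-lattices is valid and yields the stronger conclusion that $A\cong(\Q_p/\Z_p)^{r-p}\oplus(\Q_p\UUU/\Z_p\UUU)$ as $\Z_p\UUU$-modules; note, however, that this classification is not stated in Lemma \ref{ZU-modules}, so strictly speaking you are importing an external (though standard) fact.
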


\begin{proof} We first apply Lemma \ref{A2<A1<A}, with $A_1=A$ and 
$A_2$ the identity component of $A$. Since $S'$ is a discrete $p$-torus by 
Lemma \ref{disc.torus2}, we have $A_2Z\ge ZS'=A=A_1$ by Lemma \ref{A0=A}. 
So by Lemma \ref{A2<A1<A}, $A\le A_2Z_0=A_2$, the last equality since 
$Z_0\le S'\le A_2$. Thus $A$ is a discrete $p$-torus.

Set $G=\autf(A)$ and $V=\Omega_1(A)$, and choose $1\ne 
u\in\UUU\in\sylp{G}$. By Lemma \ref{min.act.}(a,b), $G\in\GGG$ (so 
$|\UUU|=p$), and $V$ is faithful, minimally active, and indecomposable as 
an $\F_pG$-module. So if $\dim(V)=\rk(A)\ge p$, then by Lemma 
\ref{min.act.props}(a,b), the action of $u$ on $V$ has a Jordan block of 
length $p$, and hence $\dim(\Omega_1(Z))=\dim(C_{V}(\UUU))=\rk(A)-p+1$. 

Let $Z_1,Z_2\le Z$ be such that $Z_1$ is a discrete $p$-torus, $Z_2$ is a 
finite abelian $p$-group, and $Z=Z_1\times Z_2$. Since $\rk(A/Z)=p-1$ by 
Lemma \ref{disc.torus2}, $\rk(Z_1)=\rk(A)-p+1=\dim(\Omega_1(Z))$. So 
$Z_2=1$, and $Z=Z_1$ is a discrete $p$-torus.
\end{proof}

\begin{Lem} \label{str.cl.}
Assume Notation \ref{n:not1}, and also that $p$ is odd, $|A|=\infty$, and 
$O_p(\calf)=1$. Then no proper nontrivial subgroup of $S$ is strongly 
closed in $\calf$. Thus $\calf$ is simple if and only if it contains no 
proper normal subsystem over $S$. 
\end{Lem}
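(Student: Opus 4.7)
The plan is to derive a contradiction from the assumption that $T\lneq S$ is a proper, nontrivial, strongly closed subgroup of $S$; the second claim of the lemma then follows immediately, since by Definition~\ref{d:E<|F} the subgroup of any normal fusion subsystem is strongly closed, so every proper nontrivial normal subsystem of $\calf$ must be over $S$ itself. Setting $T_0=T\cap A$, strong closure forces $T_0$ to be $G$-invariant (where $G=\autf(A)$, since each $\alpha\in G$ is an $\calf$-morphism sends $T_0$ into $T\cap A=T_0$) and $T\nsg S$. The key structural input is Lemma~\ref{A0=A}, which gives $A=ZS'$: this makes the finite-case analog of the strongly closed subgroup $A_0H_i=A_0B_i$ from Theorem~\ref{t3:s/a} equal to $AH_i=S$, hence not proper, matching the expected conclusion.

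The first step I would establish is that every nontrivial $G$-invariant subgroup of $A$ contains $Z_0=C_V(\UUU)\cap[\UUU,V]$, where $V=\Omega_1(A)$. When $A\in\EE\calf$, $V$ is a faithful, minimally active, indecomposable $\F_pG$-module by Lemma~\ref{min.act.}(b); combining Lemma~\ref{min.act.props} (when $\dim V\le p$, $V|_\UUU$ is a single Jordan block with socle $Z_0$), Lemma~\ref{rk(V)=p+1} (when $\dim V=p+1$, every nontrivial $G$-submodule has nontrivial $\UUU$-action and hence contains $Z_0$), and \cite[Proposition~3.7(c)]{indp2} (simplicity when $\dim V\ge p+2$), every nonzero $G$-submodule of $V$ contains $Z_0$. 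When $A\notin\EE\calf$, Lemma~\ref{Z0=Z} gives $\rk(A)=p-1$ and $Z=Z_0$ so that $V|_\UUU$ is a single Jordan block of length $p-1$, and the same submodule property holds. Since $\Omega_1(T_0)\le V$ is a nonzero $G$-submodule whenever $T_0\ne 1$, we conclude $Z_0\le T_0$.

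Next I split by whether $T\le A$ and by which of $\calh\cap\EE\calf$, $\calb\cap\EE\calf$ is nonempty (exactly one, by Lemmas~\ref{B&H}, \ref{exclusion}, and \ref{A0=A}, and nonempty since $O_p(\calf)=1$ forces $A\nnsg\calf$). If $T\le A$ and $\EE\calf\cap\calh\ne\emptyset$: fix $P\in\calh\cap\EE\calf$ and $g\in Z_0\sminus\{1\}\le T$; by Lemma~\ref{l2:s/a}(a), $O^{p'}(\autf(P))\cong\SL_2(p)$ acts naturally on $P_2\cong C_p^2$ containing $Z_0$, so some $\calf$-conjugate of $g$ lies in $P_2\sminus Z_0\subseteq S\sminus A$, contradicting $T\le A$. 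If $T\le A$ and $\EE\calf\cap\calb\ne\emptyset$: the $\SL_2(p)$-action fixes $Z_0=Z(P_2)$, so I argue instead using Lemma~\ref{O_p(F)=1} (which, since $\EE\calf\cap\calh=\emptyset$, gives no nontrivial $G$-invariant subgroup of $Z$) that $T_0\not\le Z$; the image $T_0Z/Z$ is then a nonzero $\Z_p\UUU$-submodule of $A/Z\cong\Q_p(\zeta)/\Z_p[\zeta]$ (Lemma~\ref{min.act.|Z0|=p}(b)), hence contains the socle $Z_2/Z$, producing $g'\in T_0\cap Z_2\sminus Z$ whose $\SL_2(p)$-conjugates via Lemma~\ref{l2:s/a}(b) land outside $A$, again contradicting $T\le A$.

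If $T\not\le A$, fix $y\in T\sminus A$. By $S$-normality of $T$, $[y,A]\le T_0$, and $[y,A]=[\UUU,A]=S'$, so $S'\le T_0$. Combined with $A=ZS'$ and the $G$-invariance of $T_0$, we have $T_0=(T_0\cap Z)S'$, and the problem reduces to showing $Z\le T_0$; this would give $T\supseteq A$ and hence $T=S$, the final contradiction. The main obstacle is this last step when $Z$ is infinite (i.e., $\rk(A)>p-1$): one combines that $T_0\supseteq[O^{p'}(G),A]$ (the $G$-invariant hull of $S'$, obtained by applying $G$-invariance to $S'$ and using that $[\UUU',A]\le T_0$ for every $\UUU'\in\sylp{G}$) with the constraints of Lemma~\ref{O_p(F)=1} on $G$-invariant subgroups of $Z$, and additional $\calf$-conjugacy from $\autf(P)$ for $P\in\EE\calf\cap(\calh\cup\calb)$ (which produces $\calf$-conjugates of $y$ inside $A$ and thereby enlarges $T_0\cap Z$), to force $T_0\cap Z=Z$.
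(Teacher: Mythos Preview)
Your overall architecture matches the paper's: reduce to showing a strongly closed $T$ must contain $A$ (hence equal $S$), by first forcing $T\not\le A$ via $\autf(P)$ for $P\in\calh\cup\calb$, then using $S'=[\UUU,A]\le T_0$ together with $G$-invariance of $T_0$. Your treatment of the case $T\le A$ is more elaborate than needed --- the paper simply observes that if $Q\le Z$ then $Q$ lies in every essential subgroup and hence is normal in $\calf$, and otherwise picks $g\in(Q\cap Z_2)\sminus Z$ directly from normality in $S$, without invoking the module-theoretic structure of $\Omega_1(A)$ --- but your version is correct.

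The genuine gap is in your final paragraph. You correctly reach $T_0\supseteq[G_0,A]$ with $G_0=O^{p'}(G)$, and you correctly identify Lemma~\ref{O_p(F)=1} as relevant, but you do not give an argument that $Z\le[G_0,A]$. Your suggestion of using $\autf(P)$ to conjugate $y$ into $A$ only produces elements of $Z_2$, not of $Z$ in general, so it cannot by itself force $T_0\cap Z=Z$ when $\rk(A)>p-1$. The paper closes this gap with Lemma~\ref{l0:xx}, which gives the equivalence
\[
C_A(\UUU)\le[G_0,A]\quad\Longleftrightarrow\quad C_A(G_0)\le[\UUU,A].
\]
Now $C_A(G_0)$ is a $G$-invariant subgroup of $Z=C_A(\UUU)$, so by Lemma~\ref{O_p(F)=1} it is contained in $Z_0\le S'=[\UUU,A]$; the equivalence then yields $Z=C_A(\UUU)\le[G_0,A]\le T_0$, and hence $T_0\ge ZS'=A$ by Lemma~\ref{A0=A}. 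This is the step you were missing.
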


\begin{proof} Assume that $1\ne{}Q\le S$ is strongly closed in $\calf$.  If 
$Q\le Z$, then $Q$ is contained in all $\calf$-essential subgroups, so 
$Q\nsg\calf$ by Proposition \ref{AFT-E}(c), contradicting the assumption 
that $O_p(\calf)=1$.  Thus $Q\nleq{}Z$.  

Now, $(QZ/Z)\cap Z(S/Z)\ne1$ since $Q\nsg{}S$, so $Q\cap{}Z_2\nleq Z$.  Fix 
$g\in(Q\cap Z_2)\sminus Z$.  Then $Q\ge[g,S]=Z_0$ since $Q\nsg S$.

Fix $P\in\EE\calf\sminus\{A\}\subseteq\calb\cup\calh$ (recall 
$A\nnsg\calf$). If $P\in\calb$, then the $\autf(P)$-orbit of $g\in(Q\cap 
Z_2)\sminus Z$ is not contained in $A$.  If $P\in\calh$, then the 
$\autf(P)$-orbit of $Z_0\le Q$ is not contained in $A$.  So in either case, 
$Q\nleq{}A$.  Hence $Q\ge[Q,S]\ge[\UUU,A]=S'$.  

Set $G_0=O^{p'}(G)$. Since $Q\cap{}A$ is normalized by the action of $G$, 
and contains $[\UUU,A]$ where $G_0$ is the normal closure of $\UUU$ in $G$, 
$Q\ge[G_0,A]$. Since $C_A(G_0)\le C_A(\UUU)=Z$ and $C_A(G_0)$ is normalized 
by $G$, $C_A(G_0)\le Z_0\le[\UUU,A]$ by Lemma \ref{O_p(F)=1}. So by Lemma 
\ref{l0:xx}, $[G_0,A]\ge C_A(\UUU)=Z$. Thus $Q\ge ZS'=A$, and so $Q=S$ 
since $Q\nleq A$.

The last statement is immediate.
\end{proof}

\begin{Lem} \label{l4x:s/a}
Assume Notation \ref{n:not1} and \ref{n:not2}, 
and also that $|A|=\infty$ and $S$ splits over $A$. 
\begin{enuma} 

\item The kernel of $\mu\:\autv(S)\too\Delta$ does not contain any elements 
of finite order prime to $p$.

\item Fix $Q\in\calb\cup\calh$, and set $t=0$ if $Q\in\calb$, $t=-1$ if 
$Q\in\calh$. Assume that $\mu(\autff(S))\ge\Delta_t$. Then there are unique 
subgroups $\til{Z}\le Z$ and $\til{Q}\ge Q\cap S'$ which are normalized by 
$N_{\autf(S)}(Q)$, and are such that the following hold.
\begin{enumi} 
\item If $Q\in\calh$, then $Q=\til{Z}\times \til{Q}$ and $\til{Q}\cong 
C_p\times C_p$. 

\item If $Q\in\calb$, then $\til{Z}=Z$, $Q=Z\til{Q}$, $Z\cap 
\til{Q}=Z_0=Z(\til{Q})$, and $\til{Q}$ is extraspecial of order $p^3$ and 
exponent $p$.

\item Thus in all cases, $\Out(\til{Q})\cong\GL_2(p)$. If 
$\alpha\in\Aut(Q)$ is such that $\alpha|_{\til{Z}}=\Id$, 
$\alpha(\til{Q})=\til{Q}$, $\alpha(Q\cap A)=Q\cap A$, and 
$\alpha|_{\til{Q}}\in O^{p'}(\Aut(\til{Q}))$, then $\alpha$ extends to some 
$\4\alpha\in N_{\autff(S)}(Q)$.

\end{enumi}
\end{enuma}
\end{Lem}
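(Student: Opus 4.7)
The plan is to show that any $\alpha\in\Ker(\mu)\cap\autv(S)$ of finite order $n$ prime to $p$ equals $\Id_S$, in three steps: $\alpha$ is trivial on $Z$, then on $A$, and finally on $S$. Since $\mu(\alpha)=(1,1)$ we have $[\alpha,Z]\le Z_0$ and $\alpha|_{Z_0}=\Id$; the coprime-action decomposition $Z=C_Z(\alpha)\times[\alpha,Z]$, valid because $\alpha$ is a $p'$-automorphism of the abelian $p$-group $Z$, combined with $[\alpha,Z]\le Z_0\le C_Z(\alpha)$, forces $[\alpha,Z]=1$. Next, fix $x\in S\sminus A$ of order $p$ (exists because $S$ splits over $A$), set $u=c_x\in\Aut(A)$, and note that $\alpha|_A$ commutes with $u$ because $\alpha$ is trivial on $S/A$. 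Then $[\alpha,A]$ is $u$-invariant and meets $C_A(\alpha)\supseteq C_A(u)=Z$ trivially, so $u$ acts on $[\alpha,A]$ without nontrivial fixed points. But an order-$p$ automorphism of any nontrivial abelian $p$-group has nontrivial fixed points (apply to $\Omega_1([\alpha,A])$), so $[\alpha,A]=1$. Finally, $\alpha(x)=xa$ for some $a\in A$ yields $\alpha^n(x)=xa^n=x$, hence $a=1$ since $|a|$ is a $p$-power and $n$ is prime to $p$.

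\textbf{Part (b), construction.} Using (a) and a Schur-Zassenhaus argument in the finite quotient $\outff(S)$ (the kernel of $\mu$ has no nontrivial $p'$-element), I first find $\alpha\in\autff(S)$ of order $p-1$ with $\mu(\alpha)$ generating $\Delta_t$. To arrange $\alpha\in N_{\autf(S)}(Q)$, apply coprime action of $\gen\alpha$ on the single $S$-conjugacy class $\calh$ or $\calb$ containing $Q$ (Lemma~\ref{A0=A}) and replace $Q$ by an $\alpha$-invariant representative. Set $\til Z=C_Z(\alpha)$; the coprime decomposition $Z=\til Z\times[\alpha,Z]$, together with $[\alpha,Z]\le Z_0$ and $\alpha|_{Z_0}$ equal to multiplication by $r^t\ne1$, gives $[\alpha,Z]=Z_0$. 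In case \textup{(i)}, $Q=Z\gen{x}$ is abelian with $x$ of order $p$, and I solve $(1-r)t=-z_1$ for $t\in\Omega_1(\til Z)$, where $\alpha(x)=x^rz$ has $\til Z$-component $z_1$ (solvable since $1-r\in\Z_p^\times$); then $x'=xt$ has order $p$ with $\alpha(x')\in\gen{x'}Z_0$, and $\til Q=\gen{x',Z_0}\cong C_p\times C_p$ yields $Q=\til Z\times\til Q$. In case \textup{(ii)}, with $Q=Z_2\gen{x}$ nonabelian of class $2$ and commutator $Z_0$, the analogous construction modulo $Z_0$, together with appropriate $\til Z$-adjustments of $x$ and of a chosen $z'\in Z_2\sminus Z$, produces an extraspecial $\til Q=\gen{x',z',Z_0}$ of order $p^3$ and exponent $p$ (the exponent-$p$ property following from the same $(x^ra)^p=1$ computation used in part (a)).

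\textbf{Uniqueness, invariance, and \textup{(iii)}.} Any $N_{\autf(S)}(Q)$-invariant complement of $Z_0$ in $Z$ is in particular $\alpha$-invariant, hence equal to $C_Z(\alpha)=\til Z$ by the uniqueness of the coprime decomposition; a parallel argument pins down $\til Q$. For invariance under all of $N_{\autf(S)}(Q)$: any $\beta\in N_{\autf(S)}(Q)$ conjugates $\alpha$ to another valid choice $\beta\alpha\beta^{-1}$ (same order $p-1$ and same $\mu$-image, since $\mu$ is equivariant with respect to $\Aut(S)$-conjugation), so $\beta$ sends the $(\til Z,\til Q)$ obtained from $\alpha$ to that obtained from $\beta\alpha\beta^{-1}$; by uniqueness these coincide, so $\beta$ preserves them. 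The identification $\Out(\til Q)\cong\GL_2(p)$ is standard for $C_p\times C_p$ and for extraspecial groups of order $p^3$ and exponent $p$. For the extension statement in \textup{(iii)}, an $\alpha$ of the specified form acts as an $\SL_2(p)$-automorphism on $\til Q$ and trivially on $\til Z$; using also its action on $Q\cap A$ it extends to an automorphism of $N_S(Q)$, and then to an element of $\autff(S)$ via the extension axiom together with the hypothesis $\mu(\autff(S))\ge\Delta_t$, which supplies the required $\mu$-values. The principal obstacle throughout is step~1 of part (b) — simultaneously producing an order-$(p-1)$ element of $N_{\autf(S)}(Q)$ with the prescribed $\mu$-image in the infinite setting — which leans critically on (a), on Lemma~\ref{A0=A}, and on the splitting of $S$ over $A$.
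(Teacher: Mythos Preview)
Your argument for part~(a) is correct and in fact cleaner than the paper's: rather than climbing the upper central series $Z_i(S)$ step by step, you use the coprime decomposition $A=C_A(\alpha)\times[\alpha,A]$ directly and observe that $u$ would act fixed-point-freely on $[\alpha,A]$, forcing it to be trivial. Both approaches start by showing $\alpha|_Z=\Id$; yours then finishes in one stroke, while the paper's argument bootstraps through $\Omega_1(A)$ via the filtration.

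Part~(b), however, has a genuine circularity in the invariance step. You establish uniqueness in the form ``any $N_{\autf(S)}(Q)$-invariant pair $(\til Z',\til Q')$ is $\alpha$-invariant, hence equals $(\til Z,\til Q)$''. You then argue that $\beta\in N_{\autf(S)}(Q)$ sends the pair built from $\alpha$ to the pair built from $\beta\alpha\beta^{-1}$, and conclude ``by uniqueness these coincide''. But the uniqueness you proved does not say that two different valid choices of $\alpha$ yield the same pair --- it only says that an $N_{\autf(S)}(Q)$-invariant pair equals the one from your fixed $\alpha$, which is exactly what you are trying to establish. You never show that $C_Z(\alpha)=C_Z(\alpha')$ for two distinct order-$(p-1)$ elements with the required $\mu$-value.

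The paper sidesteps this by working with a full $p'$-complement $K_0$ in $K=N_{\autff(S)}(Q)$ rather than a single cyclic $\gen\alpha$, setting $\til Z=C_Q(K_0)$ and $\til Q=[K_0,Q]$, and then observing that because $\til Z\le Z\le C_Q(N_S(Q))$ and $\til Q\ge Q\cap S'\ge[N_S(Q),Q]$, one actually has $\til Z=C_Q(K)$ and $\til Q=[K,Q]$. These depend only on $K$, and since $K\nsg N_{\autf(S)}(Q)$ (as the kernel of the map to $\Aut(Z/Z_0)$), invariance is immediate. Your approach can be repaired along these lines (e.g.\ by embedding $\gen\alpha$ in a Hall $p'$-subgroup $K_0$ and noting $C_Z(\alpha)=C_Z(K_0)$ since both are complements to $Z_0$), but as written it does not close the loop.

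Your sketch of~(iii) is also too loose: the extension axiom applies to morphisms already in $\calf$, whereas here $\alpha$ is an abstract element of $\Aut(Q)$. The paper's argument instead shows that restriction to $\til Q$ maps $K$ onto $N_{O^{p'}(\Aut(\til Q))}(Q\cap A\cap\til Q)$, so the given $\alpha$ (being trivial on $\til Z=C_Q(K)$ and landing in that normalizer on $\til Q$) agrees on all of $Q$ with the restriction of some $\gamma\in K\le\autff(S)$.
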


\begin{proof} \textbf{(a) } Fix $\alpha\in\Ker(\mu)$ of finite order prime 
to $p$. Then $\alpha$ induces the identity on $Z/Z_0$ since 
$\alpha\in\autff(S)$, and on $Z_0$ and $S/A$ since $\mu(\alpha)=(1,1)$. 
In particular, $\alpha|_Z=\Id$ by Lemma \ref{mod-Fr}, and $\alpha|_A$ is 
$\Z_p\UUU$-linear.

Fix $x\in S\sminus A$, and let $\psi\in\End(A)$ be the homomorphism 
$\psi(g)=[g,x]$. Then $\psi$ commutes with $\alpha|_A$ since $\alpha(x)\in 
xA$, and $\psi$ induces an injection from $Z_i(S)/Z_{i-1}(S)$ into 
$Z_{i-1}(S)/Z_{i-2}(S)$ for each $i\ge2$. Since $\alpha|_Z=\Id$, this shows 
that $\alpha$ induces the identity on $Z_i(S)/Z_{i-1}(S)$ for each $i$, and 
hence that $\alpha|_{Z_i(S)}=\Id$ for each $i$ by Lemma \ref{mod-Fr} again. 
Thus $\alpha|_{\Omega_1(A)}=\Id$ since $\Omega_1(A)\le Z_i(S)$ for some 
$i$, so $\alpha|_{\Omega_m(A)}=\Id$ for each $m\ge1$ by \cite[Theorem 
5.2.4]{Gorenstein}. So $\alpha|_A=\Id$, and $\alpha=\Id_S$ by Lemma 
\ref{mod-Fr} again.

\smallskip

\noindent\textbf{(b) } This proof is essentially the same as that of 
\cite[Lemma 2.6(b)]{indp2} (a similar result but with $|A|<\infty$). 
We sketch an alternative argument here. 

Set $K=N_{\autff(S)}(Q)$ for short. By the Frattini argument and since 
all members of the $\Aut(S)$-orbit of $Q$ are $S$-conjugate to $Q$ 
(Lemma \ref{A0=A}), $\autff(S)=\Inn(S)\cdot K$. Hence $\mu(K)\ge\Delta_t$. 
Also, $|N_{\Inn(S)}(Q)|\le|N_S(Q)/Z|<\infty$ since $|N_S(Q)/Q|=p$ by Lemma 
\ref{B&H}, and $N_{\Inn(S)}(Q)=\Aut_{N_S(Q)}(S)$ is normal of index prime 
to $p$ in $K$ since $\Inn(S)\nsg\autf(S)$ has finite index prime to $p$ by 
the Sylow axiom.

By the Schur-Zassenhaus theorem, there is $K_0<K$ of order prime to $p$ 
such that $K=K_0\cdot\Aut_{N_S(Q)}(S)$. Set $\til{Z}=C_Q(K_0)$ and 
$\til{Q}=[K_0,Q]$. 
\begin{itemize}

\item If $Q\in\calh$ and $\mu(K_0)=\mu(K)\ge\Delta_{-1}$, then $K_0$ acts 
nontrivially on $Q/Z$ and on $Z_0$, and trivially on $Z/Z_0$. Hence 
$\til{Z}\le Z$ and $\til{Q}\cap A=Z_0$. Also, $Q=\til{Z}\times\til{Q}$ and 
$Z=\til{Z}\times Z_0$ by \cite[Theorem 5.2.3]{Gorenstein} (applied to the 
subgroups $\Omega_m(Q)$ for $m\ge1$), and $\til{Q}\cong C_p\times C_p$ 
since $S$ splits over $A$. In particular, 
$\Out(\til{Q})\cong\GL_2(p)$.

If $\beta\in K_0$ is such that $\mu(\beta)$ generates $\Delta_{-1}$, 
then $\mu(\beta)=(r,r^{-1})$ for some generator $r$ of $(\Z/p)^\times$, so 
$\beta|_{\til{Q}}$ acts on $\til{Q}\cong C_p\times C_p$ as the matrix 
$\mxtwo{r}00{r^{-1}}$ for an appropriate choice of basis. Thus 
$\Aut_S(\til{Q})\gen{\beta|_{\til{Q}}}=N_{O^{p'}(\Aut(\til{Q}))}(Z_0)$ 
where $O^{p'}(\Aut(\til{Q}))\cong\SL_2(p)$, proving (iii) in this case.

\item If $Q\in\calb$ and $\mu(K_0)=\mu(K)\ge\Delta_0$, then $\alpha$ acts 
nontrivially on $Q/Z_2$, and trivially on $Z_0$ and $Z/Z_0$. Hence $\alpha$ 
acts trivially on $Z$ by Lemma \ref{mod-Fr}, nontrivially on $Z_2/Z$, and 
so $\til{Z}=Z$ and $\til{Q}\cap A=Z_2\cap S'$. Also, $\til{Q}$ is 
extraspecial of order $p^3$ and (since $S$ splits over $A$) exponent $p$. 
In particular, $\Out(\til{Q})\cong\GL_2(p)$. By \cite[Theorem 
5.2.3]{Gorenstein}, applied to the abelian $p$-groups $\Omega_m(Q/Z_0)$ for 
$m\ge1$, $Q=\til{Z}\til{Q}$ and $\til{Z}\cap\til{Q}=Z_0$. 

If $\beta\in K_0$ is such that $\mu(\beta)$ generates $\Delta_0$, 
then $\mu(\beta)=(r,1)$ for some generator $r$ of $(\Z/p)^\times$, so 
$[\beta|_{\til{Q}}]$ has order $(p{-}1)$ in 
$O^{p'}(\Aut(\til{Q}))/\Inn(\til{Q})\cong\SL_2(p)$. So 
$\Aut_S(\til{Q})\gen{\beta|_{\til{Q}}}=N_{O^{p'}(\Aut(\til{Q}))}(Q\cap A)$ 
in this case, again proving (iii).

\end{itemize}

Since $\til{Z}\le Z\le C_Q(N_S(Q))$ and $\til{Q}\ge Q\cap S'\ge[N_S(Q),Q]$ 
in all cases, we have $\til{Z}=C_Q(K)$ and $\til{Q}=[K,Q]$. Thus $\til{Z}$ 
and $\til{Q}$ are independent of the choice of $K_0$, and are normalized by 
$K=N_{\autff(S)}(Q)$. Since $\autff(S)$ is normal in $\autf(S)$ (the kernel 
of a homomorphism to $\Aut(Z/Z_0)$), we see that $K$ is normal in 
$N_{\autf(S)}(Q)$, and hence $\til{Z}$ and $\til{Q}$ are also normalized by 
$N_{\autf(S)}(Q)$. These are easily seen to be the unique subgroups that 
satisfy the required conditions.
\end{proof}

When $\calf$ is a fusion system over a discrete $p$-toral group $S$, 
then for each $Q\le S$, we define another subgroup $Q^\bullet\le S$ as 
follows. Let $T$ be the identity component of $S$. If $m\ge0$ is the 
smallest integer such that $g^{p^m}\in T$ for each $g\in S$, and 
$Q^{[m]}=\mho^m(Q)=\gen{g^{p^m}\,|\,g\in Q}$, then $Q^\bullet\defeq Q\cdot 
I(Q^{[m]})_0$, where $I(Q^{[m]})=C_{T}(C_{\autf(T)}(Q^{[m]}))$ and 
$I(Q^{[m]})_0$ is its identity component. Thus $Q\le Q^\bullet\le QT$ for 
each $Q$. See \cite[Definition 3.1]{BLO3} or \cite[Definition 3.1]{BLO6} 
for more detail, as well as the motivation for this construction.

\begin{Lem} \label{Q.=Q}
Assume the notation and hypotheses of \ref{n:not1}, and also that $|A|=\infty$. For each 
$Q\in\calb\cup\calh\cup\{A,S\}$, $Q^\bullet=Q$.
\end{Lem}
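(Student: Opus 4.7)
The plan is a case analysis based on the observation that the identity component $T$ of $S$ lies in $A$: indeed, $T$ is a divisible normal abelian subgroup of $p$-power index in $S$, so, being abelian of finite index, it must lie in the unique abelian subgroup $A$ of index $p$. For $Q\in\{A,S\}$ this immediately gives $T\le Q$, whence $I(Q^{[m]})_0\le T\le Q$, so $Q^\bullet=Q$.

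For $Q\in\calh$, write $Q=Z\gen{x}$ with $x\in S\sminus A$. Since $Z=Z(S)$ centralizes $x$, $Q$ is abelian, and by the choice of $m$, the subgroup $Q^{[m]}=\mho^m(Q)$ equals $\mho^m(Z)\cdot\gen{x^{p^m}}\le Q\cap T$. For $Q\in\calb$, $Q=Z_2\gen{x}$ has nilpotency class two with $[Q,Q]\le Z_0$ of order at most $p$; a direct computation of $p^m$-th powers then again places $Q^{[m]}$ inside $Q\cap T$, up to the bounded commutator term which itself lies in $Z_0\le Q\cap T$.

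It then suffices to prove $I(Q^{[m]})_0\le Q\cap T$. The plan is to exhibit a subgroup of $C_{\autf(T)}(Q^{[m]})$ whose common fixed set in $T$ is exactly the identity component of $Q\cap T$. Any automorphism of $N_S(Q)$ centralizing $Q^{[m]}$ extends, via the extension axiom, to an element of $\autf(S)$ whose restriction to $T$ lies in $\autf(T)$ and still centralizes $Q^{[m]}$. Using Lemma \ref{l2:s/a} (and Lemma \ref{l4x:s/a} in the $\calb$-case), the preimage in $O^{p'}(\autf(Q))\cong\SL_2(p)$ of the $\SL_2(p)$-elements fixing $Q^{[m]}$ then supplies enough automorphisms of $T$ for the resulting fixed set to coincide with $(Q\cap T)_0\le Q$.

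The main obstacle will be tracking how the $\SL_2(p)$-action on $Q/Y$, where $Y=Q\cap A\in\{Z,Z_2\}$, transfers to $T$, and verifying that the constructed subgroup of $C_{\autf(T)}(Q^{[m]})$ has fixed set on $T$ no larger than $(Q\cap T)_0$. This amounts to a linear-algebra calculation on $T$ viewed as a $\Z_p$-module, and is most delicate in the case $\rk(A)=p-1$, where $Z$ may be finite and the toral bulk of $T$ is carried by $S'$ together with $\gen{x^{p^m}}$; in the remaining case $\rk(A)\ge p$ the containment of the positive-rank torus $Z$ inside $Q$ makes the computation essentially routine.
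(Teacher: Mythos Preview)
Your plan is far more elaborate than necessary and misses the one-line argument the paper uses. Once you know the identity component is $A$ (Proposition~\ref{A-torus}), you have $m=1$; and since $Q/Z\cong C_p$ or $C_p^2$ for $Q\in\calh\cup\calb$, we get $Q^{[1]}=\mho^1(Q)\le Z$. But then the \emph{inner} automorphism $c_x|_A\in\UUU\le\autf(A)$ (for any $x\in S\sminus A$) already centralizes $Q^{[1]}$, so $\UUU\le C_{\autf(A)}(Q^{[1]})$ and therefore
\[
I(Q^{[1]}) = C_A\bigl(C_{\autf(A)}(Q^{[1]})\bigr) \le C_A(\UUU)=Z\le Q.
\]
That is the whole proof: a single, obvious element of $C_{\autf(T)}(Q^{[m]})$ suffices, with no $\SL_2(p)$, no extension axiom, and no case split on $\rk(A)$.

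Your proposed route through $O^{p'}(\autf(Q))\cong\SL_2(p)$ has a real gap: Lemma~\ref{l2:s/a} requires $Q\in\EE\calf$, whereas the present lemma is stated for \emph{all} $Q\in\calh\cup\calb$. When, for instance, $\EE\calf\sminus\{A\}=\calb$, the members of $\calh$ are not essential (indeed not even $\calf$-centric, by Lemma~\ref{exclusion}), and you have no $\SL_2(p)$ to invoke. Your extension-axiom step is also imprecise: to extend an element $\alpha\in\autf(Q)$ to $\autf(S)$ you need $\alpha$ to normalize $\Aut_S(Q)$, and you have not verified that the $\SL_2(p)$-elements you want both centralize $Q^{[m]}$ and satisfy this normalization condition. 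None of this machinery is needed once you notice that $\UUU$ itself already lies in $C_{\autf(A)}(Q^{[1]})$.
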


\begin{proof} By Proposition \ref{A-torus}, $A$ is the identity component 
of $S$. Thus $A$ and $G$ play the role of $T$ and $W=\autf(T)$ in 
\cite[Definition 3.1]{BLO6}. Since $Q\le Q^\bullet\le QA$ for 
each $Q\le S$, we have $A^\bullet=A$ and $S^\bullet=S$.

Now assume $Q\in\calh\cup\calb$. By assumption, $S/A$ has exponent $p=p^1$. 
Since $Q/Z\cong C_p$ or $C_p^2$ (Lemma \ref{l2:s/a}), 
$Q^{[1]}\defeq\gen{g^p\,|\,g\in Q}\le Z$. Hence $C_G(Q^{[1]})\ge\UUU$, and 
$I(Q^{[1]})\defeq C_A(C_G(Q^{[1]}))\le C_A(\UUU)=Z$. It follows that 
$Q^\bullet\le Q\cdot I(Q^{[1]})=Q$. 
\end{proof}

We are now ready to prove our main theorem used to construct simple fusion 
systems over infinite discrete $p$-toral groups with abelian subgroup of 
index $p$.

\begin{Thm} \label{t:s/a}
Fix an odd prime $p$. Let $S$ be an infinite nonabelian discrete $p$-toral 
group which contains an abelian subgroup $A\nsg{}S$ of index $p$, and let 
$\calf$ be a simple fusion system over $S$. Assume Notation 
\ref{n:not1} and \ref{n:not2} (where the uniqueness of $A$ 
follows from Corollary \ref{>1abel}). Then the following hold:
\begin{enuma}  

\item $\UUU\in\sylp{G}$ and $S$ splits over $A$.

\item $A$ is a discrete $p$-torus of rank at least $p-1$,
$Z_0=C_A(\UUU)\cap[\UUU,A]$ has order $p$, and 
$A=C_A(\UUU)\cdot[\UUU,A]$. 

\item There are no non-trivial $G$-invariant subgroups of $Z=C_A(\UUU)$, 
aside (possibly) from $Z_0$.  

\item Either 
\begin{enumi} 

\item $\EE\calf\sminus\{A\}=\calh$, $\rk(A)=p-1$, 
$\mu_A(\autff(A))\ge\Delta_{-1}$, and 
$G=O^{p'}(G)\cdot\mu_A^{-1}(\Delta_{-1})$; or 

\item $\EE\calf\sminus\{A\}=\calb$, $\rk(A)\ge p-1$, 
$\mu_A(\autff(A))\ge\Delta_{0}$, $\mu_A(\autff(A))=\Delta_{0}$ if 
$\rk(A)\ge p$, $G=O^{p'}(G)\cdot\mu_A^{-1}(\Delta_{0})$, and $Z_0$ is not 
$G$-invariant.

\end{enumi}
Here, we regard $\mu_A$ as a homomorphism defined on $\autff(A)$.
\end{enuma}

Conversely, let $S$ be an infinite discrete $p$-toral group containing a 
unique abelian subgroup $A\nsg S$ of index $p$, let $G\le\Aut(A)$ be 
such that $\Aut_S(A)\in\sylp{G}$, and adopt the notation in 
\ref{n:not1} and \ref{n:not2}. Assume that (a)--(d) hold, with $\autff(A)$ 
replaced by $G\cap\autv(A)$ and $\EE\calf\sminus\{A\}$ replaced by 
$\EE0=\calh$ or $\calb$ in (d). Then there is a unique simple fusion system 
$\calf$ over $S$ such that $G=\autf(A)$ and $\EE\calf\sminus\{A\}=\EE0$. 
\end{Thm}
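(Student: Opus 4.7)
The plan is to split the theorem into the forward direction (a)--(d) and the converse construction.

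For the forward direction, simplicity of $\calf$ forces $O_p(\calf)=1$, and Corollary~\ref{>1abel} then gives uniqueness of $A$, so Notations~\ref{n:not1}--\ref{n:not2} apply. Part~(a) is immediate: $A\nsg S$ is fully normalized, so $\UUU=\Aut_S(A)\in\sylp{G}$ by the Sylow axiom; and since $|A|=\infty$ we cannot have $A\nsg\calf$ (else $A\le O_p(\calf)=1$), so by Corollary~\ref{c2:s/a} the group $S$ splits over $A$. Part~(b) is a direct combination of Proposition~\ref{A-torus} with Lemmas~\ref{|Z0|=p}, \ref{disc.torus2}, and~\ref{A0=A}, while (c) is exactly Lemma~\ref{O_p(F)=1}.

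For part~(d), Lemmas~\ref{B&H} and~\ref{exclusion}, together with the fact (Lemma~\ref{A0=A}) that $\calh$ and $\calb$ are each single $S$-conjugacy classes, force $\EE\calf\sminus\{A\}$ to equal exactly one of $\calh$, $\calb$. In case~(i) ($=\calh$), Lemma~\ref{l2:s/a}(a) splits $Z=P_1\times Z_0$ for any essential $P\in\calh$; but if $\rk(A)\ge p$ then Proposition~\ref{A-torus} makes $Z$ a nontrivial discrete $p$-torus, which admits no finite direct summand, a contradiction. Hence $\rk(A)=p-1$ and $Z=Z_0$, and the $\mu_A$-conditions follow by translating Lemma~\ref{l4:s/a} through Lemma~\ref{l4x:s/a}(b). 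In case~(ii) ($=\calb$), if $Z_0$ were $G$-invariant the argument in the proof of Lemma~\ref{O_p(F)=1} (with $\EE\calf\cap\calh=\emptyset$) would give $Z_0\nsg\calf$, contradicting $O_p(\calf)=1$. When $\rk(A)\ge p$, any $\beta\in\autff(A)$ of order prime to $p$ splits $Z=C_Z(\beta)\oplus[\beta,Z]$ by Maschke; since $[\beta,Z]\le Z_0$ is finite while $Z$ is a discrete $p$-torus with no proper finite-index subgroup, $[\beta,Z]=1$, forcing $\mu_A(\autff(A))\le\Delta_0$. The reverse inclusion and the factorization of $G$ then come from the arguments of Lemma~\ref{l4:s/a} adapted to the infinite setting via Lemma~\ref{l4x:s/a}.

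For the converse, the scheme of \cite[Theorem~2.8]{indp2} generalizes. Given $G$ and $\EE0\in\{\calh,\calb\}$ satisfying (a)--(d), I prescribe $\autf(A):=G$, and for each $P\in\EE0$ I prescribe $\autf(P)$ as the extension of $\Aut_S(P)\cdot O^{p'}(\Aut(\widetilde{Q}))$ determined by $\mu_A^{-1}(\Delta_t)$ through Lemma~\ref{l4x:s/a}(b). The fusion system $\calf$ generated by $\Inn(S)$ together with these automizers is then shown to be saturated by a reduction to the finite case: for each $k\ge1$, the $\Omega_k(A)$-truncation of the data produces a saturated finite fusion system $\calf_k$ via Theorem~\ref{t3:s/a}, and $\calf$ arises as the direct limit of the $\calf_k$, inheriting the Sylow and extension axioms from each truncation and the continuity axiom by construction. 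Uniqueness is clear because conditions (b)--(d) pin down each $\autf(P)$ essentially via Lemma~\ref{l4x:s/a}, and simplicity follows from Lemma~\ref{str.cl.} (which reduces simplicity to the absence of a proper normal subsystem over $S$) combined with the factorization $G=O^{p'}(G)\cdot\mu_A^{-1}(\Delta_t)$, which rules out any proper normal subsystem over the full group.

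The main obstacle will be the saturation verification in the discrete $p$-toral setting: one must ensure the $\calf_k$-essential subgroups are precisely the $\Omega_k(A)$-truncations of members of $\{A\}\cup\EE0$, so that no unexpected essentials appear in the inverse limit, and that the $\bullet$-closure conditions implicit in the discrete $p$-toral saturation machinery are satisfied. Lemma~\ref{Q.=Q} handles the latter for the relevant subgroups, and Alperin's fusion theorem (Proposition~\ref{AFT-E}) combined with the classification in Theorem~\ref{t3:s/a} handles the former.
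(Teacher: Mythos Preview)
Your forward direction for (a)--(c) matches the paper, and your argument that $\rk(A)=p-1$ in case~(i) via the impossibility of a finite direct factor in the torus $Z$ is a clean variant of the paper's reasoning. However, there is a genuine gap in your handling of~(d): you assert that the \emph{equality} $G=O^{p'}(G)\cdot\mu_A^{-1}(\Delta_t)$ ``comes from the arguments of Lemma~\ref{l4:s/a}'', but that lemma (stated only for finite~$S$) together with Lemma~\ref{l4x:s/a} gives at best the containment $G\ge O^{p'}(G)\cdot\mu_A^{-1}(\Delta_t)$ and the inequality $\mu_A(\autff(A))\ge\Delta_t$. The reverse containment is the hard part. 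The paper obtains it by a bootstrap: after proving the converse construction (Step~2), it applies that construction with $G_0:=O^{p'}(G)\cdot\mu_A^{-1}(\Delta_t)$ in place of~$G$ to produce a saturated subsystem $\calf_0\le\calf$ over~$S$, checks that $\calf_0\nsg\calf$, and then uses simplicity of~$\calf$ to force $\calf_0=\calf$ and hence $G_0=G$. You have no substitute for this step.

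For the converse, your approach via direct limits of the finite fusion systems $\calf_k$ over $\Omega_k(A)\rtimes\UUU$ is genuinely different from the paper's, which instead verifies the hypotheses of \cite[Theorem~4.2]{BLO6} directly (with $\calk=\{S,A\}\cup\EE0$, using Lemma~\ref{Q.=Q} for the $\bullet$-closure condition and Lemma~\ref{l4x:s/a}(b) to check that the chosen essential subgroup is fully automized and receptive). Your limit idea is plausible in spirit, but it is not a routine reduction: there is no general theorem asserting that a union of saturated fusion systems over an ascending chain of finite $p$-groups is a saturated fusion system over the limiting discrete $p$-toral group, and you would in particular have to manufacture the continuity axiom and control the essential subgroups in the limit. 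You would also need to verify that each $\calf_k$ actually satisfies all the hypotheses of Theorem~\ref{t3:s/a} (including $[G,\Omega_k(A)]=\Omega_k(A)$ and the relevant row of Table~\ref{tbl:(d)}), which you have not done. The paper's route via \cite{BLO6} avoids all of this.
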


\begin{proof} We prove in Steps 1 and 3 that conditions (a)--(d) are 
necessary, and prove the converse in Step 2. 

\smallskip

\noindent\textbf{Step 1: } Assume that $\calf$ is a simple fusion system 
over $S$. We must show that conditions (a)--(d) hold. By Corollary 
\ref{>1abel}, $A$ is the unique abelian subgroup of index $p$ in $S$.

\smallskip

\noindent\textbf{(a,b,c) } Point (a) holds by Corollary 
\ref{c2:s/a} and since $A$ is fully automized. The last two statements in 
(b) hold by Lemmas \ref{|Z0|=p} and \ref{A0=A}, and (c) holds by Lemma 
\ref{O_p(F)=1} and since $O_p(\calf)=1$. Finally, $A$ is a discrete 
$p$-torus by Proposition \ref{A-torus}, and $\rk(A)\ge p-1$ by Lemma 
\ref{disc.torus2}.

\smallskip

\noindent\textbf{(d) } Since $A\nnsg\calf$, there is 
$P\in\EE\calf\cap(\calh\cup\calb)$. Set $t=0$ if $P\in\calb$, and $t=-1$ if 
$P\in\calh$.

Set $H=\autf(P)$ and $H_0=O^{p'}(H)$. By Lemma \ref{l2:s/a}, 
$H_0/\Inn(P)\cong\SL_2(p)$, and acts trivially on $Z/Z_0$. Since 
$\Aut_S(P)\in\sylp{H}=\sylp{H_0}$, we can choose $\alpha\in 
N_{H_0}(\Aut_S(P))$ of order $p-1$ in $H/\Inn(P)$. By the 
extension axiom, $\alpha$ extends to an element of $\autf(N_S(P))$, and 
since $P$ is maximal among $\calf$-essential subgroups by Lemmas \ref{B&H} 
and \ref{exclusion}, $\alpha=\5\alpha|_P$ for some $\5\alpha\in\autf(S)$. 
Set $\alpha_0=\5\alpha|_A\in G$.

Now, $\alpha_0$ induces the identity on $Z/Z_0$ since $\alpha$ does, and 
$\alpha_0\in N_G(\UUU)$ since it extends to $S$. Thus 
$\alpha_0\in\autff(A)$. By Lemma \ref{l2:s/a}, $\alpha$ acts as an element 
of $\SL_2(p)$ on $P/Z\cong C_p^2$ (if $P\in\calb$) or on $P/P_1\cong C_p^2$ 
where $Z=P_1\times Z_0$ (if $P\in\calh$). Hence for some $s\in(\Z/p)^\times$ 
of order $p-1$, $\mu_A(\alpha_0)=(s,s^{-1})$ if $P\in\calh$, or 
$\mu_A(\alpha_0)=(s,1)$ if $P\in\calb$. Thus $\mu_A(\autff(A))\ge\Delta_t$ 
in either case.

Assume $\rk(A)\ge p$. By Proposition \ref{A-torus}, $Z$ is a discrete 
$p$-torus. Hence each element of $\autff(A)$ induces the identity on $Z_0$ 
since it induces the identity on $Z/Z_0$, and 
$\mu_A(\autff(A))\le\Delta_0$.

Set $G_0=O^{p'}(G)\cdot\mu_A^{-1}(\Delta_t)$. Since 
$\Ker(\mu_A|_{\autff(A)})=\UUU$ by Lemma \ref{l4x:s/a}(a), and since 
$\mu_A(\autff(A))\ge\Delta_t$ by assumption, we have 
$G\ge\mu_A^{-1}(\Delta_t)$, and hence $G\ge G_0$. We will show in Step 3 
(with the help of the constructions in Step 2) that $G=G_0$, thus finishing 
the proof of (d). 

\smallskip

\noindent\textbf{Step 2: } Now assume that $S$, $A$, and $G$ are as above, 
and set $G^\vee=G\cap\autv(A)$. Assume that (a) and (b) hold, and 
also that $\mu_A(G^\vee)\ge\Delta_t$ for some $t\in\{0,-1\}$. (Note that 
$G^\vee\le N_G(\UUU)$ since each element is the restriction of an 
automorphism of $S$.) We must show that these are realized by a unique 
saturated fusion system $\calf$, which is simple if (c) and (d) hold. Set 
$\EE0=\calh$ if $t=-1$, or $\EE0=\calb$ if $t=0$.

Set $\Gamma=A\sd{}G$, and identify $S=A\sd{}\UUU\in\sylp{\Gamma}$. Choose a 
generator $\xx\in\UUU<S$. Set $Z=Z(S)$, $Z_2=Z_2(S)$, as in Notation 
\ref{n:not1}.

Set $Q=Z\gen{\xx}$ if $\EE0=\calh$, or $Q=Z_2\gen{\xx}$ if $\EE0=\calb$. 
Thus $Q\in\EE0$, and each member of $\EE0$ is $S$-conjugate to $Q$ by Lemma 
\ref{A0=A}. Set $K=\Aut_\Gamma(Q)$. By assumption, there is $\alpha\in 
N_{G^\vee}(\UUU)$ such that $\mu_A(\alpha)$ generates $\Delta_t$, and 
$\alpha$ extends to some $\4\alpha\in\autv(S)$ such that 
$\4\alpha(\xx)\in\UUU$. In particular, $\4\alpha(Q)=Q$, and 
$\4\alpha\in\Aut_\Gamma(S)$. 

By Lemma \ref{l4x:s/a}(b), applied with $\calf_S(\Gamma)$ in the role of 
$\calf$, there are unique subgroups $\til{Z}\le Z$ and $\til{Q}\ge Q\cap 
S'$, both normalized by $N_{\Aut_\Gamma(S)}(Q)$, and such that 
$Q=\til{Z}\times\til{Q}$ and $\til{Q}\cong C_p\times C_p$ if $Q\in\calh$; 
or $\til{Z}=Z$, $Q=Z\til{Q}$, $Z\cap\til{Q}=Z_0$, and $\til{Q}$ is 
extraspecial of order $p^3$ and exponent $p$ if $Q\in\calb$. Let 
$\Theta\le\Aut(Q)$ be the unique subgroup containing $\Inn(Q)$ that acts 
trivially on $\til{Z}$, normalizes $\til{Q}$, and is such that 
$\Theta/\Inn(Q)\cong\SL_2(p)$.

We next claim that
\begin{enumerate}[(1) ]

\item each $\alpha\in\Aut_\Gamma(Q)$ extends to some 
$\4\alpha\in N_{\Aut_\Gamma(S)}(Q)$; 

\item $\Aut_\Gamma(Q)$ normalizes $\Theta$; 

\item $\Aut_S(Q)\in\sylp{\Theta}=\sylp{\Theta\Aut_\Gamma(Q)}$; and 

\item $N_\Theta(\Aut_S(Q))\le\Aut_\Gamma(Q)$.

\end{enumerate}
Point (1) holds since $S=QA$ where $A\nsg\Gamma$, and hence $N_\Gamma(Q)\le 
N_\Gamma(S)$. By assumption, each element of $N_{\Aut_\Gamma(S)}(Q)$ 
normalizes $\til{Z}$ and $\til{Q}$, and hence normalizes $\Theta$. Thus (2) 
follows from (1). Since $|\Out_S(Q)|=|N_S(Q)/Q|=p$ by Lemma \ref{B&H}, this 
acts trivially on $Z\ge\til{Z}$ and normalizes $\til{Q}$, and 
$\Out(\til{Q})\cong\GL_2(p)$ where $\sylp{\GL_2(p)}=\sylp{\SL_2(p)}$, we 
see that $\Out_S(Q)\in\sylp{\Theta/\Inn(Q)}$ and hence that 
$\Aut_S(Q)\in\sylp\Theta$. Also, $\Theta$ has index prime to $p$ in 
$\Theta\Aut_\Gamma(Q)$ since $\Aut_S(Q)\in\sylp{\Aut_\Gamma(Q)}$, and this 
proves (3). Finally, (4) follows from Lemma \ref{l4x:s/a}(b.iii).

Set $\calf=\gen{\calf_S(\Gamma),\Theta}$: the smallest fusion system over 
$S$ which contains $\calf_S(\Gamma)$ and such that $\autf(Q)\ge\Theta$. Set 
$\calk=\{S,A\}\cup\EE0$. Then $\calk$ is invariant under $\calf$-conjugacy, 
and is closed in the space of all subgroups of $S$ \cite[Definition 
1.11]{BLO6}. Thus condition (i) in \cite[Theorem 4.2]{BLO6} holds 
for $\calk$; and condition (iii) holds ($P\in\calk$ and $P\le Q\le 
P^\bullet$ imply $Q\in\calk$) since $P=P^\bullet$ for each $P\in\calk$ 
(Lemma \ref{Q.=Q}). 

By Lemma \ref{exclusion}, if $\EE0=\calb$, then the members of $\calh$ 
are not $\calf$-centric. So in all cases, if $P\le S$ is 
$\calf$-centric and $P\notin\calk$, then $P$ is not contained in any member 
of $\EE\calf=\EE0\cup\{A\}$, and hence $\Out_S(P)\nsg\outf(P)$. This proves 
condition (iv) in \cite[Theorem 4.2]{BLO6}: 
	\[ O_p(\outf(P))\cap\Out_S(P)\ne1 \]
whenever $P\le S$ is $\calf$-centric and not in $\calk$.

We refer to \cite[Definition 1.11]{BLO6} for the definitions of 
``$\calk$-generated'' and ``$\calk$-saturated''. By construction, $\calf$ 
is $\calk$-generated. To show that $\calf$ is $\calk$-saturated, we 
must prove that each $P\in\calk$ is fully automized and receptive in 
$\calf$ (Definition \ref{d:saturated}). If $P=A$ or $P=S$, then 
$\autf(P)=\Aut_\Gamma(P)$, and this is easily checked. So it remains to 
show this when $P=Q$. By (2), $\autf(Q)=\Theta\cdot\Aut_\Gamma(Q)$. So $Q$ 
is fully automized by (3). If $\alpha\in N_{\autf(Q)}(\Aut_S(Q))$, then 
$\alpha\in\Aut_\Gamma(Q)$ by (4), and hence extends to some 
$\4\alpha\in\Aut_\Gamma(S)$ by (1). So $Q$ is also receptive. This finishes 
the proof of condition (ii) in \cite[Theorem 4.2]{BLO6}, and hence $\calf$ 
is saturated by that theorem.

Now assume (c) and (d) hold; we must prove that $\calf$ is simple. By (c), 
there are no non-trivial $G$-invariant subgroups of $Z$ except possibly for 
$Z_0$. Also, $\EE\calf\supseteq\calh$ in case (d.i), and $Z_0$ 
is not $G$-invariant in case (d.ii). Hence $O_p(\calf)=1$ by Lemma 
\ref{O_p(F)=1}. By Lemma \ref{str.cl.}, $\calf$ is simple if there are no 
proper normal fusion subsystems in $\calf$ over $S$. 

Assume $\calf_0\nsg\calf$ is a normal fusion subsystem over $S$, and set 
$G_0=\Aut_{\calf_0}(A)$. Then $G_0\nsg G$, and $G_0\ge O^{p'}(G)$ since it 
is the normal closure of $\UUU=\Aut_S(A)$. Also, 
$\mu_A(\autv_{\calf_0}(A))\ge\Delta_t$ by Step 1, applied with $\calf_0$ 
in the role of $\calf$. Since $\mu_A$ is injective on $G^\vee/\Aut_S(A)$ 
by Lemma \ref{l4x:s/a}(a), we have 
$G_0\ge O^{p'}(G)\cdot\mu_A^{-1}(\Delta_t)=G$. Thus $G_0=G$, and 
$\Aut_{\calf_0}(S)=\autf(S)$ by the extension axiom, so $\calf_0=\calf$ by 
the Frattini condition on a normal subsystem (see Definition \ref{d:E<|F}). 
This finishes the proof that $\calf$ is simple.

The uniqueness of $\calf$ follows from the uniqueness of $\til{Z}$ and 
$\til{Q}$ in Lemma \ref{l4x:s/a}(b). 

\smallskip

\noindent\textbf{Step 3: } We return to the situation of Step 1, where it 
remains only to prove that $G_0=G$. By Step 2, there is a unique saturated 
fusion subsystem $\calf_0\le\calf$ over $S$ such that 
$\EE{\calf_0}=\EE\calf$ and $\Aut_{\calf_0}(A)=G_0$. The invariance 
condition on $\calf_0\le\calf$ (Definition \ref{d:E<|F}) holds by the 
uniqueness of $\calf_0$, and the Frattini condition holds since 
$G=O^{p'}(G)N_G(\UUU)\le G_0N_G(\UUU)$ (where each element of $N_G(\UUU)$ 
extends to an element of $\autf(S)$ by the extension axiom). Thus 
$\calf_0\nsg\calf$, so $\calf_0=\calf$ since $\calf$ is simple, and hence 
$G_0=G$. 
\end{proof}

As a special case, we next show that for each prime $p$, there is (up to 
isomorphism) a unique simple fusion system over an infinite discrete 
$p$-toral group with abelian subgroup of index $p$ which is not essential.

\begin{Thm} \label{t:AnotinE}
For each odd prime $p$, there is, up to isomorphism, a unique 
simple fusion system $\calf$ over an infinite nonabelian discrete $p$-toral 
group $S$ which contains an abelian subgroup $A<S$ of index $p$ that 
is not $\calf$-essential. The following hold for each such $p$, 
$\calf$, $S$, and $A$:
\begin{enuma} 

\item The group $A$ is a discrete $p$-torus of rank $p-1$, and $S$ splits 
over $A$. Also, $\autf(A)\cong C_p\rtimes C_{p-1}$, $\outf(S)\cong 
C_{p-1}$, and $\EE\calf=\calh$ (defined as in Notation \ref{n:not1}).

\item Fix a prime $q\ne p$, set $\Gamma=\PSL_p(\4\F_q)$, let 
$\til{A}<\Gamma$ be the subgroup of classes of diagonal matrices of 
$p$-power order, and set $\til{S}=\til{A}\gen{\til{x}}\in\sylp{\Gamma}$ for 
some permutation matrix $\til{x}$ of order $p$. Then there is an 
isomorphism $S\cong \til{S}$ that restricts to an isomorphism 
$A\cong\til{A}$, and induces isomorphisms 
$\autf(S)\cong\Aut_{\Gamma}(\til{S})$ and 
$\autf(A)\cong\Aut_{N_{\Gamma}(\til{S})}(\til{A})$.

\item If $p=3$, then $\calf$ is isomorphic to the $3$-fusion system of 
$\PSU(3)$, and also to the $3$-fusion system of $\PSL_3(\4\F_q)$ for each 
prime $q\ne3$. For $p\ge5$, $\calf$ is not realized by any compact Lie 
group, nor by any $p$-compact group.

\end{enuma}
\end{Thm}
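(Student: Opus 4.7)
The plan is to specialize Theorem \ref{t:s/a}(d.i) under the hypothesis $A \notin \EE\calf$ via Lemma \ref{Z0=Z}, extract the structure in (a), realize $\calf$ concretely inside $\PSL_p(\overline{\F}_q)$ for (b), and conclude with Proposition \ref{G&F} together with a pseudoreflection eigenvalue computation for the exotic part of (c).

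For (a), simplicity gives $O_p(\calf) = 1$, so Lemma \ref{Z0=Z} yields $Z = Z_0$, $\EE\calf = \calh$, and $A = S'$ a discrete $p$-torus of rank $p-1$ (using Lemma \ref{disc.torus2}); Corollary \ref{c2:s/a} then splits $S = A \rtimes \gen{x}$ with $x^p = 1$, and Lemma \ref{ZU-modules} identifies $A \cong \Q_p(\zeta)/\Z_p[\zeta]$ as a $\Z_p\UUU$-module, pinning down $S$ up to isomorphism. Since $|\UUU| = p$ and $A \notin \EE\calf$, the Sylow subgroup $\UUU$ must be normal in $G = \autf(A)$, so $O^{p'}(G) = \UUU$, and Theorem \ref{t:s/a}(d.i) reads $G = \UUU \cdot \mu_A^{-1}(\Delta_{-1})$. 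Since $Z = Z_0$ the condition $[\beta,Z]\le Z_0$ is vacuous, so $\autff(A) = N_G(\UUU) = G$, and Lemma \ref{l4x:s/a}(a) forces $\Ker(\mu_A|_G) \le \UUU$. This yields an injection $G/\UUU \hookrightarrow \Delta$ with image $\Delta_{-1}$, hence $|G| = p(p-1)$ and $G \cong C_p \rtimes C_{p-1}$. Each $\alpha \in G$ extends to $S$ (the extension axiom), with freedom parameterized by $\Inn_A(S)$ (the obstruction $\sigma \cdot a$ vanishes because $\sigma$ acts as $\sum_{i=0}^{p-1}\zeta^i = 0$ on $A$); since $\Inn(S)/\Inn_A(S) \cong \UUU$, passing to $\Out$ gives $\outf(S) \cong G/\UUU \cong C_{p-1}$. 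Uniqueness of $\calf$ follows from the converse in Theorem \ref{t:s/a}.

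For (b), I would verify that $\calf_{\tilde{S}}(\Gamma)$ for $\Gamma = \PSL_p(\overline{\F}_q)$ satisfies (a): the diagonal $p$-torus $\tilde{A}$ has rank $p-1$, $\tilde{x}$ permutes coordinates cyclically (matching the $\Z_p\UUU$-module $\Q_p(\zeta)/\Z_p[\zeta]$), and $\Aut_{N_\Gamma(\tilde{S})}(\tilde{A}) \cong N_{\sym p}(\gen{\tilde{x}}) \cong C_p \rtimes C_{p-1}$ has normal Sylow $p$-subgroup, so $\tilde{A}$ is not essential. By uniqueness $\calf \cong \calf_{\tilde{S}}(\Gamma)$. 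For the $p = 3$ case of (c), $C_3 \rtimes C_2 \cong \sym 3$ is the $3$-local Weyl group of $\PSU(3)$, whose Sylow $3$-subgroup matches $\tilde{S}$, so the $3$-fusion systems coincide.

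For the exoticness in (c) when $p \geq 5$, Proposition \ref{G&F} reduces any compact Lie or $p$-compact realization to requiring $G \cong C_p \rtimes C_{p-1}$ act on $\Q_p(\zeta)$ as a group generated by pseudoreflections. Passing to $\overline{\Q}_p$, the module $\Q_p(\zeta) \otimes \overline{\Q}_p$ decomposes as $\bigoplus_{r \in (\Z/p)^\times} \overline{\Q}_p$, where $u$ acts on the $r$-summand by $\zeta^r$ and a generator $\sigma$ of $C_{p-1}$ permutes summands via $r \mapsto rs$ (for $s$ a primitive root mod $p$). For $g = u^a \sigma^b$ with $\sigma^b$ of order $d$, the permutation has $(p-1)/d$ cycles of length $d$, on each of which $g$ acts as a twisted $d$-shift whose $d$-th iterate is the identity (since $\sum_{i=0}^{d-1} s^{ib} \equiv 0 \pmod p$ when $d > 1$); its eigenvalues are therefore all $d$-th roots of unity, giving $\dim \Ker(g - 1) = (p-1)/d$. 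Pseudoreflection requires codimension $1$, i.e., $(p-1)(d-1)/d = 1$, forcing $d = (p-1)/(p-2)$, an integer only when $p = 3$. Hence for $p \geq 5$ no nontrivial element of $G$ is a pseudoreflection, ruling out any such realization. The main obstacle is this eigenvalue calculation, where mixed elements $u^a \sigma^b$ must be analyzed via the orbit structure of $\sigma^b$ on $(\Z/p)^\times$ and the vanishing of the twisted norm; the rest of the proof is a direct synthesis of prior machinery.
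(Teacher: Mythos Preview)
Your overall strategy matches the paper's: extract the structure from Lemma \ref{Z0=Z} and Theorem \ref{t:s/a}(d.i), realize $\calf$ inside $\PSL_p(\overline{\F}_q)$, and use Proposition \ref{G&F} for (c). However, there is a genuine gap in your uniqueness argument for (a).

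You correctly deduce $|G|=p(p-1)$, $G\cong C_p\rtimes C_{p-1}$, and $\mu_A(G)=\Delta_{-1}$. But the converse in Theorem \ref{t:s/a} only yields uniqueness of $\calf$ for a \emph{fixed} subgroup $G\le\Aut(A)$; it does not show that any two finite subgroups $G,G'\le N_{\Aut(A)}(\UUU)$ of order $p(p-1)$ containing $\UUU$ with $\mu_A(G)=\mu_A(G')=\Delta_{-1}$ are conjugate in $\Aut(A)$. Since $N_{\Aut(A)}(\UUU)\cong\Z_p[\zeta]^\times\rtimes\Gal(\Q_p(\zeta)/\Q_p)$ and $\Ker(\mu_A)$ contains the entire pro-$p$ group $1+\pp$, there are a priori many finite lifts of $\Delta_{-1}$ through $\mu_A$. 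The paper closes this gap by identifying $\autv(A)\cong\bigl((1+\pp)\times\F_p^\times\bigr)\rtimes\Gal(\Q_p(\zeta)/\Q_p)$ explicitly, observing that $1+\pp$ is uniquely $m$-divisible for each $m$ prime to $p$ so that $H^i(H;1+\pp)=0$ for every subgroup $H$ of order prime to $p$, and concluding by a Schur--Zassenhaus argument that any prime-to-$p$ subgroup of $\autv(A)$ is conjugate (by an element of $1+\pp$) into the fixed complement $\scal(\F_p^\times)\cdot W$. This pins down $G$ uniquely up to conjugacy, which is what is actually needed.

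Your pseudoreflection computation in (c) is more explicit than the paper's (which simply asserts that $C_p\rtimes C_{p-1}$ contains no nontrivial pseudoreflections when $p\ge5$) and is correct for $d>1$. You should also note the omitted case $b=0$: for $1\ne u^a\in\UUU$ the eigenvalues on $\Q_p(\zeta)\otimes\overline{\Q}_p$ are $\zeta^{ar}$ for $r\in(\Z/p)^\times$, none equal to $1$, so $\Ker(u^a-1)=0$ and your formula $\dim\Ker(g-1)=(p-1)/d$ does not apply there.
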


\begin{proof} We use the notation of \ref{n:not1} and \ref{n:not2}. In 
particular, $G=\autf(A)$. 

\smallskip

\noindent\textbf{(a,b) } Assume $\calf$ is a simple fusion system over an 
infinite discrete $p$-toral group $S$ with an abelian subgroup $A<S$ of 
index $p$ such that $A\notin\EE\calf$. By Lemma \ref{Z0=Z}, $Z=Z_0$, 
$\EE\calf=\calh$, and $A=S'$ is a discrete $p$-torus of rank $p-1$. In 
particular, $|Z|=|Z_0|=p$. Also, $S$ splits over $A$ by Corollary 
\ref{c2:s/a}. 

It remains to describe $G=\autf(A)$ and $\autf(S)$ and prove (b). Since 
$A\notin\EE\calf$, $\UUU\nsg G$, and $\outf(S)\cong G/\UUU$. (Each 
$\alpha\in\autf(A)$ extends to $\autf(S)$ by the extension axiom.) 

Since $S$ splits over $A$, each $\alpha\in N_{\Aut(A)}(\UUU)$ extends to an 
automorphism of $S$. Since $Z=Z_0$, this implies that 
$N_{\Aut(A)}(\UUU)=\autv(A)$. Also, $\mu_A(G)=\Delta_{-1}$ 
by Theorem \ref{t:s/a}(d) and since $O^{p'}(G)=\UUU\le\Ker(\mu_A)$. 

Let $R=\Z_p[\zeta]$ and $\pp=(1-\zeta)R$ be as in Notation \ref{n:not3}, 
regarded as $\Z_p\UUU$-modules. By Proposition \ref{tori&lattices}, 
$A\cong(\Q_p/\Z_p)\otimes_{\Z_p}\Lambda$ and 
$\Lambda\cong\Hom_{\Z_p}(\Q_p/\Z_p,A)$ for some $(p-1)$-dimensional 
$\Z_pG$-lattice $\Lambda$, and $\Lambda|_\UUU\cong R$ as $\Z_p\UUU$-modules by 
Lemma \ref{ZU-modules}(c). These isomorphisms induce isomorphisms of 
automorphism groups
	\begin{align*} 
	\autv(A) = N_{\Aut(A)}(\UUU) &\cong N_{\Aut(R)}(\UUU) 
	\cong C_{\Aut(R)}(\UUU)\rtimes\Gal(\Q_p(\zeta)/\Q_p) \\
	&\cong R^\times\rtimes\Gal(\Q_p(\zeta)/\Q_p) 
	\cong \bigl((1+\pp)\times\F_p^\times\bigr)\rtimes
	\Gal(\Q_p(\zeta)/\Q_p) ,
	\end{align*}
and these send $\Ker(\mu_A)\le\autv(A)$ (the group of automorphisms 
of $A$ that commute with $\UUU$ and are the identity on $Z=Z_0$) onto 
$1+\pp$, and send the subgroup $\scal(\F_p^\times)\le\autv(A)$ of scalar 
multiplication by $(p-1)$-st roots of unity onto $\F_p^\times$. Thus 
	\[ \autv(A) = N_{\Aut(A)}(\UUU) 
	= \bigl( \Ker(\mu_A) \times \scal(\F_p^\times) \bigr) \rtimes W \]
for a certain subgroup $W\cong C_{p-1}$. 
Set $\4G=(\UUU\times\scal(\F_p^\times))\cdot W < \autv(A)$: a subgroup of 
order $p(p-1)^2$.

Since $\autv(A)/\Ker(\mu_A)$ has order $(p-1)^2$, and since 
$\Ker(\mu_A)\cong(1+\pp)$ is an abelian pro-$p$-group and hence uniquely 
$m$-divisible for each $m$ prime to $p$, we have $H^i(H;\Ker(\mu_A))=0$ for 
each $H\le \autv(A)/\Ker(\mu_A)$ and each $i>0$. Hence for each subgroup 
$K<\autv(A)$ of order prime to $p$, $K\cap\Ker(\mu_A)=1$ since 
$\Ker(\mu_A)$ is a pro-$p$-group, $\Ker(\mu_A)\cdot K$ splits over 
$\Ker(\mu_A)$ with a splitting unique up to conjugacy, and thus $K$ is 
conjugate by an element of $\Ker(\mu_A)$ to a subgroup of 
$\scal(\F_p^\times)\cdot W$. In particular, $G$ is conjugate to a subgroup 
of $\4G$, and we can assume (without changing the isomorphism type of 
$\calf$) that $G\le\4G$. Finally, one easily sees that $\mu_A$ sends $\4G$ 
onto $\Delta$ with kernel $\UUU$, and hence that 
$G=(\mu_A|_{\4G})^{-1}(\Delta_{-1})\cong C_p\rtimes C_{p-1}$ is uniquely 
determined. 

A natural isomorphism $A\cong\til{A}$ is most easily seen by identifying 
$\Gamma=\PGL_p(\4\F_q)$, so that $\til{A}$ is the quotient of 
$(\Z/p^\infty)^p$ by the diagonal $\Z/p^\infty\cong O_p(Z(\Gamma))$,  with 
the permutation action of $\Aut_\Gamma(\til{A})\cong\Sigma_p$. This then 
extends to an isomorphism of $S\cong A\rtimes\UUU$ with 
$\til{S}=\til{A}\gen{\til{x}}$, and of $\autf(A)$ with 
$\Aut_{N_\Gamma(\til{S})}(\til{A})$.

\smallskip

\noindent\boldd{Existence and uniqueness of $\calf$: } Let $A$, $\UUU\nsg 
G$, and $S=A\rtimes\gen{x}$ be as described in the proof of (b). Since 
$\mu_A(G)=\Delta_{-1}$, conditions (a)--(d) in Theorem \ref{t:s/a} all hold 
with $\EE\calf=\calh$. So such an $\calf$ exists by that theorem. It is 
unique up to isomorphism by the uniqueness in the theorem and by the 
restrictions shown in the proof of (b).

\smallskip

\noindent\textbf{(c) } If $\calf$ is realized by a compact Lie group 
or a $p$-compact group, then by Proposition \ref{G&F} and since all 
elements in $S$ are $\calf$-conjugate to elements in $A$, $\calf$ is 
realized by a connected, simple $p$-compact group, and the action of the 
Weyl group $\autf(A)$ on $\Q\otimes_{\Z}\Hom(A,\Q_p/\Z_p)$ is generated by 
pseudoreflections. But if $p\ge5$, then $\autf(A)\cong C_p\rtimes C_{p-1}$ 
contains no pseudoreflections other than the identity. So $p=3$, and we 
easily check that $\calf$ is realized by $\PSU(3)$, or by $\PSL_3(\4\F_q)$ 
for $q\ne3$. 
\end{proof}

We can now describe the simple fusion systems over discrete 
$p$-toral groups with discrete $p$-torus of index $p$ in terms of the 
classification of certain faithful, minimally active, indecomposable 
modules carried out in \cite{indp2}.

\begin{Th} \label{ThB}
Fix an odd prime $p$.
\begin{enuma} 

\item Let $\calf$ be a simple fusion system over an infinite 
nonabelian discrete $p$-toral group $S$ with an abelian subgroup 
$A<S$ of index $p$. Assume also that $A$ is $\calf$-essential. Set 
$G=\autf(A)$ and $V=\Omega_1(A)$, let $\calh$ and $\calb$ be as in Notation 
\ref{n:not1}, and let $G^\vee=\autff(A)$ and 
$\mu_A\:G^\vee\too\Delta$ be as in Notation \ref{n:not2}. Then $A$ is a 
discrete $p$-torus, $S$ splits over $A$, $G\in\GGG$, and for 
some $t\in\{0,-1\}$,
	\beq \parbox{\shorter}{$V$ is a faithful, minimally active, 
	indecomposable $\F_pG$-module. Either $\dim(V)=p-1$, 
	$\mu_A(G^\vee)\ge\Delta_t$ and $G=O^{p'}(G)\mu_A^{-1}(\Delta_t)$; 
	or $\dim(V)\ge p$, $t=0$, $\mu_A(G^\vee)=\Delta_0$, and 
	$G=O^{p'}(G)\cdot G^\vee$. Also, $\EE\calf=\{A\}\cup\calh$ if 
	$t=-1$, while $\EE\calf=\{A\}\cup\calb$ if $t=0$. If $t=0$, then 
	$V$ contains no $1$-dimensional $\F_pG$-submodule.} 
	\tag*{\starinf} \eeq

\item Conversely, assume that $G\in\GGG$, $\UUU\in\sylp{G}$, and 
$t\in\{0,-1\}$, and that $V$ is an $\F_pG$-module that satisfies \starinf, 
where $G^\vee$ is the subgroup of all elements $\alpha\in N_G(\UUU)$ such 
that $[\alpha,C_V(\UUU)]\le[\UUU,V]$. Then there are a discrete 
$G$-$p$-torus $A$ and a simple fusion system $\calf$ over $S=A\rtimes\UUU$ 
such that $\autf(A)=\Aut_G(A)\cong G$, such that $\Omega_1(A)\cong V$ as 
$\F_pG$-modules, and such that $\EE\calf=\{A\}\cup\calh$ if $t=-1$, or 
$\EE\calf=\{A\}\cup\calb$ if $t=0$. Furthermore, any other simple fusion 
system with these properties is isomorphic to $\calf$.

\item Among the fusion systems specified in (b), the only ones that 
are realized as fusion systems of compact Lie groups or of $p$-compact 
groups are those listed in Table \ref{tbl:inf-real}.

\end{enuma}
\end{Th}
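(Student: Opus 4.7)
Part (a) is essentially a translation of Theorem \ref{t:s/a} into the language of the $\F_pG$-module $V = \Omega_1(A)$; part (b) is the construction going in reverse, using the tools of Section \ref{s:min.act.}; and part (c) is a reduction to the Clark--Ewing/Andersen--Grodal--M{\o}ller--Viruel classification via Proposition \ref{G&F}.

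\textbf{Proof of (a).} I would simply assemble the pieces already established. Theorem \ref{t:s/a}(a,b) gives that $A$ is a discrete $p$-torus of rank at least $p-1$ and that $S$ splits over $A$. Since $A \in \EE\calf$ and $O_p(\calf) = 1$ (because $\calf$ is simple), Lemma \ref{min.act.}(a,b) supplies $G \in \GGG$ and the fact that $V$ is faithful, minimally active, and indecomposable. The two possibilities in Theorem \ref{t:s/a}(d) give the dichotomy $t \in \{-1, 0\}$ and the stated information about $\mu_A(G^\vee)$, $G$, and $\EE\calf$. The constraint $\mu_A(G^\vee) = \Delta_0$ when $\dim V \ge p$ is recorded explicitly in Theorem \ref{t:s/a}(d)(ii). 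Finally, the statement that $V$ has no $1$-dimensional $\F_pG$-submodule when $t = 0$ is obtained by identifying any such submodule with a $G$-invariant subgroup of $\Omega_1(Z)$: when $\dim V = p-1$ we have $\Omega_1(Z) = C_V(\UUU) = Z_0$ by Lemma \ref{min.act.props}(c), while when $\dim V \ge p$, any $\UUU$-invariant $1$-dimensional submodule lies in $\Omega_1(Z)$ and would have to equal $Z_0$ by Theorem \ref{t:s/a}(c); in either subcase, $Z_0$ is not $G$-invariant by Theorem \ref{t:s/a}(d)(ii).

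\textbf{Proof of (b).} Given $(G, V)$ satisfying \starinf, I would first apply Proposition \ref{dim(p-1)}(a) (valid since $\dim V \ge p-1$) to produce a $\Z_pG$-lattice $\Lambda$ with $\Lambda/p\Lambda \cong V$, then set $A = \Lambda \otimes_{\Z_p} (\Q_p/\Z_p)$, a discrete $G$-$p$-torus with $\Omega_1(A) \cong \Lambda/p\Lambda \cong V$ by Proposition \ref{tori&lattices}. Lemma \ref{unique-G-p-torus} guarantees that $A$ is the unique such discrete $G$-$p$-torus up to isomorphism. I would then form $S = A \rtimes \UUU$ and check the hypotheses of the converse direction of Theorem \ref{t:s/a}. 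Hypothesis (a) is immediate and (b) follows from Lemma \ref{min.act.|Z0|=p}(b) (for $|Z_0| = p$) and a short calculation in the divisible group $A$ (for $A = C_A(\UUU)\cdot [\UUU,A]$). Hypothesis (d) is precisely the content of \starinf\ (restated in terms of $\mu_A$ instead of $\mu_V$, noting that these agree when $\rk(A) = p-1$ and that the condition $\mu_V^{-1}(\Delta_t) = \mu_A^{-1}(\Delta_t)$ on elements of order prime to $p$ when $\rk(A) \ge p$ is handled exactly as in the proof of Theorem \ref{ThA}(b)). The main obstacle will be hypothesis (c): any nontrivial $G$-invariant $R \le Z$ yields a nonzero $G$-submodule $\Omega_1(R) \le C_V(\UUU)$ with trivial $\UUU$-action, and I would exclude this by invoking Lemma \ref{rk(V)=p+1} (when $\dim V = p+1$) and \cite[Proposition 3.7(c)]{indp2} (when $\dim V \ge p+2$), together with the hypothesis that $V$ carries no $1$-dimensional submodule in the case $t = 0$. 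The construction of $\calf$ and its uniqueness then follow directly from Theorem \ref{t:s/a}.

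\textbf{Proof of (c).} Since $S$ splits over $A$ by (a), each element of $S \sminus A$ lies in some $P \in \EE\calf \sminus \{A\}$, and the $\SL_2(p)$-subgroup of $O^{p'}(\outf(P))$ described in Lemma \ref{l2:s/a} shows that every element of $S \sminus A$ is $\calf$-conjugate into $A = S_0$. Hence Proposition \ref{G&F} applies: if $\calf$ is realized by a compact Lie group or $p$-compact group, then it is realized by a simple, connected $p$-compact group $X$, and the action of the Weyl group $\autf(A)$ on $\Q \otimes_{\Z} \Hom(A, \Q_p/\Z_p)$ is irreducible and generated by pseudoreflections. The tedious step (rather than a conceptual obstacle) will be to match the pairs $(G, V)$ satisfying \starinf\ against the irreducible $p$-adic pseudoreflection representations of simple $p$-compact groups listed in \cite[Theorem 1.2]{AGMV}; for each surviving entry, I would then exhibit a compact Lie group or $p$-compact group explicitly realizing the fusion system, producing Table \ref{tbl:inf-real}. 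The remaining $(G, V)$ are not realized, either because $G$ contains no nontrivial pseudoreflections on the $\Q_p$-module $\Q_p \otimes_{\Z_p} \Lambda$ (as in the case $\dim V = p-1$ with $G \cong C_p \rtimes C_{p-1}$ already treated in Theorem \ref{t:AnotinE}(c)) or because the pseudoreflection data fails to match any entry in the Clark--Ewing list.
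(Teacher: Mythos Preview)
Your proposal is correct and follows essentially the same approach as the paper's proof: part (a) is assembled from Theorem \ref{t:s/a} together with Lemma \ref{min.act.}, part (b) constructs $A$ as $(\Q_p/\Z_p)\otimes_{\Z_p}\Lambda$ via Proposition \ref{dim(p-1)}(a) and then verifies the hypotheses of the converse of Theorem \ref{t:s/a} (with uniqueness from Lemma \ref{unique-G-p-torus}), and part (c) goes through Proposition \ref{G&F} and the Clark--Ewing list. Your treatment is in places slightly more explicit than the paper's (you flag the $\mu_A$ versus $\mu_V$ identification in (b), which the paper leaves implicit, and you justify why every element of $S$ is $\calf$-conjugate into $A$ in (c)); the only points where the paper is more specific are the precise elimination of the groups $G(m,d,n)$ with $d<m$ via the condition $G=O^{p'}(G)\cdot\mu_A^{-1}(\Delta_0)$, and the use of Aguad\'e's constructions to decide between $\calh$ and $\calb$ in the rank $p-1$ cases.
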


\begin{table}[ht]
\[ \renewcommand{\arraystretch}{1.5}
\begin{array}{|c|c|c|c|c||c|c|} \hline
p & \textup{conditions} & \rk(A) & 
G=\autf(A) & \EE0 & \textup{$p$-cpct. gp.} & \textup{tors. lin. gp.} 
\\ \hline\hline
p & p\ge5 & p{-}1 & \Sigma_p & \calh & \PSU(p) & \PSL_p(\4\F_q) \\ \hline
p & p<n<2p & n{-}1 & 
\Sigma_n & \calb & \PSU(n) & \PSL_n(\4\F_q) \\ \hline
p & \mbox{\Small{$p\le n<2p,~n\ge4$}} & n & C_2^{n-1}\rtimes\Sigma_n & \calb & \PSO(2n) & 
P\Omega_{2n}(\4\F_q) \\ \hline
p & \mbox{\Small{$\dbl{2<m\mid(p-1)}{p\le n<2p,~n\ge4}$}} & n & 
(C_m)^{n-1}\rtimes\Sigma_n & \calb & X(m,m,n) &  \\\hline
5 & n=6,7 & n & W(E_n) & \calb & 
E_n & E_n(\4\F_q) \\ \hline
7 & n=7,8 & n & W(E_n) & \calb & 
E_n & E_n(\4\F_q) \\ \hline
3 &  & 2 
& GL_2(3) & \calb & X_{12} & C_{F_4(K)}(\gamma) \\ \hline 
5 &  & 4 & 
(4\circ2^{1+4}).\Sigma_5 & \calb & X_{29} &  \\ \hline
5 &  & 4 & 
(4\circ2^{1+4}).\Sigma_6 & \calb & X_{31} & E_8(K) \\ \hline
7 &  & 6 & 6_1\cdot\PSU_4(3).2_2 & 
\calb & X_{34} &  \\\hline
\hline
\end{array}
\]
\caption{The sixth column lists a compact Lie group or a $p$-compact 
group that realizes the fusion system $\calf$ described in the first 
five columns. Here, $X(m,m,n)$ 
denotes the $p$-compact group with Weyl group $G(m,m,n)$ in the 
notation of \cite[\S\,2]{ST}, and $X_k$ the one with Weyl group 
number $k$ in \cite[Table VII]{ST}. In the last column, we 
give, in some cases, a torsion linear group that realizes $\calf$: 
$q\ne p$ is prime, $K\subseteq\overline\F_2$ 
is the union of the odd degree extensions of $\F_2$, and 
$\gamma\in\Aut(F_4(K))$ is a graph automorphism of order $2$. In the 
fourth column, 
$B.C$ means an extension of $B$ by $C$, and the subscripts in the 
entry $6_1\cdot\PSU_4(3).2_2$ are \emph{Atlas} notation \cite[p. 52]{Atlas}.} 
\label{tbl:inf-real}
\end{table}

\begin{proof} \textbf{(a) } Set $\UUU=\Aut_S(A)\in\sylp{G}$ and $Z=C_A(\UUU)$.

Under the above assumptions, $A$ is a discrete $p$-torus by Proposition 
\ref{A-torus}, $G\in\GGG$ by Lemma \ref{min.act.}(a), $V$ is faithful, 
minimally active, and indecomposable by Lemma \ref{min.act.}(b), and 
$\rk(V)=\rk(A)\ge p-1$ by Lemma \ref{disc.torus2}. Also, for some 
$t\in\{0,-1\}$, $\mu_A(G^\vee)\ge\Delta_t$ and 
$G=O^{p'}(G)\mu_A^{-1}(\Delta_t)$, and $\EE\calf$ is as described in 
\starinf, by Theorem \ref{t:s/a}(d). Since 
$A\nnsg\calf$, $S$ splits over $A$ by Corollary 
\ref{c2:s/a}. If $t=0$ and $V_0<V$ is a 1-dimensional $\F_pG$-submodule, 
then $V_0\le C_V(\UUU)\le Z$, which is impossible by Theorem 
\ref{t:s/a}(c,d.ii).

Assume $\rk(V)=\rk(A)\ge p$. Since $V$ is minimally active and 
indecomposable, $V|_\UUU$ is the direct sum of a free module $\F_p\UUU$ and 
an $\F_p$-vector space with trivial $\UUU$-action by Lemma 
\ref{min.act.props}, and hence $\Omega_1(Z)=C_V(\UUU)$ has rank 
$\rk(A)-p+1$. Also, $Z$ is a discrete $p$-torus by Proposition 
\ref{A-torus}, so for each $\alpha\in G^\vee=\autff(A)$, $\alpha$ acts via 
the identity on $Z_0$ since it acts via the identity on $Z/Z_0$ 
(see Notation \ref{n:not2}). Thus $\mu_A(\alpha)\in\Delta_0$ by 
definition of $\mu_A$, so $t=0$ and $\mu_A(G^\vee)=\Delta_0$ in this case, 
finishing the proof of \starinf.

\smallskip
 
\noindent\textbf{(b) } Fix $G\in\GGG$, $\UUU\in\sylp{G}$, and 
$t\in\{0,-1\}$, and let $V$ be an $\F_pG$-module that satisfies \starinf, 
where $G^\vee$ is the subgroup of all elements $g\in N_G(\UUU)$ such that 
$[g,C_V(\UUU)]\le[\UUU,V]$. By Proposition \ref{dim(p-1)}(a), there is a 
$\Z_pG$-lattice $\Lambda$ such that $\Lambda/p\Lambda\cong V$ as 
$\F_pG$-modules. Set $A=(\Q_p/\Z_p)\otimes_{\Z_p}\Lambda$: a discrete 
$G$-$p$-torus where $\Omega_1(A)\cong V$ as $\F_pG$-modules 
(see Proposition \ref{tori&lattices}). To simplify notation, we identify 
$V=\Omega_1(A)$. Set $S=A\rtimes\UUU$. Set $Z=Z(S)=C_A(\UUU)$, 
$S'=[S,S]=[\UUU,A]$, and $Z_0=Z\cap S'$. 

We next check that conditions (a)--(d) in Theorem \ref{t:s/a} all hold. 
Conditions (a) and (d) follow immediately from \starinf, and (b) 
($|Z_0|=p$) was shown in Lemma \ref{min.act.|Z0|=p}(b).

Assume $1\ne B\le Z$ is $G$-invariant. If $\rk(A)=p-1$, then $Z=Z_0$ 
has order $p$, so $B=Z_0$. Otherwise, by \starinf, $t=0$, and $V$ contains 
no $1$-dimensional $\F_pG$-submodule. Thus $\dim(\Omega_1(B))\ge2$, so 
$\dim(V)=\rk(A)\ge p+1$. If $\dim(V)\ge p+2$, then $V$ is simple by 
\cite[Proposition 3.7(c)]{indp2}, while if $\dim(V)=p+1$, then $V$ contains 
no nontrivial $\F_pG$-submodule with trivial $\UUU$-action by Lemma 
\ref{rk(V)=p+1}. Thus $B=Z_0$, and this proves condition \ref{t:s/a}(c).

By Theorem \ref{t:s/a}, there is a unique simple fusion system $\calf$ over 
$S$ such that $G=\autf(A)$, and $\EE\calf\sminus\{A\}=\calh$ (if $t=-1$) or 
$\calb$ (if $t=0$). Since $A$ is unique (up to isomorphism of 
$\Z_pG$-modules) by Lemma \ref{unique-G-p-torus}, this shows that $\calf$ 
is uniquely determined by $V$. 

\smallskip

\noindent\textbf{(c) } If $\calf$ is realized by a compact 
Lie group or a $p$-compact group, then by Proposition \ref{G&F} and since 
all elements in $S$ are $\calf$-conjugate to elements in $A$, $\calf$ is 
realized by a connected, simple $p$-compact group, and the action of the 
Weyl group $G=\autf(A)$ on $\Q\otimes_{\Z}\Hom(A,\Q_p/\Z_p)$ is irreducible 
as a group generated by pseudoreflections. Using the list of 
pseudoreflection groups and their realizability over $\Q_p$ compiled by 
Clark \& Ewing \cite{ClarkEwing}, as well as the assumption that 
$v_p(|G|)=1$, we see that $G$ must be one of the groups listed in Table 
\ref{tbl:inf-real}, or else one of the other groups $G(m,d,n)$ (of index 
$d$ in $C_m\wr\Sigma_n$) for $d\mid m\mid(p-1)$ with $d<m$. The latter are 
eliminated by the condition $G=O^{p'}(G)\cdot\mu_A^{-1}(\Delta_0)$ in 
(d.ii) (i.e., the fusion systems of the corresponding $p$-compact groups 
are not simple), and so we are left with the groups listed in the table. 

Since a $p$-compact group is determined by its Weyl group by \cite[Theorem 1.1]{AGMV}, 
it remains only to check, when $\rk(A)=p-1$ and based on the constructions of these groups, 
whether $\calb\subseteq\EE\calf$ or $\calh\subseteq\EE\calf$. This situation occurs only 
in the last four cases listed in the table, in which cases the $p$-compact group was 
constructed by Aguad\'e \cite[\S\S\,5--7,\,10]{Aguade}, and 
the use of $\SU(p)$ in his construction shows that extraspecial groups 
of order $p^3$ and exponent $p$ appear as essential subgroups.

In those cases where a torsion linear group is given in Table \ref{tbl:inf-real}, 
it is a union of a sequence of finite groups that by Table \ref{tbl:type3} realize 
a sequence of finite fusion subsystems of $\calf$. 
\end{proof}

The different situations handled in Theorem \ref{ThB} are partly 
summarized in Table \ref{tbl:inf.case}.
	\begin{table}[ht]
	\[ \renewcommand{\arraystretch}{1.5}\renewcommand{\arraycolsep}{1mm}
	\newcommand{\halfup}[1]{\raisebox{2.2ex}[0pt]{$#1$}}
	\begin{array}{|c|c|c|c|c|} \hline
	\dim(V) & \EE\calf\sminus\{A\} & \mu_A(G^\vee) & G= & 
	\textup{Condition} \\\hline
	& \calh &   \ge \Delta_{-1} & O^{p'}(G)\cdot
	\mu_A^{-1}(\Delta_{-1}) & - \\\cline{2-5}
	\halfup{p-1} &  \calb & \ge\Delta_0  & & \raisebox{-2pt}{\text{$V$ 
	contains no $1$-dimensional}} \\\cline{1-3}
	\ge p &  \calb & =\Delta_0 & \halfup{O^{p'}(G)\mu_A^{-1}(\Delta_0)} & 
	\raisebox{2pt}{\text{$\F_pG$-submodule}}\\\hline
	\end{array} \]
	\caption{} \label{tbl:inf.case}
	\end{table}

\section{Examples}
\label{s:examples}

Recall Definition \ref{d:min.act.}: for a given prime $p$, $\GG$ is the class of 
finite groups $G$ with $\UUU\in\sylp{G}$ of order $p$ and not normal, and 
$\GGG$ is the class of those $G\in\GG$ such that $\Aut_G(\UUU)=\Aut(\UUU)$. 
It remains now to describe explicitly which finite groups $G\in\GGG$ and 
$\F_pG$-modules $V$ can appear in Theorems \ref{ThA} and \ref{ThB}. 
This follows immediately from the work already done in \cite{indp2}, 
and is stated in Proposition \ref{p:reps} and Table \ref{tbl:reps}. As 
in \cite{indp2}, when $p$ is a fixed prime, we define, for each odd 
integer $i$ prime to $p$,
        \[ \Delta_{i/2} = \{(r^2,r^i)\,|\,r\in(\Z/p)^\times\}\,. \]
(Compare with the definition of $\Delta_i$ in Notation \ref{n:not2}.)

\begin{Prop} \label{p:reps}
Assume that $G\in\GGG$, and that $V$ is a faithful, minimally active, 
indecomposable $\F_pG$-module such that $\dim(V)\ge p-1$. If $\dim(V)\ge 
p$, then assume also that $\mu_V(G^\vee)\ge\Delta_0$; and if $\dim(V)=p$, 
then assume that $V$ contains no $1$-dimensional $\F_pG$-submodule. Then 
either 
\begin{enuma} 
\item the image of $G$ in $\PGL(V)$ is not almost simple, and $G\le\4G$ 
with the given action on $V$ for one of the pairs $(\4G,V)$ listed in Table 
\ref{tbl:reps} with no entry $G_0$; or 

\item the image of $G$ in $\PGL(V)$ is almost simple, and $G_0\le 
G\le\4G$ with the given action on $V$ for one of the 
triples $(G_0,\4G,V)$ listed in Table \ref{tbl:reps}.

\end{enuma}
In all cases, the entry under $\dim(V)$ gives the dimensions of the 
composition factors of $V$; thus a single number means that $V$ is simple. 
\end{Prop}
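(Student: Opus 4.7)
The plan is to reduce the statement to the classification of faithful, minimally active, indecomposable $\F_pG$-modules already carried out in \cite{indp2}. The earlier paper lists, for each finite group $G\in\GG$, all such modules $V$, typically organized by whether the projective image of $G$ in $\PGL(V)$ is almost simple or not, and by the dimension regime (separating $\dim(V)\le p-1$, $\dim(V)=p$, and $\dim(V)\ge p+1$, since these behave quite differently as shown in Lemma \ref{min.act.props}). So the first step is simply to invoke that classification to produce a preliminary table of candidate triples $(G_0,\4G,V)$ or pairs $(\4G,V)$, and then restrict $G\in\GGG$ by imposing the additional condition $\Aut_G(\UU)=\Aut(\UU)$, which eliminates those entries where the normalizer of a Sylow $p$-subgroup is too small.

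Next I would impose the two hypothesis-specific constraints. The assumption $\dim(V)\ge p-1$ discards the many low-dimensional modules from \cite{indp2} and leaves essentially only the entries that reappear in Tables \ref{tbl:(d)} and \ref{tbl:type3} of the present paper. When $\dim(V)=p$, I need to check case by case which of the surviving modules admit a $1$-dimensional $\F_pG$-submodule; by \cite[Proposition 3.7]{indp2} and Lemma \ref{l:top&bottom}(a), the only $p$-dimensional indecomposables with such a submodule are the projective covers of the trivial module in the Aguad\'e-type groups, and these can be struck off. When $\dim(V)\ge p$, the condition $\mu_V(G^\vee)\ge\Delta_0$ translates, via the definition of $\mu_V$ in Notation \ref{n:not2} and the identification in Lemma \ref{l:top&bottom}(a), into the condition that some element normalizing $\UU$ acts on $C_V(\UU)$ with a specific scalar; this rules out the subgroups $G(m,d,n)\le\Sigma_p\wr C_m$ with $d<m$ that would otherwise appear, matching the exclusion already noted in the proof of Theorem \ref{ThB}(c).

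Finally I would organize the surviving modules into the two cases (a) and (b) according to whether the image of $G$ in $\PGL(V)$ is almost simple, and record in the table both the maximal admissible $\4G$ (the normalizer of the module structure) and the minimal $G_0$ needed in the almost simple case so that $V$ remains minimally active indecomposable with the correct Sylow normalizer behavior. The dimensions of the composition factors of $V$ are already recorded in \cite{indp2}, and under our hypothesis $\dim(V)\ge p-1$ the module is either simple or one of a small number of explicitly classified non-simple examples (of dimension exactly $p-1$ for the cases with $\SL_2(p)\le G$ or $\PSL_2(p)\le G$, etc.\ as mentioned in Example \ref{ex:fin(abc)}(a)), and these give the multi-entry rows of the table.

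The main obstacle is essentially bookkeeping: cross-checking each entry produced by the classification in \cite{indp2} against the conditions on $\mu_V(G^\vee)$ and on $1$-dimensional submodules, and verifying that the entries in Table \ref{tbl:reps} match exactly (no omissions, no duplicates). There is no new representation-theoretic input beyond what was proved in \cite{indp2}; the proposition is a restructuring and filtering of the earlier classification tailored to the hypotheses needed for Theorems \ref{ThA} and \ref{ThB}.
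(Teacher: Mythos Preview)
Your approach is essentially the same as the paper's: start from the classification in \cite[Table 4.1]{indp2} and filter according to the three hypotheses ($\dim(V)\ge p-1$; no $1$-dimensional submodule when $\dim(V)=p$; $\mu_V(G^\vee)\ge\Delta_0$ when $\dim(V)\ge p$). The paper's proof is exactly this bookkeeping, done in a couple of sentences.

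There is one small gap you should be aware of. The table in \cite{indp2} is restricted to modules of dimension at least $3$, so pure filtering will not produce the row for $p=3$ with $\dim(V)=2$ in Table \ref{tbl:reps}. The paper handles this separately: since $\dim(V)\ge p-1$, the only way $\dim(V)=2$ can occur is $p=3$, and then $G\le\GL_2(3)$ is solvable, so its image in $\PGL(V)$ is not almost simple and the pair falls under case (a). Your proposal, as written, would miss this entry.

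A couple of your side remarks are also off target. The exclusion of the groups $G(m,d,n)$ with $d<m$ belongs to the proof of Theorem \ref{ThB}(c) (where one is asking which fusion systems come from $p$-compact groups), not to the filtering here; and your description of which $p$-dimensional modules have a $1$-dimensional submodule is speculative rather than something you can read off Lemma \ref{l:top&bottom}. But these do not affect the validity of the main argument, which is just: filter \cite[Table 4.1]{indp2}, and add the $p=3$ row by hand.
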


\begin{table}[ht]
\begin{small} 
\[ \renewcommand{\arraystretch}{1.4}\addtolength{\arraycolsep}{-2pt}
\newcommand{\halfup}[2][2.2]{\raisebox{#1ex}[0pt]{$#2$}}
\newcommand{\up}[2][2.2]{\raisebox{#1ex}[0pt]{$#2$}}
\newcommand{\sm}[1]{\textup{\begin{small}#1\end{small}}}
\newcommand{\Sm}[1]{\textup{\begin{Small}#1\end{Small}}}
\newcommand{\bnote}[1]{{}^{{\textup{[#1]}}}}
\newcommand{\Y}{\textup{yes}}
\begin{array}{|c|c|c|c|c|c|} \hline\hline
p & G_0 & \dim(V) & \4G & \mu_V(\4G\7) & \mu_V(G_0\7) \\\hline \hline

 & \SL_2(p)\textup{ or }\PSL_2(p) & p-1,~ p & \GL_2(p) \textup{ or}\hfill & \Delta & 
\Delta_{-1/2},~\frac12\Delta_0  \\ \cline{3-3}\cline{5-6}
\halfup{p} & \up[0.5]{(p\ge5)} & (p{-}n{-}1)/n & 
\up[0.5]{\PGL_2(p)\times C_{p-1}} & \Delta & \{(u^2,u^{n-1})\}  
\\\hline

 &  & (p{-}2)/1 &  & \Delta & \frac12\Delta_0 \\\cline{3-3}\cline{5-6}
\halfup{p}  & \halfup{A_p \quad(p\ge5)} & 1/(p{-}2) & 
\halfup{\Sigma_p\times C_{p{-}1}} & \Delta & \frac12\Delta_{-1} \\\hline

p & A_{p+1}\quad(p\ge5) & p & \Sigma_{p+1}\times C_{p{-}1} & \Delta & 
\frac12\Delta_0 \\\hline
p & A_n ~\Sm{$(p{+}2\le n\le 2p{-}1)$} & n-1 & \Sigma_n\times C_{p{-}1} & \Delta_0 
& \frac12\Delta_0  \\\hline
p & \textbf{---} & n & C_{p-1}\wr S_n~\Sm{$(n\ge 
p)$} & \Delta & \textbf{---}  \\\hline\hline
3 & \textbf{---} & 2 & \GL_2(3) & \Delta & \textbf{---}  \\\hline\hline
5 & 2\cdot A_6 & 4 & 4\cdot S_6  & \Delta  & \Delta_{1/2}  \\\hline
5 & \textbf{---} & 4 & (C_4\circ2^{1+4}).S_6 & \Delta & \textbf{---}  \\\hline
5 & \PSp_4(3)=W(E_6)' & 6 & W(E_6)\times4 & \Delta_0.2 & \frac12\Delta_0  \\\hline
5 & \Sp_6(2)=W(E_7)' & 7 & G_0\times4 & \Delta_0 & \frac12\Delta_0  \\\hline
\hline
7 & 6\cdot\PSL_3(4) & 6 & G_0.2_1 & \Delta  & 
\F_p^{\times2}\times\F_p^\times  \\\hline
7 & 6_1\cdot\PSU_4(3) & 6 & G_0.2_2 & \Delta  & 
\F_p^{\times2}\times\F_p^\times  \\\hline
7 & \PSU_3(3) & 6 & G_0.2\times6 & \Delta & \frac12\Delta_1  \\\hline
7 & \PSU_3(3) & 7 & G_0.2\times6 & \Delta & \frac12\Delta_0  \\\hline
7 & \SL_2(8) & 7 & G_0:3\times6 & \Delta  & \frac13\Delta_1  \\\hline
7 & \Sp_6(2)=W(E_7)' & 7 & G_0\times6 & \Delta & \Delta_3  \\\hline
7 & 2\cdot\Omega_8^+(2)=W(E_8)' & 8 & W(E_8)\times3 & \Delta_0.2 & 
\Delta_3  \\\hline
\hline
11 & \PSU_5(2) & 10 & G_0.2\times10 & \Delta  & \frac12\Delta_2  \\\hline
11 & 2\cdot M_{12} & 10,~10 & G_0.2\times5 & \Delta &
\Sm{$\Delta_{1/2} \,,~ \Delta_{7/2}$}  \\\hline
11 & 2\cdot M_{22} & 10,~10 & G_0.2\times5 & \Delta & 
\Sm{$\Delta_{1/2} \,,~ \Delta_{7/2}$}  \\\hline
\hline
13 & \PSU_3(4) & 12 & G_0.4\times12 & \Delta  & \frac13\Delta_1  \\\hline
\hline
\end{array}
\]
\end{small}
\caption{Pairs $(G,V)$, where $G\in\GGG$, $G\le\overline{G}$, $G\ge G_0$ 
when a quasisimple group $G_0$ is given, and where $V$ is a minimally 
active indecomposable module of dimension at least $p-1$, such that 
$\mu_V(G^\vee)\ge\Delta_0$ if $\dim(V)\ge p$, and such that $V$ does not 
have a 1-dimensional submodule if $\dim(V)=p$. In all cases, $\dim(V)$ 
gives the dimensions of the composition factors in $V$. Also, 
$\F_p^{\times2}=\{r^2\,|\,r\in\F_p^\times\}$. The notation $B.C$, $B:C$, and $B\cdot C$ 
for extensions is as in the \emph{Atlas} \cite[p. xx]{Atlas}, as well as the subscripts 
used to make precise certain central extensions or automorphism groups.}
\label{tbl:reps}
\end{table}

\begin{proof} We take as starting point the information in \cite[Table 
4.1]{indp2}. We drop from that table those cases where $\dim(V)<p-1$, 
and also those cases where $\dim(V)\ge p$ and $\mu_V(G^\vee)\ngeq\Delta_0$, 
or where $\dim(V)=p$ and $V$ contains a $1$-dimensional $\F_pG$-submodule. 

Since the table in \cite{indp2} is restricted to representations of 
dimension at least $3$, we must add those representations of dimension $2$ 
that appear. Since $\dim(V)\ge p-1$, this occurs only for $p=3$, and thus 
$G\le\GL_2(3)$. Since this group is solvable, the image of $G$ in $\PGL(V)$ 
cannot be almost simple, and so this case is covered by the unique 
row of the table restricted to $p=3$. 
\end{proof}

We now give two examples, in terms of the pairs $(\4G,V)$ that appear in 
Table \ref{tbl:reps}, to illustrate how this table can be used to list 
explicit fusion systems as described by Theorems \ref{ThA} and \ref{ThB}. 
When $V$ is an $\F_p$-vector space, we set 
$\Aut_{\scal}(V)=Z(\Aut(V))\cong\F_p^\times$: the group of automorphisms 
given by scalar multiplication. 

\newcommand{\Star}{--}  

\begin{Ex} \label{ex:Vsimple}
Fix an odd prime $p\ge5$ and a finite group $\4G\in\GGG$, and choose 
$\UUU\in\sylp{\4G}$. Let $V$ be a simple, $(p-1)$-dimensional, minimally active 
$\F_p\4G$-module, and assume that $\Aut_{\4G}(V)\ge\Aut_{\scal}(V)$. Then 
$\mu_V(\4G^\vee)=\Delta$ by \cite[Proposition 3.13(a)]{indp2}. Let $\Lambda$ 
be a $\Z_p\4G$-lattice such that $\Lambda/p\Lambda\cong V$ (see Proposition 
\ref{dim(p-1)}(a)). 
\begin{enuma}

\item By case (i\Star a) in Table \ref{tbl:ThA}, for each $k\ge2$, there is a 
unique simple fusion systems $\calf$ over 
$(\Lambda/p^k\Lambda)\rtimes\UUU$, with 
$\Aut_{\calf}(\Lambda/p^k\Lambda)\cong\4G$, and such that 
$\EE{\calf}=\{A\}\cup\calh_0\cup\calb_*$. 

\item By case (iv$'$\Star a) in Table \ref{tbl:ThA}, for each $k\ge2$ and each 
$\emptyset\ne I\subseteq\{0,1,\dots,p-1\}$, there is a 
unique simple fusion system $\calf_{I}$ over 
$(\Lambda/p^k\Lambda)\rtimes\UUU$, with 
$\Aut_{\calf_{I}}(\Lambda/p^k\Lambda)=G_0\mu_V^{-1}(\Delta_0)$, and such that 
$\EE{\calf_{I}}=\{A\}\cup\bigl(\bigcup_{i\in I}\calb_i\bigr)$. 

\item By case (iii$''$\Star a) in Table \ref{tbl:ThA}, for each $k\ge2$, there is a 
unique simple fusion system $\calf$ over $(\Lambda/p^k\Lambda)\rtimes\UUU$, with 
$\Aut_{\calf}(\Lambda/p^k\Lambda)=G_0\mu_V^{-1}(\Delta_{-1})$, and such that 
$\EE{\calf}=\{A\}\cup\calh_0$. 

\item Set $A=(\Q_p/\Z_p)\otimes_{\Z_p}\Lambda$, regarded as a discrete 
$\4G$-p-torus. By Theorem \ref{ThB}, there are unique simple fusion systems 
$\calf_B$ and $\calf_H$ over $A\rtimes\UUU$, with $\Aut_{\calf_B}(A)\cong 
O^{p'}(\4G)\mu_V^{-1}(\Delta_0)$ and $\EE{\calf_B}=\{A\}\cup\calb$, and 
$\Aut_{\calf_H}(A)\cong O^{p'}(\4G)\mu_V^{-1}(\Delta_{-1})$ and 
$\EE{\calf_H}=\{A\}\cup\calh$. 

\end{enuma}
Since $V$ is simple (since there is no $(p-2)$-dimensional submodule), none 
of the cases (ii\Star a), (iii$'$\Star a), or (iii$''$\Star c) in Table \ref{tbl:ThA} can 
occur with $G_0\le G\le\4G$ and $V\cong\Omega_1(A)$. Since $\dim(V)<p$, 
case (iv$''$\Star b) in Table \ref{tbl:fin(abc)} cannot occur. 
\end{Ex}

The last column in Table \ref{tbl:reps} can be used to help determine the 
subgroups $O^{p'}(\4G)\mu_V^{-1}(\Delta_t)$ for $i=0,-1$. For example:
\begin{itemize}

\item When $p=5$, $G_0\cong2\cdot A_6$, and $\dim(V)=4$, we have 
$\mu_V(G_0^\vee)=\Delta_{1/2}$: the subgroup of order $4$ in 
$\Delta=(\Z/5)^\times\times(\Z/5)^\times$ generated by the class of $(4,2)$. 
Since $\Delta_{1/2}\Delta_t=\Delta$ for $t=0,-1$, we have 
$G_0\mu_V^{-1}(\Delta_t)=\4G$.

\item When $p=7$, $G_0\cong6\cdot\PSL_3(4)$ or $6\cdot\PSU_4(3)$, and 
$\dim(V)=6$, we have that $\mu_V(G_0^\vee)$ has index $2$ in $\Delta$ and 
does not contain $\Delta_t$ for any $t$. So in all cases, 
$G_0\mu_V^{-1}(\Delta_t)=\4G$: an extension of the form $G_0.2$.

\item If $p=7$, $G_0\cong\PSU_3(3)$, and $\dim(V)=6$, then 
$\mu_V(G_0^\vee)=\frac12\Delta_1$: a subgroup of order $3$ that intersects 
trivially with $\Delta_t$ for $t=0,-1$. So in this case, 
$G_0\mu_V^{-1}(\Delta_t)$ has the form $G_0.2\times3$ (where the precise 
extension depends on $t$). 

\end{itemize}

We now look at one case where the $\F_pG$-module $V$ is not simple. 

\begin{Ex} \label{ex:V0<V}
Let $p$, $\4G$, $\UUU$, $V$, and $\Lambda$ be as in Example \ref{ex:Vsimple}, 
except that we assume that $V$ is indecomposable but not simple, and 
contains a $(p-2)$-dimensional submodule $V_0<V$. Let $\Lambda_0<\Lambda$ be 
a $\Z_p\4G$-sublattice of index $p$ such that $\Lambda_0/p\Lambda\cong V_0$. 
\begin{enuma}

\item[$\bullet$ ] There are simple fusion systems exactly as described in 
cases (a), (b), (c), and (d) in Example \ref{ex:Vsimple}. In 
addition, we have:

\setcounter{enumi}{4}

\item By case (ii\Star a) in Table \ref{tbl:ThA}, for each $k\ge2$, there is a 
unique simple fusion systems $\calf$ over $(\Lambda_0/p^k\Lambda)\rtimes\UUU$, 
with $\Aut_{\calf}(\Lambda_0/p^k\Lambda)\cong\4G$, 
and such that $\EE{\calf}=\{A\}\cup\calb_0\cup\calh_*$. 

\item By case (iii$'$\Star a) in Table \ref{tbl:ThA}, for each $k\ge2$ and each 
$\emptyset\ne I\subseteq\{0,1,\dots,p-1\}$, there is a unique simple fusion 
system $\calf_{I}$ over $(\Lambda_0/p^k\Lambda)\rtimes\UUU$, with 
$\Aut_{\calf_{I}}(\Lambda_0/p^k\Lambda)=O^{p'}(\4G)\mu_V^{-1}(\Delta_{-1})$, 
and such that $\EE{\calf_{I}}=\{A\}\cup\bigl(\bigcup_{i\in 
I}\calh_i\bigr)$.

\end{enuma}
Since there is no $1$-dimensional submodule, case 
(iii$''$\Star c) in Table \ref{tbl:ThA} cannot occur with 
$G_0\le G\le\4G$ and $V\cong\Omega_1(A)$.
Since $\dim(V)<p$, case (iv$''$\Star b) in Table \ref{tbl:fin(abc)} cannot occur. 
\end{Ex}

If we chose to restrict the above examples to the case $\dim(V)=p-1$, this 
is because when $\dim(V)$ is larger, there are far fewer possibilities. By 
Table \ref{tbl:ThA}, $A\cong\Lambda/p^k\Lambda$ for some $k\ge2$, and 
$\EE\calf=\{A\}\cup\calb_0$. Similarly, by Table \ref{tbl:inf.case}, there 
is only one possibility for $\calf$ when $A$ is a discrete $p$-torus with 
$\Omega_1(A)\cong V$.

\appendix

\section{Background on groups and representations}

We collect here some miscellaneous group theoretic results which were 
needed earlier. We begin with a few elementary properties of discrete 
$p$-toral groups that are easily reduced to the analogous statements 
about finite $p$-groups.

\begin{Lem} \label{mod-Fr}
Fix a prime $p$, a discrete $p$-toral group $P$, and a finite group 
$G\le\Aut(P)$ of automorphisms of $P$.  Let 
$1=P_0\nsg{}P_1\nsg\cdots\nsg{}P_m=P$ be a sequence of subgroups, all 
normal in $P$ and normalized by $G$.  Let $H\le{}G$ be the subgroup of 
those $g\in{}G$ which act via the identity on $P_i/P_{i-1}$ for each $1\le 
i\le m$.  Then $H$ is a normal $p$-subgroup of $G$, and hence $H\le 
O_p(G)$. 
\end{Lem}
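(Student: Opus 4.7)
The plan is to establish normality of $H$ in $G$ directly, and then reduce the assertion that $H$ is a $p$-group to the corresponding classical fact about finite $p$-groups (cf.\ \cite[Theorem 5.3.2]{Gorenstein}) by exhausting $P$ with $G$-invariant finite $p$-subgroups.

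First I would verify that $H\nsg G$. Since each $P_i$ is normalized by $G$, the group $G$ acts on the subquotient $P_i/P_{i-1}$, giving a homomorphism $\rho_i\:G\too\Aut(P_i/P_{i-1})$. The subgroup $H$ is by definition $\bigcap_{i=1}^m\Ker(\rho_i)$, hence normal in $G$.

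Next I would construct, for each sufficiently large $k$, a $G$-invariant finite $p$-subgroup $P^{(k)}\le P$ such that $P=\bigcup_k P^{(k)}$. Let $T\nsg P$ be the identity component, which is characteristic in $P$ and therefore $G$-invariant, and fix a finite set $F\subseteq P$ of coset representatives for $P/T$. Since $G$ is finite, $G\cdot F$ is finite, and every element of $P$ has $p$-power order, so for $k$ large we may set $P^{(k)}=\Gen{\Omega_k(T)\cup (G\cdot F)}$. Each $P^{(k)}$ is finite (generated by finitely many elements of bounded $p$-power order inside the nilpotent group $\gen{G\cdot F}\cdot\Omega_k(T)$), $G$-invariant by construction, and any element $t\cdot f_{i_1}\cdots f_{i_j}\in P$ (with $t\in T$) lies in $P^{(k)}$ once $k$ is large enough that $t\in\Omega_k(T)$.

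Finally I would apply the finite case. The chain $\{P_i\cap P^{(k)}\}_{i=0}^m$ is a normal series of $P^{(k)}$ with each term normalized by $G$, and the inclusion induces an embedding $(P_i\cap P^{(k)})/(P_{i-1}\cap P^{(k)})\hookrightarrow P_i/P_{i-1}$ on which $H$ acts trivially. By the classical result for finite $p$-groups, the image of $H$ in $\Aut(P^{(k)})$ is a $p$-group, so for each $g\in H$, the restriction $g|_{P^{(k)}}$ has $p$-power order. Since the order of $g$ as an automorphism of $P$ is finite (as $G$ is finite) and equals $\sup_k|g|_{P^{(k)}}|$, it is itself a $p$-power. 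Thus $H$ is a $p$-group, hence $H\le O_p(G)$. The only real technicality is the construction of the exhausting family $P^{(k)}$; once that is in hand, the rest is bookkeeping plus the standard finite $p$-group lemma.
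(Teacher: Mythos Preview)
Your proof is correct, and the paper does not actually give an argument --- it simply cites \cite[Lemma~1.7(a)]{BLO3}. Your reduction to the classical finite case via an exhausting family of $G$-invariant finite $p$-subgroups is exactly the natural approach, and is essentially how the cited result is proved as well.

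One small point: your justification that $P^{(k)}$ is finite (``generated by finitely many elements of bounded $p$-power order inside the nilpotent group\ldots'') is slightly handwavy. The cleanest way to phrase it is that discrete $p$-toral groups are locally finite: if $Q\le P$ is finitely generated, then $Q/(Q\cap T)$ embeds in the finite group $P/T$, so $Q\cap T$ has finite index in $Q$ and is therefore finitely generated (Schreier), hence a finitely generated subgroup of the torsion abelian group $T$, hence finite. This gives finiteness of $P^{(k)}$ immediately without needing to discuss nilpotence or bounded exponent.
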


\begin{proof}  See, e.g., \cite[Lemma 1.7(a)]{BLO3}.  
\end{proof}

\begin{Lem} \label{l0:xx}
Fix an abelian group $A$ each of whose elements has $p$-power order. Let 
$G\le\Aut(A)$ be a finite group of automorphisms, and choose $\UUU\in\sylp{G}$.  
Then
	\[ C_A(\UUU)\le[G,A] \ \Longleftrightarrow\  C_A(G)\le[G,A] 
	\ \Longleftrightarrow\ C_A(G)\le[\UUU,A]\,. \]
\end{Lem}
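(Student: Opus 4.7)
Label the three conditions in the statement $(1)$, $(2)$, $(3)$ from left to right. The implications $(1) \Rightarrow (2)$ and $(3) \Rightarrow (2)$ are immediate from $C_A(G) \le C_A(\UUU)$ and $[\UUU,A] \le [G,A]$ respectively. It remains to establish the reverse implications $(2) \Rightarrow (1)$ and $(2) \Rightarrow (3)$ by transfer arguments, exploiting the key fact that $k := [G:\UUU]$ is prime to $p$ and so acts as an automorphism of the $p$-torsion abelian group $A$ (and of any of its subquotients).

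For $(2) \Rightarrow (1)$, the plan is to work with the invariants: given $x \in C_A(\UUU)$, pick left coset representatives $g_1,\dots,g_k$ of $\UUU$ in $G$ and set $\tau(x) = \sum_i g_i(x)$. Standard verifications show that $\tau(x)$ is independent of the choice of representatives (using $x \in A^\UUU$) and lies in $C_A(G)$. Since $g_i(x) - x \in [G,A]$ for each $i$, one has $\tau(x) \equiv kx \pmod{[G,A]}$; assuming $(2)$ then gives $kx \in [G,A]$, whence $x \in [G,A]$ by invertibility of $k$.

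For $(2) \Rightarrow (3)$, the plan is to dualise: construct a restriction map $\rho \colon A/[G,A] \to A/[\UUU,A]$ by the formula $\rho(\bar{x}) = \sum_i \overline{r_i(x)}$, where $r_1,\dots,r_k$ are right coset representatives for $\UUU \backslash G$. The same formula defines a map $A \to A/[\UUU,A]$ that is independent of the choice of the $r_i$ (since changing $r_i$ to $h_i r_i$ for $h_i \in \UUU$ shifts each summand by $(h_i-1)(r_i(x)) \in [\UUU,A]$), and that descends to $A/[G,A]$ (because for $x = (g-1)(a)$, right multiplication by $g$ permutes the right cosets $\UUU r_i$ up to factors in $\UUU$, so the resulting sum vanishes in $A/[\UUU,A]$). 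Once $\rho$ is constructed, for $y \in C_A(G)$ one has $\rho(\bar y) = k\bar y$ in $A/[\UUU,A]$ (since each $r_i(y) = y$); then assuming $(2)$, $\bar y = 0$ in $A/[G,A]$ forces $\rho(\bar y) = 0$, so $k\bar y = 0$ in $A/[\UUU,A]$, and invertibility of $k$ gives $y \in [\UUU,A]$. The main technical hurdle is the well-definedness of $\rho$, which requires care because $\UUU$ is typically not normal in $G$ so $[\UUU,A]$ need not be $G$-stable; however, both verifications reduce to routine bookkeeping with the permutation action of $G$ on the right cosets of $\UUU$.
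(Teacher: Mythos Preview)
Your proof is correct. The two transfer arguments are exactly the standard corestriction maps on $H^0$ and $H_0$ (fixed points and coinvariants), worked out by hand; the well-definedness checks for $\rho$ are the only places requiring care, and you handle the bookkeeping with right cosets correctly.

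The paper does not give a direct argument but simply cites \cite[Lemma~1.9]{indp2} for the finite case and observes that the same proof goes through for infinite $p$-torsion $A$. Your approach is presumably close to what the cited proof does (the transfer/averaging idea is the natural one here), but you have the advantage of being self-contained and of making explicit why the infinite $p$-torsion case requires no extra work: the only property of $A$ used is that multiplication by the prime-to-$p$ index $k=[G:\UUU]$ is invertible on $A$ and on every subquotient, which holds for any abelian group all of whose elements have $p$-power order.
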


\begin{proof} This is shown in \cite[Lemma 1.9]{indp2} when $A$ is a finite 
abelian $p$-group, and the proof given there also applies when $A$ is 
infinite and $p$-power torsion. 
\end{proof}

\begin{Lem} \label{1abel}
Let $S$ be a nonabelian discrete $p$-toral group, with abelian subgroup 
$A<S$ of index $p$, and set $Z=Z(S)=C_S(A)$ and $S'=[S,S]=[S,A]$. 
Then $S'\cong A/Z$. Also, $A$ is the unique abelian subgroup of index $p$ in 
$S$ if and only if $|S'|=|A/Z|>p$. 
\end{Lem}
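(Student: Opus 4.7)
The plan is to exhibit a surjective commutator homomorphism $A\twoheadrightarrow S'$ with kernel $Z$, then deduce the uniqueness dichotomy by analyzing $S/Z$. First I would observe that for nonabelian $S$ one has $Z(S)\le A$: otherwise $C_S(A)\supsetneq A$ would force $C_S(A)=S$, so that $A\le Z(S)$ and $S/Z(S)$ would be a quotient of $S/A\cong C_p$, making $S$ abelian. Thus $Z=Z(S)=C_A(S)$, consistent with Notation \ref{n:not1}. Fixing $x\in S\smallsetminus A$, I would define $\varphi\colon A\to A$ by $\varphi(a)=[x,a]$; since $A$ is abelian and stable under conjugation by $x$, $\varphi$ is a homomorphism with kernel $C_A(x)=C_A(S)=Z$. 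To identify $\Im(\varphi)=[x,A]$ with $S'$, I would first show $S'=[S,A]$ (because $S/A$ is abelian, and a short computation gives $[bx^j,a]=[x^j,a]$ for $b,a\in A$), and then apply the identity $[x^i,a]=[x,x^{i-1}ax^{-(i-1)}]\cdot[x^{i-1},a]$ inductively to obtain $[S,A]=[x,A]$. The first isomorphism theorem then yields $S'\cong A/Z$.

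For the uniqueness criterion I would argue both contrapositives. If $B<S$ is a second abelian subgroup of index $p$ distinct from $A$, then $S=AB$ and $A\cap B\le C_S(A)\cap C_S(B)=C_S(AB)=Z$; since $[A:A\cap B]=[AB:B]=[S:B]=p$, this forces $[A:Z]\le p$. Combined with $S'\ne1$ (equivalently $|A/Z|\ge p$, since $S$ is nonabelian), one obtains $|A/Z|=p$. Conversely, suppose $|A/Z|=p$, so $|S/Z|=p^2$; since $S/Z$ cyclic would make $S$ abelian, $S/Z\cong C_p\times C_p$, which has $p+1$ subgroups of order $p$. Each pulls back to an index-$p$ subgroup $H=Z\langle h\rangle$ of $S$, and any such $H$ is abelian because $Z\le Z(S)$ centralizes $h$. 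Thus $S$ has at least $p+1\ge 3$ abelian subgroups of index $p$, contradicting uniqueness of $A$.

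The argument does not really use the discrete $p$-toral hypothesis beyond $S$ being nonabelian with an abelian subgroup of index $p$, so I do not anticipate any serious obstacle. The one place requiring a little care is the identification $S'=[x,A]$: one must check both that $[x,A]$ is a subgroup (automatic, since it is the image of $\varphi$) and that every commutator $[s_1,s_2]$ with $s_1,s_2\in S\smallsetminus A$ lies in $[x,A]$, which reduces cleanly to commutators of the form $[x^i,a]$ and then to $[x,a]$ via the inductive identity above.
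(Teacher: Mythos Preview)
Your proof is correct and follows essentially the same route as the paper: the paper also defines $\varphi(a)=a-u(a)$ for $u\in\Aut_S(A)$ (your $[x,a]$ up to inversion), reads off $\Ker(\varphi)=Z$ and $\Im(\varphi)=S'$, and handles the uniqueness dichotomy via $S/Z\cong C_p\times C_p$ in one direction and $A\cap B=Z$ in the other. You supply more detail (e.g., the check $Z(S)\le A$ and the reduction $[S,A]=[x,A]$), but the structure of the argument is the same.
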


\begin{proof} Choose $1\ne u\in\Aut_S(A)$, and define $\varphi\:A\too A$ 
by setting $\varphi(a)=a-u(a)$. Then $Z=\Ker(\varphi)$ and $S'=\Im(\varphi)$, 
so $A/Z\cong S'$. 

If $|A/Z|=p$, then $S/Z\cong C_p\times C_p$ (it cannot be cyclic since $S$ 
is nonabelian), and each subgroup of index $p$ in $S$ containing $Z$ is 
abelian. Conversely, if $B$ is a second abelian subgroup of 
index $p$, then $Z=A\cap B$ since $S=AB$, so $|A/Z|=p$.
\end{proof}

We now turn attention to discrete $p$-toral groups and discrete 
$G$-$p$-tori. We start with the well known equivalence between 
discrete $G$-$p$-tori and $\Z_pG$-lattices (see Definition \ref{d:ZG-lattice}).

\begin{Prop} \label{tori&lattices}
Fix a prime $p$ and a finite group $G$. Then there is a natural bijection 
	\beq 
	\xymatrix@C=30pt@R=5pt{
	\left\{\parbox{42mm}{\textup{isomorphism classes of discrete 
	$G$-$p$-tori}}\right\} 
	\ar@<0.5ex>[r]^-{\cong} \ar@<-0.5ex>@{<-}[r] & 
	\left\{\parbox{65mm}{\textup{isomorphism classes of $\Z_pG$-lattices in 
	finitely generated $\Q_pG$-modules}}\right\} \\
	A \ar@{|->}[r] & \Hom_{\Z_p}(\Q_p/\Z_p,A) \\
	(\Q_p/\Z_p)\otimes_{\Z_p}\Lambda \ar@{<-|}[r] & \Lambda
	} \eeq
If $A$ is a discrete $G$-$p$-torus and 
$\Lambda=\Hom_{\Z_p}(\Q_p/\Z_p,A)$, then for each $n\ge1$, evaluation at 
$[1/{p^n}]\in\Q_p/\Z_p$ defines an $\F_pG$-linear isomorphism 
$\Lambda/p^n\Lambda\Right2{\cong}\Omega_n(A)$.
\end{Prop}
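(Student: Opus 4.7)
The plan is to reduce Proposition \ref{tori&lattices} to the classical equivalence between $\Z_p$-free $\Z_p$-modules of finite rank and finite-rank discrete $p$-tori, and then check $G$-equivariance. The key identification is
\[
\Q_p/\Z_p \;=\; \mathrm{colim}_n\,(\Z/p^n),
\]
where $\Z/p^n$ is generated by $[1/p^n]$ and the transition map $\Z/p^n \hookrightarrow \Z/p^{n+1}$ sends $[1]$ to $[p]$.

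First I would establish the main calculation: for any discrete $G$-$p$-torus $A$, applying $\Hom_{\Z_p}(-,A)$ to the colimit above gives
\[
\Hom_{\Z_p}(\Q_p/\Z_p,A) \;=\; \lim_n \Hom_{\Z_p}(\Z/p^n,A) \;=\; \lim_n \Omega_n(A),
\]
as $\Z_pG$-modules, where I identify $\Hom_{\Z_p}(\Z/p^n,A)\cong\Omega_n(A)$ via $f\mapsto f([1])$. Under this identification, the bonding map $\Omega_{n+1}(A)\to \Omega_n(A)$ is multiplication by $p$ (dual to $[1]\mapsto[p]$). If $A\cong(\Z/p^\infty)^r$, then $\Omega_n(A)\cong(\Z/p^n)^r$ with generator $[1/p^n]_i$ corresponding to $[1]_i$, and the transition maps become reduction mod $p^n$; thus $\Lambda\defeq\Hom_{\Z_p}(\Q_p/\Z_p,A)\cong\lim_n(\Z/p^n)^r\cong(\Z_p)^r$ is a $\Z_p$-lattice of rank $r$. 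The $G$-action on $A$ induces a $G$-action on each $\Omega_n(A)$ by functoriality, making $\Lambda$ into a $\Z_pG$-lattice.

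Next I would verify the two constructions are inverse. For $\Lambda\cong(\Z_p)^r$ a $\Z_pG$-lattice, $(\Q_p/\Z_p)\otimes_{\Z_p}\Lambda\cong(\Q_p/\Z_p)^r$ is a discrete $G$-$p$-torus. To check $\Lambda\cong\Hom_{\Z_p}\bigl(\Q_p/\Z_p,(\Q_p/\Z_p)\otimes_{\Z_p}\Lambda\bigr)$, I apply $\Hom_{\Z_p}(\Q_p/\Z_p,-)$ to $\Q_p/\Z_p\otimes_{\Z_p}\Lambda\cong(\Q_p/\Z_p)^r$ and use $\Hom_{\Z_p}(\Q_p/\Z_p,\Q_p/\Z_p)\cong\Z_p$ (a $\Z_p$-linear endomorphism of $\Q_p/\Z_p$ is determined by a compatible system of values $f([1/p^n])\in\Omega_n(\Q_p/\Z_p)=\tfrac1{p^n}\Z_p/\Z_p$, which is an element of $\lim_n\Z/p^n=\Z_p$). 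Conversely, for $A$ a discrete $G$-$p$-torus, the evaluation maps $\Lambda\to\Omega_n(A)$ assemble into a natural $\Z_pG$-linear map $(\Q_p/\Z_p)\otimes_{\Z_p}\Lambda\to A$ that restricts to an isomorphism on each $\Omega_n$ by the rank-$r$ calculation, hence is itself an isomorphism. Naturality and $G$-equivariance are immediate from the constructions, since $G$ acts on both sides through functoriality of $\Hom$ and $\otimes$.

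Finally, for the last statement, evaluation at $[1/p^n]$ is exactly the canonical projection $\Lambda=\lim_m\Omega_m(A)\twoheadrightarrow\Omega_n(A)$ from the inverse limit. In the model $\Lambda\cong(\Z_p)^r$ this is reduction mod $p^n$, so the evaluation map has kernel $p^n\Lambda$ and image $\Omega_n(A)$, inducing the claimed $\F_pG$-linear (in fact $(\Z/p^n)G$-linear) isomorphism $\Lambda/p^n\Lambda\cong\Omega_n(A)$. The main thing to be careful about is the direction of the transition maps in the colimit/limit and the identification of the evaluation map with the structural projection; once this bookkeeping is done, no serious obstacle remains.
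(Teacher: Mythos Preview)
Your proposal is correct and follows essentially the same approach as the paper: both verify that the two functors are mutually inverse via the natural evaluation and unit maps, and both rest on the identification $\Hom_{\Z_p}(\Q_p/\Z_p,\Q_p/\Z_p)\cong\Z_p$. The only cosmetic difference is in the last statement, where the paper applies the short exact sequence $0\to(p^{-n}\Z_p)/\Z_p\to\Q_p/\Z_p\xrightarrow{p^n}\Q_p/\Z_p\to0$ rather than your inverse-limit projection, but these are two ways of packaging the same computation.
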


\begin{proof} If $\Lambda$ is a $\Z_pG$-lattice, then 
$(\Q_p/\Z_p)\otimes_{\Z_p}\Lambda$ is a discrete $G$-$p$-torus, 
and if $A$ is a discrete $G$-$p$-torus, then 
$\Hom_{\Z_p}(\Q_p/\Z_p,A)$ is a $\Z_pG$-lattice. It is an easy 
exercise to show that the natural homomorphisms 
	\[ (\Q_p/\Z_p)\otimes_{\Z_p}\Hom_{\Z_p}(\Q_p/\Z_p,A) 
	\Right6{\textup{eval}} A \]
and
	\[ \Lambda \Right9{\lambda\mapsto(r\mapsto r\otimes\lambda)} 
	\Hom_{\Z_p}\bigl(\Q_p/\Z_p, (\Q_p/\Z_p)\otimes_{\Z_p}\Lambda\bigr) \]
are isomorphisms for each $\Z_pG$-lattice $\Lambda$ and each discrete 
$G$-$p$-torus $A$. The last statement now follows from the 
short exact sequence
	\beq 0 \Right2{} (p^{-n}\Z_p)/\Z_p \Right4{\incl} \Q_p/\Z_p 
	\Right6{(x\mapsto p^nx)} \Q_p/\Z_p \Right2{} 0. \qedhere \eeq
\end{proof}

The next lemma is mostly a well known result in elementary number theory.

\begin{Lem} \label{ZU-modules}
Fix a prime $p$, and let $\UUU$ be a group of order $p$. Let $\zeta$ be a 
primitive $p$-th root of unity, and regard $\Q_p(\zeta)$ 
and $\Z_p[\zeta]$ as $\Z_p\UUU$-modules via some choice of isomorphism 
$\UUU\cong\gen{\zeta}$. 
\begin{enuma} 

\item There are exactly two irreducible $\Q_p\UUU$-modules up to 
isomorphism: a $1$-dimensional module with trivial $\UUU$-action, and a 
$(p-1)$-dimensional module isomorphic to $\Q_p(\zeta)$.

\item The ring $\Z_p[\zeta]$ is a local ring with maximal ideal 
$\pp=(1-\zeta)\Z_p[\zeta]$. Also, $p\Z_p[\zeta]=\pp^{p-1}$.

\item Let $M$ be a $(p-1)$-dimensional irreducible $\Q_p\UUU$-module, and 
let $\Lambda<M$ be a $\Z_p\UUU$-lattice. Then $\Lambda\cong\Z_p[\zeta]$ as 
$\Z_p\UUU$-modules, and hence $\Lambda/p\Lambda\cong\Z_p[\zeta]/\pp^{p-1}$ 
is indecomposable as an $\F_p\UUU$-module.

\item Let $B$ be an infinite abelian discrete $p$-toral group (written 
additively), upon which $\UUU$ acts with $|C_B(\UUU)|=p$. Assume also that 
$\prod_{u\in\UUU}u(x)=1$ for each $x\in B$. Then $B$ is a discrete 
$p$-torus of rank $p-1$, and $B\cong\Q_p(\zeta)/\Z_p[\zeta]$ as 
$\Z_p\UUU$-modules. 

\end{enuma}
\end{Lem}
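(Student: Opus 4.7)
The plan is to dispatch (a) and (b) as standard algebraic number theory, handle (c) as a short application of the DVR structure theorem, and concentrate the real effort on (d). For (a), I would use the factorization $\Q_p\UUU \cong \Q_p[x]/(x^p-1) \cong \Q_p \times \Q_p[x]/\Phi_p(x) \cong \Q_p \times \Q_p(\zeta)$, where irreducibility of $\Phi_p$ over $\Q_p$ follows from Eisenstein's criterion after the substitution $x \mapsto x+1$; this makes $\Q_p\UUU$ semisimple with exactly the two claimed simple modules. For (b), I would use the identity $\Phi_p(1) = \prod_{i=1}^{p-1}(1-\zeta^i) = p$ and note that each quotient $(1-\zeta^i)/(1-\zeta)$ is a unit in $\Z_p[\zeta]$, so $1-\zeta$ is a uniformizer and $p\Z_p[\zeta] = \pp^{p-1}$.

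For (c), the crucial step is that the $\Z_p\UUU$-lattice $\Lambda \subset M \cong \Q_p(\zeta)$ is annihilated by $\sigma = \sum_{u\in\UUU} u$ since $M$ is; thus $\Lambda$ is a module over $\Z_p\UUU/\gen{\sigma} \cong \Z_p[\zeta]$. Because $\Z_p[\zeta]$ is a DVR (by (b)) and $\Lambda$ is finitely generated and torsion-free (it sits inside a $\Q_p$-vector space), $\Lambda$ is free of rank $\dim_{\Q_p(\zeta)}(\Q_p(\zeta)\otimes_{\Z_p[\zeta]}\Lambda) = 1$, so $\Lambda \cong \Z_p[\zeta]$. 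Then $\Lambda/p\Lambda \cong \Z_p[\zeta]/\pp^{p-1}$; using the identity $\sigma \equiv (1-u)^{p-1}$ in $\F_p\UUU = \F_p[u]/(u-1)^p$, this becomes $\F_p[t]/t^{p-1}$, a single Jordan block and hence indecomposable.

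The substance of the lemma lies in (d). Writing $B$ additively, the hypothesis $\prod_u u(x) = 1$ becomes $\sigma \cdot B = 0$, so $B$ is a $\Z_p[\zeta]$-module on which $\pi \defeq 1-\zeta$ acts as $1-u$, with $|B[\pi]| = |C_B(\UUU)| = p$. Let $B_0 \le B$ be the identity component. By Proposition \ref{tori&lattices}, $B_0$ corresponds to the $\Z_p\UUU$-lattice $\Lambda = \Hom_{\Z_p}(\Q_p/\Z_p, B_0)$, and $\sigma = 0$ transfers from $B_0$ to $\Lambda$ by an argument analogous to (c): the image of $\sigma$ on $\Q_p \otimes_{\Z_p} \Lambda$ is a $\Q_p$-subspace contained in $\Lambda$, hence zero. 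Applying (c), $\Lambda \cong \Z_p[\zeta]^n$ and $B_0 \cong (\Q_p(\zeta)/\Z_p[\zeta])^n$; comparing $\pi$-torsion, $\F_p^n \cong B_0[\pi] \hookrightarrow B[\pi] \cong \F_p$ forces $n \le 1$, and $n = 1$ since $B$ is infinite.

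The main obstacle is the final step: ruling out a nontrivial finite extension of $B_0$ inside $B$. I plan to exploit the $\pi$-divisibility of $B_0 \cong \Q_p(\zeta)/\Z_p[\zeta]$: since $\pi B_0 = B_0$, the snake lemma applied to $0 \to B_0 \to B \to B/B_0 \to 0$ with multiplication by $\pi$ yields the exact sequence $0 \to B_0[\pi] \to B[\pi] \to (B/B_0)[\pi] \to B_0/\pi B_0 = 0$. Counting orders forces $(B/B_0)[\pi] = 0$; since $B/B_0$ is a finite $\pi$-power-torsion $\Z_p[\zeta]$-module, it must then vanish, so $B = B_0 \cong \Q_p(\zeta)/\Z_p[\zeta]$, a discrete $p$-torus of rank $p-1$.
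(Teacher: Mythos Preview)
Your proof is correct. Parts (a)--(c) match the paper's arguments almost exactly: the paper likewise deduces (a) and (b) from the total ramification of $p$ in $\Z[\zeta]$, and for (c) scales $\Lambda$ into $\Z_p[\zeta]$ to identify it with an ideal $\pp^k$ --- the same DVR fact you invoke, just phrased as ``every ideal is a power of $\pp$'' rather than ``finitely generated torsion-free modules are free.''

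For (d) your route is genuinely different from the paper's. The paper never mentions the identity component: it shows directly that each $\Omega_n(B)$ is cyclic over $\Z_p[\zeta]$ (from $|\Omega_n(B)/(1-\zeta)\Omega_n(B)|=|C_{\Omega_n(B)}(\UUU)|=p$), and then takes an inverse limit over the finite nonempty sets of generators of $\Omega_n(B)$, with transition maps $r_n\mapsto p\cdot r_n$, to produce an explicit isomorphism $\Q_p(\zeta)/\Z_p[\zeta]\to B$. Your argument instead passes through Proposition~\ref{tori&lattices} to convert the identity component $B_0$ into a lattice, pins down $\rk(B_0)=p-1$ by comparing $\pi$-torsion, and then kills the finite quotient $B/B_0$ with the snake lemma and the $\pi$-divisibility of $B_0$. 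Your approach is more structural and cleanly separates ``$B_0$ is the right torus'' from ``there is no finite extension''; the paper's inverse-limit construction is more self-contained (it does not appeal to Proposition~\ref{tori&lattices}) and yields the isomorphism explicitly.
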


\begin{proof} \textbf{(a,b) } By \cite[Proposition 
6-2-6]{Goldstein}, $(1-\zeta)\Z[\zeta]$ is the only prime ideal in 
$\Z[\zeta]$ containing $p\Z[\zeta]=(1-\zeta)^{p-1}\Z[\zeta]$. Hence 
$\Q_p\otimes_{\Q}\Q(\zeta)=\Q_p(\zeta)$, so
$\dim_{\Q_p}(\Q_p(\zeta))=p-1$, and $\Z_p[\zeta]$ is a local ring with 
maximal ideal $\pp=(1-\zeta)\Z_p[\zeta]$ where $\pp^{p-1}=p\Z_p[\zeta]$. 
This proves (b), and also that $\Q_p(\zeta)$ is an irreducible 
$(p-1)$-dimensional $\Q_p\UUU$-module. So the only other irreducible 
$\Q_p\UUU$-module is $\Q_p$ with the trivial action.

\noindent\textbf{(c) } By (a), we can assume that $M=\Q_p(\zeta)$. 
Thus $\Lambda$ is a $\Z_p\UUU$-lattice in $\Q_p(\zeta)$, so 
$p^m\Lambda\le\Z_p[\zeta]$ is an ideal for $m$ large enough. Hence 
$p^m\Lambda=\pp^k=(1-\zeta)^k\Z_p[\zeta]$ for some $k$, and $\Lambda$ is 
isomorphic to $\Z_p[\zeta]$ as a $\Z_p\UUU$-module. 

\smallskip

\noindent\textbf{(d) } Since $\prod_{u\in\UUU}u(x)=1$ for each $x\in 
B$, we can regard $B$ as a $\Z_p[\zeta]$-module. For each $n\ge1$, 
$\bigl|\Omega_n(B)/(1-\zeta)\Omega_n(B)\bigr|=|C_{\Omega_n(B)}(\UUU)|=|C_B(\UUU)|=p$, 
and so by (b), there is $r_n\in\Omega_n(B)$ that generates $\Omega_n(B)$ as a 
$\Z_p[\zeta]$-module. Let $R_n$ be the set of all such generators of 
$\Omega_n(B)$, and let $\varphi_n\:R_n\too R_{n-1}$ be the map 
$\varphi_n(r_n)=(r_n)^p$. The $(R_n,\varphi_n)$ thus form an inverse system of 
nonempty finite sets. An element $(r_n)_{n\ge1}$ in the inverse limit 
defines an isomorphism 
$\Q_p(\zeta)/\Z_p[\zeta]\cong(\Q_p/\Z_p)\otimes_{\Z_p}\Z_p[\zeta]\Right3{}B$ 
of $\Z_p[\zeta]$-modules (hence of $\Z_p\UUU$-modules), where $(1/p^n)\otimes\xi$ 
is sent to $\xi\cdot r_n$. 
\end{proof}

\begin{Lem} \label{|S|=p}
Fix a prime $p$, an abelian discrete $p$-toral group $A$ and 
a finite group of automorphisms $G\le\Aut(A)$. Assume, for 
$S\in\sylp{G}$, that $S\nnsg G$ and $|A/C_A(S)|=p$.  Then $|S|=p$.  
\end{Lem}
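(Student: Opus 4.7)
Set $W = C_A(S)$. The plan is to first show that $S$ is elementary abelian with every nontrivial element acting as a ``transvection'' that fixes $W$ pointwise, then characterize $N_G(S)$ as $\textup{Stab}_G(W)$, and finally use an element $g \in G$ with $gW \ne W$ (which exists since $S\nnsg G$) to work in the quotient $\bar A := A/(W \cap gW) \cong C_p^2$ and squeeze out $|S|=p$. Note first that $S$ is nontrivial, since $S\nnsg G$ forces $S\ne1$.

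For the structural setup: since $S$ is a $p$-group and $|A/W|=p$, $S$ acts trivially on $A/W\cong C_p$, so for each $s\in S$ the endomorphism $s-1$ of $A$ factors through $A/W$ with image in $\Omega_1(W)$. This gives $(s-1)(s'-1)=0$ on $A$ for all $s,s'\in S$, and combined with the faithful action yields that $S$ is elementary abelian, that $s\mapsto s-1$ embeds $S$ into $\Hom(A/W,\Omega_1(W))$, and that every nontrivial $s\in S$ satisfies $C_A(s)=W$. From this the TI property $S\cap S^g=1$ for $g\notin N_G(S)$ is immediate: a common nontrivial element would pointwise fix $W+gW=A$. Next, I would define $H_1:=\{g\in G:g|_W=\textup{id},\ g\text{ acts trivially on }A/W\}$; every element of $H_1$ has the form $1+M$ with $M\in\Hom(A/W,W)$, and has $p$-power order since $A$ is $p$-toral, so $H_1$ is a $p$-subgroup of $G$ containing $S$, hence $H_1=S$ by Sylow-maximality. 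The stabilizer $H:=\{g\in G:g(W)=W\}$ then normalizes $H_1=S$, and $N_G(S)\le H$ is obvious, so $N_G(S)=H$. In particular, $S\nnsg G$ reformulates as: $W$ is not $G$-invariant.

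For the main step, fix $g\in G$ with $W':=gW\ne W$. Since both $W,W'$ have index $p$ and $pA\le W\cap W'$, we have $\bar A:=A/(W\cap W')\cong C_p^2$, and both $S$ and $S^g$ act on $\bar A$ as transvection subgroups of $\GL(\bar A)\cong\GL_2(\F_p)$ with distinct hyperplanes. In the \emph{generic case} where $\phi(S):=[S,A]\not\le W'$, the kernels $S_1\le S$ and $S^g_1\le S^g$ of the transvection actions on $\bar A$ have index $p$ and consist of transvections whose directions lie in $W\cap W'$; a short computation using the explicit formulas $s(v)=v+\lambda(v)\phi(s)$ and $t(v)=v+\lambda'(v)\phi'(t)$ (with $\lambda(\phi'(t))=0=\lambda'(\phi(s))$) shows that $S_1$ and $S^g_1$ commute elementwise. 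By TI, $S_1\cap S^g_1\le S\cap S^g=1$, so $S_1S^g_1$ is an abelian $p$-subgroup of $G$ of order $(|S|/p)^2$; since $S\in\sylp{G}$, we get $(|S|/p)^2\le|S|$, forcing $|S|\le p^2$. The case $|S|=p^2$ is then ruled out by Sylow conjugacy: $S_1S^g_1$ would itself be a Sylow $S^h$, and $S_1\le S\cap S^h$ yields a contradiction in both subcases (if $h\in H$, then $S^h=S$ and $S^g_1\le S\cap S^g=1$; if $h\notin H$, then TI gives $S_1\le S\cap S^h=1$). Hence $|S|=p$. The \emph{exceptional case}, where $\phi(S)\le gW$ for \emph{every} $g\in G$ moving $W$ (and trivially for $g\in H$), gives $[S,A]\le C_A(O^{p'}(G))$; by $G$-invariance of this subgroup, $[O^{p'}(G),A]\le C_A(O^{p'}(G))$, and Lemma \ref{mod-Fr} applied to the filtration $1\le C_A(O^{p'}(G))\le A$ forces $O^{p'}(G)$ to be a $p$-group, hence $O^{p'}(G)=S$, contradicting $S\nnsg G$. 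The main obstacle is the explicit commutation computation for $S_1,S^g_1$ and the bookkeeping in the $|S|=p^2$ subcase; once those are in place the rest is essentially formal.
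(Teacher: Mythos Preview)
Your argument is essentially correct and takes a genuinely different route from the paper. The paper's proof is two lines: it cites \cite[Lemma 1.10]{indp1} for the case where $A$ is finite, and then observes that $G$ acts faithfully on $\Omega_k(A)$ for $k$ large enough, reducing the general statement to the finite case. Your approach, by contrast, is a self-contained direct argument that works uniformly for all discrete $p$-toral $A$: you extract the transvection structure of $S$ on $A$, identify $N_G(S)$ with the stabilizer of $W$, derive the TI property, and then squeeze the order of $S$ via a count inside a rank-two quotient $\bar A$. This is more informative than a bare citation and makes the mechanism transparent.

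There is one small gap to patch. In the ``generic case'' you assert that both $S_1$ and $S^g_1$ have index $p$, but your hypothesis $[S,A]\not\le W'$ only justifies this for $S_1$; the analogous condition $g[S,A]=[S^g,A]\not\le W$ needed for $[S^g:S^g_1]=p$ does not follow. However, the omitted case is strictly easier: if $S^g_1=S^g$, then your commutation formula (with $\lambda(\phi'(t))=0$ for all $t\in S^g$) shows that $S_1$ and all of $S^g$ commute, and TI gives $|S_1\cdot S^g|=(|S|/p)\cdot|S|\le|S|$, forcing $|S|\le p$ immediately. One sentence dispatching this subcase completes the proof. A second, purely cosmetic point: your statement of TI precedes the identification $N_G(S)=H$, but the argument you give for TI really shows $S\cap S^g\ne1\Rightarrow g\in H$; once $H=N_G(S)$ is in hand this is exactly TI, so you may wish to swap the order of those two steps.
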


\begin{proof} This is shown in \cite[Lemma 1.10]{indp1} when $A$ is finite, 
and the general case follows since $G$ acts faithfully on $\Omega_k(A)$ for 
$k$ large enough.
\end{proof}

\begin{Prop} \label{GonA}
Fix an abelian discrete $p$-toral group $A$, and a subgroup $G\le\Aut(A)$.  
Assume the following.
\begin{enumi}  
\item Each Sylow $p$-subgroup of $G$ has order $p$ and is not normal in 
$G$. 
\item For each $x\in{}G$ of order $p$, $[x,A]$ has order $p$, and hence 
$C_A(x)$ has index $p$.  
\end{enumi}
Set $H=O^{p'}(G)$, $A_1=C_A(H)$, and $A_2=[H,A]$.  Then $G$ normalizes 
$A_1$ and $A_2$, $A=A_1\times A_2$, and $H\cong\SL_2(p)$ acts faithfully on 
$A_2\cong C_p^2$.  There are groups of automorphisms $G_i\le\Aut(A_i)$ 
($i=1,2$), such that $p\nmid|G_1|$, $G_2\ge\Aut_H(A_2)\cong\SL_2(p)$, and 
$G\nsg{}G_1\times{}G_2$ (as a subgroup of $\Aut(A)$) with index dividing 
$p-1$.
\end{Prop}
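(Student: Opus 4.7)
The plan is to reduce the statement to the case where $A$ is finite and then to analyze the action of $G$ on $V=\Omega_1(A)$. For the reduction, since $G$ is finite, it acts faithfully on $\Omega_k(A)$ for all $k$ sufficiently large, and the hypotheses (i), (ii) carry over to this finite quotient because $[u,A]$ of order $p$ forces $[u,A]\le\Omega_1(A)\le\Omega_k(A)$, so $[u,\Omega_k(A)]=[u,A]$. Moreover, writing $[H,A]=IA$ where $I\subseteq\Z H$ is the augmentation ideal (generated as a left ideal by the elements $u'-1$ over $u'\in u^H$), the fact that $(u'-1)A=[u',A]\le\Omega_1(A)$ implies $[H,A]\le\Omega_1(A)$. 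Thus once the finite case produces $\Omega_k(A)=C_{\Omega_k(A)}(H)\times[H,\Omega_k(A)]$, the second factor stabilizes at $[H,A]$ for $k\ge1$ while the centralizers form an ascending chain, so the decompositions assemble in the limit to $A=A_1\times A_2$.

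Now assume $A$ is finite; set $V=\Omega_1(A)$, $L=[u,V]=[u,A]$, and $V_2=[H,V]=[H,A]$. Condition (ii) gives $\dim L=1$, and unipotence $(u-1)^p=0$ together with $\dim(u-1)V=1$ forces $(u-1)^2V=0$: hence $L\le C_V(u)$ and $u$ acts on $V$ as a transvection. I will show $\dim V_2=2$ and $H\cong\SL_2(p)$ acts on $V_2$ as the natural module. If $\dim V_2=1$, then $L$ is $H$-invariant and every $H$-conjugate of $u$ is a transvection with center $L$, so $H$ lies in the abelian transvection group $T_L\le\Aut(V)$ and is a $p$-group, contradicting $P\nnsg H$ where $|P|=p$ (here $P\nnsg H$ because $P\nsg H$ would make $P$ characteristic in $H$ and thus normal in $G$, violating (i)). If $\dim V_2\ge3$, two $H$-conjugate transvections $u,u'$ with distinct centers generate $\SL_2(p)$ on the 2-plane they span, and a third conjugate transvection with center outside that plane produces an additional independent Sylow-$p$ generator, forcing $|P|\ge p^2$ and again contradicting (i). With $\dim V_2=2$, two such transvections generate $\SL_2(p)$ in $\Aut(V_2)\cong\GL_2(p)$; since $|H|_p=p$, the image of $H$ equals $\SL_2(p)$; and because $C_H(V_2)$ has order coprime to $p$ and acts trivially on $A_1=C_A(H)$, the faithful action of $G$ on $A$ forces $C_H(V_2)=1$, yielding $H\cong\SL_2(p)$ acting faithfully on $A_2=V_2\cong C_p^2$.

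For the direct sum $A=A_1\oplus A_2$, the intersection $A_1\cap A_2=C_{A_2}(H)=0$ since $\SL_2(p)$ has no nonzero fixed vector on its natural module. For $A_1+A_2=A$, consider the short exact sequence $0\to A_2\to A\to A/A_2\to 0$: the action on the quotient is trivial (as $[H,A]=A_2$), so a splitting exists iff $H^1(H,A_2)=0$. For $p\ne3$ this holds because the trivial and natural simple $\F_p\SL_2(p)$-modules lie in distinct blocks, so $\Ext^1=0$; for $p=3$, $\SL_2(3)\cong2\cdot A_4$ and in any such extension $E$ the central involution decomposes $E=E^{-1}\oplus E^{+1}$ with $E^{-1}=A_2$ by dimension, and the 1-dimensional $E^{+1}$ carries the trivial $A_4$-action (since $\Hom(A_4,\F_3^\times)=0$), giving the splitting.

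Finally, $G$ normalizes both $A_1$ and $A_2$ because $H\nsg G$. Setting $G_1=\Aut_G(A_1)$ and $G_2=\Aut_G(A_2)$, every Sylow $p$-subgroup of $G$ lies in $H\le C_G(A_1)$, so $p\nmid|G_1|$, while $G_2\le\GL_2(p)$ contains $\SL_2(p)=\Aut_H(A_2)$. The embedding $G\hookrightarrow G_1\times G_2$ has image of index $[G_2:\pi_2(C_G(A_1))]$, which divides $[\GL_2(p):\SL_2(p)]=p-1$ since $\pi_2(C_G(A_1))\supseteq\SL_2(p)$; normality of $G$ in $G_1\times G_2$ follows because this quotient is abelian (a subgroup of $\GL_2(p)/\SL_2(p)\cong\F_p^\times$). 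The main obstacles are the transvection-counting step that forces $\dim V_2=2$, which needs care about how many independent conjugate transvections can coexist in a group with Sylow $p$-subgroup of order exactly $p$, together with the cohomology vanishing $H^1(\SL_2(p),\F_p^2)=0$, in particular at the small prime $p=3$ where an ad hoc argument via the central involution is required.
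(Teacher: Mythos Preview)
The paper's proof is not self-contained: it cites \cite[Lemma~1.11]{indp1} for the case $|A|<\infty$ and then observes that the general case follows by passing to $\Omega_k(A)$ for $k$ large. Your proposal instead attempts a direct proof of the finite case, which is a genuinely different and more ambitious route.

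Most of your argument is sound, but the step you yourself flag as delicate---ruling out $\dim V_2\ge 3$---has a real gap. Your first claim, that two conjugate transvections $u,u'$ from distinct Sylow subgroups generate $\SL_2(p)$ on $L+L'$, is true but unjustified as stated: a priori one of them, say $u$ when $L'\le C_V(u)$, could act trivially on $L+L'$. This is salvaged by noting that $[\langle u,u'\rangle,V]\le L+L'$ (induction on word length), so if $u|_{L+L'}=\mathrm{id}$ then $\langle u,u'\rangle$ maps into a $p$-group in $\GL(V)$ (it acts on $L+L'$ through $\langle u'|_{L+L'}\rangle\cong C_p$ and trivially on $V/(L+L')$), and hence is itself a $p$-group of order $>p$, contradicting $|G|_p=p$. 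Your second claim, however---that a third transvection $u''$ with $L''\not\subseteq L+L'$ ``produces an additional independent Sylow-$p$ generator, forcing $|P|\ge p^2$''---is not justified, and I do not see how to make it work directly: there is no evident reason $u''$ must commute with a nontrivial $p$-element of $\langle u,u'\rangle\cong\SL_2(p)$. A cleaner finish is to show that $V_2$ is indecomposable as an $\bar H=H/C_H(V_2)$-module (if $V_2=M_1\oplus M_2$ then $[u,M_i]=0$ for some $i$ since $\dim[u,V_2]=1$, whence $\bar H$ centralizes $M_i$ and $M_i=[\bar H,M_i]=0$), and then invoke the structure of faithful minimally active indecomposable modules as in \cite[Proposition~3.7]{indp2}: since the unique nontrivial Jordan block of $u|_{V_2}$ has size $2$, that proposition forces $\dim V_2=2$.

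There is also a minor circularity in your presentation: you conclude $C_H(V_2)=1$ from ``$C_H(V_2)$ acts trivially on $A_1$ and on $A_2$, and $G$ is faithful on $A$'', but $A=A_1\times A_2$ is only established in the following paragraph. This is easily repaired without the splitting: each $c\in C_H(V_2)$ has $p'$-order and satisfies $(c-1)A\le A_2$ and $(c-1)A_2=0$, so $(c-1)^2A=0$; a $p'$-automorphism acting unipotently on a finite $p$-group is trivial.
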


\begin{proof} This is shown in \cite[Lemma 1.11]{indp1} when $A$ is finite. 
The general case then follows by regarding $A$ as the union of the groups 
$\Omega_k(A)$ for $k\ge1$. 
\end{proof}


\begin{thebibliography}{BLO2}

\bibitem[Ag]{Aguade} J. Aguad\'e, Constructing modular classifying spaces, 
Israel J. Math. 66 (1989), 23--40

\bibitem[Al]{Alperin} J. Alperin, Local representation theory, Cambridge 
Univ. Press (1986)

\bibitem[AG]{AG} K. Andersen \& J. Grodal, The classification of $2$-compact 
groups, Journal Amer. Math. Soc. 22 (2009), 387--436

\bibitem[AGMV]{AGMV} K. Andersen, J. Grodal, J. M\o{}ller, \& A. Viruel, 
The classification of $p$-compact groups for $p$ odd, Ann. of Math. 167 
(2008), 95--210.

\bibitem[AOV1]{AOV1} K. Andersen, B. Oliver, \& J. Ventura, Reduced, tame, 
and exotic fusion systems, Proc. London Math. Soc. 105 (2012), 87--152

\bibitem[AOV2]{AOV2} K. Andersen, B. Oliver, \& J. Ventura, 
Fusion systems and amalgams, Math. Z. 274 (2013), 1119--1154


\bibitem[AKO]{AKO} M. Aschbacher, R. Kessar, \& B. Oliver, Fusion systems 
in algebra and topology, Cambridge Univ. Press (2011)

\bibitem[Ben]{Benson1} D. Benson, Representations and cohomology I, 
Cambridge Univ. Press (1995)


\bibitem[BLO2]{BLO2} C. Broto, R. Levi, \& B. Oliver, The homotopy theory
of fusion systems, J. Amer. Math. Soc. 16 (2003), 779--856

\bibitem[BLO3]{BLO3} C. Broto, R. Levi, \& B. Oliver, Discrete models for 
the $p$-local homotopy theory of compact Lie groups and $p$-compact groups, 
Geometry and Topology 11 (2007), 315--427


\bibitem[BLO6]{BLO6} C. Broto, R. Levi, \& B. Oliver, An algebraic model 
for finite loop spaces, Algebr. Geom. Topol. 14 (2014), 2915--2981

\bibitem[CE]{ClarkEwing} A. Clark, \& J. Ewing, The realization of 
polynomial algebras as cohomology rings, Pacific J. Math. 50 (1974), 
425--434.

\bibitem[At]{Atlas} J. Conway, R. Curtis, S. Norton, R. Parker, \& R. 
Wilson, Atlas of finite groups, Oxford Univ. Press (1985)

\bibitem[Cr]{Craven} D. Craven, The theory of fusion systems, Cambridge 
Univ. Press (2011)

\bibitem[CrOS]{indp2} D. Craven, B. Oliver, \& J. Semeraro, Reduced fusion 
systems over $p$-groups with abelian subgroup of index $p$: II, Advances in 
Math. 322 (2017), 201--268 


\bibitem[DW]{DW} W. Dwyer \& C. Wilkerson, Homotopy fixed-point methods 
for Lie groups and finite loop spaces, Annals of math. 139 (1994), 395--442

\bibitem[Gd]{Goldstein} L. Goldstein, Analytic number theory, Prentice-Hall 
(1971)

\bibitem[Gon]{Gonzalez} A. Gonz\'alez, The structure of $p$-local compact 
groups of rank 1, Forum Math. 28 (2016), 219--253

\bibitem[GLR]{GLR} A. Gonz\'alez, T. Lozano, \& A. Ruiz, On some new 
examples of $p$-local compact groups, (preprint) arXiv:1512.00284v3 
[math.AT].

\bibitem[G]{Gorenstein} D. Gorenstein, Finite groups, Harper \& Row (1968)



\bibitem[LO]{Sol} R. Levi \& B. Oliver, Construction of 2-local finite 
groups of a type studied by Solomon and Benson, Geometry \& Topology 6 
(2002), 917--990


\bibitem[M\o]{Moller} J. Møller, $N$-determined $2$-compact groups. I, 
Fund. math. 195 (2007), 11--84

\bibitem[Ol]{indp1}  B. Oliver, Simple fusion systems over $p$-groups 
with abelian subgroup of index $p$: I, J. Algebra 398 (2014), 527--541


\bibitem[Pg]{Puig} L. Puig, Frobenius categories, J. Algebra 303 (2006),
309--357

\bibitem[RV]{RV} A. Ruiz \& A. Viruel, The classification of $p$-local 
finite groups over the extraspecial group of order $p^3$ and exponent $p$, 
Math. Z. 248 (2004), 45--65

\bibitem[ST]{ST} G. Shephard \& J. Todd, Finite unitary reflection groups, 
Canadian J. Math. 6 (1954), 274--304


\end{thebibliography}
\end{document}

Recall that we are mainly interested in the cases where $A$ is not 
elementary abelian, including those where it is finite: the elementary 
abelian examples were handled in \cite{indp2} and earlier papers. The next 
lemma says that in our cases of interest, the uniqueness assumption on $A$ 
is not really a restriction.

\begin{Lem} 
Let $\calf$ be a saturated fusion system over a discrete $p$-toral group 
$S$ with an abelian subgroup $A$ of index $p$. Assume in addition that $A$ 
is not elementary abelian and $O_p(\calf)=1$. Then $A$ is the unique 
abelian subgroup of index $p$ in $S$. 
\end{Lem}

\begin{proof} If $A$ is not unique, then by Lemma \ref{1abel}, $Z(S)$ has 
index $p$ in $A$. Hence $
\end{proof}